\renewcommand*\@fnsymbol[1]{\the#1}  
\title{On the structure of graphs excluding $K_{4}$, $W_{4}$, $K_{2,4}$ and one other graph as a rooted minor}
\author[Moore]{Benjamin Moore}\thanks{The author thanks NSERC for financial support.}
\address[Benjamin Moore]{Department of Combinatorics and Optimization, University of Waterloo, Waterloo, ON, Canada}  
\email{brmoore@uwaterloo.ca}
\newtheoremstyle{case}{}{}{\normalfont}{}{\itshape}{:}{ }{}
\newtheorem{theorem}{Theorem}
\newtheorem{corollary}[theorem]{Corollary}
\newtheorem{proposition}[theorem]{Proposition}
\newtheorem{definition}[theorem]{Definition}
\newtheorem{observation}[theorem]{Observation}
\newtheorem{lemma}[theorem]{Lemma}
\newtheoremstyle{case}{}{}{\normalfont}{}{\itshape}{\normalfont:}{ }{}
\tikzset{
  bigblue/.style={circle, draw=blue!80,fill=blue!40,thick, inner sep=1.5pt, minimum size=5mm},
  bigred/.style={circle, draw=red!80,fill=red!40,thick, inner sep=1.5pt, minimum size=5mm},
  bigblack/.style={circle, draw=black!100,fill=black!40,thick, inner sep=1.5pt, minimum size=5mm},
  bluevertex/.style={circle, draw=blue!100,fill=blue!100,thick, inner sep=0pt, minimum size=2mm},
  redvertex/.style={circle, draw=red!100,fill=red!100,thick, inner sep=0pt, minimum size=2mm},
  blackvertex/.style={circle, draw=black!100,fill=black!100,thick, inner sep=0pt, minimum size=2mm},  
  whitevertex/.style={circle, draw=black!100,fill=white!100,thick, inner sep=0pt, minimum size=2mm},  
  smallblack/.style={circle, draw=black!100,fill=black!100,thick, inner sep=0pt, minimum size=1mm},
  smallwhite/.style={circle, draw=black!100,fill=white!100,thick, inner sep=0pt, minimum size=1mm},    
}
\begin{document}
\begin{abstract}
In this paper we give structural characterizations of graphs not containing rooted $K_{4}$, $W_{4}$, $K_{2,4}$, and a graph we call $L$. 
\end{abstract}

\maketitle 

\section{Introduction}

Graph minors have seen extensive study over the years \cite{Smallminors,k24excludedminors,graphstructuretheorem}. Most graph minors results can be split into two distinct areas; results on the ``rough" structure for large $H$-minor-free graphs \cite{graphstructuretheorem,WQOtheorem}, or results on the exact structure of $H$-minor-free graphs \cite{Crumppaper,NoV8minors,Smallminors}. While there are lots of beautiful results on the approximate structure of large $H$-minor-free graphs, exact structural results have only been found for graphs with few edges \cite{Smallminors}. In this paper, we look at rooted graph minors, which are a well known generalization of graph minors \cite{rootedk23theorem,root, rootedk3}  and we give structural results for a set of small graphs. More precisely, we characterize graphs without rooted $K_{4}, W_{4}, K_{2,4}$ and $L$-minors  (see Figure \ref{L(X)labelling1} for a picture of the graph $L$) (and various subsets of those minors).

Given finite, undirected graphs $G$ and $H$, a graph $G$ has an $H$-minor if and only if a graph isomorphic to $H$ can be obtained from $G$ through the contraction and deletion of some edges, and possibly the removal of some isolated vertices. We will always assume all graphs are connected and in that case, one never needs to delete isolated vertices. To generalize minors to rooted minors, we use a well known equivalent notion of minors.  A graph $G$ has an $H$-minor if and only if there exists a set $\{G_{x} \ | \ x \in V(H)\}$ of pairwise disjoint connected subgraphs of $G$ indexed by the vertices of $H$, such that if $xy \in E(H)$, then there exists a vertex $v \in V(G_{x})$ and a vertex $u \in V(G_{y})$ where $uv \in E(G)$. If the set $\{G_{x}\ | \ x \in V(H) \}$ exists, we will say $\{G_{x} \ | \  x \in V(H) \}$ is an \textit{$H$-model} of $G$, and the sets $G_{x}$ are the \textit{branch sets} of the $H$-model.

 From this definition of $H$-minors, it is easy to generalize $H$-minors to \textit{rooted $H$-minors}. Let $X \subseteq V(G)$ and let $f : X \to V(H)$ be injective. We say $G$ has a  \textit{rooted $H$-minor with respect to $f$} if there is an $H$-model of $G$ such that for all $v \in X$, we have $v \in G_{\pi(v)}$. We call the vertices in $X$ the \textit{roots} of the rooted $H$-minor. 

In general we will want to allow more than one injective map, so if we have injective maps $f_{1}, f_{2},\ldots,f_{n} : X \to V(H)$, that a graph $G$ has a \textit{rooted $H$-minor} if $G$ has a rooted $H$-minor with respect to $f_{i}$, for any $i \in \{1,\ldots,n\}$.  For ease of notation, once the family of maps is defined for some graph $H$ and set of vertices $X$, we will just say that $G$ has an $H(X)$-minor. 

This paper will look at rooted minors with exactly four roots. Specifically, we look at we will look at the following four rooted minors: $K_{4}(X)$-minors where each root is mapped to a distinct vertex, $W_{4}(X)$-minors (where $W_{4}$ is the graph obtained by taking a $4$-cycle and adding a universal vertex),  where  the roots are mapped to the rim vertices, $K_{2,4}(X)$-minors where the roots are mapped to the large side of the bipartition, and $L(X)$-minors which we will define later. We give characterizations of graphs not containing $K_{4}(X)$-minors and $W_{4}(X)$-minors (Theorem \ref{w4cuts}), and graphs not containing $K_{4}(X)$, $W_{4}(X)$ and $K_{2,4}(X)$-minors (Theorem \ref{k4w4k24characterization}). We also show that for $2$-connected graphs, the class of $K_{4}(X)$, $W_{4}(X)$, $K_{2,4}(X)$ and $L(X)$-minor free graphs, that the class is equivalent to the class of $K_{4}(X)$, $W_{4}(X)$, $K_{2,4}(X)$ and $L'(X)$-free graphs for a smaller graph $L'$ (see Figure \ref{L(X)labelling1}).

\begin{figure}
\begin{center}
\includegraphics[scale=0.5]{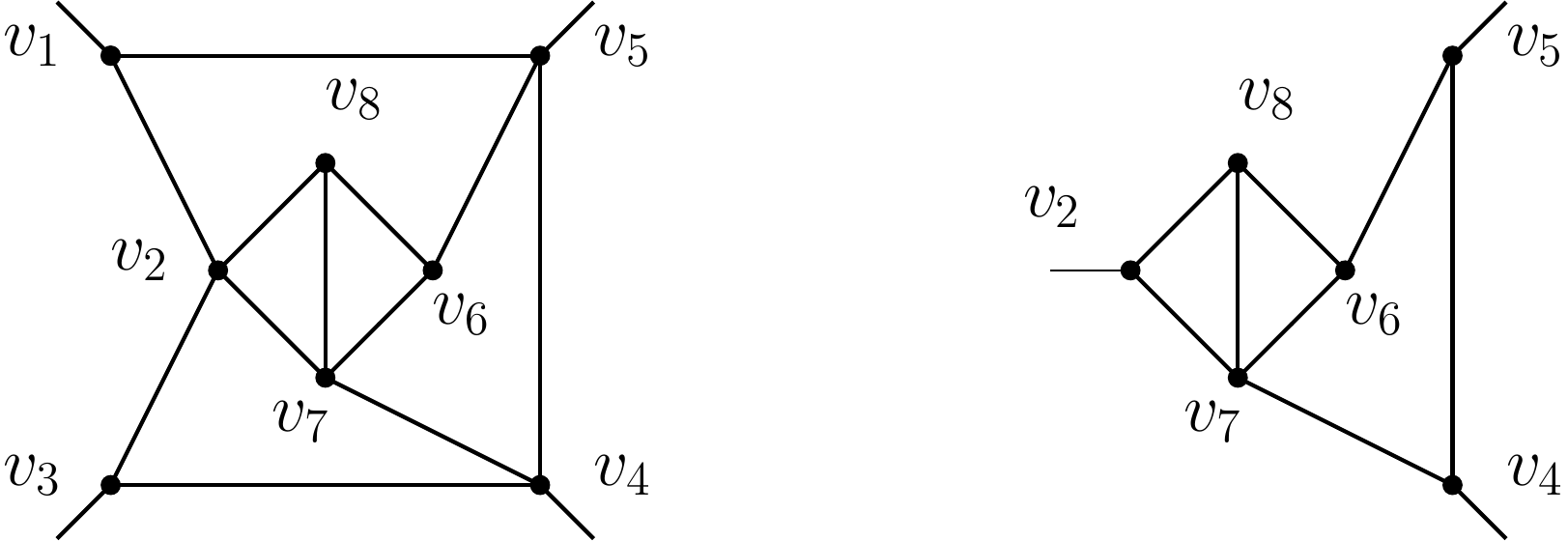}
\caption{The graph $L$ and the graph $L'$. The lines with only one endpoint indicate where the roots will be mapped for the rooted minor.}
\label{L(X)labelling1}
\end{center}
\end{figure}

To motivate the study of these rooted minors in particular, we turn to quantum field theory. In particular, the study of computing Feynman integrals and the related Feynman period (which is an integral which retains key information from the Feynman integral).  In parametric space, Feynman periods are defined from graphs, so one can ask if certain graph properties give any structure to the Feynman period. A celebrated result of Brown is that if a graph has vertex width less than $3$, then the cooresponding Feynman period evaluates to some multiple zeta value \cite{Francisbig}. In general, Brown showed that if a graph is ``reducible", then the corresponding Feynman period evaluates to some multiple zeta value (when the integral converges), and furthermore that being reducible is graph minor closed. In \cite{Ben, reducibilitypaper}, it was shown that reducibility is graph minor closed even when the graphs have ``external momenta" and ``particle masses" (as in, vertex and edge labels). In the case where there is exactly four vertex labels satisfying a physical restriction, and no edge labels, $K_{4}(X), W_{4}(X), K_{2,4}(X)$ and $L(X)$ are forbidden minors for reducibility. We also note there has been other work relating graph minors and computing Feynman periods, in particular the work of Black, Crump, DeVos, and Yeats characterizing Feynman $5$-splitting graphs and $3$-connected graphs which have vertex width less than $3$ \cite{Crumppaper,CrumpMastersthesis}. 

\subsection{Previous Results}

We start by outlining the structural characterization of graphs without $K_{4}(X)$-minors given by Monory and Wood in \cite{root}. This result acts as the starting point of all the other characterizations.

 Let $G$ be a graph and $X = \{a,b,c,d\} \subseteq V(G)$. We say that $G$ has a \textit{$K_{4}(X)$-minor} if and only if $G$ has a $K_{4}(X)$-minor with respect to $\pi$, where $\pi$ is any surjective map from $X$ to $V(K_{4})$ (see Figure \ref{K4example} for an example). Thus $G$ has a $K_{4}(X)$-minor if and only if $G$ has a $K_{4}$-minor where each vertex of $X$ ends up in a distinct branch set. Before we can state Monroy and Wood's characterization, we need some definitions.

Let $H$ be a graph. The graph $H^{+}$ is defined in the following way: for each triangle $T = \{x,y,z\}$ in $H$, we attach a clique of arbitrary size, $F_{T}$, to the triangle. As in, each vertex of $F_{T}$ is adjacent to each vertex of $T$, and not adjacent to any other vertex of $H$. We will let  $F$ denote the set of cliques attached to the triangles. Then as $H^{+}$ is uniquely defined from $H$ and $F$, we will use the notation $H^{+} = (H,F)$. 

Now, consider a planar graph $H$ where the outerface is a $4$-cycle, $C_{4}$, every internal face of $H$ is a triangle, and every triangle is a face. Let $V(C_{4}) = \{a,b,c,d\}$. In this case, we call the graph $H^{+}$ an \textit{$\{a,b,c,d\}$-web}. Now we can state the excluded $K_{4}(X)$-minor theorem.

 \begin{figure}
\begin{center}
\includegraphics[scale =0.7]{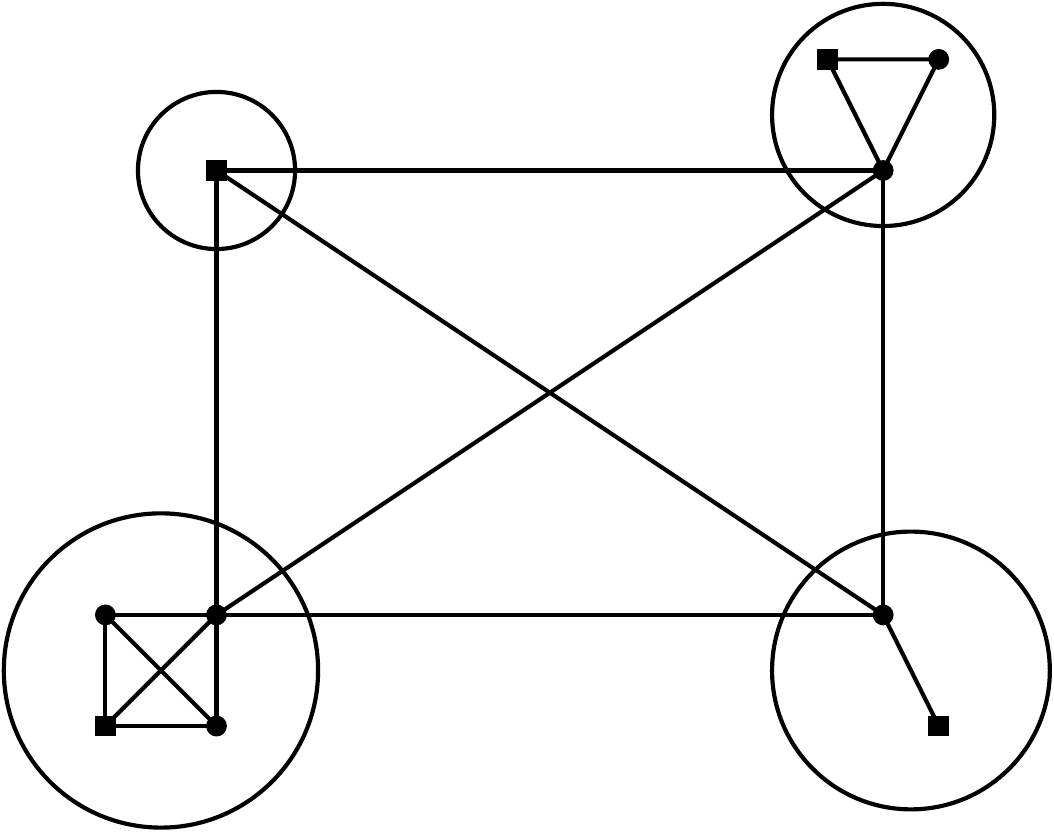}
\caption{A graph with a $K_{4}(X)$-minor. Circles represent the branch sets. The square vertices represent the roots.}
\label{K4example}
\end{center}
\end{figure}

\begin{theorem}[\cite{root}]
\label{k4free}
Let $G$ be a graph and $X = \{a,b,c,d\} \subseteq V(G)$. Then either $G$ has a $K_{4}(X)$-minor or $G$ is a spanning subgraph of a graph belonging to one of the following six classes of graphs:

\begin{itemize}

\item{Class $\mathcal{A}$: Let $H$ be the graph with vertex set $V(H) = \{a,b,c,d,e\}$ and with edge set $E(H) = \{ae,ad,be,bd,ce,cd,de\}$.  Class $\mathcal{A}$ is the set of all graphs $H^{+}$.}

\item{Class $\mathcal{B}$: Let $H$ be the graph with vertex set $V(H) = \{a,b,c,d,e,f\}$ and with edge set $E(H) = \{ae,af,be,bf,ce,cf,de,df,ef\}$. Class $\mathcal{B}$ is the set of all graphs $H^{+}$.}

\item{Class $\mathcal{C}$: Let $H$ be the graph such that $V(H) = \{a,b,c,d,e,f,g\}$ and with edge set $E(H) = \{ae,ag,be,bg,cf,cg,df,dg,ef,eg,fg\}$. Class $\mathcal{C}$ is the set of all graphs $H^{+}$.}

\item{Class $\mathcal{D}$: The set of all $\{a,b,c,d\}$-webs.}

\item{Class $\mathcal{E}$: Let $H'$ be a $\{c,d,e,f\}$-web, where $c,d,e$ and $f$ appear in that order on the outer $4$-cycle. Let $H$ be the graph with vertex set $V(H') \cup \{a,b\}$ and edge set $E(H) = E(H') \cup \{ae,af,be,bf\}$. Class $\mathcal{E}$ is the set of all graphs $H^{+}$. }

\item{Class $\mathcal{F}$: Let $H'$ be a $\{e,f,g,h\}$-web and suppose that $e,f,g,h$ appear in that order on the outer $4$-cycle. Let $H$ be the graph with vertex set $V(H) = V(H') \cup \{a,b,c,d\}$ and edge set $E(H) = E(H') \cup \{ae,af,be,bf,cg,ch,dg,dh\}$. Class $\mathcal{E}$ is the set of all graphs $H^{+}$. }

\end{itemize} 
\end{theorem}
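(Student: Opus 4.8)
The plan is to determine the \emph{edge-maximal} graphs with no $K_4(X)$-minor, since the statement only asks for $G$ to be a spanning subgraph of such a graph; so throughout I assume that adding any edge to $G$ creates a $K_4(X)$-minor, and I induct on $|V(G)|$. The guiding principle is that a rooted $K_4$ fails to exist for exactly two reasons: either the four roots cannot be ``crossed'' (which will materialise as planarity with $a,b,c,d$ on a common face, giving the webs $\mathcal{D}$ and their apex extensions $\mathcal{E},\mathcal{F}$), or there are simply too few vertices available to connect the roots (the sporadic graphs $\mathcal{A},\mathcal{B},\mathcal{C}$, each of which has a $2$-cut of ``connector'' vertices through which all six required adjacencies would have to be forced). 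The first reduction I would make is the triangle--clique reduction, which is exactly the content of the $H^{+}=(H,F)$ operation: if $T=\{x,y,z\}$ is a triangle and $F_T$ is attached to it with $X\cap F_T=\varnothing$, then $T$ is a $3$-cut whose torso on the $F_T$-side is a clique, so in any prospective $K_4$-model the part meeting $F_T$ can be contracted into $T$ without losing an adjacency. Hence deleting $F_T$ changes nothing, and I may pass to a \emph{reduced core} carrying no attached cliques; it then suffices to identify the underlying graph $H$.

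Next I would analyse $1$- and $2$-cuts in the same spirit. Across a cutvertex or a $2$-cut $\{u,v\}$ the roots split between the two sides, a rooted $K_4$ can be assembled only if one side already ``delivers'' the connection it is responsible for through $\{u,v\}$, and edge-maximality forces the completed side to be a smaller maximal obstruction to which induction applies. The sporadic Classes $\mathcal{A},\mathcal{B},\mathcal{C}$ emerge precisely here: a $2$-cut $\{e,f\}$ whose deletion isolates three or four roots yields the $K_{2,k}+ef$ configurations (and, with two interacting hubs, the graph of Class $\mathcal{C}$), because two connector vertices can never realise all six adjacencies among four roots. The same $2$-cut analysis, applied to a cut whose deletion isolates a single degree-two root seeing two neighbours, produces the apex vertices of Classes $\mathcal{E}$ and $\mathcal{F}$, which I would peel off and attach back at the very end.

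The heart of the argument, and the step I expect to be the main obstacle, is the crossing dichotomy for the genuinely $3$-connected case: such a graph has no $K_4(X)$-minor if and only if it is planar with $a,b,c,d$ on a common face. The ``if'' direction is a Jordan-curve argument — with the four roots on one face, whatever their cyclic order, the two ``diagonal'' connections form a crossing pair and cannot both be routed by disjoint connected subgraphs inside the disc, so the $K_4$ cannot be completed. The ``only if'' direction is where the real work lies: I must show that any $3$-connected graph that is non-planar, or planar but with no single face meeting all four roots, does admit the rooted minor. For the non-planar case I would extract a $K_5$- or $K_{3,3}$-subdivision via Kuratowski--Wagner and use $3$-connectivity (Menger and fan arguments) to link the four roots disjointly into it, realising all six pairings; for the planar-but-separated case I would invoke the essentially unique embedding (Whitney) to find the separation across which the missing diagonal path can be completed. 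Controlling these linkages while respecting the rootedness is the delicate part.

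Finally I would assemble the pieces. Once a $3$-connected core is known to be planar with $a,b,c,d$ on a common face, edge-maximality forces every internal face to be a triangle, which is exactly the definition of an $\{a,b,c,d\}$-web, giving Class $\mathcal{D}$; reinstating the triangle-cliques deleted in the first step recovers the general $H^{+}$, and reattaching the degree-two apex pairs found in the $2$-cut analysis (one pair seeing two consecutive corners, or two such pairs on opposite sides) produces Classes $\mathcal{E}$ and $\mathcal{F}$, while the fully degenerate low-connector cases give $\mathcal{A},\mathcal{B},\mathcal{C}$. Two places demand genuine care: the ``only if'' routing argument above, and the bookkeeping that the low-connectivity pieces can glue onto a web in only these finitely many ways; everything else is the clique-sum and planar-embedding machinery.
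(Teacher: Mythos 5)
This statement is imported from Fabila-Monroy and Wood's paper \cite{root}; the present paper states it without proof, so there is no in-paper argument to compare yours against. Judged on its own, your proposal is a reasonable architectural outline (clique reduction at triangles, low-order cuts producing the sporadic classes and the apex pairs of $\mathcal{E},\mathcal{F}$, a $3$-connected core that must be a web, and a Jordan-curve argument for the ``if'' direction), but it has a genuine gap at exactly the step you flag as delicate, and one of your intermediate claims is false as stated.

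The false claim is the dichotomy ``a $3$-connected graph has no $K_{4}(X)$-minor if and only if it is planar with $a,b,c,d$ on a common face,'' together with the plan to handle the non-planar case by extracting a Kuratowski subdivision and linking the roots into it. A web $H^{+}$ with a large clique $F_{T}$ attached to an internal triangle is $3$-connected and non-planar yet $K_{4}(X)$-minor-free (it is Class $\mathcal{D}$), so ``$3$-connected and non-planar'' does not imply a rooted $K_{4}$; the correct $3$-connected statement (Corollary \ref{k4(x)3conn}) is ``spanning subgraph of a web,'' which is strictly weaker than planarity. Your clique reduction is meant to dispose of this, but as you state it the reduction requires the $F_{T}$-side torso to literally be a clique, which holds only in the edge-maximal graph, and you would still need to verify that the reduced core inherits $3$-connectivity and that no non-planarity survives the reduction. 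More importantly, even granting all of that, the surviving core statement --- every clique-reduced $3$-connected graph that is non-planar, or planar with the roots not cofacial, admits a $K_{4}(X)$-minor --- is essentially the entire content of the theorem in the $3$-connected case, and ``Menger and fan arguments into a $K_{5}$- or $K_{3,3}$-subdivision'' plus ``invoke Whitney'' is a statement of intent rather than a proof; this is a rooted-linkage argument in the spirit of the Seymour--Thomassen two-paths theorem and cannot be waved through. The same applies, less critically, to the bookkeeping that $2$-cuts can only assemble into the configurations $\mathcal{A},\mathcal{B},\mathcal{C},\mathcal{E},\mathcal{F}$ (e.g.\ ruling out three or more apex pairs, or two hubs interacting in any way other than Class $\mathcal{C}$), which you acknowledge but do not carry out.
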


Figure \ref{k4freepic} gives a pictorial representation of the graphs $H$ in the above classes. For the upcoming results, we will start with $3$-connected graphs and then consider lower connectivity afterwards. It is easily seen that Theorem \ref{k4free} simplifies significantly when we restrict to $3$-connected graphs.

\begin{corollary}[\cite{root}]
\label{k4(x)3conn}
Let $G$ be a $3$-connected graph and $X = \{a,b,c,d\} \subseteq V(G)$. Then either $G$ has a $K_{4}(X)$-minor or $G$ is a spanning subgraph of a Class $\mathcal{D}$ graph. In other words, $G$ is a spanning subgraph of an $\{a,b,c,d\}$-web.
\end{corollary}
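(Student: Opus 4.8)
The plan is to read the corollary off Theorem~\ref{k4free} by checking that $3$-connectivity is incompatible with membership in any of the five classes other than $\mathcal{D}$. Suppose $G$ is $3$-connected and has no $K_{4}(X)$-minor. By Theorem~\ref{k4free}, $G$ is a spanning subgraph of some graph $M = H^{+} = (H,F)$ belonging to one of the classes $\mathcal{A}$ through $\mathcal{F}$; since $V(G) = V(M)$, all four roots $a,b,c,d$ and every vertex of $H$ lie in $G$. The goal is to show that $M$ must in fact lie in class $\mathcal{D}$, whence $G$ is a spanning subgraph of an $\{a,b,c,d\}$-web as claimed.

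The single fact I would use repeatedly is that separations are inherited by spanning subgraphs: if $\{x,y\} \subseteq V(M)$ separates two vertices $u,v \in V(M)$ in $M$, then, because every $u$--$v$ path of $G$ is also a $u$--$v$ path of $M$, the pair $\{x,y\}$ separates $u$ and $v$ in $G$ as well. Consequently, if $M$ admits a two-element cut separating two of the roots, then $G$ inherits a $2$-cut and cannot be $3$-connected. So the whole corollary reduces to the routine remark that each of $\mathcal{A},\mathcal{B},\mathcal{C},\mathcal{E},\mathcal{F}$ has exactly such a cut, while $\mathcal{D}$ does not.

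I would then exhibit the cuts class by class, reading them off the edge lists in Theorem~\ref{k4free}. In class $\mathcal{A}$ the vertices $a,b,c$ each have neighbourhood $\{d,e\}$ in $H$ and attach only to cliques on triangles of the form $\{\cdot,d,e\}$, so $\{d,e\}$ separates the roots $a$ and $b$. In class $\mathcal{B}$ the vertices $a,b,c,d$ all have neighbourhood $\{e,f\}$, so $\{e,f\}$ separates $a$ from $b$. In class $\mathcal{C}$ one has $N(a) = N(b) = \{e,g\}$, so $\{e,g\}$ separates the roots $a$ and $b$ (symmetrically $\{f,g\}$ separates $c$ and $d$). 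In class $\mathcal{E}$ the added vertices $a,b$ have neighbourhood $\{e,f\}$, so $\{e,f\}$ separates $a$ from $b$; and in class $\mathcal{F}$ the added vertices $a,b$ have neighbourhood $\{e,f\}$ while $c,d$ have neighbourhood $\{g,h\}$, so again $\{e,f\}$ separates $a$ and $b$. In every case the two separated vertices are roots, hence present in $G$, and the cut has size two.

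By the inheritance observation each of these five classes forces a $2$-cut in $G$, contradicting $3$-connectivity; therefore $M \in \mathcal{D}$, and $G$ is a spanning subgraph of an $\{a,b,c,d\}$-web. I expect no real obstacle here, since the argument is a finite verification once the inheritance principle is isolated. The only mild point worth checking is that attaching the cliques $F_{T}$ when forming $H^{+}$ creates no path circumventing the claimed cut: in each case above the relevant triangle contains both cut vertices, so each attached clique lies entirely on one side of the separation and cannot reconnect the two roots.
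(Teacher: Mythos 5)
The paper states this corollary without proof, citing \cite{root} and remarking only that it is ``easily seen'' from Theorem~\ref{k4free}; your argument supplies exactly the intended verification. Your class-by-class identification of the $2$-cuts is correct, and the two points that actually need care --- that a spanning subgraph inherits every separation of its supergraph, and that each attached clique $F_{T}$ hangs off a triangle containing both cut vertices and so cannot bypass the cut --- are both explicitly handled, so the proposal is complete and matches the approach the paper takes for granted.
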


\begin{figure}
\centering
\includegraphics[scale = 0.5]{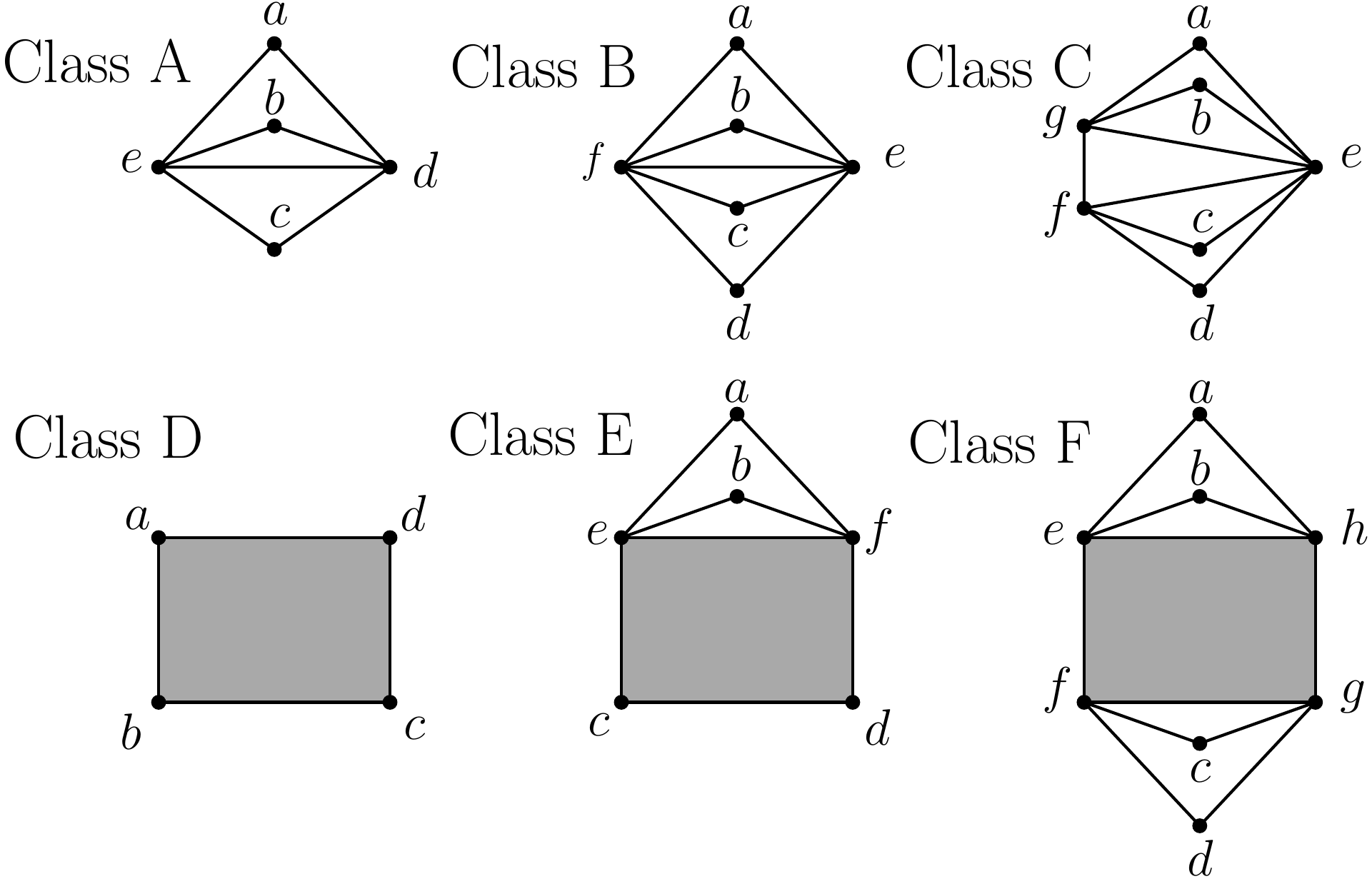}
\caption{The graphs $H$ which appear in Theorem \ref{k4free}. The cliques in the triangles are omitted. The shaded sections are $\{a,b,c,d\}$-webs.}
\label{k4freepic}
\end{figure}

We will also do reductions to planar graphs, so it is useful to notice that when $G$ is a $3$-connected planar graph, we can simplify Theorem \ref{k4free} even further.

\begin{corollary}[\cite{root}]
\label{k4(x)planar}
Let $G$ be a $3$-connected planar graph and $X = \{a,b,c,d\} \subseteq V(G)$.  Then $G$ does not have a $K_{4}(X)$-minor if and only if all the vertices of $X$ lie on the same face.
\end{corollary}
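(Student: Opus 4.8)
The plan is to prove the two implications of the biconditional separately. The easy direction is that if the four roots lie on a common face then $G$ has no $K_{4}(X)$-minor. To see this, let $F$ be a face whose boundary contains $a,b,c,d$ and let $G^{+}$ be obtained from $G$ by adding one new vertex $w$ placed inside the disk bounded by $F$ and joined to $a,b,c,d$; since all four roots lie on the boundary of $F$, these four edges can be routed without crossings, so $G^{+}$ is planar and in particular has no $K_{5}$-minor. Now if $G$ had a $K_{4}(X)$-minor with connected, pairwise adjacent branch sets $G_{a}, G_{b}, G_{c}, G_{d}$ containing $a,b,c,d$ respectively, then the singleton $\{w\}$ would be adjacent to each $G_{x}$ through the edge $wx$, so $\{w\}, G_{a}, G_{b}, G_{c}, G_{d}$ would form a $K_{5}$-model in $G^{+}$, a contradiction. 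Note that this direction uses only planarity of $G$, not $3$-connectivity.

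For the converse I would lean on the structure theorem. Assuming $G$ has no $K_{4}(X)$-minor, Corollary \ref{k4(x)3conn} gives that $G$ is a spanning subgraph of some $\{a,b,c,d\}$-web $W = H^{+}$, where $a,b,c,d$ appear in order on the outer $4$-cycle of the planar near-triangulation $H$ and, since the outer face is not a triangle, no clique of $F$ is attached there. I would fix a planar embedding of $H$ with $a,b,c,d$ on the outer face; deleting the edges of $H$ absent from $G$ only merges faces, so the roots still lie on a common face of this drawing of the part of $G$ inside $H$. It then remains to reinsert, for each internal triangular face $T=\{x,y,z\}$, the clique vertices of $F_{T}$ that survive in $G$, together with the edges of $G$ among $T \cup F_{T}$, drawn inside the triangular region of $T$. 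Because every vertex of $F_{T}$ has all of its $G$-neighbours inside $T \cup F_{T}$, the triple $\{x,y,z\}$ separates $F_{T}$ from the rest of $G$, so this blob really does hang inside the face $T$. Carrying this out for every internal face should yield a planar embedding of $G$ with $a,b,c,d$ cofacial, and since $G$ is $3$-connected Whitney's theorem guarantees its embedding is unique, so $a,b,c,d$ are cofacial in $G$ itself.

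I expect the last step of the converse to be the main obstacle. The web $W$ is typically non-planar (a single clique $F_{T}$ of size at least two already contains a $K_{5}$), so one cannot simply read a drawing of $G$ off a drawing of $W$; the real content lies in showing that each blob $G[T \cup F_{T}]$ can be drawn inside its triangular region, i.e.\ that $x,y,z$ are cofacial within $G[T \cup F_{T}]$ with the blob on one side. This is where planarity of $G$ must be combined with the fact that $\{x,y,z\}$ is a $3$-separator, using the standard behaviour of $3$-separations in $3$-connected planar graphs, namely that the part attached at a $3$-cut occupies a single face of the remainder. Once this local compatibility is established, Whitney uniqueness promotes the embedding we have constructed to the canonical one, and the roots are cofacial as required.
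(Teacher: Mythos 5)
The paper offers no proof of this corollary: it is imported verbatim from \cite{root}, so there is no internal argument to compare yours against. Judged on its own terms, your first direction is correct and complete: adding an apex $w$ inside a face containing $a,b,c,d$ keeps the graph planar, and a $K_{4}(X)$-model together with $\{w\}$ would be a $K_{5}$-model, which is impossible in a planar graph. As you note, this half needs only planarity.

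The converse, however, has a genuine gap, and it sits exactly where you flag it. You reduce via Corollary \ref{k4(x)3conn} to showing that a $3$-connected planar spanning subgraph $G$ of an $\{a,b,c,d\}$-web has $a,b,c,d$ cofacial, and you propose to build an embedding by drawing $G[V(H)]$ from the embedding of $H$ and then inserting each blob $G[T\cup F_{T}]$ into the triangular region of $T$; but the claim that this insertion is always possible, and that it preserves cofaciality of the roots, is asserted rather than proved. Two concrete problems: first, since $G$ is only a spanning subgraph, edges of the triangle $T$ and even of the outer $4$-cycle may be absent, so ``the triangular region of $T$'' may have merged with neighbouring regions or with the outer face; several blobs may then compete for the same merged face, and a blob inserted into a face whose boundary contains some of $a,b,c,d$ could separate them on that face. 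Second, your justification --- ``the part attached at a $3$-cut occupies a single face of the remainder'' in a $3$-connected planar graph --- is a statement about the (unique) embedding of $G$, which is precisely the object you are in the middle of constructing; to use it non-circularly you should instead start from the unique embedding of $G$, delete the clique vertices, and argue about where the resulting bridges sit, which also forces you to verify that $x,y,z$ occur on the relevant face in a compatible rotation. None of this is unfixable, but as written the decisive step of the converse is missing, so the proof is incomplete.
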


\section{All $3$-connected graphs have a $K_{4}(X)$ or $W_{4}(X)$-minor}

First we define what we mean by $W_{4}(X)$-minors. Let $G$ be a graph and $X = \{a,b,c,d\} \subseteq V(G)$. Let $\mathcal{F}$ be the family of maps from $X$ to $V(W_{4})$ such that each vertex of $X$ is mapped to a distinct vertex of the outer $4$-cycle of $W_{4}$.  For the purposes of this paper, a $W_{4}(X)$-minor refers to the $X$ and $\mathcal{F}$ given above. 

The goal of this section is to proof that for every $3$-connected graph, either we have a $K_{4}(X)$-minor or a $W_{4}(X)$-minor. By Corollary \ref{k4(x)3conn}, it suffices to show that all $3$-connected $\{a,b,c,d\}$-webs have a $W_{4}(X)$-minor. To do so, we will first show all $3$-connected planar have either a $K_{4}(X)$ or $W_{4}(X)$-minor, and then give an easy reduction to the general case. First we prove a lemma about paths in $3$-connected planar graphs. For notation, we will say an $(a,b)$-path is a path whose endpoints are $a$ and $b$.

\begin{lemma}
\label{3connectedplanarpaths}
Let $G$ be a $3$-connected planar graph and let $C$ be a facial cycle (as in, $C$ bounds a face) of $G$. Suppose $v,w \in V(C)$ and $vw \not \in E(G)$. Then there is a $(v,w)$-path $P$ such that $V(P) \cap V(C) = \{v,w\}$.  
\end{lemma}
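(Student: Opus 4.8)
The plan is to reformulate the conclusion and then extract the path from a system of three internally disjoint paths supplied by Menger's theorem. Since $vw \notin E(G)$ and $G$ is $3$-connected, Menger's theorem gives three internally disjoint $(v,w)$-paths $P_1, P_2, P_3$, each of length at least $2$ (none can be a single edge, as $vw \notin E(G)$, so each has an internal vertex). Their union $\Theta = P_1 \cup P_2 \cup P_3$ is a subdivision of the theta graph, that is, two branch vertices $v,w$ joined by three internally disjoint paths. I would fix a plane embedding of $G$ in which the facial cycle $C$ is the outer face; this is legitimate because a $3$-connected planar graph has an essentially unique embedding and any face may be taken as the unbounded one. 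The goal is to show that one of the three paths is internally disjoint from $C$, and that path will be the required $P$.

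The key idea is to exploit the planar structure of $\Theta$. As a subdivision of the theta graph, $\Theta$ is $2$-connected and planar, so each of its faces is bounded by a cycle, and Euler's formula gives exactly three faces, namely the three regions cut out by the three paths. The unbounded face of $\Theta$ is bounded by two of the paths, say $P_1$ and $P_3$, and the third path $P_2$ is the \emph{middle} one: reading the rotation at $v$, the edge of $P_2$ at $v$ lies between those of $P_1$ and $P_3$, so every internal vertex of $P_2$ lies strictly inside $\Theta$ and is not incident to the unbounded face of $\Theta$.

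To conclude I would relate the faces of $\Theta$ to those of $G$. Because $\Theta$ is a subgraph of $G$, every face of $G$ is contained in a face of $\Theta$; in particular the unbounded face of $G$, bounded by $C$, lies inside the unbounded face of $\Theta$. Hence every vertex of $C$ is incident to the unbounded face of $\Theta$, i.e.\ lies on its boundary $P_1 \cup P_3$. Since the internal vertices of $P_2$ are not incident to the unbounded face of $\Theta$, none of them lies on $C$, and the only vertices $P_2$ shares with $C$ are its endpoints $v,w$. Thus $P := P_2$ satisfies $V(P) \cap V(C) = \{v,w\}$, as required.

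The main obstacle is making the topological step fully rigorous. I must justify that (i) the unbounded face of $\Theta$ is bounded by precisely two of the three paths, identifying the middle path intrinsically through the rotation at $v$ together with the three-face count, and (ii) the face-refinement claim that any vertex incident to the outer face of $G$ must lie on the outer boundary of the subgraph $\Theta$. Both are standard consequences of working in a fixed plane embedding, but they are the crux; everything else, namely the existence of the three paths and the fact that each has an internal vertex, is immediate from $3$-connectivity and $vw \notin E(G)$. Note that this argument never requires $C$ to be chordless, although one can verify separately that a facial cycle of a $3$-connected planar graph is induced, since a chord would yield a $2$-cut.
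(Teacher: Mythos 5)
Your proposal is correct, and it reaches the conclusion by a genuinely different use of planarity than the paper, even though both arguments open the same way. Both you and the paper invoke Menger's theorem to obtain three internally disjoint $(v,w)$-paths $P_1,P_2,P_3$ and then must show that one of them meets $C$ only in $\{v,w\}$. The paper does this combinatorially: it splits $C$ into its two $(v,w)$-arcs $F_1,F_2$ and argues first that no $P_i$ can meet both arcs internally (a subpath of such a $P_i$ would separate $v$ from $w$, contradicting internal disjointness), and second that at most one of the three paths can meet each arc internally, so by counting one path survives. You instead pass to the theta subgraph $\Theta = P_1\cup P_2\cup P_3$, re-embed so that $C$ bounds the outer face, note that the outer face of $\Theta$ is bounded by exactly two of the three paths, and use face refinement (the face of $G$ bounded by $C$ lies inside the outer face of $\Theta$) to conclude that the middle path's internal vertices cannot lie on $C$. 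Your route trades the paper's somewhat delicate two-stage case analysis for a single face-containment observation, at the cost of leaning on slightly more topological machinery (essentially unique embeddings of $3$-connected planar graphs, faces of subgraphs refining faces of the supergraph). One small imprecision to fix: your assertion that \emph{every} vertex of $C$ lies on $P_1\cup P_3$ is only valid for vertices of $C$ that belong to $\Theta$; a vertex of $C$ outside $\Theta$ sits in the interior of $\Theta$'s outer face rather than on its boundary. This does not affect the argument, since the vertices you actually need to control, the internal vertices of $P_2$, do belong to $\Theta$ and are not on the boundary of its outer face, hence not in the closure of the face bounded by $C$, hence not on $C$.
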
 

\begin{proof}
 By Mengar's Theorem, there are $3$-internally disjoint $(v,w)$-paths, say  $P_{1},P_{2},P_{3}$. Let $F_{1},F_{2}$ be the two facial walks of $C$ from $v$ to $w$ (as in the two disjoint $(v,w)$-paths on $C$). If any two of $P_{1},P_{2},P_{3}$ are the facial walks then we are done.

Therefore we assume at most one of $P_{1},P_{2},P_{3}$ is a facial walk. First we claim that for $i \in \{1,2,3\}$, either $V(P_{i}) \cap V(F_{1}) = \{v,w\}$ or $V(P_{i}) \cap V(F_{2}) = \{v,w\}$. If not, there exists an $i \in \{1,2,3\}$ such that we have two vertices $x_{1} \in V(F_{1}) \setminus \{v,w\}$ and $x_{2} \in V(F_{2}) \setminus \{w,v\}$ where $x_{1},x_{2} \in V(P_{i})$. Then consider the $(x_{1},x_{2})$-subpath on $P_{i}$ which we denote $P_{x_{1},x_{2}}$. We may assume that this subpath has no additional vertices from $F_{1}$ or $F_{2}$. Note that this subpath partitions $C$ into two cycles which separate $v$ and $w$. But  $C$ is a facial cycle, and $G$ is $3$-connected so any $(v,w)$-path intersects the subpath $P_{x_{1},x_{2}}$, contradicting that $P_{1},P_{2},P_{3}$ are internally disjoint $(v,w)$-paths.

Now suppose $\{v,w\} \subsetneq V(P_{1}) \cap V(F_{1})$. We claim that both $V(P_{2}) \cap V(F_{1}) = \{v,w\}$ and $V(P_{3}) \cap V(F_{1}) = \{v,w\}$. By the above argument, $V(P_{1}) \cap V(F_{2}) = \{v,w\}$.  Therefore every vertex of $F_{1}$ either belongs to $P_{1}$ or lies in a cycle created from some subpath of $P_{1}$ and a subpath of $F_{1}$. But since $C$ is a facial cycle, $G$ is planar, and $P_{2}$ and $P_{3}$ are internally disjoint from $P_{1}$, we get that $P_{2}$ and $P_{3}$ both are internally disjoint from $F_{1}$. By similar arguments, at most one of $P_{2}$, $P_{3}$ contains a vertex in $F_{2}$ which is not $v,w$. Therefore at least one of $P_{1}$, $P_{2}$, and $P_{3}$ has no vertex from $C$ except for $v$ and $w$, completing the claim.  
\end{proof}

\begin{lemma}
\label{Planar $3$-connected}
Let $G$ be a $3$-connected planar graph and let $X = \{a,b,c,d\} \subseteq V(G)$. Then $G$ has either a $K_{4}(X)$-minor or a $W_{4}(X)$-minor.
\end{lemma}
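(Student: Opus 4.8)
The plan is to reduce to the planar case already at hand via Corollary \ref{k4(x)3conn}, and then do a direct construction. Suppose $G$ has no $K_4(X)$-minor. Since $G$ is $3$-connected and planar, Corollary \ref{k4(x)planar} tells us that all four roots $a,b,c,d$ lie on a common face $C$. By $3$-connectivity, $G$ is a polyhedral graph, so we may assume a fixed planar embedding in which $C$ is a facial cycle. The goal is then to produce a $W_4(X)$-minor, i.e.\ a $W_4$-model in which $a,b,c,d$ land in the four distinct rim branch sets and some fifth branch set forms the hub adjacent to all four.

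First I would fix a cyclic ordering of $a,b,c,d$ around the face $C$; by relabelling we may assume they appear in the cyclic order $a,b,c,d$ along $C$. The four arcs of $C$ between consecutive roots give four connected pieces which are natural candidates for the four rim branch sets (each rim branch set being a root together with the arc-vertices assigned to it), so the rim $4$-cycle $a\text{--}b\text{--}c\text{--}d\text{--}a$ of $W_4$ is witnessed along $C$ itself. The real content is manufacturing a single hub branch set $H$, disjoint from $C$, that is simultaneously adjacent to (the branch sets containing) all four roots. This is exactly where Lemma \ref{3connectedplanarpaths} enters: since opposite roots are generically non-adjacent, that lemma supplies a $(v,w)$-path meeting $C$ only in its endpoints, giving a chord through the interior. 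I would use such internally-disjoint paths — one running from the $a$--side to the $c$--side and one from the $b$--side to the $d$--side — and argue that in a $3$-connected planar graph these two interior paths must cross, i.e.\ share a vertex or be joinable through the interior. Their common vertex (or a connecting subpath) becomes the hub $H$, giving the four rim-to-hub adjacencies.

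The step I expect to be the main obstacle is establishing that the two interior connecting paths genuinely meet in the interior so that the hub can be realized as a single connected branch set reaching all four sides. Some care is needed with degenerate cases: when two roots are adjacent, or when $C$ is short, the arc-based rim sets may be trivial, and I would handle these by a separate small-case analysis (possibly producing a $K_4(X)$-minor instead, which is also an acceptable conclusion). I would also need to verify disjointness of the hub from the four rim branch sets; the endpoint-on-$C$-only property from Lemma \ref{3connectedplanarpaths} is precisely what keeps the interior path clear of the rim vertices except at its prescribed attachment points.

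Once the hub $H$ and the four rim sets are in place and shown to be pairwise disjoint and connected with the required adjacencies, the $W_4$-model is complete, proving $G$ has a $W_4(X)$-minor. Finally, the general (non-planar) $3$-connected case is deferred, as the excerpt announces, to an easy reduction from this planar lemma.
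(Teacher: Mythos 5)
Your proposal is correct and follows essentially the same route as the paper: reduce via Corollary \ref{k4(x)planar} to the four roots lying on a facial cycle, handle the degenerate case of adjacent opposite roots separately (where a $K_{4}(X)$-minor appears), and otherwise use Lemma \ref{3connectedplanarpaths} to obtain two interior connecting paths whose forced intersection (by the Jordan Curve Theorem) serves as the hub of the $W_{4}$-model, with the arcs of the facial cycle plus the subpaths to the intersection vertex forming the rim branch sets.
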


\begin{proof}
We may assume $G$ does not contain a $K_{4}(X)$-minor. Then by Corollary \ref{k4(x)planar},  $a,b,c$ and $d$ lie on a facial cycle, $F$, in that order.

First, suppose that $ac \in E(G)$. Without loss of generality we may assume $ac$ lies in the interior of $F$. Notice either the edge $bd \in E(G)$ or by Menger's Theorem  there is a $(b,d)$-path $P$ where $a,c \not \in V(P)$. In either case, this would contradict $F$ being a facial cycle, or $G$ not having a $K_{4}(X)$-minor. 

Therefore we can assume that $ac \not \in E(G)$. Then by Lemma \ref{3connectedplanarpaths} there is an $(a,c)$-path, $P_{a,c}$, which is internally disjoint from $F$. Without loss of generality we may assume that $P_{a,c}$ lies in the interior of $F$. Let $F_{a,c}$ be a facial walk from $a$ to $c$. Notice $C = P_{a,c} \cup F_{a,c}$ partitions the interior of $F$ into two regions, and that $b$ and $d$ lie in distinct regions. By Lemma \ref{3connectedplanarpaths}, we also have a $(b,d)$-path, $P_{b,d}$, which is disjoint from $F$. If $P_{b,d}$ lies on the exterior of $F$, then this would contradict that $F$ is a face. Thus $P_{b,d}$ lies in the interior of $F$. As $b$ and $d$ lie on differing sides of the partition of $C$, by the Jordan Curve Theorem, $P_{a,c}$ intersects $P_{b,d}$ at some vertex $v$.
   Then, contracting $F$ down to a four cycle on $a,b,c,d$ and contracting the subpaths from $v$ to $a,b,c,d$ on $P_{a,c}$ and $P_{b,d}$ to a vertex gives a $W_{4}(X)$-minor. 
\end{proof}

Now that we have shown that all planar $3$-connected graphs have a $W_{4}(X)$ or a $K_{4}(X)$-minor, we prove a lemma reducing the non-planar case to the planar case.

\begin{lemma}
\label{planarreduction}
Let $G$ be a $3$-connected graph and $X = \{a,b,c,d\} \subseteq V(G)$. Suppose $G$ is a spanning subgraph of an $\{a,b,c,d\}$-web, $H^{+} = (H,F)$. Then there is a $3$-connected planar graph $K$ such that $K$ is a minor of $G$.
\end{lemma}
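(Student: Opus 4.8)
The plan is to construct $K$ by \emph{flattening} the clique attachments: for every triangular face $T$ of $H$ I will contract the attached clique $F_T$ down to at most one vertex, so that what remains embeds together with the planar skeleton $H$. Before doing so I would record the key structural fact forced by $3$-connectivity. Fix a triangle $T=\{x,y,z\}$ of $H$ and let $C$ be a component of $G[F_T]$. By the definition of the web, every vertex of $F_T$ is adjacent in $H^{+}$ only to $T$ and to other vertices of $F_T$, so $N_G(C)\setminus C\subseteq T$. Since the outer $4$-cycle gives $|V(H)|\geq 4$, there is a vertex of $G$ outside $C\cup T$, whence $N_G(C)\setminus C$ separates $C$ from the rest of $G$; as $G$ is $3$-connected this forces $N_G(C)=\{x,y,z\}$. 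Thus every component of every $G[F_T]$ is a \emph{full apex}: a connected set adjacent to all three corners of its triangle.

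With this in hand I would define $K$ as follows. For each triangle $T$ with $F_T\neq\emptyset$ I contract one full-apex component of $G[F_T]$ to a single apex vertex $f_T$ (still adjacent to $x,y,z$); I contract any further components into corners of $T$, which restores the triangle edges $xy,yz,zx$; and I delete any components left over. The roots $a,b,c,d$ are corners of the outer $4$-cycle and hence vertices of $H$, so they are never contracted or deleted and survive in $K$. Planarity is then immediate: fix a planar embedding of $H$ with $a,b,c,d$ on the outer face and place each surviving $f_T$ inside the face $T$, joined to its three corners. Every edge of $K$ is either an edge of $H$ or such an apex edge, so $K$ is planar with the roots on a common face, and in particular $K$ is a planar minor of $G$ containing $X$, exactly the object to which the planar case established above can be applied once we know $K$ is $3$-connected.

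Establishing that $K$ is $3$-connected is the crux and the step I expect to be the main obstacle. The deletions are harmless, since a component is discarded only after the triangle edges of its face have already been reinstated, so any $G$-path through a deleted component — which can only enter and leave via $T$ — can be rerouted along those triangle edges without reusing a vertex. For the contractions I would argue by contradiction: given a $2$-separation $\{p,q\}$ of $K$, I replace $p$ and $q$ by the branch sets that produced them (a single vertex of $H$, or a full-apex component possibly amalgamated with a corner) and show that, because each such branch set meets the rest of $G$ only through its triangle, the separation pulls back to a separator of $G$ of order at most $2$, contradicting $3$-connectivity. Here it is convenient that $H^{+}$, being a supergraph of the $3$-connected spanning subgraph $G$, is itself $3$-connected, which controls how the corners of each triangle attach to the remainder of the graph. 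The delicate cases are those in which the $2$-cut contains an apex vertex $f_T$, so that removing its branch set could a priori disconnect the three corners of $T$ from one another; the recovery of the triangle edges in the construction is precisely what is meant to prevent this, and making this interaction rigorous — especially for triangles carrying only a single apex component — is where the real work of the lemma lies.
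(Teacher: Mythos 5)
Your construction of $K$ is exactly the paper's (one component of each $G[V(F_{T})]$ becomes an apex $v_T$ of its triangle, the remaining components are absorbed into corners of $T$), your justification that every component of $G[V(F_{T})]$ must attach to all three corners of $T$ is correct and is in fact slightly more explicit than what the paper writes, and your planarity argument (embed $H$, place one apex in each triangular face) is the paper's verbatim. The gap is where you say it is: $3$-connectivity of $K$ is not actually established, and the route you sketch for it is harder than necessary. If $\{p,q\}$ is a $2$-cut of $K$ and you pull back $p$ and $q$ to their branch sets $B_p, B_q$ in $G$, what you get is that $B_p\cup B_q$ separates $G$ --- but this set can have many more than two vertices (a branch set may be a whole component of $G[V(F_T)]$ amalgamated with a corner), so there is no immediate contradiction with $3$-connectivity of $G$; you would still have to extract a separator of order at most $2$ from inside $B_p\cup B_q$, which is precisely the delicate case analysis you flag as unresolved.

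The paper avoids this entirely with a direct Menger argument in the forward direction, and you should adopt it. First, since each apex $v_T$ is adjacent in $K$ to all three corners of its triangle, three internally disjoint paths from $v_T$ to any other vertex are obtained from a fan out of $\{x_1,x_2,x_3\}$. Second, for two vertices $x,y\in V(H)$, take three internally disjoint $(x,y)$-paths in $G$; a path that passes through $F_T$ must enter and leave through distinct vertices of $T$, and $|V(T)|=3$, so at most one of the three paths meets any given $F_T$, and that one path can be rerouted through $v_T$ in $K$. This yields three internally disjoint paths between every pair of vertices of $K$ without ever having to analyse $2$-separations of $K$ or their preimages.
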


\begin{proof}
For every triangle $T \in H$, consider the graph $G[V(F_{T})]$ (here recall $F_{T}$ is the clique attached to $T$ in $H^{+}$). Let $C_{1}, \ldots, C_{n}$ be the connected components of $G[V(F_{T})]$. For every triangle $T \in H$, contract $C_{1}$ down to a vertex, which we will call $v_{T}$, and contract all of $C_{2}, \ldots, C_{n}$ to an arbitrary vertex of $T$. Let $K$ be the graph obtained from $G$ after applying the above construction. We claim $K$ is planar and $3$-connected. 

First, we show $G$ is $3$-connected. Notice for each triangle $T$ where $V(T) = \{x_{1},x_{2},x_{3}\}$, for each $x_{i}$, $i \in \{1,2,3\}$ there is a vertex $v \in V(C_{1})$ such that $vx_{i} \in E(G)$. Thus in $K$, $v_{T}$ is adjacent to $x_{i}$ for all $i \in \{1,2,3\}$. It follows that $v_{T}$ has three internally disjoint paths to any other vertex of $K$.  Now let $x,y \in V(H)$ and let $P_{1},P_{2}$ and $P_{3}$ be three internally disjoint $(x,y)$-paths in $G$. Notice that for every triangle $T \in H$, at most one of $P_{1},P_{2}$ or $P_{3}$ uses vertices from $V(F_{T})$, since $|V(T)| = 3$. Therefore in $K$, if necessary, we can reroute the path using vertices from $V(F_{T})$ to use $v_{T}$, and thus in $K$ there are three internally disjoint $(x,y)$-paths. Therefore $K$ is $3$-connected.
 
 So it suffices to show that $K$ is planar. Notice that if given a some planar embedding of $H$, to all the faces bounded by a triangle, we can add a vertex to the interior of the face and make the vertex adjacent to every vertex in the triangle and remain planar. The graph from that construction contains $K$ as a subgraph, so $K$ is planar, completing the proof.  
\end{proof}

Now it follows easily that every $3$-connected graph has either a $W_{4}(X)$ or $K_{4}(X)$-minor. 
\begin{theorem}
\label{3connectivityW4K4}
Let $G$ be a $3$-connected graph and $X = \{a,b,c,d\} \subseteq V(G)$. Then $G$ either has a $K_{4}(X)$-minor or a $W_{4}(X)$-minor.
\end{theorem}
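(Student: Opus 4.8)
The plan is to combine the three preceding results into a short deduction. First I would assume as a working hypothesis that $G$ has no $K_{4}(X)$-minor; the goal is then to produce a $W_{4}(X)$-minor. Since $G$ is $3$-connected, Corollary \ref{k4(x)3conn} immediately tells me that $G$ is a spanning subgraph of some $\{a,b,c,d\}$-web $H^{+} = (H,F)$, which places me exactly in the hypothesis of Lemma \ref{planarreduction}.

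Applying Lemma \ref{planarreduction} yields a $3$-connected planar graph $K$ that is a minor of $G$. The next step, and the one I would be most careful about, is to verify that this minor $K$ \emph{retains the roots} $X = \{a,b,c,d\}$ as four distinct vertices. Inspecting the construction in Lemma \ref{planarreduction}, the only contractions performed are within the subgraphs $G[V(F_{T})]$ attached to the triangles $T$ of $H$; the roots $a,b,c,d$ are vertices of the outer $4$-cycle of the web $H$ and hence lie in $V(H)$, so they are never absorbed into a contracted clique component (even if such a component is contracted onto a vertex of $T$ that happens to be a root, that root persists as a vertex of $K$). Thus $X \subseteq V(K)$, and the identity map on $X$ realizes $K$ as a rooted minor of $G$ with respect to $X$.

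With $K$ a $3$-connected planar graph containing $X$, Lemma \ref{Planar $3$-connected} gives that $K$ has a $K_{4}(X)$-minor or a $W_{4}(X)$-minor. Finally I would invoke transitivity of rooted minors: because $K$ is a minor of $G$ fixing the roots, any rooted $K_{4}(X)$- or $W_{4}(X)$-minor of $K$ is also one of $G$. Hence $G$ has a $K_{4}(X)$-minor or a $W_{4}(X)$-minor; since the former is excluded by assumption, $G$ has a $W_{4}(X)$-minor, which is exactly what is needed. The only genuinely delicate point in the whole argument is the bookkeeping of the roots through the contraction of Lemma \ref{planarreduction}, together with the routine but essential observation that rooted minors compose along a chain of minors that fixes $X$; everything else is a direct citation of the established results.
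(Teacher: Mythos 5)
Your proposal is correct and follows exactly the same route as the paper: assume no $K_{4}(X)$-minor, invoke Corollary \ref{k4(x)3conn} to land in the web case, reduce to a $3$-connected planar minor via Lemma \ref{planarreduction}, and finish with Lemma \ref{Planar $3$-connected}. Your extra care in verifying that the roots $a,b,c,d$ survive the contractions of Lemma \ref{planarreduction} (they lie in $V(H)$, never inside any $F_{T}$) is a point the paper leaves implicit, but it is not a different argument.
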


\begin{proof}
We may assume $G$ does not have a $K_{4}(X)$-minor. By Corollary \ref{k4(x)3conn}, $G$ is a spanning subgraph of an $\{a,b,c,d\}$-web. Then by Lemma \ref{planarreduction}, $G$ has a $3$-connected planar minor $K$. By Lemma \ref{Planar $3$-connected}, $K$ has $W_{4}(X)$-minor and thus $G$ has a $W_{4}(X)$-minor. 
\end{proof}

\section{$2$-connected graphs without $K_{4}(X)$ or $W_{4}(X)$-minors}
The goal of this section will be to give a spanning subgraph characterization of $2$-connected graphs without $K_{4}(X)$ or $W_{4}(X)$-minors, and some connectivity reductions for $1$-connected graphs. We first give some low order connectivity reductions. 

\subsection{Connectivity reductions}

Here we give easy connectivity reductions for $H(X)$-minors, which we could not find written anywhere. Throughout this section, if we do not say what the underlying family of maps $ \pi_{1},\pi_{2},\ldots,\pi_{n}$ is, it is assumed that we have an arbitrary family of maps. The following observation is obvious.

\begin{observation}
\label{connectedfromdisconnected}
Let $H$ be a connected graph. Let $G$ be a graph and $X \subseteq V(G)$. Then $G$ has an $H(X)$-minor if and only if $X$ is contained in a connected component of $G$, and the connected component has an $H(X)$-minor. 
\end{observation}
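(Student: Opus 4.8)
The plan is to prove both implications directly from the definition of an $H$-model, the only real content being a connectivity argument that uses the hypothesis that $H$ is connected.

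For the forward direction, I would fix an $H$-model $\{G_{x} \mid x \in V(H)\}$ witnessing the $H(X)$-minor with respect to some map $f_{i}$ in the underlying family, and show that the union $\bigcup_{x \in V(H)} V(G_{x})$ is contained in a single connected component $K$ of $G$. The key observation is that each branch set $G_{x}$ is connected by definition, and for every edge $xy \in E(H)$ there is an edge of $G$ joining some vertex of $G_{x}$ to some vertex of $G_{y}$. Since $H$ is connected, this forces the union of the branch sets (together with the connecting edges) to be a connected subgraph of $G$: fix a spanning tree of $H$ and argue by induction on its vertices, starting from one branch set and repeatedly attaching an adjacent branch set along a connecting edge, so that connectivity is preserved at each step. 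Hence all branch sets lie in one component $K$.

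Once every branch set is inside $K$, each root $v \in X$ lies in $G_{f_{i}(v)} \subseteq K$, so $X \subseteq V(K)$. Furthermore, the very same family $\{G_{x} \mid x \in V(H)\}$ is a valid $H$-model inside $K$, since all branch sets and all connecting edges already live in $K$, and the roots lie in the prescribed branch sets. Thus $K$ has an $H(X)$-minor with respect to $f_{i}$, giving the forward direction.

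The converse is immediate and requires no connectivity input: if $X$ is contained in a connected component $K$ of $G$ and $K$ has an $H(X)$-minor, then any $H$-model realizing it in $K$ is also an $H$-model in $G$, because $K$ is a subgraph of $G$ and the roots sit in the required branch sets; hence $G$ has the $H(X)$-minor. The main (and only mildly nontrivial) obstacle is the connectivity claim in the forward direction, namely that the preimage of the connected graph $H$ under the contraction is connected; I expect this to be the step worth stating carefully, and the induction along a spanning tree of $H$ is the cleanest way to do so.
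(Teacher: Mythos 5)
Your argument is correct and is exactly the routine verification the paper has in mind; the paper states this as an observation with no proof at all (it is introduced with ``The following observation is obvious''), and your spanning-tree induction showing that the union of the branch sets is connected is the standard way to fill in that detail. Nothing is missing and nothing diverges from the intended reasoning.
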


Then since all of our forbidden minors we will discuss are connected,  we will assume all graphs are connected. Now we have a few definitions.

 Given a graph $G$, for some positive integer $k$, a \textit{$k$-separation} of $G$ is a pair $(A,B)$ such that $A \subseteq V(G)$, $B \subseteq V(G)$, $A \cup B = V(G)$, $|A \cap B| \leq k$, and if $v \in B \setminus A$, and $u \in A \setminus B$, then $uv \not \in E(G)$. The vertices in $A \cap B$ are called the \textit{vertex boundary} of the separation. We say a $k$-separation is \textit{proper} if $A \setminus (A \cap B) \neq \emptyset$ and $B \setminus (A \cap B) \neq \emptyset$.  A proper $k$-separation is \textit{tight} if for all subsets $X \subsetneq A \cap B$, the set $X$ is not the vertex boundary of a separation. 

Let $G_{1}$ and $G_{2}$ be graphs a $k$-cliques as subgraphs. A \textit{$k$-clique-sum} or just \textit{$k$-sum} of $G_{1}$ and $G_{2}$ is a bijective identification of pairs of vertices in the two $k$-cliques with, if desired, removal of some edges from the new $k$-clique. We note sometimes it is enforced that all edges in the new $k$-clique are removed in a $k$-sum. In practice, under the assumption we can have parallel edges, this is equivalent to the above definition, as one simply adds parallel edges as desired.

Now the remainder of the section is dedicated to generalizing the following well-known lemma (see, for example, \cite{Smallminors}) to rooted minors.

\begin{lemma}
\label{lowordersepsminors}
Let $H$ be a $3$-connected graph. Let $G$ be a $k$-sum of $G_{1}$ and $G_{2}$ where $k \in \{0,1,2\}$. Then $G$ has an $H$-minor if and only if $G_{1}$ or $G_{2}$ has an $H$-minor. 
\end{lemma}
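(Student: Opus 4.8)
The plan is to prove both directions of the biconditional, with the nontrivial direction being that an $H$-minor in the $k$-sum $G$ yields an $H$-minor in $G_1$ or $G_2$. The reverse direction is immediate: since $G_1$ and $G_2$ are each (after the identification and possible edge deletions) subgraphs of $G$, any $H$-model living entirely in $G_1$ or entirely in $G_2$ is an $H$-model of $G$. So the work is in the forward direction. I would fix an $H$-model $\{G_x \mid x \in V(H)\}$ in $G$ and argue that it can be pushed into one side. Let $S = A \cap B$ denote the vertex boundary, so $|S| \le k \le 2$, and note $S$ is a clique of size at most $2$ in $G$ (this is where the $k$-clique-sum structure is used, as opposed to an arbitrary low-order separation).

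First I would handle the degenerate cases $k = 0$ and $k = 1$ quickly. If $k = 0$ then $G$ is disconnected with parts in $A \setminus B$ and $B \setminus A$, and since $H$ is connected (being $3$-connected), every branch set of the $H$-model, together with the edges between them, must lie in a single connected component of $G$; hence the whole model lies in $G_1$ or $G_2$. If $k = 1$, let $S = \{s\}$ be the cut vertex. Every $(G_i \setminus S)$-to-$(G_j \setminus S)$ path passes through $s$, so the branch sets that are connected subgraphs and the adjacencies between adjacent branch sets are constrained: at most one branch set can straddle both sides, and the rest partition into those contained in $A$ and those contained in $B$. The key step is then a contraction/rerouting argument showing the straddling branch set can be replaced by a branch set on one side.

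The heart of the proof is $k = 2$, with $S = \{s_1, s_2\}$ and $s_1 s_2 \in E(G)$ (the edge is present because it is a $2$-clique; if it had been deleted we may re-add it as this only helps create minors). Here I would use the $3$-connectivity of $H$ crucially. Consider which branch sets meet $A \setminus S$ and which meet $B \setminus S$. Any branch set is connected, and any edge of $G$ between a vertex of $A \setminus S$ and a vertex of $B \setminus S$ does not exist, so a connected branch set meeting both sides must contain a vertex of $S$. Since $|S| = 2$, at most two branch sets can straddle. The plan is to show that, after suitable contractions along $G_1$ and $G_2$, one side carries a full $H$-model: for each branch set one would contract its intersection with the ``far'' side together with its trace on $S$ into a single vertex sitting on $S$, and then argue that because $H$ is $3$-connected one cannot have $H$ essentially ``split'' across a $2$-cut — a $2$-separation of a model of a $3$-connected graph forces the image of all but a bounded, rerouteable part to lie on one side. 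Concretely, I would argue that the set of branch sets entirely inside $A$ together with those entirely inside $B$ cannot both be nonempty and nontrivial while respecting all edges of $H$, unless the ``other'' side can be collapsed onto $\{s_1, s_2\}$, yielding an $H$-model in $G_1$ (or symmetrically $G_2$) after identifying the branch structure on the clique.

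The main obstacle I anticipate is the bookkeeping for the branch sets that straddle the separator in the $k=2$ case, specifically handling the adjacency requirements of $H$ when exactly two branch sets meet $S$. The delicate point is that $H$ being $3$-connected must be invoked to rule out the model genuinely requiring vertices on both sides in an irreducible way: if the model did require essential structure on both sides of the $2$-separator, contracting each side onto the pair $\{s_1, s_2\}$ would exhibit a $2$-separation in $H$ itself, contradicting $3$-connectivity. I would make this precise by showing that contracting $A \setminus B$ into the clique $S$ produces a graph $G_2'$ whose minor model (obtained from the original by intersecting each branch set with $B$ and contracting its $A$-part onto $S$) is still a valid $H$-model, using that each branch set's $A$-part, if nonempty, attaches to $S$ and so remains connected after contraction, and that every $H$-edge realized by a $G$-edge crossing into $A$ can instead be realized through $S$, which is a clique and hence fully available on both sides.
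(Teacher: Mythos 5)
The paper never proves this lemma: it is stated as a well-known fact, attributed to the literature (see \cite{Smallminors}), and serves only as the template that the subsequent rooted-minor reductions generalize. So there is no in-paper proof to compare against, and your argument has to stand on its own. Its core is correct and is the standard one: for the forward direction, a branch set contained in $A\setminus S$ together with a branch set contained in $B\setminus S$ would, after contracting branch sets to vertices, exhibit a cut of size at most $|S|\le 2$ in $H$, contradicting $3$-connectivity; since at most $|S|\le 2$ branch sets can meet $S$ and $|V(H)|\ge 4$, some branch set avoids $S$, so all branch sets must meet one side, say $A$. Then the clique edge $s_1s_2$ present in $G_1$ keeps each restricted branch set $G_1[V(G_x)\cap A]$ connected (even when $G_x$ contains both of $s_1,s_2$ and was joined through $B$) and realizes any $H$-edge whose witnessing edge of $G$ lay inside $B$, exactly as you say in your last sentence. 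This is the same mechanism the paper uses in its rooted analogues (e.g.\ Lemmas \ref{2sepW4} and \ref{allonesidegeneralH}), so your reconstruction is faithful to the intended argument even though the paper omits it.

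One point needs tightening: in the backward direction you assert that $G_1$ and $G_2$ are subgraphs of $G$. Under the paper's definition a $k$-sum may delete edges of the identified clique, in which case $G_i$ need not be a subgraph of $G$, and an $H$-model of $G_1$ that uses the deleted edge $s_1s_2$ does not transfer directly. What you actually need is that $G_1$ is a \emph{minor} of $G$, which holds whenever $s_1$ and $s_2$ remain joined through $G[B]$ (contract an $(s_1,s_2)$-path of $G[B]$ to recover the edge); in degenerate cases (say $G_2$ is exactly the $2$-clique and the edge is deleted in the sum) the biconditional as literally stated can fail. This is a known wrinkle in the folklore statement -- resolved by the paper's convention of retaining parallel edges, or by a properness/tightness assumption on the separation -- but a careful proof should say so explicitly rather than treating the backward direction as immediate.
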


\subsection{Cut vertices}
Throughout this subsection, suppose that we have a simple $2$-connected graph $H$, and a connected graph $G$ where $G$ has a $1$-separation $(A,B)$ where $A \cap B = \{v\}$. Furthermore, let $X = \{a,b,c,d\} \subseteq V(G)$, and without loss of generality suppose that $|A \cap X| \geq |B \cap X|$. Let $\mathcal{F}$ be an arbitrary family of injective maps from $X$ to $V(H)$. Figure \ref{Oneconnectedreductionspic} gives a pictorial representation of the upcoming lemmas. 

\begin{lemma}
\label{allonesidecutvertex}
If $X \subseteq A$, then $G$ has an $H(X)$-minor if and only if $G[A]$ has an $H(X)$-minor.
\end{lemma}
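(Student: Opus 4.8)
The plan is to prove both directions of the biconditional, with the nontrivial direction being that an $H(X)$-minor in $G$ yields one in $G[A]$. The reverse direction is immediate: since $G[A]$ is a subgraph of $G$ and $X \subseteq A$, any $H(X)$-model in $G[A]$ is literally an $H(X)$-model in $G$, using exactly the same branch sets and the same injective map from $\mathcal{F}$.

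For the forward direction, suppose $G$ has an $H(X)$-minor, so there is a family $\{G_x \mid x \in V(H)\}$ of disjoint connected branch sets and a map $\pi \in \mathcal{F}$ with each root placed in its prescribed branch set. The idea is to push every branch set entirely into $A$ without disturbing the roots. First I would observe that because $(A,B)$ is a $1$-separation with $A \cap B = \{v\}$, every path from $B \setminus A$ to $A \setminus B$ must pass through the cut vertex $v$; in particular, for any branch set $G_x$ that meets $B \setminus A$, the subgraph $G_x$ can only connect to the rest of $A$ through $v$. Since $X \subseteq A$, no root lies in $B \setminus A$, so the roots are unaffected by any modification we make to the part of the model sitting in $B$.

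The key step is to contract, for each branch set $G_x$, the portion lying in $B$. Concretely, if $G_x$ is entirely contained in $B \setminus A$, then because $G_x$ is connected and disjoint from the other branch sets, it attaches to the rest of the model only through neighbours of $v$; one replaces its role by rerouting through $v$, but since at most one branch set can contain $v$, the cleaner approach is this: let $G_x' = (G_x \cap A) \cup (\text{the trace through } v)$. More carefully, I would argue that we may take the branch set containing $v$ (if any) and absorb into it all the connectivity that the model used inside $B$. For every adjacency $xy \in E(H)$ witnessed by an edge of $G$ with an endpoint in $B \setminus A$, that edge-path must route through $v$, so the corresponding adjacency can instead be witnessed within $A$ after contracting the $B$-portions onto $v$. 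Setting $G_x'' = G_x \cap A$ for branch sets disjoint from $v$ and enlarging the branch set through $v$ to cover everything the model needed in $B$ gives a valid collection of disjoint connected subgraphs of $G[A]$ realizing the same $\pi$, hence an $H(X)$-minor in $G[A]$.

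The main obstacle will be the bookkeeping in the forward direction, specifically verifying that after deleting or contracting the $B$-side material the branch sets remain \emph{connected} and \emph{pairwise disjoint} and that every edge of $H$ still has its witnessing adjacency inside $A$. The crucial structural fact making this work is that all cross-traffic between $B \setminus A$ and $A \setminus B$ is funneled through the single vertex $v$, so at most one branch set uses $v$ and that branch set alone needs to inherit the $B$-side connectivity; all other branch sets that dipped into $B$ must have done so in a connected blob hanging off $v$, which can be discarded without breaking any required adjacency since their only access to the rest of the graph was via $v$ anyway. I would take care to treat the two cases — whether or not some branch set contains $v$ — and to note that an adjacency realized by an edge incident to $B$ is in fact an adjacency realized at $v$ itself, so it survives the restriction to $A$.
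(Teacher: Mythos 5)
The backward direction is fine, and the overall plan (restrict the model to $A$, using that all cross-traffic is funneled through $v$) is the right one. But there is a genuine gap in the forward direction: you never rule out the case of a branch set $G_z$ contained entirely in $G[B\setminus\{v\}]$, and none of your proposed fixes for that case actually works. You suggest either ``rerouting through $v$'' (impossible in general, since $v$ may already belong to a different branch set and branch sets must stay disjoint) or ``discarding'' such a blob because its only access to the rest of the graph was via $v$ --- but discarding it leaves the vertex $z$ of $H$ with an \emph{empty} branch set, so the result is no longer an $H$-model at all. Your formula $G_x''=G_x\cap A$ likewise produces the empty set for such an $x$. Tellingly, your argument never uses the standing hypothesis that $H$ is $2$-connected, and the statement is false without it: if $H$ had a vertex $z$ separated from all the roots by a cut vertex, a model could legitimately hide $G_z$ inside $B\setminus\{v\}$, and $G[A]$ need not contain any $H(X)$-minor.

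The missing step, which is exactly where the paper spends its effort, is to show that no branch set lies strictly inside $G[B\setminus\{v\}]$. If $G_z\subseteq G[B\setminus\{v\}]$, then $z$ is not a root (as $X\subseteq A$), and since every path from $G_z$ to a root's branch set must pass through $v$, contracting the branch sets shows that every path in $H$ from $z$ to a root vertex passes through the single vertex of $H$ whose branch set contains $v$; that vertex is then a cut vertex of $H$, contradicting $2$-connectivity. Once this case is excluded, at most one branch set (the one containing $v$) meets $B$, its intersection with $G[A]$ is still connected (a path between two of its $A$-vertices cannot enter and leave $B\setminus\{v\}$ without visiting $v$ twice), and every witnessing edge for an adjacency of $H$ lies in $G[A]$, so plain restriction to $A$ finishes the proof. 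Add this exclusion argument and your proof closes up.
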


\begin{proof}
First suppose $G[A]$ has an $H(X)$-minor. Then $G$ has an $H(X)$-minor by extending the branch set containing $v$ to contain all of $G[B]$.

Conversely, suppose $G$ has an $H(X)$-minor and let $\{G_{x} \ | \ x \in V(H)\}$ be a model of an $H(X)$-minor in $G$. As $X \subseteq A$, $H$ is $2$-connected, and $v$ is a cut vertex, there is no branch set which is  strictly contained inside $G[B \setminus \{v\}]$. Notice if all of the branch sets are contained inside $G[A]$, then we are done since $\{G[V(G_{x}) \cap A]\ | \ x \in V(H)\}$ would be the desired $H(X)$-model in $G[A]$. Therefore we assume at least one branch set contains vertices from $B$. Since $v$ is a cut vertex, there is only one branch set containing vertices from $B$. Let $G_{z}$, for some $z \in V(H)$, be such a branch set. Notice that $G_{z} \cap G[A]$ is a connected subgraph of $G[A]$, and as $G_{z}$ was the only branch set containing vertices in $B$, $\{G_{x} \cap G[A] \ | \  x \in V(H)\}$ is an $H(X)$-model in $G[A]$. 
\end{proof}

\begin{lemma}
Suppose $a,b,c \in A \setminus \{v\}$, and $d \in B \setminus \{v\}$. Let $X_{A} = \{a,b,c,v\}$ and for each $\pi \in \mathcal{F}$, define $\pi': X_{A} \to V(H)$ such that $\pi' = \pi$ except that $\pi'(v) = \pi(d)$.  Then $G$ has an $H$-minor if and only if $G[A]$ has an $H(X_{A})$-minor.
\end{lemma}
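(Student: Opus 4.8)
The plan is to mirror the proof of Lemma \ref{allonesidecutvertex}, treating the two directions separately, with the extra feature that the root $d \in B \setminus \{v\}$ gets ``pulled back'' onto the cut vertex $v$. (I read the statement as concerning an $H(X)$-minor of $G$.)

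For the easy (backward) direction, I would start from an $H(X_A)$-model $\{G'_x : x \in V(H)\}$ of $G[A]$ with respect to $\pi'$, so that $a \in G'_{\pi(a)}$, $b \in G'_{\pi(b)}$, $c \in G'_{\pi(c)}$ and $v \in G'_{\pi(d)}$. First I would observe that $G[B]$ is connected: since $v$ is a cut vertex, any path in $G$ from a vertex of $B \setminus \{v\}$ to $v$ must reach $v$ before it can enter $A \setminus \{v\}$, so it has a subpath inside $G[B]$. Hence there is a $(v,d)$-path $Q$ in $G[B]$, and I would enlarge the branch set of $\pi(d)$ to $G'_{\pi(d)} \cup Q$, leaving all others unchanged. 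Because $Q \subseteq B$ meets $A$ only in $v$ and every other branch set lies in $A \setminus \{v\}$, the new family stays pairwise disjoint and connected, all edges of $H$ are still realized, and now $d \in G_{\pi(d)}$; this is an $H(X)$-model of $G$ with respect to $\pi$.

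The main work is the forward direction. Given an $H(X)$-model $\{G_x\}$ of $G$ with respect to $\pi$, the goal is to show that, after intersecting every branch set with $A$, one obtains an $H(X_A)$-model of $G[A]$ in which $v$ plays the role previously played by $d$. The key structural claim is: \emph{$v$ lies in the branch set $G_{\pi(d)}$, and no branch set is contained in $B \setminus \{v\}$; consequently $G_{\pi(d)}$ is the only branch set meeting $B \setminus \{v\}$.} To prove this I would first rule out that $v$ lies in no branch set: in that case each branch set, being connected and avoiding the cut vertex, lies wholly in $A \setminus \{v\}$ or wholly in $B \setminus \{v\}$, and there can be no edge of $H$ between the two types (a realizing edge of $G$ would join $A \setminus \{v\}$ to $B \setminus \{v\}$, which the separation forbids); since $H$ is connected while the roots $a,b,c$ force a branch set on the $A$-side and $d$ forces one on the $B$-side, this is impossible. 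So $v \in G_z$ for a unique $z$. Letting $S_B$ be the set of indices $x$ with $G_x \subseteq B \setminus \{v\}$, the same separation argument shows every $H$-neighbour of $S_B$ lies in $S_B \cup \{z\}$, so $z$ would be a cut vertex of $H$ unless $S_B = \emptyset$ or $V(H) = S_B \cup \{z\}$. The $2$-connectivity of $H$ kills the cut-vertex possibility, and the three roots $a,b,c \in A \setminus \{v\}$ (which cannot sit in $S_B$, and cannot all equal $z$ since $\pi$ is injective) kill $V(H) = S_B \cup \{z\}$; hence $S_B = \emptyset$, forcing $v \in G_{\pi(d)}$ and $z = \pi(d)$.

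With the structural claim in hand, the rest is routine bookkeeping: set $G'_x = G_x \cap A$. For $x \neq \pi(d)$ this equals $G_x$ and is connected; for $x = \pi(d)$ it contains $v$ and is connected because deleting the $B \setminus \{v\}$ vertices of the connected set $G_{\pi(d)}$ leaves the $A$-part attached through the cut vertex $v$. Every edge of $H$ is still realized in $G[A]$: a realizing edge incident to a branch set lying in $A \setminus \{v\}$ cannot have its other end in $B \setminus \{v\}$, so both ends lie in $A$. Finally $a,b,c$ stay in their branch sets and $v \in G'_{\pi(d)} = G'_{\pi'(v)}$, giving the desired $H(X_A)$-model with respect to $\pi'$. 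I expect the case analysis in the structural claim — in particular the use of $2$-connectivity of $H$ to prevent a branch set from being trapped inside $B \setminus \{v\}$ — to be the only genuinely delicate step.
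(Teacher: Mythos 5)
Your proposal is correct and follows essentially the same route as the paper: in the forward direction the paper likewise observes that $2$-connectivity of $H$ together with $v$ being a cut vertex forces $v \in G_{\pi(d)}$ and all other branch sets into $G[A]$, and in the backward direction it extends the branch set containing $v$ into $B$ (the paper absorbs all of $G[B]$ where you use a $(v,d)$-path, an immaterial difference). Your write-up simply fills in the details of the separation/cut-vertex argument that the paper states more tersely.
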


\begin{proof}
Let $\{G_{x} \ |\  x \in V(H)\}$ be an $H(X_{A})$-model in $G[A]$. Suppose that $G_{d}$ is the branch set where $v \in V(G_{d})$. Then we obtain an $H(X)$-model of $G$ by extending $G_{d}$ to contain all of $G[B]$.  

Conversely, let $\{G_{x} \ | \ x \in V(H)\}$ be a model of an $H(X)$-minor in $G$. Let $G_{d}$ be the branch set where $d \in V(G_{d})$. As $H$ is $2$-connected, and $v$ is a cut vertex, we have that $v \in G_{d}$. Therefore all other branch sets are contained inside $G[A]$, and thus all required adjacencies for the $H(X)$-minor exist in $G[A]$. Therefore $\{G_{x} \cap G[A] \ | \ x \in V(H)\}$ is an $H(X_{A})$-minor of $G[A]$.  
\end{proof}

\begin{lemma}
Suppose $v=a$. Then $G$ has an $H(X)$-minor if and only if $X \subseteq A$ and $G[A]$ has an $H(X)$-minor.
\end{lemma}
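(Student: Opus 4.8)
The plan is to recognize this statement as the degenerate boundary case of the preceding lemmas, where the cut vertex $v$ itself coincides with one of the roots. Since $v = a$ and $a \in A \cap B$, the root $a$ lies on \emph{both} sides of the separation, so the real content is to show that the minor must ``collapse'' onto the $A$-side whenever it exists at all. First I would handle the easy direction: if $X \subseteq A$ and $G[A]$ has an $H(X)$-minor, then extending the branch set containing $v$ to swallow all of $G[B]$ produces an $H(X)$-minor of $G$, exactly as in Lemma \ref{allonesidecutvertex}; this requires nothing new.

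For the forward direction, suppose $G$ has an $H(X)$-minor with model $\{G_x \mid x \in V(H)\}$. The key observation is that $v = a$ is a root, so $v$ lies in the branch set $G_{\pi(a)}$ for the relevant map $\pi \in \mathcal{F}$. I would first argue that $X \subseteq A$ must hold. The only way this could fail is if some root lies in $B \setminus \{v\}$; but recall the standing assumption of this subsection that $|A \cap X| \geq |B \cap X|$, together with the fact that $v = a$ is already accounted for on the $A$-side. The cleaner route, which I expect to be the intended one, is the connectivity argument: because $v$ is a cut vertex and $H$ is $2$-connected, at most one branch set can meet $B \setminus \{v\}$, and any such branch set must also contain $v$ (otherwise it would be confined to the component $B \setminus \{v\}$ and its two required neighbours across the cut could not both be realized, contradicting $2$-connectivity of $H$). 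Since $v = a$ already sits in the root branch set $G_{\pi(a)}$, the unique branch set touching $B$ is forced to be $G_{\pi(a)}$ itself.

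From here the argument mirrors Lemma \ref{allonesidecutvertex} directly. Every branch set other than $G_{\pi(a)}$ is contained in $G[A]$, and $G_{\pi(a)} \cap G[A]$ remains connected and still contains the root $a = v$. Hence $\{G_x \cap G[A] \mid x \in V(H)\}$ is an $H(X)$-model in $G[A]$, which simultaneously certifies both that $X \subseteq A$ (each root $x \in X$ lies in its branch set, all of which now live in $A$) and that $G[A]$ has an $H(X)$-minor. Combining the two directions yields the stated equivalence.

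The main obstacle, and the only step that demands genuine care rather than routine bookkeeping, is justifying that the branch set meeting $B$ is forced to be the root branch set $G_{\pi(a)}$ and that no root can escape into $B \setminus \{v\}$; this is precisely where the hypotheses $v = a$, the $2$-connectivity of $H$, and the cut-vertex structure all combine. Once that localization is established, the restriction-to-$A$ construction is entirely parallel to the proof of Lemma \ref{allonesidecutvertex} and carries no further difficulty.
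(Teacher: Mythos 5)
Your proposal is correct and follows essentially the same route as the paper: the backward direction extends the branch set containing $v$ over $G[B]$ (as in Lemma \ref{allonesidecutvertex}), and the forward direction rests on showing that no root branch set can be confined to $B \setminus \{v\}$, which is exactly where the paper also invokes the $2$-connectivity of $H$. The one place you should tighten is your parenthetical justification that a branch set confined to $B \setminus \{v\}$ is impossible: the neighbours in $H$ of such a branch set need not lie ``across the cut'' (they could themselves be confined to $B \setminus \{v\}$), so a local count of required neighbours does not by itself yield a contradiction. The clean argument --- and the one the paper uses --- is global: if a root $b$ lies in $B \setminus \{v\}$ and a root $c$ lies in $A \setminus \{v\}$ (which is the only configuration compatible with the standing assumption $|A \cap X| \geq |B \cap X|$), then $2$-connectivity of $H$ gives a path in $H$ from $\pi(b)$ to $\pi(c)$ avoiding $\pi(a)$; realizing this path through the corresponding branch sets produces a walk in $G$ from $B \setminus \{v\}$ to $A \setminus \{v\}$ that avoids $v$, contradicting that $v$ is a cut vertex. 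With that substitution your argument matches the paper's proof step for step.
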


\begin{proof}
Sufficiency follows from Lemma \ref{allonesidecutvertex}.

Conversely, let $\{G_{x} \ | \ x \in V(H)\}$ be a model of an $H(X)$-minor in $G$. Towards a contradiction,  we consider the case where $d,c \in A \setminus \{v\}$ and $b \in B \setminus \{v\}$. Let $G_{v_{1}}$, $G_{v_{n}}$, $G_{z}$ be the branch sets for which $b \in V(G_{v_{1}})$, $c \in G_{v_{n}}$, and $a \in G_{z}$.  As $H$ is $2$-connected, there is a path $P = v_{1},v_{2},\ldots,v_{n}$ in $H$ such that $z \not \in V(P)$. Then there is a sequence of branch sets, $G_{v_{1}},\ldots,G_{v_{n}}$, such that $G_{v_{i}}$ has a vertex which is adjacent to a vertex in $G_{v_{i+1}}$ for all $i \in \{1,\ldots,n-1\}$. But $G_{v_{1}} \subseteq G[A-v]$, $G_{v_{n}} \subseteq G[B-v]$,  $G_{z} \neq G_{v_{i}}$ for any $i \in \{1,\ldots,n\}$ and $a \in V(G_{z})$, which a contradiction. The other cases follow similarly. 
\end{proof}

\begin{lemma}
If exactly two vertices of $X$ are in $B \setminus \{v\}$ and exactly two vertices of $X$ are in $A \setminus \{v\}$, then $G$ does not have an $H(X)$-minor.
\end{lemma}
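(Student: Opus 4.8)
The plan is to argue by contradiction. Suppose $G$ has an $H(X)$-minor, witnessed by a model $\{G_{x} \mid x \in V(H)\}$ together with an injective map $\pi \in \mathcal{F}$ satisfying $r \in V(G_{\pi(r)})$ for every root $r \in X$. Relabel the roots so that $a,b \in A \setminus \{v\}$ and $c,d \in B \setminus \{v\}$. Since $\pi$ is injective, the four branch sets $G_{\pi(a)}, G_{\pi(b)}, G_{\pi(c)}, G_{\pi(d)}$ are pairwise distinct, and each of them contains exactly one root.

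First I would record the structural consequence of the $1$-separation. Because $(A,B)$ is a $1$-separation with $A \cap B = \{v\}$, there are no edges of $G$ between $A \setminus \{v\}$ and $B \setminus \{v\}$; hence every connected subgraph of $G$ that avoids $v$ lies entirely in $G[A \setminus \{v\}]$ or entirely in $G[B \setminus \{v\}]$. Since the branch sets are pairwise disjoint, at most one of them can contain $v$; call it $G_{z}$ if it exists. Every other branch set is then confined to a single side, so I can partition the remaining branch vertices into $Q_{A}$, those whose branch set lies in $A \setminus \{v\}$, and $Q_{B}$, those whose branch set lies in $B \setminus \{v\}$. Here $\pi(a),\pi(b) \in Q_{A} \cup \{z\}$ while $\pi(c),\pi(d) \in Q_{B} \cup \{z\}$.

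The key step is to delete $z$ and invoke $2$-connectivity. Let $H' = H - z$ when $G_{z}$ exists and $H' = H$ otherwise. Since $H$ is $2$-connected (and $|V(H)| \geq 4$ because $\pi$ is injective on four roots), $H'$ is connected, and every branch vertex of $H'$ belongs to $Q_{A}$ or $Q_{B}$. Because $G_{z}$ contains at most one root, deleting $z$ removes at most one of the four root images, so at least one image of an $A$-side root and at least one image of a $B$-side root survive in $H'$; thus both $Q_{A}$ and $Q_{B}$ are nonempty. A connected graph cannot be partitioned into two nonempty parts with no edge between them, so there is an edge $xy \in E(H')$ with $x \in Q_{A}$ and $y \in Q_{B}$. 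By the definition of an $H$-model, this edge is realized by an edge of $G$ joining a vertex of $G_{x} \subseteq A \setminus \{v\}$ to a vertex of $G_{y} \subseteq B \setminus \{v\}$, contradicting that $(A,B)$ is a $1$-separation.

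I expect the only subtlety, and hence the main obstacle, to be the branch set $G_{z}$ straddling the two sides: this is exactly where $2$-connectivity of $H$ (rather than mere connectivity) is essential, since one must be able to delete the single ``crossing'' branch vertex and still have a connected graph in which both sides remain occupied by surviving root images. Verifying that at least one root image persists on each side after removing $z$ is the one bookkeeping point, and it follows immediately from the fact that a single branch set holds at most one root, combined with the hypothesis that there are exactly two roots strictly inside each of $A \setminus \{v\}$ and $B \setminus \{v\}$.
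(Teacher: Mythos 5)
Your proof is correct and follows essentially the same approach as the paper: both arguments identify the unique branch set meeting the cut vertex $v$, observe that every other branch set is confined to one side of the separation, and derive a contradiction with the $2$-connectivity of $H$ (the paper phrases this as the corresponding branch vertex separating two root vertices, while you phrase it as $H$ minus that vertex being disconnected, which is the same fact). Your write-up is in fact slightly more careful than the paper's, since you explicitly handle the cases where no branch set contains $v$ and where the crossing branch set holds a root.
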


\begin{proof}

Let $\{G_{x} \ | \ x \in V(H)\}$ be an $H$-model of $G$. Let $G_{y}$ be the branch set containing $v$.  Suppose $a \in A \setminus \{v\}$ and $b \in B \setminus \{v\}$ and $a \in G_{a}$ and $b \in G_{b}$. Then if we contract each branch set to a vertex to obtain the $H$-minor, all $(a,b)$-paths in $H$ contain $y$ since $v \in G_{y}$ and $v$ is a cut vertex. But then $H$ is not $2$-connected, a contradiction. 
\end{proof}

\begin{figure}
\begin{center}
\includegraphics[clip,trim=0.5cm 4cm 0.5cm 0.5cm, scale=0.5]{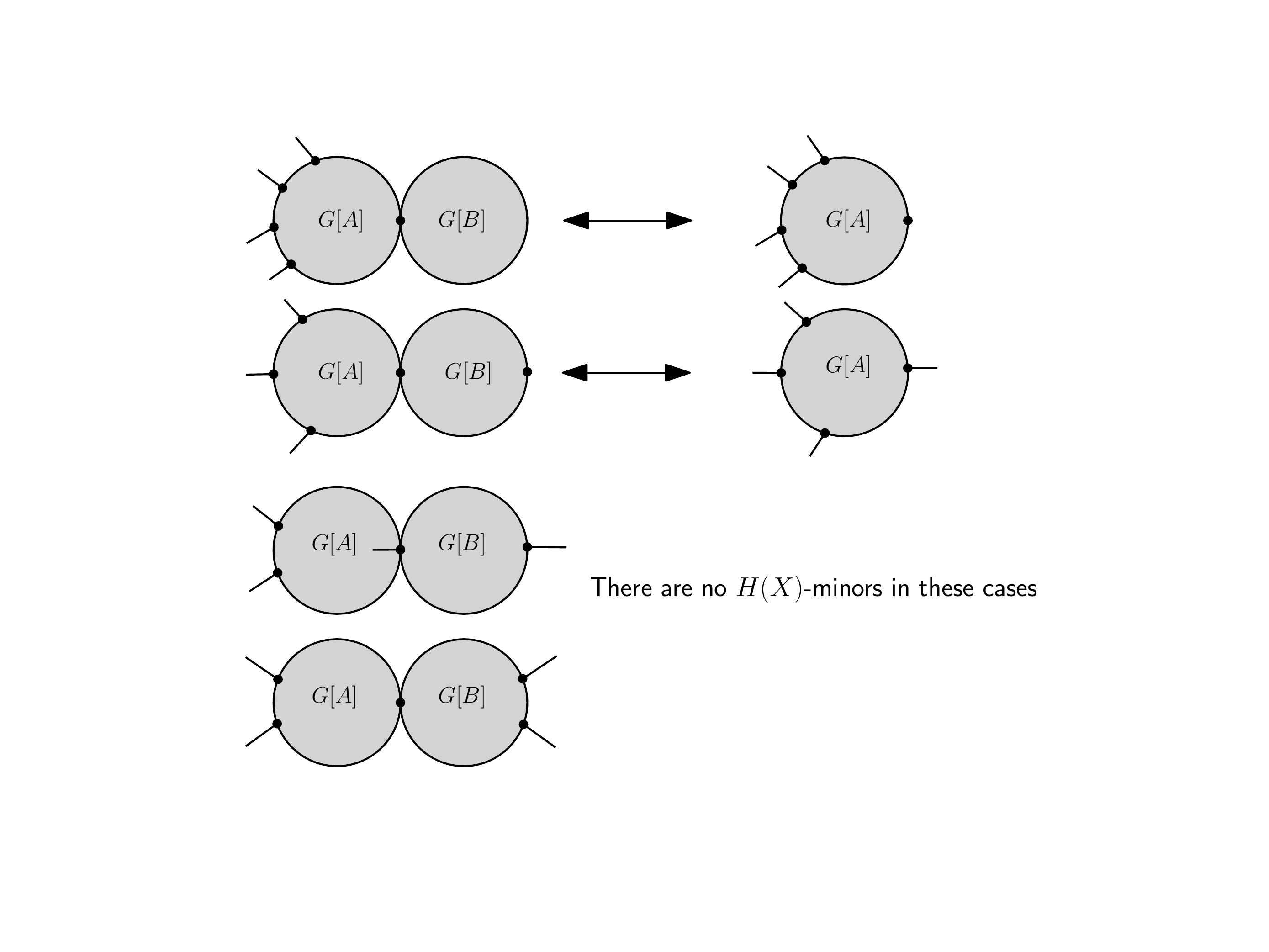}
\caption{Cut vertex reductions. Edges with only one endpoint represent the root vertices.  }
\label{Oneconnectedreductionspic}
\end{center}
\end{figure}

We note that these are all of the possibilities for how the roots can be distributed across a cut vertex. Now we restrict our attentions to $2$-connected graphs.

\subsection{$2$-connected reductions}

For this section suppose that $H$ is a $3$-connected simple graph and that $G$ is a $2$-connected graph. Furthermore, assume that $G$ has a $2$-separation $(A,B)$ such that $A \cap B = \{u,v\}$, and let $X = \{a,b,c,d\} \subseteq V(G)$. We define $G_{A} = G[A] \cup \{uv\}$ and $G_{B} = G[B] \cup \{uv\}$. Let $\mathcal{F}$ be a family of injective maps from $X$ to $V(H)$. By the discussion in the previous section, we may assume $G$ is $2$-connected.

\begin{lemma}
\label{2sepW4}
Let $L \subseteq X$ such that $|L| =2$ and suppose $L \subseteq A \setminus \{u,v\}$. Furthermore, suppose that $X \setminus L \subseteq B \setminus \{u,v\}$. Let $X_{A} = \{L,u,v\}$ and $X_{B} = \{(X \setminus L),u,v\}$. Suppose that for any $\pi_{1} \in \mathcal{F}$, there exists a $\pi_{2} \in \mathcal{F}$ such that $\pi_{1}(c) = \pi_{2}(d)$ and $\pi_{1}(d) = \pi_{2}(c)$. For each $\pi \in \mathcal{F}$, define $\pi_{A}$ and $\pi_{B}$ in the natural way so that $u,v$ replace the vertices in $L$ and $X \setminus L$ respectively. Then $G$ has an $H(X)$-minor if and only if either $G_{A}$ has an $H(X_{A})$-minor or $G_{B}$ has an $H(X_{B})$-minor. 

\end{lemma}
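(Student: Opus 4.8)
The plan is to prove both directions of the equivalence by transferring an $H$-model across the $2$-separation, exactly mirroring the structure of the cut-vertex proofs but now with a separating pair $\{u,v\}$ instead of a single vertex. For the forward direction, I would start with an $H(X)$-model $\{G_x \mid x \in V(H)\}$ in $G$ and analyze how the branch sets interact with the $2$-separation. The key structural fact to extract is that, because $H$ is $3$-connected and $\{u,v\}$ is a $2$-cut, the branch sets lying ``mostly in $B$'' can meet $A\setminus\{u,v\}$ only through $u$ and $v$; more precisely, I would argue that the restriction of the model to one side, say $\{G_x \cap A \mid x\in V(H)\}$, together with the artificial edge $uv$ added in $G_A$, still realizes all the required adjacencies of $H$. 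The role of adding the edge $uv$ to form $G_A$ is precisely to simulate any connection that in $G$ was routed through the interior of $B$: any $H$-edge whose two branch sets touched each other only via a path through $B\setminus\{u,v\}$ must, by the $2$-cut, have each of those branch sets meeting $\{u,v\}$, so the edge $uv$ recovers that adjacency.

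The careful bookkeeping is in deciding which side receives the model and in folding the vertices $u,v$ into the branch sets indexed by the images of the roots. First I would consider how many branch sets have vertices strictly inside $B\setminus\{u,v\}$. If at most the two branch sets meeting $\{u,v\}$ reach into $B$, then restricting to $A$ and adding $uv$ gives an $H(X_A)$-minor in $G_A$ after assigning $u$ and $v$ to the branch sets that genuinely contain roots mapped according to $\pi_A$. Symmetrically, if the ``interesting'' behaviour happens on the $A$-side, I obtain an $H(X_B)$-minor in $G_B$. The hypothesis that $\mathcal{F}$ is closed under swapping the roles of $c$ and $d$ (the condition $\pi_1(c)=\pi_2(d)$ and $\pi_1(d)=\pi_2(c)$) is what lets me assign $u$ and $v$ to the two boundary branch sets in either of the two possible orders and still land in the allowed family $\mathcal{F}$; without it, the induced map on $X_A$ or $X_B$ might not be one of the permitted maps. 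I would handle this by showing that whichever way the two roots in $B$ (respectively $A$) attach to $u$ and $v$, the resulting restricted map is $\pi_A$ (respectively $\pi_B$) for some $\pi\in\mathcal{F}$, using the swap closure to cover both orderings.

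For the converse, the argument is the easier ``gluing'' direction analogous to the first half of Lemma \ref{allonesidecutvertex}. Given an $H(X_A)$-model in $G_A$, the only issue is the artificial edge $uv$: if it is used by the model, I must realize it back in $G$, where $uv$ may not be a genuine edge. Here I would use that $G$ is $2$-connected and that $B\setminus\{u,v\}$ is nonempty (since $X\setminus L\subseteq B\setminus\{u,v\}$ is nonempty), so there is a $(u,v)$-path through $B$; I extend the branch sets assigned to $u$ and $v$ along an $H(X_B)$ or merely along a connected subgraph of $G[B]$ joining $u$ to $v$, thereby absorbing the boundary and producing the adjacency encoded by the edge $uv$ without needing that edge to exist in $G$. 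Symmetrically an $H(X_B)$-model in $G_B$ extends to $G$ using a $(u,v)$-path through $A$. I expect the main obstacle to be the forward direction's case analysis: correctly arguing that no branch set can be ``split'' across the cut in a way that forces more than two branch sets to straddle $\{u,v\}$, and verifying that the induced root maps genuinely lie in $\mathcal{F}$, which is exactly where the $c$--$d$ swap hypothesis is consumed.
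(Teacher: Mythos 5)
Your overall architecture matches the paper's: in the restriction direction you use the $3$-connectivity of $H$ to rule out having a branch set fully contained in $A\setminus\{u,v\}$ and another fully contained in $B\setminus\{u,v\}$ simultaneously, so that one whole side of the separation is covered by the two branch sets meeting $\{u,v\}$ and the model survives restriction to the other side, with the artificial edge $uv$ supplying the one adjacency that crossed the cut. That half of the plan is essentially the paper's argument and is sound, modulo spelling out that the restricted branch sets stay connected.

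The gap is in the gluing direction. You identify ``the only issue'' as realizing the artificial edge $uv$, and propose to fix it with a single $(u,v)$-path through $B$. But the real issue is that the roots $c,d$ of the desired $H(X)$-minor lie in $B\setminus\{u,v\}$, while the $H(X_A)$-model you start from has $u$ and $v$ standing in for them: you must extend the branch set containing $u$ to contain one of $c,d$ and the branch set containing $v$ to contain the other, keeping them disjoint and connected. A single $(u,v)$-path need not pass through $c$ or $d$ at all, and absorbing all of $G[B]$ into one branch set (the cut-vertex trick you are imitating) would place $c$ and $d$ in the same branch set, which is not permitted. The paper's fix is to apply Menger's theorem in $G[B]$ to get two disjoint paths from $\{c,d\}$ to $\{u,v\}$ and contract $G[B]$ onto $\{u,v\}$ along them, so that $c$ lands on one of $u,v$ and $d$ on the other; the $c$--$d$ swap hypothesis on $\mathcal{F}$ is consumed exactly here, to cover whichever of the two pairings this produces. (You instead spend the swap hypothesis in the restriction direction, which is also a legitimate place to need it, but the gluing step still requires the two disjoint paths, not just one.)
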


\begin{proof}
Suppose that $G_{A}$ contains an $H(X_{A})$-minor and suppose that $c,d \in B \setminus \{u,v\}$. As $G$ is $2$-connected, there are two disjoint paths between $\{u,v\}$ and $\{c,d\}$. Since $c,d \in B \setminus \{u,v\}$, these paths are contained inside of $G[B]$. Thus we can contract $G[B]$ to $\{u,v\}$ in such a way that $c$ and $d$ do not get identified together (by an easy application of Mengers Theorem). Since we supposed that for any $\pi_{1} \in \mathcal{F}$, there exists a $\pi_{2} \in \mathcal{F}$ such that $\pi_{1}(c) = \pi_{2}(d)$ and $\pi_{1}(d) = \pi_{2}(c)$, the graph $G$ has an $H(X)$-minor. The other cases follow similarly. 

Conversely, let $\{G_{x} | x \in V(H)\}$ be a model of an $H(X)$-minor. Suppose $a,b \in A \setminus \{u,v\}$ and $c,d \in B \setminus \{u,v\}$. Let $a \in G_{a}$, $b \in G_{b}$, $c \in G_{c}$, and $d \in G_{d}$. First, suppose there are branch sets $G_{y}$ and $G_{z}$ such that $G_{y} \subseteq G[A - \{u,v\}]$ and $G_{z} \subseteq G[B- \{u,v\}]$. Then if we contract each branch set down to a vertex, there would be at most two internally disjoint $(y,z)$-paths, contradicting that $H$ is $3$-connected.  

Therefore we can assume that either $u \in G_{a}$, $v \in G_{b}$ and $A \subseteq V(G_{a}) \cup V(G_{b})$  or $u \in G_{c}$, $v \in G_{d}$ and $B \subseteq V(G_{c}) \cup V(G_{d})$. Suppose $u \in G_{a}$ and $v \in G_{b}$ and $A \subseteq V(G_{a}) \cup V(G_{b})$. Then all other branch sets are contained in $G[B- \{u,v\}]$. Then since $uv \in E(G_{b})$,  $\{G'_{x} = G_{B}[V(G_{x}) \cap V(G_{B})] \; | x \in V(H)\}$ is an $H(X)$-model in $G_{B}$. The other case follows similarly.
\end{proof}

We remark that all rooted graph minors we will see throughout this paper satisfy the technical condition in the above lemma.

\begin{lemma}
\label{oneterminalonesideothersother}
Suppose  $a \in A \setminus \{u,v\}$ and $b,c,d \in B \setminus \{u,v\}$.  Let $X_{1} = \{u,b,c,d\}$ and $X_{2} = \{v,b,c,d\}$. For each $\pi \in \mathcal{F}$, let $\pi_{1}$ satisfy $\pi_{1} = \pi$ on $X_{2} \setminus \{u\}$ and $\pi_{1}(u) = \pi(a)$. Let $\pi_{2} = \pi$ on $X_{1} \setminus \{v\}$ and $\pi_{2}(v) = \pi(a)$. Then $G$ has an $H(X)$-minor if and only if either $G_{B}$ has an $H(X_{1})$-minor or an $H(X_{2})$-minor. 
\end{lemma}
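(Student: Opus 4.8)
The plan is to prove the two implications separately, with the reverse direction (an $H(X)$-minor in $G$ forces an $H(X_1)$- or $H(X_2)$-minor in $G_B$) carrying the real content and the forward direction being a routine lift.

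For the forward direction, suppose $G_B$ has an $H(X_1)$-minor with respect to $\pi_1$, realized by a model $\{M_x \mid x \in V(H)\}$ with $u \in M_{\pi(a)}$ and $b,c,d$ in their prescribed branch sets. Since $G$ is $2$-connected and $a \in A \setminus \{u,v\}$, there are two paths from $a$ to $\{u,v\}$ meeting only at $a$, one ending at $u$ and one at $v$; as any $a$–$B$ path must cross $\{u,v\}$, both of these paths lie in $G[A]$. I would use them to extend $M_{\pi(a)}$: adding the $a$–$u$ path places the genuine root $a$ into the branch set playing the role of $\pi(a)$, and adding (part of) the $a$–$v$ path lets me replace any use of the artificial edge $uv$ by a real $G$-path through $A$, whether $uv$ was used inside $M_{\pi(a)}$ or between $M_{\pi(a)}$ and the branch set containing $v$. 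The $H(X_2)$ case is symmetric with $u$ and $v$ exchanged. This yields a genuine $H(X)$-model of $G$.

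For the reverse direction, let $\{G_x \mid x \in V(H)\}$ be an $H(X)$-model in $G$, and let $G_a,G_b,G_c,G_d$ be the branch sets containing $a,b,c,d$. The crucial structural step is to show that \emph{no branch set is contained in $A \setminus \{u,v\}$}. Since $\pi$ is injective, $G_b,G_c,G_d$ are three distinct branch sets, each meeting $B \setminus \{u,v\}$, while $u$ and $v$ each lie in at most one branch set; by pigeonhole at least one of them, say $G_b$, contains neither $u$ nor $v$ and hence lies entirely in $B \setminus \{u,v\}$. If some branch set $G_y$ were contained in $A \setminus \{u,v\}$, then contracting each branch set to a vertex leaves a $2$-separation between $y$ and $b$, so there would be at most two internally disjoint $y$–$b$ paths in $H$, exactly as in the proof of Lemma \ref{2sepW4}, contradicting the $3$-connectivity of $H$. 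This proves the claim.

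Given the claim, any branch set meeting $A \setminus \{u,v\}$ must contain $u$ or $v$, for otherwise it would be trapped inside $A \setminus \{u,v\}$; in particular $G_a$ contains $u$ or $v$. Say $u \in G_a$ (the case $v \in G_a$ gives $H(X_2)$ symmetrically), so at most one further branch set, the one containing $v$, reaches into $A \setminus \{u,v\}$. I would restrict to $G_B$ by setting $M_x = G_x \cap G_B$ for all $x$. Connectivity of each $M_x$ follows because any walk inside a branch set leaving $B$ must re-enter through $u$ or $v$, with the artificial edge $uv$ reconnecting any branch set that contains both boundary vertices; and any $H$-adjacency realized in $G$ by an edge lying in $A \setminus \{u,v\}$ is an adjacency between $G_a$ and the branch set containing $v$, so it is restored in $G_B$ by the edge $uv$. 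Since $u \in M_{\pi(a)}$, this is precisely an $H(X_1)$-model of $G_B$ with respect to $\pi_1$.

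I expect the main obstacle to be the bookkeeping in the reverse direction around the artificial edge $uv$: one must simultaneously ensure that every restricted branch set $M_x = G_x \cap G_B$ stays connected (which can genuinely fail in $G[B]$ alone, and is exactly why $uv$ is added to $G_B$) and that no $H$-adjacency is lost when edges through $A \setminus \{u,v\}$ are discarded. The pigeonhole observation forcing a branch set entirely inside $B \setminus \{u,v\}$ is what makes the three-root side behave like the two-root situation of Lemma \ref{2sepW4}, and tracking whether one or two branch sets reach into $A$ is the only place where the dichotomy $u \in G_a$ versus $v \in G_a$ (hence $X_1$ versus $X_2$) genuinely enters.
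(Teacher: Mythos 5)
Your proof is correct and takes essentially the same route as the paper: the forward direction lifts the model by routing $a$ through $G[A]$ to the boundary, and the reverse direction shows no branch set can be trapped in $A \setminus \{u,v\}$ and then restricts the model to $G_{B}$, using the added edge $uv$ to preserve connectivity and the one adjacency realized through $A$. Your pigeonhole observation that one of $G_{b}, G_{c}, G_{d}$ must avoid both $u$ and $v$ is a cleaner packaging of the paper's case analysis on whether $yb \in E(H)$, but the underlying argument is identical.
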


\begin{proof}
Suppose $G_{B}$ has an $H(X_{1})$-minor. By Menger's Theorem there exists a path from $a$ to $u$ which does not contain $v$, and therefore we can contract $G[A]$ to $\{u,v\}$ in such a way that $a$ gets contracted onto $u$. Therefore $G$ has an $H(X)$-minor. The case where $G_{B}$ has an $H(X_{2})$-minor follows similarly.

Conversely, let $\{G_{x} | x \in V(H)\}$ be a model of an $H(X)$-minor in $G$. Let $a \in G_{a}$, $b \in G_{b}, c \in G_{c}$, and $d \in G_{d}$. First, suppose for some $y \in V(H)$, $G_{y} \subseteq G[A - \{u,v\}]$. If $yb \in E(H)$, then one of $u$ or $v$ is in $V(G_{b})$. Then at least two of the following occur: $V(G_{a}) \subseteq A \setminus \{u,v\}$, $V(G_{c}) \subseteq B \setminus \{u,v\}$ or $V(G_{d}) \subseteq B \setminus \{u,v\}$. If $V(G_{a}) \subseteq A \setminus \{u,v\}$ and $V(G_{c}) \subseteq B \setminus \{u,v\}$, then if we contract all the branch sets down to a vertex, there are at most two internally disjoint $(a,c)$-paths contradicting that $H$ is $3$-connected. The other case when $V(G_{a}) \subseteq A \setminus \{u,v\}$ follows similarly, and thus we can assume that $G_{a}$ contains one of $u$ or $v$.   But then, contracting all branch sets to a vertex there are at most two internally disjoint $(c,y)$-paths contracting that $H$ is $3$-connected. 

Thus we may assume that $yb \not \in E(H)$. Then by Menger's Theorem there are three internally disjoint $(b,y)$-paths. If either $u$ or $v$ is in $V(G_{b})$, then the above argument can be applied to derive a contradiction. Therefore we assume that $u,v \not \in V(G_{b})$. But then contradicting all the branch sets down to a vertex, every $(b,y)$-path uses the vertex which was obtained by contracting the branch sets that $u$ or $v$ were in down to a single vertex.  But that implies there are at most two internally disjoint $(b,y)$-paths, a contradiction. Therefore for every $y \in V(H)$, $G_{y} \not \subseteq G[A - \{u,v\}]$. 

Then since $a \in A \setminus \{u,v\}$, at least one of $u$ or $v$ is contained in $G_{a}$. Therefore at most one other branch set contains vertices from $A$. Then since $uv \in G_{B}$, $\{G'_{x} = G_{B}[V(G_{x}) \cap V(G_{B})] \; | x \in V(H)\}$ is a model for either an $H(X_{1})$ or $H(X_{2})$-minor in $G_{B}$, depending on which of $u$ and $v$ is in $G_{a}$. 
\end{proof}

\begin{lemma}
\label{allonesidegeneralH}
Suppose that $X \subseteq A$. Then $G$ has an $H(X)$-minor if and only if $G_{A}$ has an $H(X)$-minor.
\end{lemma}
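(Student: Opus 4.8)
The statement to prove is Lemma~\ref{allonesidegeneralH}: if $X \subseteq A$ in a $2$-separation $(A,B)$ with $A \cap B = \{u,v\}$, then $G$ has an $H(X)$-minor iff $G_A$ has one, where $G_A = G[A] \cup \{uv\}$ and $H$ is $3$-connected. This is the $2$-connected analogue of Lemma~\ref{allonesidecutvertex}, so the plan is to mirror that proof's two-direction structure, with the main new subtlety being that the boundary now has two vertices $u,v$ rather than one, and that $G_A$ carries the extra edge $uv$.

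\textbf{The easy direction.} First I would prove that an $H(X_A)$-minor — here really an $H(X)$-minor, since $X \subseteq A$ and no relabelling is needed — of $G_A$ lifts to one of $G$. The only obstacle is the artificial edge $uv$ that $G_A$ has but $G$ may not. If $uv \in E(G)$ already, the model of $G_A$ is a model of a subgraph of $G$ and we are done immediately. If $uv \notin E(G)$, then the model of $G_A$ might use the edge $uv$ to realize an adjacency between the two branch sets containing $u$ and $v$. Since $G$ is $2$-connected, by Menger's Theorem there are two internally disjoint $(u,v)$-paths, so there is a $(u,v)$-path $Q$ through $B$ internally disjoint from $A \setminus \{u,v\}$; I would contract $Q$ into the branch set containing $u$ (or $v$), thereby simulating the edge $uv$ and absorbing all of $B$ into that single branch set without disturbing the other branch sets, which all lie in $A$. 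This yields an $H(X)$-model of $G$.

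\textbf{The harder direction.} Conversely, suppose $G$ has an $H(X)$-model $\{G_x \mid x \in V(H)\}$. I want to restrict each branch set to $A$ and recover a model in $G_A$. The key claim, exactly as in the two analogous $3$-connected lemmas above (Lemmas~\ref{2sepW4} and~\ref{oneterminalonesideothersother}), is that no branch set can be \emph{strictly} contained in $G[B \setminus \{u,v\}]$: if some $G_y \subseteq G[B \setminus \{u,v\}]$, then since all roots $X$ lie in $A$, there is a branch set $G_z$ (indeed $z \neq y$) realized using vertices of $A$, and after contracting all branch sets every $(y,z)$-path must pass through one of the two vertices obtained by contracting the branch set(s) meeting $\{u,v\}$; this forces at most two internally disjoint $(y,z)$-paths, contradicting that $H$ is $3$-connected. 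Given this claim, every branch set meets $A$, so setting $G'_x = G_A[V(G_x) \cap A]$ gives a candidate model; I must check each $G'_x$ is connected (true because any branch set meeting $B$ enters $B$ only through $u$ or $v$, so its trace on $A$ stays connected) and that every required adjacency survives. The one adjacency that could be lost is one realized entirely inside $B$ between the branch sets containing $u$ and $v$ respectively; but that adjacency is restored by the added edge $uv \in E(G_A)$. Hence $\{G'_x\}$ is an $H(X)$-model of $G_A$.

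\textbf{Main obstacle.} I expect the crux to be the connectivity claim that no branch set hides entirely inside $B \setminus \{u,v\}$, together with the careful bookkeeping showing that the only adjacency potentially destroyed by restricting to $A$ is the $u$--$v$ one supplied by the new edge. Everything else is routine contraction of $B$ through a Menger path, closely parallel to the preceding lemmas, so I would keep those arguments terse and cite $3$-connectivity of $H$ as the single structural input driving the contradiction.
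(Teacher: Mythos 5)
Your proposal is correct and follows essentially the same route as the paper: the forward direction amounts to observing that $G_A$ is a minor of $G$ (realizing the edge $uv$ via a path through $B$), and the backward direction hinges on the same key claim that $3$-connectivity of $H$ forbids any branch set from lying entirely in $G[B \setminus \{u,v\}]$, after which restricting the model to $A$ and using the added edge $uv$ finishes the argument. No substantive differences to report.
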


\begin{proof}
Suppose $G_{A}$ has an $H(X)$-minor. Then contracting $G[B]$ onto $\{u,v\}$ gives $G_{A}$, and thus $G$ has a $W_{4}(X)$-minor.

Let $\{G_{x} | x \in V(H)\}$ be a model of an $H(X)$-minor. Let $a \in G_{a}, b \in G_{b}, c \in G_{c}$ and $d \in G_{d}$. Suppose there is a $y \in V(H)$ such that $G_{y}$ is contained in $G[B \setminus \{u,v\}]$. Note that $y \neq a,b,c$ or $d$. Then since $\{u,v\}$ is a $2$-vertex cut, at least two of the following occur: $G_{a} \in G[A \setminus \{u,v\}]$, $G_{b} \in G[A \setminus \{u,v\}]$, $G_{c} \in G[A \setminus \{u,v\}]$ and $G_{d} \in G[A \setminus \{u,v\}]$. Without loss of generality, suppose that $G_{a} \in G[A \setminus \{u,v\}]$. But then if we contract each branch set down to a vertex, there is at most two internally disjoint $(a,y)$-paths in $H$, contradicting that $H$ is $3$-connected. Therefore there are no branch sets contained in $G[B \setminus \{u,v\}]$. Then since $\{u,v\}$ is a $2$-vertex cut, there are at most two branch sets using vertices in $B$. If there is only one branch set using vertices from $B$, then easily $\{G'_{x} = G[V(G_{x}) \cap V(G_{A})] \; | x \in V(H)\}$ is an $H(X)$-minor of $G_{A}$. If two branch sets contain vertices from $B$, then since $uv \in E(G_{A})$,  $\{G'_{x} = G_{A}[V(G_{x}) \cap V(G_{A})] \; | x \in V(H)\}$ is a model of an $H(X)$-minor of $G_{A}$. 
\end{proof}

\begin{lemma}
\label{noW4minor}
Suppose $L \subseteq X$ where $L = \{u,v\}$. Furthermore, suppose that there is a vertex of $X \setminus L$ in $A \setminus \{u,v\}$ and a vertex of $X \setminus L$ in $B \setminus \{u,v\}$. Then $G$ does not have an $H(X)$-minor.
\end{lemma}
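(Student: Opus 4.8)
The plan is to argue by contradiction and show that an $H(X)$-minor would force a $2$-vertex cut in the $3$-connected graph $H$, exactly mirroring the cut-vertex argument used for the analogous $1$-separation lemma earlier in this subsection. First I would fix notation: since $L = \{u,v\} \subseteq X$, relabel the roots so that $u = a$ and $v = b$, and let $c$ be the vertex of $X \setminus L$ lying in $A \setminus \{u,v\}$ and $d$ the vertex of $X \setminus L$ lying in $B \setminus \{u,v\}$. Suppose $\{G_{x} \mid x \in V(H)\}$ is a model of an $H(X)$-minor with respect to some $\pi \in \mathcal{F}$. Because $\pi$ is injective, the four roots occupy four distinct branch sets, so $u \in G_{\pi(a)}$, $v \in G_{\pi(b)}$, $c \in G_{\pi(c)}$, and $d \in G_{\pi(d)}$, with these four branch sets pairwise distinct. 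Since the whole argument uses only the injectivity of $\pi$, ruling out an $H(X)$-minor for this arbitrary $\pi$ rules one out for every map in $\mathcal{F}$.

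The first observation is that $G_{\pi(c)}$ and $G_{\pi(d)}$ each lie entirely on one side of the separation. As $u$ and $v$ already belong to the distinct branch sets $G_{\pi(a)}$ and $G_{\pi(b)}$, and branch sets are pairwise disjoint, neither $G_{\pi(c)}$ nor $G_{\pi(d)}$ contains $u$ or $v$. The separation has no edges between $A \setminus \{u,v\}$ and $B \setminus \{u,v\}$, so a connected subgraph that meets $A \setminus \{u,v\}$ but avoids $\{u,v\}$ cannot reach $B \setminus \{u,v\}$; hence $G_{\pi(c)} \subseteq A \setminus \{u,v\}$, and symmetrically $G_{\pi(d)} \subseteq B \setminus \{u,v\}$.

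The main step, and the only one needing care, is to show that $\{\pi(a),\pi(b)\}$ separates $\pi(c)$ from $\pi(d)$ in $H$; I would prove this by deriving a contradiction from any path $P$ in $H - \{\pi(a),\pi(b)\}$ from $\pi(c)$ to $\pi(d)$. Such a path corresponds to a sequence of pairwise-adjacent branch sets, none of which contains $u$ or $v$. Starting from $G_{\pi(c)} \subseteq A \setminus \{u,v\}$ and walking along $P$, each successive branch set must again lie in $A \setminus \{u,v\}$: any edge of $G$ leaving $A \setminus \{u,v\}$ has its other end in $\{u,v\}$ or in $A \setminus \{u,v\}$, and since the next branch set avoids $\{u,v\}$ and is connected and disjoint from $\{u,v\}$, it lies wholly in $A \setminus \{u,v\}$. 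By induction every branch set along $P$ lies in $A \setminus \{u,v\}$, contradicting $G_{\pi(d)} \subseteq B \setminus \{u,v\}$. Thus $\{\pi(a),\pi(b)\}$ is a $2$-vertex cut of $H$ separating the distinct vertices $\pi(c)$ and $\pi(d)$, contradicting that $H$ is $3$-connected; hence $G$ has no $H(X)$-minor.

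I expect this induction in the final paragraph to be the only subtle point, since it is precisely where the separation of $G$ is transferred to a separation of $H$; everything else is bookkeeping identical to Lemma \ref{allonesidegeneralH} and the other $2$-separation lemmas above.
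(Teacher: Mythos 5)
Your proposal is correct and follows essentially the same route as the paper: both trap $G_{\pi(c)}$ and $G_{\pi(d)}$ on opposite sides of the separation (since $u$ and $v$ occupy two other branch sets) and then contradict the $3$-connectivity of $H$, the paper by noting there are at most two internally disjoint paths between the corresponding vertices after contraction and you by exhibiting $\{\pi(a),\pi(b)\}$ as a $2$-vertex cut, which is the same fact in Menger-dual form.
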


\begin{proof}

Consider any $L \subseteq X$. Suppose for a contradiction that $\{G_{x} \ | \ x \in V(H)\}$ is a model of an $H(X)$-minor.  Consider the case where $b,c \in L$ and $a \in A \setminus \{u,v\}$, $d \in B \setminus \{u,v\}$, and $a \in G_{a}$, and $d \in G_{d}$. Then $V(G_{a}) \subseteq A \setminus \{u,v\}$ and $V(G_{d}) \subseteq B \setminus \{u,v\}$. But then contracting each branch set down to a vertex, there are at most two internally disjoint $(a,d)$-paths, contradicting that $H$ is $3$-connected. The other cases follow similarly. 
\end{proof}

\begin{figure}
\begin{center}
\includegraphics[scale =0.5]{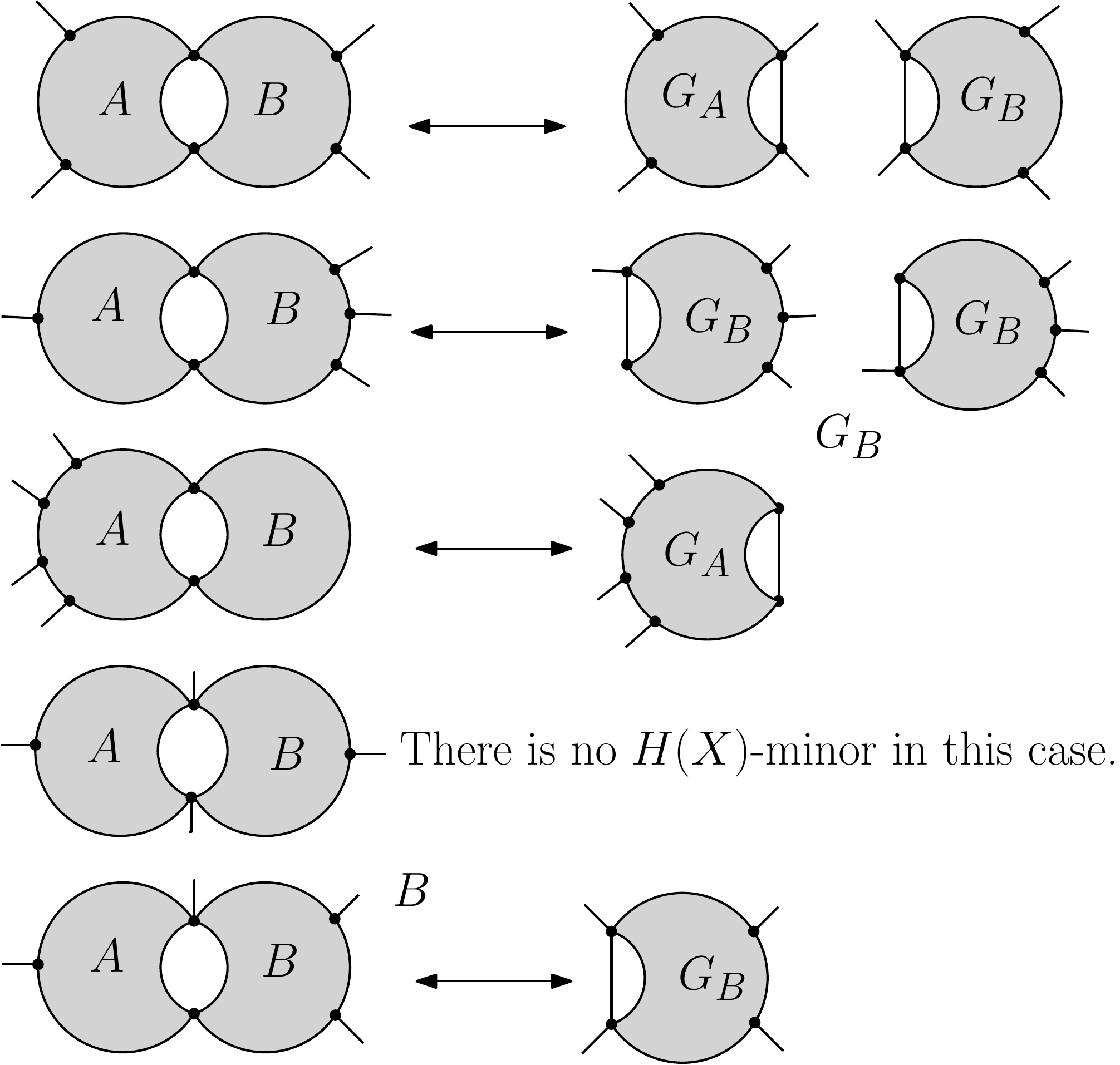}
\caption{The $2$-connected reductions. Edges with only one endpoint represent vertices from $X$.}
\label{2connectivitylemmas}
\end{center}
\end{figure}

\begin{lemma}
\label{oneterminalinthecut2conn}
Suppose $u = a$, and that there is exactly one vertex of $X$ in $A \setminus \{u,v\}$ and two vertices of $X$ in $B \setminus \{u,v\}$. Let $X_{A} = \{u,v,(X \cap B \setminus \{u,v\})\}$. For each $\pi \in \mathcal{F}$, define $\pi_{A} = \pi$ on $X_{A} \setminus \{v\}$ and let $\pi_{A}(v) = \pi(x)$ where $x \in X \setminus \{u, (X \cap A \setminus \{u,v\})\}$. Then $G$ has an $H(X)$-minor if and only if $G_{A}$ has an $H(X_{A})$-minor. 
\end{lemma}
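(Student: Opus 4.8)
The plan is to prove the two implications separately, following the template of Lemma~\ref{oneterminalonesideothersother}. Let $b$ denote the unique root in $X\cap(A\setminus\{u,v\})$ and let $c,d$ denote the two roots in $X\cap(B\setminus\{u,v\})$. Since $a=u$ already lies on the boundary, the natural reduction is to the side containing $c,d$, so I read $X_A$ as the root set $\{u,v,c,d\}$ on $G_B$, with $u$ inheriting the role $\pi(a)$ and $v$ inheriting the role $\pi(b)$ of the ``lost'' interior root (that is, $\pi_A=\pi$ on $\{u,c,d\}$ and $\pi_A(v)=\pi(b)$).

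For the backward (sufficiency) direction I would argue by a single contraction. Since $G$ is $2$-connected, $G[A]-u$ is connected (otherwise $u$ together with one further vertex would separate $G$) and $u$ has a neighbour in $A\setminus\{u,v\}$. Contracting all of $A\setminus\{u\}$ to one vertex and identifying it with $v$ turns $G$ into exactly $G_B=G[B]\cup\{uv\}$: the contracted vertex inherits precisely $v$'s neighbours in $B$ (there are no edges from $A\setminus\{u,v\}$ into $B\setminus\{u,v\}$) and is joined to $u$. Under this contraction $a=u$ stays at $u$ and $b\in A\setminus\{u\}$ is sent to $v$, so any $H(X_A)$-model of $G_B$ pulls back to an $H(X)$-model of $G$.

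For the forward (necessity) direction, take an $H(X)$-model $\{G_x\mid x\in V(H)\}$ of $G$; note $u\in G_a$ for free because $a=u$. The crux, and the step I expect to be the main obstacle, is to prove $v\in G_b$. I would show $G_b\not\subseteq A\setminus\{u,v\}$: if it were, then each of three internally disjoint $(b,c)$-paths in $H$ (which exist since $H$ is $3$-connected) lifts to a connected chain of branch sets running from the $A$-interior set $G_b$ to the $B$-interior set $G_c$, and so must cross the cut $\{u,v\}$ through a branch set containing $u$ or $v$. As $u\in G_a$ and, if $v$ is used at all, $v$ lies in a single branch set $G_w$, all three chains would have to pass through the two $H$-vertices $a$ and $w$; this is impossible unless $w\in\{c,d\}$ (so that a crossing via $v$ reaches an \emph{endpoint} rather than an internal vertex of the path). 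The degenerate possibility $v\in G_c$ is then eliminated by rerunning the argument on the $(b,d)$-paths, which would be forced through the pair $\{a,c\}$, contradicting $3$-connectivity; symmetrically $v\in G_d$ is eliminated using the $(b,c)$-paths. This is exactly where I need \emph{both} interior roots $c$ and $d$. Hence $G_b$ meets $\{u,v\}$, and since $u\in G_a$ we conclude $v\in G_b$.

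With $u\in G_a$ and $v\in G_b$ established, the same $3$-connectivity argument, now with the cut realized by the pair $\{a,b\}$, shows that no branch set $G_y$ with $y\neq a,b$ is contained in $A\setminus\{u,v\}$; consequently every branch set other than $G_a$ and $G_b$ lies entirely in $B$. I would then restrict, setting $G'_x=G_B[V(G_x)\cap B]$. Each restriction is connected: $G_a$ meets the cut only in $u$ and $G_b$ only in $v$, so no $B$-to-$B$ path inside either detours through $A$. The roots land correctly ($a\to u$, $b\to v$, $c\to c$, $d\to d$), and every adjacency of $H$ survives; the only one that could be destroyed is $ab$, were it realized inside $A$, and this is recovered by the edge $uv$ of $G_B$ since $u\in G'_a$ and $v\in G'_b$. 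This produces the desired $H(X_A)$-model of $G_B$, completing the equivalence.
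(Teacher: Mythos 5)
Your proof is correct and follows essentially the same route as the paper's: for sufficiency, contract $G[A]$ onto the boundary so that the interior root lands on $v$, and for necessity, use the $3$-connectivity of $H$ to show that no branch set can be confined to $A\setminus\{u,v\}$, so only the branch sets through $u$ and $v$ meet $A$ and the model restricts to $G[B]\cup\{uv\}$ via the added edge. You also correctly read the statement's intent (the reduction is to the $B$-side, with $v$ inheriting the role of the interior root of $A$), which is what the paper's own proof does despite the statement's notation; your two-stage elimination of the cases $v\in G_{c}$ and $v\in G_{d}$ is just a slightly longer path to what the paper gets directly by noting that, since $u$ lies in $G_{a}$ and $v$ lies in at most one branch set, at least one of $G_{c},G_{d}$ is contained in $B\setminus\{u,v\}$.
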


\begin{proof}

 Suppose $G_{A}$ has an $H(X_{A})$-minor. Then by Menger's Theorem there is a path from the vertex of $X \cap A \setminus \{u,v\}$ to $v$ not containing $u$. Thus we can contract $G[A]$ to  $\{u,v\}$ such that we obtain the edge $uv$ and the vertex of $X$ in $A \setminus \{u,v\}$ is contracted to $v$. The resulting graph is isomorphic to $G_{A}$ and thus $G$ has an $H(X)$-minor.

Conversely, let $\{G_{x} | x \in V(H)\}$ be a model of an $H(X)$-minor in $G$. Let $b \in A \setminus \{u,v\}$ and suppose $a \in G_{a}$, $b \in G_{b}$, $c \in G_{c}$ and $d \in G_{d}$. First, suppose for some $y \in V(H)$,  $V(G_{y}) \subseteq A \setminus \{u,v\}$.  As $a =u$,  either $V(G_{c}) \subseteq B \setminus \{u,v\}$ or $V(G_{d}) \subseteq B \setminus \{u,v\}$. Without loss of generality, suppose that $V(G_{c}) \subseteq B \setminus \{u,v\}$.  Then if we contract all of the branch sets to a vertex, there are at most two internally disjoint $(y,c)$-paths, contradicting that $H$ is $3$-connected. Thus the only two branch sets with vertices in $A$ are $G_{a}$ and $G_{b}$. Since $uv \in E(G_{B})$, we have that $\{G'_{x} = G_{B}[V(G_{x}) \cap V(G_{B})] \; | x \in V(H)\}$ is a model of an $H(X)$-minor of $G_{B}$. 
 
\end{proof}

We note that this is every possible way to distribute four roots across $2$-separations. Working through the lemmas, one obtains a characterization of all graphs not containing a $W_{4}(X)$ or $K_{4}(X)$-minor. 

\subsection{A characterization of $2$-connected graphs without $W_{4}(X)$ and $K_{4}(X)$-minors}

Now we give a spanning subgraph characterization of graphs without $W_{4}(X)$ and $K_{4}(X)$-minors in somewhat similar vein to Theorem \ref{k4free}. It turns out the only interesting case are webs. 

\begin{lemma}
Let $G$ be a $2$-connected graph such that $G$ is a spanning subgraph of a class $\mathcal{A},\mathcal{B}$ or $\mathcal{C}$ graph (see Theorem \ref{k4free}). Then $G$ does not have a $W_{4}(X)$-minor. 
\end{lemma}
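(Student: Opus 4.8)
My plan is to show directly that no graph built on the vertex sets described in Classes $\mathcal{A}$, $\mathcal{B}$, $\mathcal{C}$ can have a $W_{4}(X)$-minor, by analyzing the structure of these base graphs $H$ and using the fact that a $W_{4}(X)$-minor requires four root-containing branch sets (one per rim vertex) together with a fifth ``hub'' branch set adjacent to all four rims. The key observation is that in each of these three classes the base graph $H$ has only a small number of non-root vertices available to serve as connectors, and the attached cliques $F_{T}$ sit inside triangles, so contracting them cannot create the long cyclic adjacency pattern that $W_{4}$ demands.

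First I would recall the structure of $W_{4}$: it has a central hub $h$ adjacent to all four rim vertices $r_1, r_2, r_3, r_4$, and the rim forms a $4$-cycle $r_1 r_2 r_3 r_4$. In a $W_{4}(X)$-model the four roots $a,b,c,d$ land in the four rim branch sets, so I need five pairwise-disjoint connected branch sets $R_1,R_2,R_3,R_4,B_h$ such that $B_h$ touches all four $R_i$, and the $R_i$ are cyclically adjacent. The plan is to count the ``degrees of freedom'': after the four roots are placed, the hub branch set and the extra connectors must be drawn from the remaining vertices of $H$ (namely $e$ in Class $\mathcal{A}$; $e,f$ in Class $\mathcal{B}$; $e,f,g$ in Class $\mathcal{C}$) together with whatever clique vertices $F_T$ survive. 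Because every clique $F_T$ is attached only to a triangle and is adjacent to nothing outside that triangle, contracting $F_T$ can only merge or relay vertices already lying in a single triangle, so it provides no new long-range connectivity beyond the three triangle vertices.

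The concrete argument I expect to run case by case on the three classes. In each class I would first observe that the root set $X=\{a,b,c,d\}$ occupies four specific vertices of $H$ (the ``outer'' vertices in each description), and that the remaining structural vertices are few. For Class $\mathcal{A}$, with $V(H)=\{a,b,c,d,e\}$ and $e$ adjacent to all of $a,b,c,d$ but $a,b,c,d$ pairwise non-adjacent (after accounting for the given edge set), the only candidate for the hub is $e$; but then the rim vertices would need to be pairwise cyclically adjacent through additional paths, which do not exist because all connectivity among the roots passes through $e$. The plan is to show that the graph is essentially $K_{2,3}$-like after contractions, which cannot contain a wheel. Classes $\mathcal{B}$ and $\mathcal{C}$ are handled the same way, noting that the two (resp.\ three) extra vertices still cannot simultaneously supply a hub and the four rim-to-rim cyclic adjacencies, since the roots remain too poorly interconnected and the triangle-cliques add nothing.

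The main obstacle, I expect, is handling the clique attachments $F_T$ rigorously rather than waving them away: I must argue that because each $F_T$ lies entirely inside a triangle $T$ and has no edges to the rest of the graph, any branch set meeting $F_T$ either stays within the closed neighborhood of $T$ or connects to the outside only through the three vertices of $T$, so for the purpose of detecting a $W_{4}$-minor we may contract each $F_T$ into its triangle without loss. Once that reduction is justified, the problem collapses to checking the three small fixed graphs $H$ directly, which is finite and mechanical. I would therefore lead with a short lemma (or inline observation) that contracting every $F_T$ preserves the non-existence of a $W_{4}(X)$-minor, and then finish with the three small case checks.
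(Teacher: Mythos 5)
Your route is genuinely different from the paper's. The paper does not analyze $W_{4}$-models inside the three base graphs at all: it observes that $\{d,e\}$ (Class $\mathcal{A}$), $\{e,f\}$ (Class $\mathcal{B}$) and $\{f,g\}$, $\{e,g\}$ (Class $\mathcal{C}$) are $2$-vertex cuts splitting the roots, and then invokes the general $2$-separation reduction lemmas already proved for $3$-connected patterns $H$ (Lemmas \ref{oneterminalinthecut2conn}, \ref{2sepW4} and \ref{noW4minor}) to peel the graph down to a separation with two roots in the boundary and one root strictly on each side, which kills the minor outright. That argument never touches the cliques $F_{T}$, because the reduction lemmas are stated for arbitrary $2$-connected $G$; your approach has to handle the $F_{T}$'s explicitly, which is why you correctly identify the clique-contraction step as your main obstacle. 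If you do carry out a direct finite check, be careful with the direction of that reduction: contracting each $F_{T}$ into its triangle produces a minor of $G$, and a minor having no $W_{4}(X)$-minor does not by itself imply $G$ has none. You must argue, as in Lemma \ref{planarreduction}, that any $W_{4}(X)$-model in $G$ can be converted into one in the reduced graph, using the fact that every component of $G[V(F_{T})]$ meets the rest of the graph only through $V(T)$.

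There is also a concrete factual error in your Class $\mathcal{A}$ case. You assert that $a,b,c,d$ are pairwise non-adjacent and that all connectivity among the roots passes through $e$. In Class $\mathcal{A}$ the edge set is $\{ae,ad,be,bd,ce,cd,de\}$, so the root $d$ is adjacent to each of $a$, $b$, $c$: it sits in the apex position symmetric to $e$, not on the $K_{2,3}$-like ``large side.'' Consequently the rim branch sets are not all isolated from one another, and your stated reason for the failure of the rim $4$-cycle does not hold. The correct obstruction is that after reserving $e$ (and the clique vertices, which only relay adjacencies already present on their triangles) for the hub, the branch set of $d$ can supply at most two of the four rim-cycle adjacencies, while the remaining rim edge would require two of the branch sets of $a$, $b$, $c$ to be adjacent to each other, which is impossible since every path between two of $a,b,c$ passes through $\{d,e\}$. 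A similar two-edges-versus-four-edges count is what you need in Classes $\mathcal{B}$ and $\mathcal{C}$; your sketch for those cases is plausible but, as written for Class $\mathcal{A}$, the argument rests on a misreading of the graph and would need to be repaired along these lines.
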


\begin{proof}
We treat each case separately.

If $G$ is the spanning subgraph of a class $\mathcal{A}$ graph, then $\{d,e\}$ is a $2$-vertex cut. Applying Lemma \ref{oneterminalinthecut2conn} and Lemma \ref{noW4minor} successively to the separation induced by $\{d,e\}$, we see $G$ does not have a $W_{4}(X)$-minor.

If $G$ is the spanning subgraph of a class $\mathcal{B}$ graph, then $\{e,f\}$ is a $2$-vertex cut. Applying  Lemma \ref{2sepW4} and Lemma \ref{noW4minor} successively to the separation induced by $\{e,f\}$, we see $G$ does not have a $W_{4}(X)$-minor.

 If $G$ is the spanning subgraph of a class $\mathcal{C}$ graph, then $\{g,f\}$ is a $2$-vertex cut. Apply Lemma \ref{2sepW4} to the separation induced by $\{g,f\}$ and let $G_{1}$ and $G_{2}$ be the graphs obtained from Lemma \ref{2sepW4}. Without loss of generality, let $G_{1}$ be the graph such that  $\{f,g\}$ induces a separation satisfying Lemma \ref{noW4minor}. Then $G_{1}$ does not have a $W_{4}(X)$-minor.  Then in $G_{2}$, to the separation induced by the $2$-vertex cut $\{g,e\}$, apply Lemma \ref{oneterminalinthecut2conn} to obtain a graph $G_{3}$. Then in $G_{3}$, notice that Lemma \ref{noW4minor} applies, thus $G_{3}$ does not have a $W_{4}(X)$-minor, and thus we get that $G$ does not have a $W_{4}(X)$-minor. 
\end{proof}

Notice that by applying Lemma \ref{2sepW4} to $2$-connected spanning subgraphs of class $\mathcal{E}$ and $\mathcal{F}$ graphs, we see that one of these graphs has a $W_{4}(X)$-minor if and only if the corresponding web from class $\mathcal{D}$ has a $W_{4}(X)$-minor. So now we restrict ourselves to looking at $\{a,b,c,d\}$-webs. First we show that $\{a,b,c,d\}$-webs always have a cycle which contains $\{a,b,c,d\}$.

\begin{observation}
\label{planarface}
Suppose $G$ is a planar spanning subgraph of some $\{a,b,c,d\}$-web $H^{+} = (H,F)$. Then the graph $G'$ defined by $V(G') = V(G)$ and $E(G') = E(G) \cup \{ab,bc,cd,da\}$ is planar, and furthermore the cycle $C$ with edge set $ab,bc,cd,da$ is the boundary of a face in $G'$. 
\end{observation}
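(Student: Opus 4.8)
The plan is to reduce everything to one embedding statement: I will show that $G$ has a plane embedding in which $a,b,c,d$ occur, in this cyclic order, on the boundary of the outer face. Granting this, the conclusion is immediate — route whichever of the edges $ab,bc,cd,da$ are not already present as arcs lying in the outer face and hugging the current outer boundary between consecutive roots. The four arcs together form a closed curve that encloses all of $G$, so in $G'$ the cycle $C=abcda$ bounds the (new) outer face and $G'$ is planar. Thus the whole Observation rests on producing the claimed embedding of $G$.

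To build that embedding I would start from the skeleton $H$. By definition of an $\{a,b,c,d\}$-web, $H$ is planar with outer face bounded by $abcda$ and every bounded face a triangle, so I fix such a plane embedding of $H$. Each clique $F_T$ of $H^{+}$ is attached only to its triangle $T=\{x,y,z\}$: a vertex of $F_T$ has no neighbours outside $\{x,y,z\}\cup F_T$, so in $G\subseteq H^{+}$ the surviving clique vertices $V(F_T)\cap V(G)$ can only be placed inside the bounded triangular face of $H$ cut out by $T$. The idea is therefore to embed, one bounded face at a time, the subgraph $G_T:=G[V(F_T)\cup\{x,y,z\}]$ inside that face with $x,y,z$ on its boundary. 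Since distinct cliques live in distinct faces of $H$ and are nonadjacent to one another, these local embeddings glue onto the embedding of $H$ to give a plane embedding of all of $G$. Crucially, no clique vertex is ever placed in the outer face, so — even after some of $ab,bc,cd,da$ are deleted, possibly enlarging the outer face — the outer boundary still meets $a,b,c,d$ in the cyclic order $a,b,c,d$; this is exactly the invariant needed in the first paragraph.

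The key local step, which I would isolate as a lemma, is: for each bounded triangle $T=\{x,y,z\}$ the graph $G_T$ has a plane embedding with $x,y,z$ on the outer face (the orientation being irrelevant since $T$ is a triangle, and fixable by reflection). Equivalently, by the standard cofaciality criterion, the graph $G_T^{+}$ obtained from $G_T$ by adding one new vertex joined to each of $x,y,z$ should be planar. Here planarity of $G$ does some of the work: $G_T$ is a subgraph of the join of the triangle $xyz$ with the clique $F_T$, and many configurations are already excluded — for instance two clique vertices each joined to all of $x,y,z$ would produce a $K_5$. I would run a short case analysis on how many clique vertices survive in $G$ and on how many of $x,y,z$ each meets, checking planarity of $G_T^{+}$ in each surviving configuration.

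I expect this local step to be the \emph{main obstacle}, and it is genuinely delicate: planarity of $G$ alone is not enough. A clique realised as $K_5$ minus an edge is planar, yet it turns $\{x,y,z\}$ into a separating triangle, so $x,y,z$ are no longer cofacial and the insertion fails. This can touch the roots only when $T$ consists of three of $a,b,c,d$, which forces a chord $ac$ or $bd$ of the outer $4$-cycle, and a clique pocketed behind such a root-triangle is exactly the obstruction — indeed it is an actual counterexample to cofaciality of $\{a,b,c,d\}$. I would therefore need the webs in play to have a chordless outer $4$-cycle, so that no triangle spans three roots; granting this, every separating triangle pockets its clique away from the outer face and leaves $a,b,c,d$ cofacial, and the face-by-face gluing together with the outer-face routing of the first paragraph completes the proof.
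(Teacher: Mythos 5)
Your construction is the same one the paper uses: fix a plane embedding of $H$, insert the surviving part of each clique $F_T$ into the triangular face bounded by $T$, and then read off the outer $4$-cycle as a face. The local step you isolate as the main obstacle --- embedding $G_T:=G[V(F_T)\cup V(T)]$ in a closed disk with $x,y,z$ on the boundary --- is precisely the step the paper does not justify: its proof says only ``fix a planar embedding of $F_T\cup T$ where $T$ is the boundary of the outerface'' and moves on (and then concludes that $H^{+}$ itself is planar, which already fails whenever some $|F_T|\ge 2$). So you have not missed an argument that the paper supplies; you have located the soft spot, and your suspicion that it is fatal is correct. Your chord example is a genuine counterexample to the statement: take $H$ to be the $4$-cycle $abcd$ with chord $ac$, let $F_{abc}=\{u,w\}$, $F_{acd}=\emptyset$, and $G=H^{+}-uw$. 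Then $G$ is planar ($K_5$ minus an edge, with a degree-$2$ vertex $d$ hung on $a$ and $c$) and already contains $ab,bc,cd,da$, so $G'=G$; but $K_5-uw$ is $3$-connected and its unique embedding separates $u$ from $w$ by the triangle $abc$, from which one checks that the $4$-cycle $abcd$ bounds a face in no embedding of $G'$.

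Where your proposal itself goes wrong is the repair. Forbidding chords of the outer $4$-cycle is not enough, because a spanning subgraph may also delete edges of $H$, and then planarity of $G$ no longer forces $G_T$ to be disk-embeddable with $T$ on its boundary even when $T$ contains at most two roots; worse, restoring the four outer edges can then destroy planarity outright. Take $H=W_4$ with hub $e$ (chordless outer cycle), $F_{abe}=\{u,w\}$, all other cliques empty, and $G=H^{+}-\{uw,\,ab,\,cd\}$. One checks that $G$ is planar and $2$-connected, yet $G'=G+ab+cd$ has a $K_{3,3}$-minor: contracting the edge $cd$ to a single vertex yields three vertices $u$, $w$ and the contracted one, each joined to all of $a,b,e$. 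So even the first assertion of the Observation fails for chordless webs. The statement needs a genuinely stronger hypothesis before either your argument or the paper's can be completed --- for instance that $G$ contains all of $E(H)$, or directly that each $G_T$ together with a new vertex joined to $V(T)$ is planar; your plan (and the paper's) then does go through, but as written both proofs rest on an insertion step that is simply false in the stated generality.
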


\begin{proof}

Fix a planar embedding $\tilde{H}$ of $H$. As $G$ is planar, for each triangle $T \in H$ the graph $F_{T} \cup T$ is planar. For each triangle $T \in H$, fix a  planar embedding of $F_{T} \cup T$ where $T$ is the boundary of the outerface. Then we can combine the planar embedding of $H$ with the planar embeddings of $F_{T} \cup T$ by joining $F_{T} \cup T$ to the appropriate triangle. This implies that the graph $H^{+}$ is planar, and thus $G'$ is planar. Additionally, notice in  $H^{+}$, the cycle with edge set $ab,bc,cd,da$ is the boundary of a face in $H^{+}$, and thus the cycle with edge set $ab,bc,cd,da$ in $G'$ is the boundary of a face in $G'$. 
\end{proof}

\begin{lemma}
\label{webcycleplanar}
Let $G$ be a $2$-connected planar graph and let $X = \{a,b,c,d\} \subseteq V(G)$. If $G$ is the spanning subgraph of an $\{a,b,c,d\}$-web, then there is a cycle, $C$, such that $X \subseteq V(C)$.
\end{lemma}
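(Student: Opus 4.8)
The plan is to leverage Observation \ref{planarface} to reduce everything to analyzing a single face of a plane embedding. By Observation \ref{planarface}, the graph $G'$ obtained from $G$ by adding the (possibly already present) edges $ab,bc,cd,da$ is planar, and the $4$-cycle $C_0$ on $a,b,c,d$ bounds a face of $G'$. I would fix a plane embedding of $G'$ in which $C_0$ is the boundary of the outer face; this is legitimate since any facial cycle can be taken to bound the outer face (embed on the sphere and reproject). Observe that $G = G' - S$, where $S \subseteq \{ab,bc,cd,da\}$ is precisely the set of $C_0$-edges absent from $G$, and that restricting the fixed embedding of $G'$ yields a plane embedding of $G$. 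If $S = \emptyset$ then $C_0$ itself is the desired cycle, so I would assume $S \neq \emptyset$.

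Next I would track what happens to the outer face as the edges of $S$ are deleted one at a time. Each edge of $S$ lies on the boundary of the outer face at the moment it is deleted, because $C_0$ minus the already-deleted edges remains part of the outer boundary throughout. Deleting a boundary edge $e$ merges the outer face with the face lying on the other side of $e$; the new outer boundary is the old boundary walk with $e$ replaced by the remainder of the neighboring face's boundary. The key point is that the two endpoints of $e$ stay on the new outer boundary, as do all other vertices that were already on it. Hence, after every edge of $S$ has been removed, every vertex that lay on $C_0$ — in particular each of $a,b,c,d$ — still lies on the boundary of the outer face of $G$.

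Finally, since $G$ is $2$-connected, the boundary of every face of $G$, and in particular of the outer face, is a cycle $C$; the previous paragraph then gives $\{a,b,c,d\} \subseteq V(C)$, which is exactly the conclusion. I expect the middle step to be the main obstacle: one must check that deleting the boundary edges of $C_0$ only \emph{enlarges} the outer face and never strands any of the four roots off its boundary. Here the hypothesis that $G$ is $2$-connected does double duty, since each root has degree at least $2$ in $G$ and therefore always retains an incident edge bordering the enlarged outer region (so no root is cut off), while $2$-connectivity simultaneously guarantees that the resulting outer boundary is a genuine cycle rather than a more complicated closed walk.
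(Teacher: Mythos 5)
Your proof is correct, but it takes a genuinely different and substantially shorter route than the paper. The paper proves Lemma \ref{webcycleplanar} by induction along a sequence of single-edge deletions from $H^{+}$ down to $G$, explicitly maintaining a cycle through $X$ at every step; when the deleted edge lies on the current cycle it invokes $2$-connectivity to get two internally disjoint replacement paths and then performs a fairly delicate rerouting argument through the interior and exterior of the cycle, controlled by Observation \ref{planarface} and the Jordan curve theorem. You instead use Observation \ref{planarface} only once, to place the $4$-cycle $abcd$ on the outer face of an embedding of $G'$, and then observe that deleting the at most four edges of $S$ only merges faces into the outer face: since the closure of the merged face always contains the closure of the old outer face, $a,b,c,d$ never leave the outer boundary, and $2$-connectivity of $G$ (which all intermediate graphs $G'-S'$ inherit, as they contain $G$ as a spanning subgraph) guarantees the final outer boundary is a genuine cycle. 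What your approach buys is brevity and the avoidance of the paper's case analysis; what it relies on is the classical fact that in a $2$-connected plane graph every face boundary is a cycle, which the paper's more hands-on argument does not need to cite. One small remark: your closing justification that no root is ``stranded'' because each has degree at least $2$ is not really the operative reason (a vertex of degree $2$ need not lie on the outer face); the correct and sufficient reason is the one you already gave earlier, namely the topological monotonicity of the outer face's closure under edge deletion.
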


\begin{proof}
Let $G_{1},\ldots,G_{n}$ be a sequence of graphs where $G_{1} = H^{+}$, $G_{n} = G$ and $G_{i+1} = G_{i} \setminus \{e\}$ where $e$ is some edge of $G_{i}$. We proceed by induction on $i$.  When $i =1$, $G_{1} = H^{+}$ and the $4$-cycle on $a,b,c,d$ in $H$ is our desired cycle.

 Now consider $G_{i}$, $i \geq 2$ and let $e=xy \in E(G)$ be the edge such that $G_{i} = G_{i-1} \setminus \{e\}$. By induction, $G_{i-1}$ contains a cycle $C$ containing $X$. We may assume that $e \in E(C)$ as otherwise $C$ completes the claim. Let $P= C \setminus \{e\}$.   Without loss of generality, suppose that $a,b,c,d$ appear in that order in $C$, and that $x$ and $y$ lie on the $(a,d)$-path, $P_{a,d}$, in $C$ in $G_{i-1}$ which does not contain $c$ and $d$, such that $a,x,y,d$ appear in that order. Similarly define paths $P_{a,b}, P_{b,c}$ and $P_{d,a}$. Additionally define $P_{a,x}$ to be the $(a,x)$-subpath on $P_{a,d}$ and $P_{y,d}$ to be the $(y,d)$-subpath on $P_{a,d}$.
 
  By Observation \ref{planarface}, in the graph $G'_{i-1}$, we have that $P_{a,d} \cup \{ad\}$, $P_{a,b} \cup \{ab\}, P_{b,c} \cup \{bc\}$ and $P_{c,d} \cup \{cd\}$ are cycles. Furthermore, we may assume that in a planar embedding of $G'_{i-1}$, no edges cross $ad,ab,bc,$ or $cd$. We define the interior of $C$ to be the component of $G'_{i-1} -C$ which does not contain any of $ab,bc,cd$ or $ab$, and the exterior is the component which is not the interior. We abuse notation and will refer also interior and exterior of $C$ in $G_{i-1}$. Notice that if we have a path whose two endpoints are on $P_{a,d}$ and whose vertices only use exterior vertices, then that path does not contain any vertices from $P_{a,b} \cup P_{b,c} \cup P_{c,d} \setminus \{a,d\}$.
  
As $G_{i}$ is $2$-connected, there is an $(x,y)$-path, $P'$ such that $P' \neq P$. If $V(P') \cap V(P) = \{x,y\}$, then $P' \cup P$ is our desired cycle. Therefore we may assume that every $(x,y)$-path intersects $P$.
   If there is any path $P''$ from a vertex $x' \in V(P_{a,x})$ to a vertex $y' \in  V(P_{y,d})$ using only vertices from the exterior, then $P_{a,x'} \cup P'' \cup P_{y',d} \cup P_{c,d} \cup P_{b,c} \cup P_{a,d}$ is a cycle, since no edges cross the edge $ad \in E(G'_{i-1})$. Here $P_{a,x'}$ is the $(a,x')$-subpath on $P_{a,x}$ and $P_{y',d}$ is the $(y',d)$-subpath on $P_{y,d}$. Therefore we assume no such path of that form exists. By essentially the same argument, we can assume no path of that form exists with vertices in the interior which does not intersect any of $P_{a,b}, P_{b,c}$ and $P_{c,d}$.
   
Since $G_{i}$ is $2$-connected, there are two internally disjoint $(x,y)$-paths, say $P'$ and $P''$. By our previous discussion, we may assume that both $P'$ and $P''$ are not $P$, and that both $P'$ and $P''$ intersect $P$. Suppose that $P'$ intersects all of $P_{b,c}$, $P_{c,d}$ and $P_{a,b}$. Notice that by planarity, these paths cannot cross, so it is well defined to say that one of $P' - \{x,y\}$ or $P'' - \{x,y\}$ lies on the interior of the cycle $P' \cup \{xy\}$ or $P'' \cup \{xy\}$.  Without loss of generality, suppose that $P' -\{x,y\}$ lies on the interior of $P'' \cup \{xy\}$. Then by the previous discussion, and planarity, the only way for $P''$ to be an internally disjoint $(x,y)$-path is for $P''$ to intersect $P_{a,b}$, go through the exterior of $C$ and intersect $P_{a,b}$ again, do this some finite number of times, then intersect $P_{b,c}$, go through the exterior of $C$ and intersect $P_{b,c}$ again, do this some finite number of times, then intersect $P_{c,d}$, go through the exterior of $C$ and intersect $P_{c,d}$ again. Then we can reroute $P$ along paths in the exterior of $C$ along $P''$ to get a new path $P'''$ such that $P'''$ contains all of $a,b,c,d$ and $V(P''') \cap V(P') \setminus \{x,y\} = \emptyset$. But then $P''' \cup P'$ a cycle satisfying the claim. We note the same strategy holds if $P'$ intersects any subset of $P_{b,c}$, $P_{c,d}$ and $P_{a,b}$. Therefore there is a cycle containing $a,b,c$, and $d$ in $G$. 
\end{proof}

\begin{figure}
\begin{center}
\includegraphics[scale = 0.5]{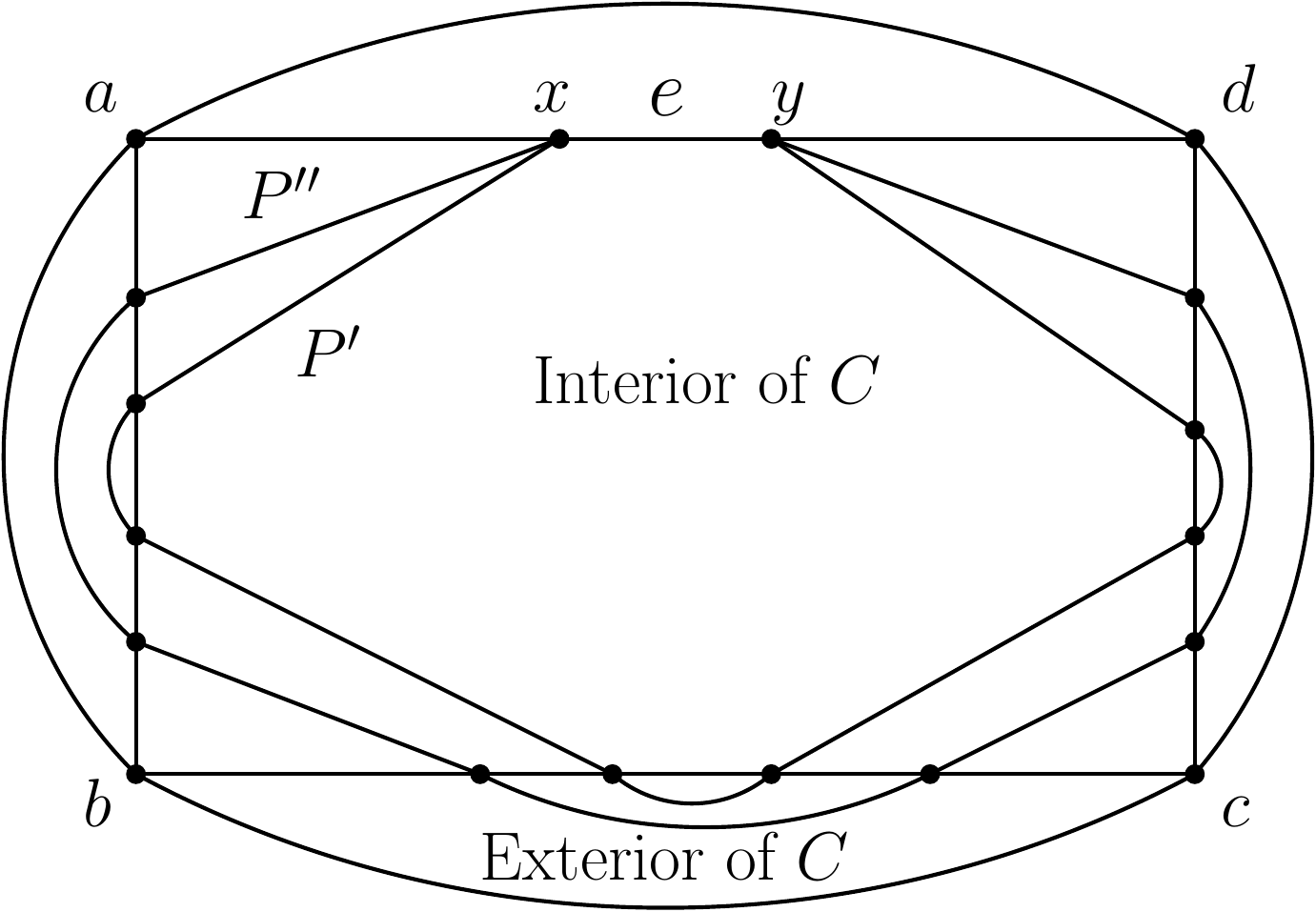}
\caption{The situation in Lemma \ref{webcycleplanar}. The edges $ab,bc,cd$ and $ab$ exist only in $G'_{i}$. The path $P'''$ is obtained by rerouting along paths in the exterior of $C$ which are subpaths of $P''$.}
\end{center}
\end{figure}

\begin{corollary}
\label{webcycle}
Let $G$ be a $2$-connected graph and let $X = \{a,b,c,d\} \subseteq V(G)$. If $G$ is the spanning subgraph of an $\{a,b,c,d\}$-web, $H^{+} = (H,F)$, then there is a cycle, $C$, such that $X \subseteq V(C)$.
\end{corollary}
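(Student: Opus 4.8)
The plan is to reduce Corollary \ref{webcycle} to the already-established planar case in Lemma \ref{webcycleplanar}. The key observation is that Lemma \ref{webcycleplanar} proves exactly the statement we want, but only for planar $G$. Since a general $\{a,b,c,d\}$-web $H^{+} = (H,F)$ need not be planar, the obstruction is the presence of the cliques $F_{T}$ attached to the triangles, which can be arbitrarily large and destroy planarity. However, the existence of a cycle through $X$ is a property that should only depend on the planar ``skeleton'' of the web, not on how large these attached cliques are.

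First I would invoke a contraction argument in the same spirit as Lemma \ref{planarreduction}. For each triangle $T \in H$, consider the attached clique $F_{T}$ and the subgraph $G[V(F_{T})]$. The idea is to contract each $F_{T}$-part down in a controlled way to produce a planar $2$-connected graph $G^{*}$ that is still a spanning subgraph of an $\{a,b,c,d\}$-web (namely $\tilde{H}^{+}$ for the same planar base graph $H$, but with each attached clique shrunk to a single vertex), while ensuring that none of $a,b,c,d$ get identified with each other or lost. Concretely, since $X \subseteq V(H)$ lies on the outer $4$-cycle and the attached cliques are internally disjoint from $V(H)$ except at the triangle vertices, contracting each connected piece of $G[V(F_{T})]$ into an appropriate vertex affects none of the roots. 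One must check that $G^{*}$ remains $2$-connected; this follows because the base graph $H$ together with one contracted apex per triangle is $2$-connected (in fact the argument is simpler than the $3$-connectivity check in Lemma \ref{planarreduction}, since we only need $2$-connectivity and $H$ contains the $4$-cycle on $X$).

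Next I would apply Lemma \ref{webcycleplanar} to $G^{*}$, which is a $2$-connected planar spanning subgraph of an $\{a,b,c,d\}$-web, to obtain a cycle $C^{*}$ with $X \subseteq V(C^{*})$. The final step is to lift $C^{*}$ back to a cycle in the original graph $G$. Since $C^{*}$ is obtained from $G$ only through contractions (no deletions), every edge of $C^{*}$ corresponds either to an edge of $G$ or to a pair of vertices lying in a common contracted connected subgraph; in the latter case one reroutes through a path inside that connected subgraph. Because each contracted piece is connected and the roots were never merged, uncontracting $C^{*}$ yields a genuine cycle $C$ in $G$ with $X \subseteq V(C)$.

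The main obstacle will be the bookkeeping in the contraction step: one must set up the contraction so that the resulting graph is honestly planar, honestly $2$-connected, and honestly a spanning subgraph of a web on the \emph{same} planar skeleton, all while guaranteeing that the four roots survive as distinct vertices lying on (the uncontracted preimages of) the cycle. An alternative, and perhaps cleaner, route that avoids re-proving connectivity is to observe that the cycle-through-$X$ property is preserved under the reverse operation of uncontracting a connected vertex set: if a graph $G'$ has a cycle through $X$ and $G$ is obtained from $G'$ by ``blowing up'' a non-root vertex into a connected graph, then $G$ still has a cycle through $X$. Phrasing the reduction this way lets me apply Lemma \ref{webcycleplanar} to the planar skeleton directly and then perform a single uncontraction lemma, which I expect to be the least error-prone presentation.
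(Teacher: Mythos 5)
Your overall strategy is the same as the paper's: contract the attached cliques to reduce to a planar $2$-connected graph, apply Lemma \ref{webcycleplanar}, and lift the cycle back through the contractions. The lifting step at the end is fine and is exactly what the paper does. However, there is a concrete gap in the middle of your plan, namely in how the contraction is ``set up.''

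You propose to shrink each attached clique to a single vertex (one apex per triangle, as in Lemma \ref{planarreduction}) and assert that the $2$-connectivity check is \emph{simpler} than in the $3$-connected case. It is actually more delicate, and a single apex per triangle does not work. In Lemma \ref{planarreduction} the $3$-connectivity of $G$ guarantees that the retained component $C_{1}$ of $G[V(F_{T})]$ is adjacent to all three vertices of $T$, so any path routed through $F_{T}$ can be rerouted through the apex $v_{T}$. When $G$ is merely $2$-connected, a connected component of $G[V(F_{T})]$ may be adjacent to only two of the three vertices of $T$, and different components may attach to different pairs; moreover the triangle edges of $H$ need not be present in $G$ (it is only a spanning subgraph of $H^{+}$), so a path entering $F_{T}$ at $x_{1}$ and leaving at $x_{3}$ cannot be rerouted through an apex adjacent only to $\{x_{1},x_{2}\}$, nor absorbed into $x_{1}$ and continued along a possibly missing edge $x_{1}x_{3}$. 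On the other hand, keeping every component as its own contracted vertex can destroy planarity: three components each adjacent to all of $x_{1},x_{2},x_{3}$ yield a $K_{3,3}$. The paper resolves this with a case analysis per triangle: keep one representative if some component sees all three vertices of $T$, and otherwise keep up to three representatives, one for each pair of vertices of $T$ that is realized, contracting all remaining components into $T$; this is exactly what preserves both planarity and $2$-connectivity. Your ``alternative route'' of applying Lemma \ref{webcycleplanar} to the planar skeleton $H$ directly has the same problem in sharper form, since $G$ restricted to $V(H)$ need not be $2$-connected at all. So the proof goes through only after supplying this case analysis, which is the actual content of the paper's argument.
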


\begin{proof}
By Lemma \ref{webcycleplanar}, we may assume that $G$ is non-planar. For each triangle $T \in H$, consider the graph $G[V(F_{T})]$ and let $M_{1},\ldots,M_{n}$ be the connected components of $G[V(F_{T})]$. Now for each triangle $T \in H$ let $G'$ be the graph obtained by contracting each connected component, $M_{i}$, down to a vertex, call it $v^{i}_{T}$, $i \in \{1,\ldots,n\}$.

 Now consider some triangle $T \in H$ such that $V(T) = \{x_{1},x_{2},x_{3}\}$. First suppose there exists an $i \in \{1,\ldots,n\}$ such that $v^{i}_{T}$ is adjacent to $x_{j}$ for all $j \in \{1,2,3\}$. Then for all $v^{k}_{T}$, $k \in \{1,\ldots,n\}, k \neq i$, contract $v^{k}_{T}$ to any vertex of $T$.
 
 Now suppose that there was no $i \in \{1,\ldots,n\}$ such that $v^{i}_{T}$ is adjacent to $x_{j}$ for all $j \in \{1,2,3\}$. Since $G$ is $2$-connected, that means that for all $i \in \{1,\ldots,n\}$, $v^{i}_{T}$ is adjacent to exactly two of $x_{1},x_{2}$ and $x_{3}$. Let $v^{i}_{T}, v^{j}_{T}, v^{k}_{T}$ be vertices such that $v^{i}_{T}$ is adjacent to $x_{1},x_{2}$, and $v^{j}_{T}$ is adjacent to $x_{1},x_{3}$, and $v^{k}_{T}$ is adjacent to $x_{2},x_{3}$ for $i,j,k \in \{1,\ldots,n\}$, $i \neq j \neq k$ (note such vertices may not exist). Then for all $v^{l}_{T}$, $l \neq i,j,k$, contract $v^{l}_{T}$ to an arbitrary vertex of $T$. Let $G'$ be the resulting graph after applying the above procedure to every triangle $T \in H$ to $G$. We note that some subset of the vertices $v^{i}_{T}, v^{j}_{T}, v^{k}_{T}$ may not exist, but in this case we just do not have that subset of vertices in $G'$ . We claim that $G'$ is planar and $2$-connected.
 
First we show that $G'$ is $2$-connected. Notice that since $G$ is $2$-connected, for every $T \in H$, all of the $v^{i}_{T}$ have $2$ internally disjoint paths to every other vertex. Now consider two vertices $x,y \in V(H)$ and let $P_{1},P_{2}$ be two internally disjoint $(x,y)$-paths in $G$. Notice that for each triangle $T \in H$, at most one of $P_{1}$ or $P_{2}$ uses vertices from any connected component of $G[V(F_{T})]$, so these paths exist in $G'$ by possibly augmenting them to the appropriate vertex $v^{i}_{T},v^{j}_{T}$ or $v^{k}_{T}$. Therefore $G'$ is $2$-connected. 

So it suffices to show that $G'$ is planar. Take any planar embedding of $H$, and for every face bounded by a triangle $T$, either add a vertex adjacent to all of the vertices of $T$ to the interior of the face, or add three vertices $v_{1},v_{2},v_{3}$ to the interior of the face such that for all $i \in \{1,2,3\}$, $v_{i}$ is adjacent to two vertices of $T$, and $N(v_{i}) \neq N(v_{j})$ if $i \neq j$. Note that  the resulting graph is planar. Furthermore, using this construction, we can obtain a planar graph $K$ such that $G'$ is a subgraph of $K$, and thus $G'$ is planar. 

Now since $G'$ is $2$-connected, planar, and by construction we did not contract any of $a,b,c$ or $d$ together, we can apply Lemma \ref{webcycleplanar}. Thus $G'$ has a cycle $C'$ containing $X$. But then $G$ has a cycle $C$ containing $X$, obtained by extending $C'$ along the contracted edges, if necessary. 
\end{proof}

We note it is easy to show that none of the non-web $K_{4}(X)$-free classes have a cycle containing $X$, so the above lemma cannot be extended. Before we state our characterization, we give some definitions.

A common idea which appears in the study of graph minors is the notion of a $k$-dissection, which is simply a sequence of nested $k$-separations. Formally, a sequence $((A_{1},B_{1}), \ldots,(A_{n},B_{n}))$ is a \textit{$k$-dissection} if for all $i \in \{1,\ldots,n\}$,  $(A_{i},B_{i})$ is a $k$-separation, and for all $i \neq n$,  $A_{i} \subseteq A_{i+1}$, and $B_{i+1} \subseteq B_{i}$. We will use special types of $k$-dissections. 

\begin{definition}
Let $G$ be a $2$-connected graph and $X = \{a,b,c,d\} \subseteq V(G)$. Let $((A_{1},B_{1}), \ldots,(A_{n},B_{n}))$ be a $2$-dissection for some integer $n$. If $A_{i} \cap  B_{i} \cap  A_{i+1} \cap B_{i+1} \neq \emptyset$  for all $i \in \{1,\ldots, n-1\}$, then we will say the $2$-dissection is a \emph{$2$-chain}. Let $((A_{1},B_{1}),\ldots,(A_{n},B_{n}))$ be a $2$-chain. Suppose both $A_{1} \cap B_{1}$ and $A_{n} \cap B_{n}$ contain at least one vertex of $X$, and there is exactly one vertex of $X$ in $A_{1} \setminus B_{1}$, and exactly one vertex of $X$ in $B_{n} \setminus A_{n}$. Then we say $((A_{1},B_{1}),\ldots, (A_{n},B_{n}))$ is a \emph{terminal separating $2$-chain}. 
\end{definition}

Additionally, we will require the idea of a ``triangle" of separations. 

\begin{definition}
Let $G$ be a $2$-connected graph and $X = \{a,b,c,d\} \subseteq V(G)$. Suppose there are three distinct $2$-separations $(A_{1},B_{1}),(A_{2},B_{2}),(A_{3},B_{3})$. We say these separations form a \emph{triangle} if $A_{1} \cap B_{1} = \{x,y\}$, $A_{2} \cap B_{2} = \{x,v\}$ and $A_{3} \cap B_{3} = \{v,y\}$ for distinct vertices $x,y,v \in V(G)$. For notational convenience, we will enforce that in a triangle, $(A_{i} \setminus B_{i}) \cap (A_{j} \setminus B_{j}) = \emptyset$ for any $i,j \in \{1,2,3\}$, $i \neq j$.  We say a triangle $(A_{1},B_{1}),(A_{2},B_{2}),(A_{3},B_{3})$, is \emph{terminal separating} if exactly two vertices of $X$ are contained in $A_{1}$, exactly one vertex of $X$ is contained in $A_{2} \setminus B_{2}$ and exactly one vertex of $X$ is contained in $A_{3} \setminus  B_{3}$. Two triangles  $((A^{1}_{1},B^{1}_{1}), (A^{1}_{2},B^{1}_{2}), (A^{1}_{3},B^{1}_{3}))$, $((A^{2}_{1},B^{2}_{1}),(A^{2}_{2},B^{2}_{2}),(A^{2}_{3},B^{2}_{3}))$, are distinct if there exists an $i \in \{1,2,3\}$, such that for $A^{2}_{i}$, $A^{1}_{j} \cap B^{1}_{j} \subseteq A^{2}_{i}$ for all $j \in \{1,2,3\}$ and there exists an $i \in \{1,2,3\}$ such that for  $A^{1}_{i}$, $A^{2}_{j} \cap B^{2}_{j} \subseteq A^{1}_{i}$ for all $j \in \{1,2,3\}$.  
\end{definition}

Now we can state the characterization.

\begin{figure}
\begin{center}
\includegraphics[scale=0.5]{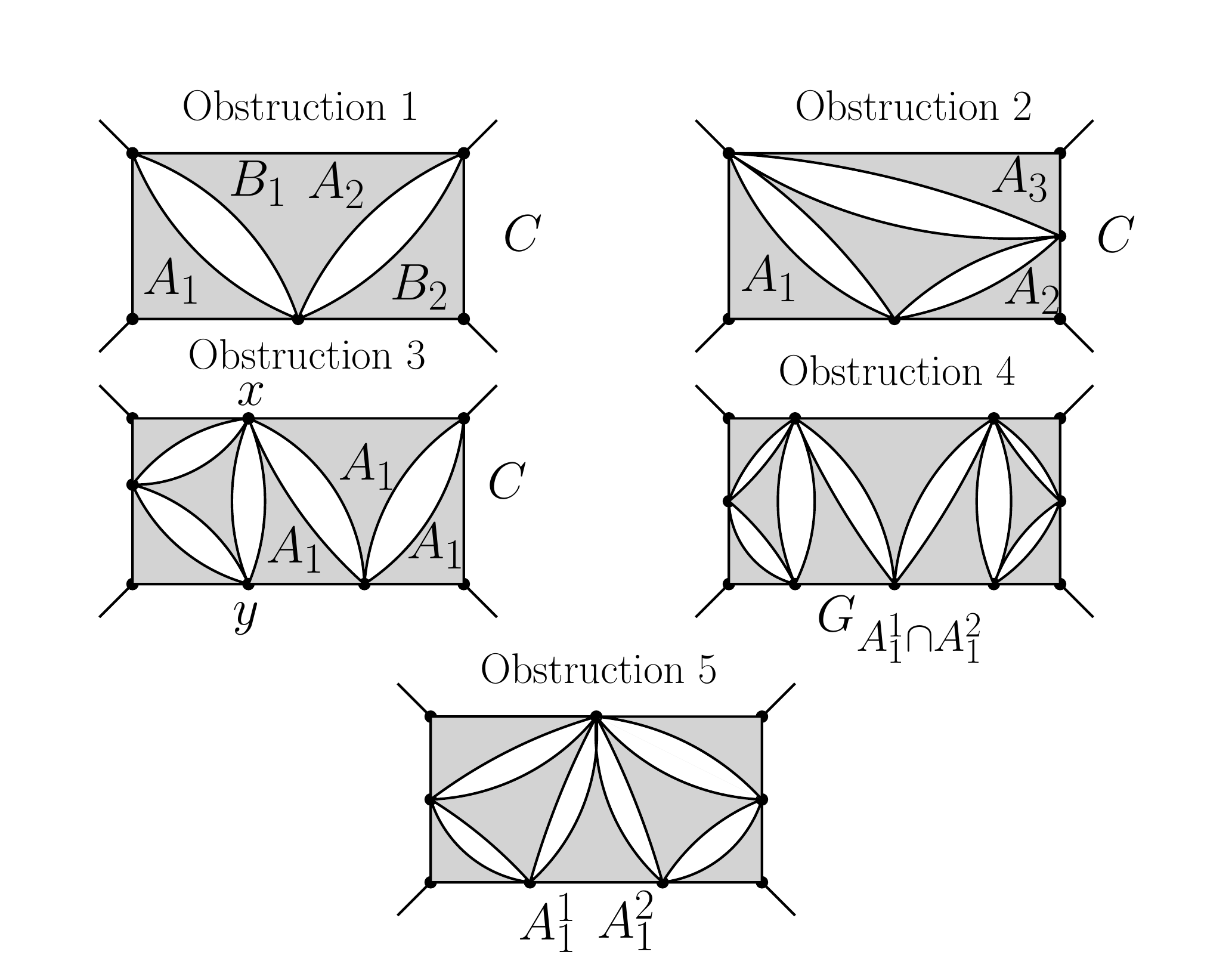}
\caption{The obstructions for Theorem \ref{w4cuts}. The vertices of $X$ are represented by vertices which have an edge not adjacent to a vertex. Curved lines represent a $2$-separation. Sections not apart of triangles are apart of a $2$-chain.}
\label{obstructionset}
\end{center}
\end{figure}

\begin{theorem}
\label{w4cuts}
Let $G$ be a $2$-connected graph and $X = \{a,b,c,d\} \subseteq V(G)$. Suppose $G$ is a spanning subgraph of an $\{a,b,c,d\}$-web. Then $G$ is $W_{4}(X)$-minor free if and only if for every cycle $C$ where $X \subseteq V(C)$, we have one of the following obstructions (see Figure \ref{obstructionset}).  
\begin{enumerate}
\item{There is a terminal separating $2$-chain $((A_{1},B_{1}),\ldots,(A_{n},B_{n}))$ such that $A_{i} \cap B_{i} \subseteq V(C)$ for all $i \in \{1,\ldots, n\}$.}
\item{There is a terminal separating triangle $(A_{1},B_{1}),(A_{2},B_{2}),(A_{3},B_{3})$, such that $A_{1} \cap B_{1}$ contains a vertex from $X$ and $A_{i} \cap B_{i} \subseteq V(C)$, for all $i \in \{1,2,3\}$.}
\item{There is a terminal separating triangle $(A_{1},B_{1}),(A_{2},B_{2}),(A_{3},B_{3})$ such that $A_{1} \cap B_{1} = \{x,y\}$, where $x,y \not \in X$. Furthermore, the graph $G_{A_{1}} = G[A_{1}] \cup \{xy\}$ and $C_{A} = G[V(C) \cap A] \cup \{xy\}$  has a terminal separating $2$-chain $((A'_{1},B'_{1}),\ldots,(A'_{n},B'_{n}))$ where we let $x$ and $y$ replace the two vertices in $X$ from $G$ not in $G_{A_{1}}$. Additionally, $A_{i} \cap B_{i} \subseteq V(C)$, for all $i \in \{1,2,3\}$, and $A'_{i} \cap B'_{i} \subseteq V(C_{A})$ for all $i \in \{1,\ldots,n\}$.}
\item{There are two distinct terminal separating triangles $((A^{1}_{1},B^{1}_{1}),(A^{1}_{2},B^{1}_{2}), (A^{1}_{3},B^{1}_{3}))$, and $((A^{2}_{1},B^{2}_{1}),(A^{2}_{2},B^{2}_{2}),\\ (A^{2}_{3},B^{2}_{3}))$  where for all $i \in \{1,2,3\}$,  $A^{1}_{i} \cap B^{1}_{i} \subseteq A^{2}_{1}$ and $A^{2}_{i} \cap B^{2}_{i} \subseteq A^{1}_{3}$. Furthermore, consider the graph $G_{A^{2}_{1} \cap A^{1}_{1}} = G[A^{2}_{1} \cap A^{1}_{1}] \cup \{xy | x,y \in A^{i}_{1} \cap B^{i}_{1}, i \in \{1,2\}\}$ and the cycle $C' = G[V(C) \cap A^{2}_{1} \cap A^{1}_{1}] \cup \{xy | x,y \in A^{i}_{1} \cap B^{i}_{1}, i \in \{1,2\}\}$. Let $X'$ be defined to be the vertices $A^{2}_{1} \cap B^{2}_{1}$ and $A^{2}_{3} \cap B^{2}_{3}$. Then there is a terminal separating $2$-chain with respect to $X'$, $((A_{1},B_{1}),\ldots,(A_{n},B_{n}))$, in $G_{A^{2}_{1} \cap A^{1}_{1}}$ such that $A_{i} \cap B_{i} \subseteq V(C')$.}
\item{There are $2$ distinct terminal separating triangles $((A^{1}_{1},B^{1}_{1}), (A^{1}_{2},B^{1}_{2}), (A^{1}_{3},B^{1}_{3})),\\  ((A^{2}_{1},B^{2}_{1}), (A^{2}_{2},B^{2}_{2}),(A^{2}_{3},B^{2}_{3}))$ where for all $i \in \{1,2,3\}$,  $A^{1}_{i} \cap B^{1}_{i} \subseteq A^{2}_{1}$  and $A^{2}_{i} \cap B^{2}_{i} \subseteq A^{1}_{1}$, the set $A^{2}_{1} \cap B^{2}_{1} \cap A^{1}_{1} \cap B^{1}_{1}$ is not empty and $A^{j}_{i} \cap B^{j}_{i} \subseteq V(C)$ for all $i \in \{1,2,3\}$, and $j \in \{1,2\}$.}
\end{enumerate}
\end{theorem}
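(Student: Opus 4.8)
The plan is to prove the logically equivalent reformulation that $G$ has a $W_{4}(X)$-minor if and only if there is \emph{some} cycle $C$ through $X$ for which none of the five obstructions occur; the stated theorem is the negation of this, quantified over all cycles. The bridge between $W_{4}(X)$-minors and cycles is that the rim of any $W_{4}$-model is a $4$-cycle whose four branch sets, together with the edges realizing the rim adjacencies, contain a cycle through $a,b,c,d$ in the cyclic order prescribed by the rim; conversely, a $W_{4}(X)$-minor whose rim is built from a given cycle $C$ is exactly a connected ``hub'' subgraph, disjoint from the rim, having a neighbour in each of the four arcs of $C$ between consecutive roots. So I would first record the lemma that $G$ has a $W_{4}(X)$-minor if and only if some cycle $C \supseteq X$ admits such a hub. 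By Corollary~\ref{webcycle} at least one cycle through $X$ exists, and exactly as in the proof of Corollary~\ref{webcycle} I would contract the clique components $G[V(F_{T})]$ to pass to a planar, $2$-connected graph, so that each candidate cycle $C$ bounds a disk and every hub can be taken to lie on one side of $C$.

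Next, for a fixed cycle $C$ I would characterize when a hub exists. Working in the disk bounded by $C$, a hub meeting all four arcs is equivalent to a pair of crossing connected structures joining opposite arcs, just as the two crossing paths produce the wheel in Lemma~\ref{Planar $3$-connected}. Such a cross fails to exist precisely when one side of $C$ is ``pinched'' by $2$-separations whose boundaries lie on $C$: a single $2$-cut on $C$ splitting the roots two-against-two already blocks one crossing, and more generally a nested family (a $2$-chain) or a triangular family of $2$-cuts can block every crossing. I would then feed these $2$-separations into the reduction lemmas of the previous subsection: Lemma~\ref{2sepW4}, Lemma~\ref{oneterminalonesideothersother}, Lemma~\ref{oneterminalinthecut2conn} and Lemma~\ref{allonesidegeneralH} let me reduce the $W_{4}(X)$-question across each cut to smaller rooted instances in which the cut vertices become roots, while Lemma~\ref{noW4minor} supplies the only immediate certificate of non-existence, namely a $2$-cut consisting of two roots that separates the remaining two. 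The target statement for this step is that a cycle $C$ has no hub if and only if peeling the $2$-separations along $C$ produces a decomposition that terminates, on the relevant branches, in applications of Lemma~\ref{noW4minor}, and that the global shape of such a terminating decomposition is one of the five listed configurations.

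The classification itself is where the five obstructions come from, and this is the step I expect to be the main obstacle. Obstruction~(1) is the case where the two separated roots are joined by a terminal separating $2$-chain along $C$ that reduces, cut by cut via the cut-reduction lemmas, to a single $2$-cut of two roots covered by Lemma~\ref{noW4minor}. Obstructions~(2)--(5) arise when the blocking structure is a triangle of $2$-cuts rather than a chain: (2) when a root already sits on the base cut $A_{1}\cap B_{1}$; (3) when the base cut avoids $X$, forcing a recursion into $G_{A_{1}}$ with $x,y$ promoted to roots and a $2$-chain found there along $C_{A}$; and (4),(5) when two nested terminal separating triangles are needed, either linked by an intermediate $2$-chain found in $G_{A^{2}_{1} \cap A^{1}_{1}}$ along $C'$ (4) or meeting in a shared boundary vertex (5). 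I would prove that each listed configuration yields no hub, hence via the reduction lemmas no $W_{4}(X)$-minor using $C$, by direct application of Lemma~\ref{noW4minor} at the leaves, and conversely that if $C$ exhibits none of these configurations then the reductions never close off all crossing routes, so a cross and thus a hub survives. The delicate and lengthy part is verifying exhaustiveness: that every way one side of $C$ can be pinched so as to defeat both crossing structures is captured by a chain, a single triangle, a triangle-with-recursion, or one of the two doubled-triangle patterns, together with the careful bookkeeping of where the roots and the non-root cut vertices sit on $C$ so that the promoted roots in the recursive instances align with the subcycles $C_{A}$ and $C'$.
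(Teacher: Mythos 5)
Your plan diverges from the paper's proof in a way that leaves the central difficulty unaddressed. The paper never characterizes $W_{4}(X)$-minors via a ``hub'' attached to a fixed cycle; instead, the hard direction is proved by taking a vertex-minimal counterexample $G$ with a cycle $C$ carrying none of the five obstructions, running through every possible distribution of the four roots relative to a $2$-separation (Claims 1--5), using Lemmas \ref{allonesidegeneralH}, \ref{2sepW4}, \ref{oneterminalonesideothersother} and \ref{oneterminalinthecut2conn} to pass to smaller graphs $G_{A}$, $G_{B}$ which by minimality must carry one of the obstructions on the induced cycle, and then using Lemma \ref{nicesplittinglemma} (which exploits the new edge $uv$ lying on the reduced cycle) to lift and concatenate those obstructions into an obstruction on $C$ in $G$ --- a contradiction. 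This forces $G$ to be $3$-connected, and Theorem \ref{3connectivityW4K4} finishes. That lifting-and-concatenation machinery is precisely what proves the exhaustiveness of the five configurations, and it is exactly the step you defer as ``the delicate and lengthy part'' without supplying an argument. As written, your classification step is a restatement of the theorem rather than a proof of it.

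Two of your preliminary reductions are also unsound or unjustified as stated. First, the bridge lemma ``$G$ has a $W_{4}(X)$-minor iff some cycle through $X$ admits a hub'' is not obvious in the forward direction: in a $W_{4}$-model the path through a rim branch set between its two rim attachments need not pass through the root, and the hub's attachment to a rim branch set need not lie on whatever cycle you extract, so the claimed cycle-plus-hub structure requires a genuine (and nontrivial) re-routing argument. Second, passing to the planar contraction as in Corollary \ref{webcycle} only preserves minors in one direction: a cycle or a $W_{4}(X)$-minor of the contraction lifts to $G$, but a hub in $G$ may intersect the same clique $F_{T}$ as the cycle and fail to survive the contraction disjointly. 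The paper only performs this kind of planar reduction in the $3$-connected setting (Lemma \ref{planarreduction}) and, for $K_{2,2}(X)$-minors, with a dedicated argument (Lemma \ref{planarreductionk22}); for $2$-connected graphs and $W_{4}(X)$ it deliberately works in $G$ itself. You would need to either prove the analogous preservation lemma or abandon the planar framework for this direction.
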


Before proving this, we prove some lemmas to make the proof cleaner.

\begin{lemma}
\label{obstructionsareobstructions}
Let $G$ be a $2$-connected graph, $X = \{a,b,c,d\} \subseteq V(G)$. Let $C$ be a cycle in $G$ such that $X \subseteq V(C)$. If any of the obstructions in Theorem \ref{w4cuts} occur, then $G$ does not have a $W_{4}(X)$-minor.
\end{lemma}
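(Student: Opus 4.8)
The plan is to show that each of the five obstructions in Theorem \ref{w4cuts} genuinely prevents a $W_{4}(X)$-minor, and the uniform idea is to reduce each obstruction, via the $2$-connected reduction lemmas from the previous subsection, to a situation where Lemma \ref{noW4minor} directly applies (or where no valid distribution of the four roots across a $2$-separation survives). Recall that $W_{4}$ is $3$-connected, so all of Lemmas \ref{2sepW4}, \ref{oneterminalonesideothersother}, \ref{allonesidegeneralH}, \ref{noW4minor}, and \ref{oneterminalinthecut2conn} are available with $H = W_{4}$, and recall that $W_{4}(X)$ satisfies the technical symmetry condition required by Lemma \ref{2sepW4} (as remarked after that lemma). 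The engine of the whole argument is therefore to peel off pieces of $G$ across the given $2$-separations while tracking where the roots go, until we reach a separation with exactly one root strictly inside each side and both cut vertices as non-roots — precisely the hypothesis of Lemma \ref{noW4minor}, which asserts no $W_{4}(X)$-minor.

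First I would handle obstruction (1), the terminal separating $2$-chain $((A_{1},B_{1}),\ldots,(A_{n},B_{n}))$. Here each consecutive pair of separations shares a boundary vertex, and there is exactly one root in $A_{1}\setminus B_{1}$ and one in $B_{n}\setminus A_{n}$, with the remaining two roots sitting on the chain's boundary. The idea is to walk along the chain: at the first separation $(A_{1},B_{1})$ there is one root strictly inside $A_{1}$ and (since the other three roots lie in $B_{1}$) Lemma \ref{oneterminalonesideothersother} or \ref{oneterminalinthecut2conn} lets us contract $A_{1}$ onto the cut and transfer that root to a boundary vertex, yielding an equivalent rooted-minor question on $G_{B_{1}}$ with the root relocated. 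Iterating along the chain collapses everything except the last link, at which point we are left with a single $2$-separation having exactly one root on each strict side and the two cut vertices free of roots, so Lemma \ref{noW4minor} finishes it. The key bookkeeping is that the roots on the chain boundary never both end up as cut vertices of a single separation, which is exactly what "terminal separating" enforces.

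For obstructions (2) and (3), the terminal separating triangle $(A_{1},B_{1}),(A_{2},B_{2}),(A_{3},B_{3})$ with cut pairs $\{x,y\},\{x,v\},\{v,y\}$, the plan is to use the triangle structure to argue that any $W_{4}$-model must place two roots on opposite sides of one of the three separations with both cut vertices free. Concretely, two roots live in $A_{1}$, one in $A_{2}\setminus B_{2}$, one in $A_{3}\setminus B_{3}$; applying Lemma \ref{allonesidegeneralH} or \ref{2sepW4} to $(A_{1},B_{1})$ reduces the two roots in $A_{1}$ down to the cut $\{x,y\}$ (in case (2), where $\{x,y\}$ meets $X$, one of these roots is already a cut vertex), and then the separation $(A_{2},B_{2})$ or $(A_{3},B_{3})$ becomes a separation with one root strictly inside each part and non-root boundary, triggering Lemma \ref{noW4minor}. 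Case (3) is the variant where $\{x,y\}\cap X=\emptyset$, so after the triangle reduction we must further recurse on the smaller graph $G_{A_{1}}$ with $x,y$ promoted to roots and apply the case-(1) chain argument inside it; this is purely a matter of invoking the inductive structure already built into obstruction (1).

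Obstructions (4) and (5) are the genuinely awkward cases and I expect the nesting of two distinct terminal separating triangles to be the main obstacle. The plan is to isolate the "central" piece $G_{A^{2}_{1}\cap A^{1}_{1}}$ (respectively the analogous overlap in (5)), add the virtual edges on the two triangle boundaries to make it $2$-connected, and re-root it using the boundary vertices $X'$; by construction this central piece inherits a terminal separating $2$-chain, so case (1) applies to it, while the two triangles on either end are absorbed by repeated application of Lemmas \ref{2sepW4} and \ref{allonesidegeneralH} exactly as in cases (2)--(3). The delicate point — and where I expect to spend the most care — is verifying that the reductions across the outer triangles are compatible with, and do not destroy, the chain inside the central piece: one must check that contracting each triangle onto its boundary genuinely relocates the roots to the boundary vertices that become the terminals $X'$ of the inner chain, and that the distinctness condition on the two triangles guarantees the central piece is nonempty and properly $2$-connected after adding the virtual edges. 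Once these compatibility checks are in place, the conclusion in each case is again a single separation satisfying the hypotheses of Lemma \ref{noW4minor}, so $G$ has no $W_{4}(X)$-minor.
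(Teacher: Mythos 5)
Your overall architecture is the same as the paper's: peel off pieces of $G$ across the separations of each obstruction using the $2$-connected reduction lemmas, relocating roots to cut vertices as you go, until a single separation certifies that no $W_4(X)$-minor exists, and handle obstructions (3)--(5) by reducing them to instances of the earlier obstructions. The paper executes this by induction on $|V(G)|$ rather than by explicit iteration, but that is a cosmetic difference.

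However, there is a concrete error at the foundation of your plan: you have inverted the hypothesis of Lemma \ref{noW4minor}. That lemma requires \emph{both} cut vertices $u,v$ to be roots ($L=\{u,v\}\subseteq X$), with the two remaining roots split one strictly on each side. You instead aim for ``a separation with exactly one root strictly inside each side and both cut vertices as non-roots,'' and you repeat this in the chain case (``the two cut vertices free of roots''). That configuration is not the hypothesis of any no-minor lemma in the paper --- with non-root boundary and one root strictly on each side, the other two roots must sit somewhere, and the resulting distribution falls under Lemma \ref{2sepW4} or Lemma \ref{oneterminalonesideothersother}, neither of which forbids a minor. In fact the reductions push roots \emph{onto} the boundary: in obstruction (1) with $n=1$ the separation $(A_1,B_1)$ already has two roots in $A_1\cap B_1$, and for $n\ge 2$ repeated application of Lemma \ref{oneterminalinthecut2conn} relocates a root to the second cut vertex at each step, so the terminal separation $(A_n,B_n)$ has both boundary vertices as roots --- exactly Lemma \ref{noW4minor} as actually stated. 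A secondary issue: in obstruction (2) the separation $(A_1,B_1)$ has a root \emph{in} the cut, one root strictly in $A_1$, and two strictly in $B_1$, so neither Lemma \ref{allonesidegeneralH} (which needs $X\subseteq A$) nor Lemma \ref{2sepW4} (which needs a $2$--$2$ split with no roots on the boundary) applies; the correct tool there is again Lemma \ref{oneterminalinthecut2conn}, applied to $(A_1,B_1)$ and then $(A_2,B_2)$, after which $(A_3,B_3)$ has both boundary vertices as roots and Lemma \ref{noW4minor} finishes. With the terminal configuration corrected, your case analyses would close essentially as the paper's do.
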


\begin{proof}
We deal with each case separately. In each case we suppose $G$ is a minimal counterexample with respect to the number of vertices. 

\textbf{Case 1:} Suppose we have a terminal separating $2$-chain $((A_{1},B_{1}),\ldots,(A_{n},B_{n}))$. If $n=1$, then $(A_{1},B_{1})$ is a $2$-separation satisfying the conditions in Lemma \ref{noW4minor} and thus $G$ does not have a $W_{4}(X)$-minor. Therefore we assume $n \geq 2$. Then $(A_{1},B_{1})$ satisfies the conditions in Lemma \ref{oneterminalinthecut2conn}. Let $G'$ be the graph obtained after applying Lemma \ref{oneterminalinthecut2conn} to $(A_{1},B_{1})$. Then $G$ has a $W_{4}(X)$-minor if and only if $G'$ has a $W_{4}(X_{1})$-minor, where $X_{1}$ is defined from Lemma \ref{oneterminalinthecut2conn}. Notice in $G'$, $((A_{2},B_{2}),\ldots,(A_{n},B_{n}))$ is a terminal separating $2$-chain satisfying the properties of obstruction $1$ for the cycle $G'[V(C) \cap B_{1}]$ when we replace the vertex of $X$ in $A_{1} \setminus B_{1}$ with the vertex in $(A_{1} \cap B_{1}) \setminus X$. Since $G$ is a vertex minimal counterexample, $G'$ has no $W_{4}(X)$-minor, and thus $G$ has no $W_{4}(X)$-minor.

\textbf{Case 2:} Suppose there is a terminal separating triangle $(A_{1},B_{1}),(A_{2},B_{2}),(A_{3},B_{3})$ satisfying properties in the second obstruction.  Then we apply Lemma \ref{oneterminalinthecut2conn} to $(A_{1},B_{1})$ giving a new graph $G'$ which has a $W_{4}(X)$-minor if and only if $G'$ has a $W_{4}(X_{1})$-minor, where $X_{1}$ is defined from Lemma \ref{oneterminalinthecut2conn}. In $G'$, apply Lemma \ref{oneterminalinthecut2conn} to $(A_{2},B_{2})$ giving a graph $G''$. Then $G''$ has a $W_{4}(X_{2})$-minor if and only if $G$ has a $W_{4}(X)$-minor. Observe in $G''$,  $(A_{3},B_{3})$ is a separation satisfying Lemma \ref{noW4minor} and thus $G''$ does not have a $W_{4}(X_{2})$-minor. But $G''$ has a $W_{4}(X)$-minor if and only if $G$ has a $W_{4}(X)$-minor, so therefore $G$ has no $W_{4}(X)$-minor.

\textbf{Case 3:} Suppose there is a terminal separating triangle $(A_{1},B_{1}),(A_{2},B_{2}),(A_{3},B_{3})$ and a terminal separating $2$-chain $((A'_{1},B'_{1}),\ldots (A'_{n},B'_{n}))$ in the graph $G_{A_{1}}$, as in the third obstruction. Apply Lemma \ref{oneterminalonesideothersother} to $(A_{2},B_{2})$ to obtain two new reduced graphs $G'_{1}$ and $G'_{2}$. Now in one of $G'_{1}$ and $G'_{2}$, we can apply Lemma \ref{oneterminalinthecut2conn} to $(A_{1},B_{1})$ to obtain a graph $G''$, and in one of $G'_{1}$ and $G'_{2}$, we can apply Lemma \ref{oneterminalinthecut2conn} twice to $(A_{3},B_{3})$ and $(A_{1},B_{1})$ to obtain the graph $G''$ (note that the graph $G''$ obtained from both $G'_{1}$ and $G'_{2}$ is indeed the same graph). Then $G''$ has a $W_{4}(X)$-minor if and only if $G$ has a $W_{4}(X)$-minor. Notice that in the graph $G''$, $((A'_{1},B'_{1}),
\ldots,(A'_{n},B'_{n}))$ is a terminal separating $2$-chain satisfying obstruction $1$. Then by case one, $G''$ has no $W_{4}(X)$-minor, and thus $G$ has no $W_{4}(X)$-minor.

\textbf{Case 4:} Suppose we have the fourth obstruction in Theorem \ref{w4cuts} and let $(A_{1},B_{1}), (A_{2},B_{2}),(A_{3},B_{3})$ be one of the terminal separating triangles. If we apply Lemma \ref{oneterminalonesideothersother} and Lemma \ref{oneterminalinthecut2conn} to $(A_{1},B_{1}),(A_{2},B_{2}),(A_{3},B_{3})$ as we did in case $3$, we obtain a graph $G'$ which has a $W_{4}(X)$-minor if and only if $G$ does. Furthermore, in the graph $G'$, the other terminal separating triangle and terminal separating $2$-chain given by the fourth obstruction for $G$ and $C$ satisfy the properties of obstruction $3$. But then by case $3$, $G'$ has no $W_{4}(X)$-minor, and thus $G$ has no $W_{4}(X)$-minor. 

\textbf{Case 5:} Let $(A^{1}_{1},B^{1}_{1}), (A^{1}_{2},B^{1}_{2}), (A^{1}_{3},B^{1}_{3})$ and $(A^{2}_{1},B^{2}_{1}), (A^{2}_{2},B^{2}_{2}),(A^{2}_{3},B^{2}_{3})$ be the two distinct triangles satisfying the properties in obstruction five. Applying Lemma \ref{oneterminalonesideothersother} and Lemma \ref{oneterminalinthecut2conn} to $(A^{1}_{1},B^{1}_{1}), (A^{1}_{2},B^{1}_{2}), (A^{1}_{3},B^{1}_{3})$ as in case $3$, we obtain a graph $G'$ which has a $W_{4}(X)$-minor if and only if $G$ has a $W_{4}(X)$-minor. If $A^{1}_{1} \cap B^{1}_{1} = A^{2}_{1} \cap B^{2}_{1}$, then in $G'$, the separations $(A^{2}_{2}, B^{2}_{2})$ and $(A^{2}_{3},B^{2}_{3})$ are a terminal separating $2$-chain in $G'$ satisfying obstruction $1$ on the cycle $C' = G'[V(C) \cap B^{1}_{1}]$. Then by case $1$, $G'$ has no $W_{4}(X)$-minor. Therefore we assume that $A^{1}_{1} \cap B^{1}_{1} \neq A^{2}_{1} \cap B^{2}_{1}$. Then in $G'$ on the cycle $C'$, $(A^{2}_{1},B^{2}_{1}), (A^{2}_{2},B^{2}_{2}),(A^{2}_{3},B^{2}_{3})$ is a terminal separating triangle, so by case $2$, $G'$ has no $W_{4}(X)$-minor, and thus $G$ has no $W_{4}(X)$-minor.    
\end{proof}

\begin{lemma}
\label{nicesplittinglemma}
Let $G$ be a $2$-connected graph, $X = \{a,b,c,d\} \subseteq V(C)$. Let $C$ be a cycle in $G$ such that $X \subseteq V(C)$. Suppose that $a,b,c,d$ appear in that order on $C$, and suppose that $cd \in E(C)$. Then,
\begin{itemize}
  \item{If there is a terminal separating triangle satisfying the properties of obstruction $2$, then  $A_{1} \cap B_{1}$ contains exactly one of vertices $c$ or $d$.}
  \item{If $C$ has a terminal separating triangle and a terminal separating $2$-chain $((A_{1},B_{1}),\ldots, (A_{n},B_{n}))$ as in obstruction $3$, then either $A_{1} \cap B_{1}$ contains $c$ or $d$ or $A_{n} \cap B_{n}$ contains $c$ or $d$.}
  \item{ Obstructions $4$ and $5$ do not occur on $C$.}
  \end{itemize}  
\end{lemma}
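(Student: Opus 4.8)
The plan is to reduce all three bullets to a single observation, which I will call the \emph{edge principle}: since $cd\in E(C)\subseteq E(G)$, the vertices $c$ and $d$ are adjacent, so no $2$-separation of $G$ can place them on opposite open sides. Precisely, if $(A,B)$ is a $2$-separation with $A\cap B\subseteq V(C)$, then its two boundary vertices cut $C$ into two arcs, each of which (being connected and meeting the boundary only at its endpoints) lies entirely in $A\setminus B$ or entirely in $B\setminus A$; as there is no edge between $A\setminus B$ and $B\setminus A$, the edge $cd$ cannot run between the two open arcs. Hence $c$ and $d$ always lie in a common \emph{closed} arc of $C$ cut out by the boundary. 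The whole proof is just repeated application of this principle, after recording that every separation boundary here lies on $C$ and therefore distributes $X$ according to arcs.

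For the first bullet, the three boundary vertices $x,y,v$ of the terminal separating triangle cut $C$ into three closed arcs, one per flap, with $A_2\setminus B_2$ and $A_3\setminus B_3$ carrying the two ``apex'' terminals and $A_1=\{x,y\}\cup(\text{base arc interior})$ carrying the other two. The separation $(A_2,B_2)$ places $A_2\setminus B_2$ and $A_3\setminus B_3$ on opposite sides, so the two apex terminals are strictly separated; by the edge principle they cannot be $\{c,d\}$, which rules out $cd$ lying in either apex arc unless $cd$ uses the shared endpoint $x$ or $y$. Tracing this: if $cd$ lies in an apex closed arc then (since that flap has a single interior terminal and $v\notin X$) one of $c,d$ must be the base vertex $x$ or $y$ and the other is the apex terminal, giving exactly one of $c,d$ on $A_1\cap B_1$; if $cd$ lies in the base closed arc then, because obstruction $2$ requires $A_1\cap B_1$ to meet $X$, one of $c,d$ is a base vertex and the second base terminal sits in the arc interior, again exactly one. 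The degenerate possibility $A_1\cap B_1=\{c,d\}$ is absorbed away, since then $X\subseteq B_1$ and Lemma~\ref{allonesidegeneralH} reduces to $G_{B_1}$, so it is never a minimal triangle obstruction. The same arc analysis applied to the triangle of obstruction $3$ shows at least one of $c,d$ lies in $A_1$, which is what lets me pass to the chain inside $G_{A_1}$.

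For the chains (the second bullet, and the reduced part of the third) I would use the nesting directly. In a terminal separating $2$-chain $((A_1,B_1),\dots,(A_n,B_n))$ the lone terminal $p\in A_1\setminus B_1$ satisfies $p\in A_i\setminus B_i$ for every $i$ (because $A_1\subseteq A_i$ and $B_i\subseteq B_1$), and symmetrically the lone terminal $q\in B_n\setminus A_n$ lies in $B_i\setminus A_i$ for all $i$; thus $p$ and $q$ are separated by every cut of the chain, so $pq\notin E(G)$, whence $\{p,q\}\neq\{c,d\}$ by the edge principle. Since the four terminals are exactly $p$, $q$, one in $A_1\cap B_1$, and one in $A_n\cap B_n$, whichever of $c,d$ is not a lone end must lie in $A_1\cap B_1$ or $A_n\cap B_n$, which is the second bullet. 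The only care needed is to carry this through the substitution of $x,y$ for the terminals lying outside $G_{A_1}$, using that the surviving copy of the edge $cd$ (or its image under the substitution) still cannot be cut.

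Finally, for the third bullet I would show obstructions $4$ and $5$ cannot occur: each of the two nested terminal separating triangles certifies, as above, a strongly separated pair of terminals, and the nesting makes the two certificates independent enough to exhibit a single $2$-separation with boundary on $C$ whose two open sides split $c$ from $d$, contradicting the edge principle. Concretely, the edge principle applied to each triangle forces at least one of $c,d$ into that triangle's base region, and the containment clauses ($A^{1}_{i}\cap B^{1}_{i}\subseteq A^{2}_{1}$ together with $A^{2}_{i}\cap B^{2}_{i}\subseteq A^{1}_{3}$, or $\subseteq A^{1}_{1}$ with $A^{2}_{1}\cap B^{2}_{1}\cap A^{1}_{1}\cap B^{1}_{1}\neq\emptyset$) place the two base regions at opposite ends of the nesting, so $c$ and $d$ are pushed apart and an intermediate cut separates them. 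I expect this bullet to be the main obstacle: unlike the clean chain argument, it requires tracking $c$ and $d$ against two triangles simultaneously, interpreting the distinctness and non-empty-intersection conditions correctly, and pinning down the separating cut while keeping the substituted boundary vertices consistent with the edge $cd$.
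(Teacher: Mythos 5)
Your ``edge principle'' is exactly the engine of the paper's own proof: since $P_{c,d}$ is the single edge $cd$, any cut that would have to meet the arc of $C$ between $c$ and $d$ must contain $c$ or $d$ itself, and the paper's treatment of the first two bullets is the same arc-by-arc bookkeeping you describe. Your handling of bullets 1 and 2 is essentially sound (in particular, your observation that the two lone ends of a terminal separating $2$-chain lie in $A_i\setminus B_i$ and $B_i\setminus A_i$ for every $i$, hence cannot be the adjacent pair $\{c,d\}$, is a clean way to get the second bullet), with two caveats. First, disposing of the case $A_1\cap B_1=\{c,d\}$ by appeal to ``minimality'' is not legitimate: the lemma is an unconditional statement, not one about a minimal counterexample. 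The case is in fact impossible, but for an order-on-$C$ reason (if $A_1\cap B_1=\{c,d\}$ then $a$ and $b$ are the apex terminals, and no arc of $C$ with an endpoint in $\{c,d\}$ can contain $a$ in its interior while excluding both $b$ and $d$); the paper is equally silent on ``exactly one,'' so this is a shared looseness rather than a defect relative to the paper. Second, for bullet 2 you need \emph{both} of $c$ and $d$ in the triangle's base $A_1$, not ``at least one'': this does follow from your edge principle together with the fact that in obstruction 3 the base boundary avoids $X$ and each apex flap holds only one terminal, but you must say so, since your chain argument requires $\{c,d,x,y\}$ to be the full terminal set of the chain in $G_{A_1}$.

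The genuine gap is the third bullet, which you explicitly leave as a plan, and the plan as stated aims at the wrong contradiction. You propose to ``exhibit a single $2$-separation whose two open sides split $c$ from $d$,'' but if --- as your own edge principle forces --- both $c$ and $d$ land in the base of \emph{each} of the two triangles, they are never split apart: they sit together in the common middle region. The contradiction actually comes from the other two terminals. With $c,d$ in both bases, both triangles must have $\{a,b\}$ as their pair of apex terminals, yet the nesting conditions of obstructions 4 and 5 place the apex flaps of the two triangles in disjoint parts of $G$, so $a$ cannot lie in an apex flap of each. Equivalently (and this is the route the paper takes), the nesting forces the two triangles' apex pairs to be complementary, so some triangle either splits $\{c,d\}$ across its two apex flaps or puts one of $c,d$ in an apex and the other in its base; either way one of its cuts must contain $c$ or $d$, contradicting that all cuts of these triangles avoid $X$. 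Until this step is carried out against the actual definitions of obstructions 4 and 5 --- including verifying that the apex cuts, not just the base cuts, avoid $X$ --- the third bullet is unproven.
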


\begin{proof}

Let $P_{a,b}$ be the $(a,b)$-path on $C$ such that $c,d \not \in V(P_{a,b})$. Similarly define $P_{b,c}$, $P_{c,d}$ and $P_{d,a}$. Then by construction, $P_{a,b} \cup P_{b,c} \cup P_{c,d} \cup P_{d,a} = C$. 

We first show that if we have a terminal separating triangle $(A_{1},B_{1}),(A_{2},B_{2}),(A_{3},B_{3})$ as in obstruction $2$, that $A_{1} \cap B_{1}$ contains one of $c$ or $d$.

 If $A_{1} \cap B_{1}$ contains $c$ or $d$ we are done. Therefore we assume that $A_{1} \cap B_{1}$ contains $a$. Let $(A_{1} \cap B_{1}) \setminus \{a\} = v$. Then to satisfy the definition of a terminal separating triangle we have that $v \in V(P_{b,c})$ or $v \in V(P_{c,d})$. This follows since if $v$ lies on either of $P_{a,b} \setminus \{a,b\}$ or $P_{d,a} \setminus \{d,a\}$, then either there would not be two exactly two vertices of $X$ contained in $A_{1}$, or we could not satisfy the condition that $A_{j} \not \subseteq A_{i}$ for all $i,j \in \{1,2,3\}$, $i \neq j$ and maintain that $A_{2} \setminus  B_{2}$ and $A_{3} \setminus B_{3}$ both contain a vertex of $X$. 

If $v=b$, then notice that the vertices of $A_{2} \cap B_{2}$ and $A_{3} \cap B_{3}$ that are not $a$ or $b$ lie on $P_{c,d}$. Since $cd \in E(C)$, without loss of generality $A_{2} \cap B_{2} = \{b,c\}$ and $A_{3} \cap B_{3} = \{a,c\}$. But then we do not have a terminal separating triangle, a contradiction. 

Therefore $v \neq b$. If $v = c$ or $v = d$ we are done. Since $cd \in E(C)$ it suffices to consider the case when $v \in P_{b,c} \setminus \{b,c\}$. Suppose $A_{2} \cap B_{2} = \{v,x\}$. If $x \not \in V(P_{a,d})$, then either $A_{2} \setminus B_{2}$ does not contain a vertex of $X$, or $A_{3} \setminus  B_{3}$ does not contain a vertex of $X$ which is a contradiction. If $x = d$, then $A_{3} \cap B_{3} = \{a,d\}$ which implies that $A_{3} \setminus B_{3}$ does not contain a vertex from $X$, a contradiction. Therefore $A_{1} \cap B_{1}$ cannot contain $a$. Mirroring the above argument, $A_{1} \cap B_{1}$ cannot contain $b$, and thus since by definition $A_{1} \cap B_{1}$ contains a vertex of $X$, it contains $c$ or $d$. 

Now suppose we have a terminal separating triangle $(A^{1}_{1},B^{1}_{1}), (A^{1}_{2}, B^{1}_{2}), (A^{1}_{3}, B^{1}_{3})$ and a terminal separating $2$-chain $((A_{1},B_{1}),\ldots,(A_{n},B_{n}))$ in $G_{A_{1}}$ satisfying the properties of obstruction $3$. We will show that either $A_{1} \cap B_{1}$ or $A_{n} \cap B_{n}$ contains $c$ or $d$. 

 Since $A^{1}_{1} \cap B^{1}_{1} = \{x,y\}$ and $x,y \not \in X$, to be a terminal separating triangle, for all $i \in \{1,2,3\}$,  $A^{1}_{i} \cap B^{1}_{i}$ does not contain a vertex from $X$. This follows from the definition of terminal separating triangle as two vertices of $X$ lie in $A^{1}_{1} \setminus  B^{1}_{1}$, so if one of $A^{1}_{i} \cap B^{1}_{i}$, for $i \in \{2,3\}$ contained a vertex of $X$, then at least one of $A^{1}_{2} \setminus B^{1}_{2}$ or $A^{1}_{3} \setminus B^{1}_{3}$ does not contain a vertex from $X$, a contradiction. Now notice that since $A^{1}_{1} \setminus \{x,y\}$ contains two vertices of $X$, and $cd \in E(C)$, either $c,d \in A^{1}_{1} \setminus \{x,y\}$ or $a,b \in A^{1}_{1} \setminus \{x,y\}$. If $a,b \in A^{1}_{1} \setminus \{x,y\}$ then to remain a terminal separating triangle, one of $A^{1}_{2} \cap B^{1}_{2}$ or $A^{1}_{3} \cap B^{1}_{3}$ contains a vertex from $P_{c,d}$. But since $cd \in E(C)$, and we know that for all $i \in \{1,2,3\}$,  no vertex of $X$ is contained in $A^{1}_{i} \cap B^{1}_{i}$, a contradiction. Therefore $c,d \in A^{1}_{1} \setminus \{x,y\}$. Then by definition of terminal separating $2$-chain and since $a,b,c,d$ appear in that order on $C$, the terminal separating $2$-chain in $G_{A_{1}}$ contains either $d$ or $c$ in $A_{n} \cap B_{n}$ or $A_{1} \cap B_{1}$.

Now suppose we have two distinct terminal separating triangles $(A^{1}_{1}, B^{1}_{1}), (A^{1}_{2},B^{1}_{2}), (A^{1}_{3}, B^{1}_{3})$ and $(A^{2}_{1},B^{2}_{1}), \\ (A^{2}_{2},B^{2}_{2}), (A^{2}_{3}, B^{2}_{3})$, and a terminal separating $2$-chain in $G[A_{1} \cap A_{2}]$ satisfying the properties of obstruction $4$. We will show that this obstruction does not exist since $cd \in E(C)$.

 Notice from the assumptions that $A^{i}_{1} \cap B^{i}_{1}$ does not contain any vertices from $X$ for $i \in \{1,2\}$. Then since $cd \in E(C)$, without loss of generality we may assume that $a,b \in A^{1}_{1}$ and $c,d \in A^{2}_{1}$. But then by the previous discussion, one of $A^{2}_{2} \cap B^{2}_{2}$ and $A^{2}_{3} \cap B^{2}_{3}$ contains a vertex from $P_{c,d}$. But then one of $A^{2}_{2} \cap B^{2}_{2}$ and $A^{2}_{3} \cap B^{2}_{3}$ contains $c$ or $d$, which implies that $(A^{2}_{1}, B^{2}_{1}), (A^{2}_{2},B^{2}_{2}), (A^{2}_{3}, B^{2}_{3})$ is not a terminal separating triangle, a contradiction. 

Finally, suppose that the fifth obstruction occurred. Then we have two distinct terminal separating triangles $((A^{1}_{1},B^{1}_{1}), (A^{1}_{2},B^{1}_{2}), (A^{1}_{3},B^{1}_{3})), ((A^{2}_{1},B^{2}_{1}),  (A^{2}_{2},B^{2}_{2}),(A^{2}_{3},B^{2}_{3}))$ satisfying the properties of obstruction $5$. Then from our assumptions and since $cd \in E(C)$, without loss of generality we may assume  $c,d \in A^{1}_{1} \setminus B^{1}_{1}$ and $a,b \in A^{2}_{1} \setminus B^{2}_{1}$. By the same argument as for the obstruction $4$ case, this gives a contradiction, completing the proof.    
\end{proof}

We also note a well known observation on the submodularity of separations. 

\begin{proposition}
Let $G$ be a graph with $2$-separations $(A_{1},B_{1})$ and $(A_{2},B_{2})$. Let $A_{1} \cap B_{1} = \{u,v\}$ and $A_{2} \cap B_{2} = \{x,y\}$ where $x,y,u$ and $v$ are distinct vertices. Furthermore, suppose that $x \in A_{1} \setminus B_{1}$ and $y \in B_{1} \setminus A_{1}$. Then let $u \in A_{2} \setminus  B_{2}$ and $v \in B_{2} \setminus A_{2} $. Then there is a separation $(A',B')$ such that $A' \cap B' = \{u,x\}$ and $A' \subseteq A_{1}$ and $B_{1} \subseteq B'$. 
\end{proposition}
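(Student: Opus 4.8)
The plan is to exhibit the desired separation explicitly by ``uncrossing'' the two given separations, taking $A' = A_1 \cap A_2$ and $B' = B_1 \cup B_2$. These are precisely the combinations produced by the submodularity of the order function on separations, so the only real work is to check that this pair is genuinely a separation and that its boundary is exactly $\{u,x\}$; the containments $A' \subseteq A_1$ and $B_1 \subseteq B'$ will then be immediate.

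First I would verify that $A' \cup B' = V(G)$. Any vertex $w \notin B_1 \cup B_2$ lies in $A_1 \setminus B_1 \subseteq A_1$ and in $A_2 \setminus B_2 \subseteq A_2$, hence in $A_1 \cap A_2 = A'$, so every vertex is covered. Next I would rule out edges between $A' \setminus B'$ and $B' \setminus A'$. Suppose, for contradiction, that $pq \in E(G)$ with $p \in A' \setminus B'$ and $q \in B' \setminus A'$. Then $p \in A_1 \setminus B_1$, so since $(A_1,B_1)$ is a separation $q$ cannot lie in $B_1 \setminus A_1$, which forces $q \in A_1$; symmetrically $p \in A_2 \setminus B_2$ forces $q \in A_2$. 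Hence $q \in A_1 \cap A_2 = A'$, contradicting $q \notin A'$. This shows $(A',B')$ is a separation.

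It then remains to compute the boundary $A' \cap B' = (A_1 \cap A_2) \cap (B_1 \cup B_2)$, which I would split as $(A_1 \cap A_2 \cap B_1) \cup (A_1 \cap A_2 \cap B_2)$. Using $A_1 \cap B_1 = \{u,v\}$ together with $u \in A_2$ and $v \notin A_2$ (the latter from $v \in B_2 \setminus A_2$ and the distinctness of the four vertices) gives $A_1 \cap A_2 \cap B_1 = \{u\}$; symmetrically, $A_2 \cap B_2 = \{x,y\}$ with $x \in A_1$ and $y \notin A_1$ gives $A_1 \cap A_2 \cap B_2 = \{x\}$. Thus $A' \cap B' = \{u,x\}$, which in particular has size at most $2$ as required. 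Finally, $A' = A_1 \cap A_2 \subseteq A_1$ and $B' = B_1 \cup B_2 \supseteq B_1$ are immediate, completing the argument.

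Since the statement is a routine uncrossing fact, I do not expect any genuine obstacle; the only point requiring a little care is tracking the membership relations among $u,v,x,y$ when evaluating the boundary, so that the two ``crossing'' vertices $v$ and $y$ are correctly excluded and only $u$ and $x$ survive.
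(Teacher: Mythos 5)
Your proposal is correct and takes essentially the same uncrossing approach as the paper: both set $A' = A_1 \cap A_2$ and verify that $\{u,x\}$ is the resulting boundary. If anything, your explicit choice $B' = B_1 \cup B_2$ and the direct edge-checking argument are slightly cleaner than the paper's version, which literally defines $B' = V(G) \setminus (A_1 \cap A_2)$ (making $A' \cap B'$ empty as written) and relies on a path argument to show that $u$ and $x$ separate $A_1 \cap A_2$ from the rest.
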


\begin{proof}
Let $A' = A_{1} \cap A_{2}$ and $B' = V(G) \setminus (A_{1} \cap A_{2})$. Let $z$ be any vertex in $A_{1} \cap A_{2}$ and consider a path $P$ from $z$ to any vertex not in $A_{1} \cap A_{2}$. Consider the first vertex in $P$ which is not in $A_{1} \cap A_{2}$. If this vertex is in $B_{1} \cap A_{2}$, then since $u \in A_{2} \setminus B_{2}$, this vertex is $u$, and thus $u \in V(P)$. If this vertex is in $A_{1} \cap B_{2}$ then since $x \in A_{1} \setminus B_{1}$ this vertex is $x$ and thus $x \in V(P)$. Notice that these are the only options, and thus $(A',B')$ is a $2$-separation with $A' \cap B' = \{u,x\}$. Additionally, it is immediate that $A' \subseteq A_{1}$ and $B_{1} \subseteq B'$.
\end{proof}

Now we prove the theorem. Throughout the proof we will abuse the notation of separations slightly.  Suppose we have a graph $G$ and a $2$-separation $(A,B)$. Consider the graph  $G_{B} = G[B] \cup \{xy\}$ where $xy \in A \cap B$. If there is a $2$-separation $(A',B')$ in $G_{B}$, then we will refer to the $2$-separation $(A' \cup A,B)$ in $G$ as $(A',B')$ to avoid notational clutter. Thus essentially if a separation $(A',B')$ in $G_{B}$ induces a natural separation in $G$, then we refer to the separation in $G$ as $(A',B')$.

\begin{proof}[Proof of Theorem \ref{w4cuts}]
Lemma \ref{obstructionsareobstructions} proves one direction of the theorem.

For the other direction, consider a graph $G$ which is a minimal counterexample with respect to $|V(G)|$. That is, we consider a graph $G$ such that there exists a cycle $C$ such that $X \subseteq V(C)$ where none of the five above obstructions exist on the cycle $C$, and $G$ is $W_{4}(X)$-minor free. Note such cycle always exists by Corollary \ref{webcycle}. Without loss of generality let $a,b,c,d$ appear in that order on $C$. Let $P_{a,b}$ be the $(a,b)$-path on $C$ such that $c,d \not \in V(P_{a,b})$. Similarly define $P_{b,c},P_{c,d}$ and $P_{d,a}$.  The goal will be to show that $G$ must be $3$-connected, which contradicts Theorem \ref{3connectivityW4K4}. We go through all the different possibilities for where the vertices of $X$ can be in relation to a $2$-separation.

\textbf{Claim 1:} There is no $2$-separation $(A,B)$ such that $X \subseteq A$. 

Suppose we had such separation and first suppose $V(C) \subseteq A$. By applying Lemma \ref{allonesidegeneralH} to $(A,B)$, the graph $G$ has a $W_{4}(X)$-minor if and only if $G_{A}$ has a $W_{4}(X)$-minor. Since $G$ is a vertex minimal counterexample, $G_{A}$ has one of the obstructions on $C$. But then the obstruction exists in $G$, a contradiction. 

Therefore we can assume that there are vertices of $C$ in $B \setminus A$. Since $X \subseteq V(C)$ and $X \subseteq A$, all of the vertices in $V(C) \cap B$ lie in exactly one of $P_{a,b}$, $P_{b,c}$,  $P_{c,d}$ or $P_{d,a}$. Without loss of generality, suppose all the vertices in $V(C) \cap B$ lie on $P_{a,b}$. Furthermore, since there is a vertex in $B \setminus A$, this implies that $A \cap B \subseteq V(P_{a,b})$. Then after applying Lemma \ref{allonesidegeneralH} to $(A,B)$ notice that in the graph $G_{A}$, that $C_{A} = G_{A}[V(C) \cap A]$ is a cycle. As $G$ is a minimal counterexample, $C_{A}$ has one of the five obstructions. Notice that regardless of the obstruction, since all the vertices of $C$ that were in $B$ were on $P_{a,b}$, the obstruction for $C_{A}$ in $G_{A}$ exists in $G$ for $C$. But this is a contradiction. 

\textbf{Claim 2:} There is no $2$-separation $(A,B)$ such that two vertices of $X$ lie in $A \setminus B$ and two vertices of $X$ that lie in $B \setminus A$.

Suppose such separation existed and let $A \cap B = \{x,y\}$. Notice that for such a separation to exist we have that $x \in V(C)$ and $y \in V(C)$.  Consider the graphs $G_{A}$ and $G_{B}$ inherited from Lemma \ref{2sepW4} and let $C_{A}$ and $C_{B}$ be the cycles where $C_{A} = G_{A}[V(C) \cap A]$ and $C_{B} = G_{B}[V(C) \cap A]$. By minimality, both $C_{A}$ and $C_{B}$ have one of the five obstructions. We consider the various cases.
 
 \textbf{Case 1:} The cycle $C_{B}$ has a terminal separating $2$-chain as in obstruction $1$, say $((A^{2}_{1},B^{2}_{1}),\ldots,(A^{2}_{n},B^{2}_{n}))$.
 
 \textbf{Subcase 1:} The cycle $C_{A}$ has a terminal separating $2$-chain as in obstruction $1$, $((A^{1}_{1},B^{1}_{1}),\ldots,(A^{1}_{n},B^{1}_{n}))$.

  Then if $A^{1}_{n} \cap B^{1}_{n} \cap A^{2}_{1} \cap B^{2}_{1} \neq \emptyset$ we may concatenate the two terminal separating $2$-chains giving a terminal separating $2$-chain of $G$ on $C$ satisfying obstruction $1$. Therefore we can assume $A^{1}_{n} \cap B^{1}_{n}$ does not share a vertex with $A^{2}_{1} \cap B^{2}_{1}$. But then the terminal separating $2$-chain $((A^{1}_{1},B^{1}_{1}),\ldots,(A^{1}_{n},B^{1}_{n}),(A,B), (A^{2}_{1},B^{2}_{1}),\ldots,(A^{2}_{n},B^{2}_{n}))$  satisfies obstruction $1$ on $C$, a contradiction. 
 
 \textbf{Subcase 2:} The cycle $C_{A}$ has a terminal separating triangle, $(A^{1}_{1},B^{1}_{1}), (A^{1}_{2},B^{1}_{2}),(A^{1}_{3},B^{1}_{3})$, satisfying obstruction two.
 
  Since $xy \in E(C_{A})$, the terminal vertex in $A^{1}_{1} \cap B^{1}_{1}$ is either $x$ or $y$, by Lemma \ref{nicesplittinglemma}. But then either $(A^{2}_{1},B^{2}_{1}), (A^{2}_{2},B^{2}_{2}), (A^{2}_{3},B^{2}_{3})$ combines with the terminal separating $2$-chain in $G_{B}$ or the terminal separating $2$ chain plus the separation $(A,B)$ to form the third obstruction in $G$, a contradiction. 

\textbf{Subcase 3:}  The cycle $C_{A}$ has a terminal separating triangle $(A_{1},B_{1}),(A_{2},B_{2}),(A_{3},B_{3})$ and a terminal separating $2$-chain $((A^{1}_{1},B^{1}_{1}),\ldots,(A^{1}_{n},B^{1}_{n}))$ as in obstruction $3$. 

By Lemma \ref{nicesplittinglemma}, we can assume that either $x$ or $y$ is in $A^{1}_{1} \cap B^{1}_{1}$. But then either we can concatenate the two terminal separating $2$-chains such that there resulting terminal separating $2$-chain and the terminal separating triangle satisfy obstruction $3$, or we can add in the separation $(A,B)$ to the terminal separating $2$-chains such that the terminal separating $2$-chains plus $(A,B)$ plus the terminal separating triangle satisfy obstruction $3$. 

\textbf{Subcase 4:} Obstruction $4$ or $5$ occurs on $C_{A}$.

By Lemma \ref{nicesplittinglemma} the fourth and fifth obstructions do not occur in $G_{A}$, since $xy \in E(C_{A})$. Now we consider cases where $G_{B}$ does not have a terminal separating $2$-chain.

\textbf{Case 2:} The cycle $C_{B}$ has a terminal separating triangle $(A^{2}_{1},B^{2}_{1}),(A^{2}_{2},B^{2}_{2}),(A^{2}_{3},B^{2}_{3})$ satisfying obstruction $2$.
 
 Notice by Lemma \ref{nicesplittinglemma} the terminal vertex contained in $A^{2}_{1} \cap B^{2}_{1}$ is $x$ or $y$. 
 
 \textbf{Subcase 1:} The cycle $C_{A}$ has a terminal separating triangle $(A^{1}_{1},B^{1}_{1}),(A^{1}_{2},B^{1}_{2}),(A^{1}_{3},B^{1}_{3})$ satisfying obstruction $2$.
 
  Again, by Lemma \ref{nicesplittinglemma} the terminal vertex contained in $A^{1}_{1} \cap B^{1}_{1}$ is $x$ or $y$. If the terminal vertex in $A^{i}_{1} \cap B^{i}_{1}$ is $x$ for both $i =1,2$, then notice in $G$ the two triangles form obstruction five. A similar statement holds if they are both $y$. Then we must have that one of the triangles contains $x$ as the terminal vertex in $A^{i}_{1} \cap B^{i}_{1}$, $i \in \{1,2\}$ and the other contains $y$. But then the two triangles plus the separation $(A,B)$ form obstruction $4$ in $G$, a contradiction. 

\textbf{Subcase 2:} The cycle  $C_{A}$ has a terminal separating triangle and a terminal separating $2$-chain as in obstruction $3$. 

By Lemma \ref{nicesplittinglemma} the terminal separating $2$-chain contains one of the vertices $x$ or $y$. Then by possibly adding in the separation $(A,B)$ to the existing terminal separating $2$-chains, we can extend this to a triangle plus a terminal separating $2$-chain and another terminal separating triangle, as in obstruction $4$, a contradiction. 

\textbf{Case 3:} Both of the cycles $C_{A}$ and $C_{B}$ have obstruction $3$.

But then by Lemma \ref{nicesplittinglemma} the terminal separating $2$-chains in both $C_{A}$ and $C_{B}$ both contain $x$ or $y$, and thus after possibly adding in $(A,B)$, we get in $G$, the fourth obstruction exists on $C$, a contradiction.  

Obstruction $4$ and $5$ cannot occur, and therefore there is no $2$-separation $(A,B)$ such that two vertices of $X$ lie in $A \setminus  B$ and two vertices of $X$ that lie in $B \setminus A$, and $A \cap B \subseteq V(C)$. 
 
\textbf{Claim 3:}  There is no $2$-separation $(A,B)$ such that $b \in A \cap B$, $a \in A \setminus B$, and $c,d \in B \setminus A$. 

Suppose there is such a separation, $(A,B)$, and let $A \cap B = \{v,b\}$. First notice that since $C$ is a cycle containing $X$, for such separation to exist $v \in V(C)$. Applying Lemma \ref{oneterminalinthecut2conn} to $(A,B)$ we get that $G$ has a $W_{4}(X)$-minor if and only if $G_{B}$ has a $W_{4}(X)$-minor. Since we picked $G$ to be a minimal counterexample, $G_{B}$ has one of the five obstructions occuring on the cycle $C_{B} = G_{B}[V(C) \cap B]$. We consider the various cases.

\textbf{Case 1:} The cycle $C_{B}$ has a terminal separating $2$-chain $((A_{1},B_{1}),\ldots,(A_{n},B_{n}))$ satisfying obstruction $1$.

If $A_{1} \cap B_{1}$ or $A_{n} \cap B_{n}$ contains $b$ then it is a terminal separating $2$-chain in $G$ for $C$. Otherwise without loss of generality we have that $A_{1} \cap B_{1}$ contains $v$. But then $((A,B),(A_{1},B_{1}),\ldots,(A_{n},B_{n}))$ is a terminal separating $2$-chain in $G$ for $C$, contradicting that $G$ is a minimal counterexample. 
 
\textbf{Case 2:} The cycle $C_{B}$ has a terminal separating triangle $(A_{1},B_{1}), (A_{2},B_{2}),(A_{3},B_{3})$ satisfying obstruction $2$.

 First suppose that  $b \in A_{1} \cap B_{1}$. Then the terminal separating triangle exists in $G$, contradicting that we have a minimal counterexample. Then $v \in A_{1} \cap B_{1}$. But then in $G$, the separation $(A,B)$ plus $(A_{1},B_{1}), (A_{2},B_{2}),(A_{3},B_{3})$ satisfies obstruction $3$, contradicting that $G$ is a minimal counterexample. 

\textbf{Case 3:} The cycle $C_{B}$ has a terminal separating triangle plus a terminal separating $2$-chain as in obstruction $3$.

Then by Lemma \ref{nicesplittinglemma}, then the terminal separating $2$-chain contains either $v$ or $b$. In the case where the terminal separating $2$-chain contains $v$, then by adding in the separation $(A,B)$, we get a terminal separating $2$-chain plus terminal separating triangle in $G$. In the case where the terminal separating $2$-chain contains $b$, the terminal separating $2$-chain and terminal separating triangle were already an obstruction in $G$, a contradiction.

\textbf{Case 4:} Obstructions $4$ or $5$ occur on $C_{B}$. 

By Lemma \ref{nicesplittinglemma}, these obstructions do not occur, a contradiction. We have considered all possible cases, so therefore we can assume there is no $2$-separation $(A,B)$ such that $b \in A \cap B$, $a \in A \setminus B$, and $c,d \in B \setminus A$. 
 
 \textbf{Claim 4:} There are no $2$-separations of the form $(A,B)$ such that $A \cap B = \{u,v\}$, $a \in A \setminus \{u,v\}$, and $b,c,d \in B \setminus \{u,v\}$.

 Without loss of generality, we may assume that $u \in V(P_{a,b})$ and $v \in V(P_{a,d})$. By applying Lemma \ref{oneterminalonesideothersother} we get that $G$ has a $W_{4}(X)$-minor if and only if $G_{B}$ has either a $W_{4}(X_{1})$-minor or a $W_{4}(X_{2})$-minor, where $X_{1} = \{b,c,d,u\}$ and $X_{2} = \{b,c,d,v\}$. Since we have a minimal counterexample with respect to the number of vertices, we get that $C_{B} = G_{B}[V(C) \cap B]$ has one of the obstructions when we consider $X_{1}$ and when we consider $X_{2}$. First notice that if under either $X_{1}$ or $X_{2}$, we get obstruction $4$ or $5$, then the obstruction exists in $G$, a contradiction. We consider the various other cases.
 
 \textbf{Case 1:} Under $X_{2}$, we get a terminal separating $2$-chain  $((A_{1},B_{1}),\ldots,(A_{n},B_{n}))$.
 
 Notice that we may assume that $v \in A_{1} \cap B_{1}$ or $v \in A_{n} \cap B_{n}$, as otherwise $((A_{1},B_{1}),\ldots, (A_{n},B_{n}))$ exists in $G$ for $C$, a contradiction. Therefore without loss of generality, we suppose that $v \in A_{1} \cap B_{1}$. 
 
 \textbf{Subcase 1:} Under $X_{1}$, we get a terminal separating $2$-chain $((A'_{1},B'_{1}),\ldots,(A'_{n'},B'_{n'}))$. 
 
 By similar reasoning as above, we may assume that $u \in A'_{1} \cap B'_{1}$. Notice that the vertex in $A'_{1} \cap B'_{1} \setminus \{u\}$ lies in $P_{b,c} \setminus \{b\}$ or in $P_{d,c} \setminus \{d\}$ and and the vertex in $A_{1} \cap B_{1} \setminus \{v\}$ lies in $P_{b,c} \setminus \{b\}$ or $P_{d,c} \setminus \{d\}$. Consider the case where both the vertex in $A'_{1} \cap B'_{1} \setminus \{u\}$ and the vertex in $A_{1} \cap B_{1} \setminus \{v\}$ lie on $P_{b,c} \setminus \{b\}$.

We consider various subcases. Suppose the vertex in $A'_{1} \cap B'_{1} \setminus \{u\}$ is the same vertex as $A_{1} \cap B_{1} \setminus \{v\}$. If this vertex is $c$, then $(A,B), (B_{1},A_{1}), (A'_{1},B'_{1})$ is a terminal separating triangle in $G$ satisfying obstruction $2$, a contradiction. Thus we assume the vertex in $A'_{1} \cap B'_{1} \setminus \{u\}$ is not $c$. But then $(A,B), (A_{1},B_{1}), (A'_{1},B'_{1})$ plus $(A_{2},B_{2}),\ldots,(A_{n},B_{n})$ is a terminal separating triangle and terminal separating $2$-chain satisfying obstruction $3$, a contradiction. 
 
 Therefore we can assume that $A'_{1} \cap B'_{1} \setminus \{u\} \neq A_{1} \cap B_{1} \setminus \{v\}$. First suppose that the vertex in $A'_{1} \cap B'_{1} \setminus \{u\}$ lies in $A_{1}$. Notice that one of the vertices in $A'_{2} \cap B'_{2}$ lies on $P_{u,d}$, and call this vertex $x$  (if $x \not \in V(P_{u,d})$, then either $((A'_{1},B'_{1}),\ldots,(A'_{n},B'_{n}))$ is not a dissection, or not a terminal separating chain).  If  $x =v$, then $(A,B),(A'_{1},B'_{1}),(A'_{2},B'_{2})$ plus $((A_{1},B_{1}),\ldots (A_{n},B_{n}))$ satisfies obstruction $3$ in in $G$, a contradiction. Therefore $x \neq v$. But notice that $uv \in E(C_{B})$, which implies that $x$ lies on the $(v,d)$-subpath of $P_{u,d}$. But then the vertex in $A'_{1} \cap B'_{1} \setminus \{u\}$ and $v$ are a $2$-vertex cut. Let $(A',B')$ be the $2$-separation such that $A' \cap B' = \{u, (A'_{1} \cap B'_{1} \setminus \{u\})\}$. Then $(A,B),(A',B'),(A'_{1},B'_{1})$ plus $((A_{1},B_{1}),\ldots, (A_{n},B_{n}))$ satisfy obstruction $3$ in $G$, a contradiction. 
 
 Therefore we can assume that the vertex in $A'_{1} \cap B'_{1} \setminus \{u\}$ lies in $B_{1}$. Let $x$ be the vertex in $A_{1} \cap B_{1} \setminus \{v\}$. Notice that $x \neq c$ as the vertex in $A'_{1} \cap B'_{1} \setminus \{u\}$ lies in $B_{1}$. Since the vertex in $A'_{1} \cap B'_{1} \setminus \{u\}$ lies in $B_{1}$, we have that $\{x,u\}$ is a $2$-vertex cut. Let $(A',B')$ be this $2$-separation. Then $(A,B),(A',B'),(A_{1},B_{1})$ plus $((A_{2},B_{2}),\ldots (A_{n},B_{n}))$ is a terminal separating triangle plus terminal separating $2$-chain satisfying obstruction $3$ a contradiction. 
 
 Therefore we can assume that the vertex in  $A'_{1} \cap B'_{1} \setminus \{u\}$ and the vertex in $A_{1} \cap B_{1} \setminus \{v\}$ both do not lie on $P_{b,c} \setminus \{b\}$. By essentially the same argument, we can assume that the vertex in $A'_{1} \cap B'_{1} \setminus \{u\}$ and the vertex in $A_{1} \cap B_{1} \setminus \{v\}$ both do not lie on $P_{c,d} \setminus \{d\}$. 
 
 Now consider the case where the vertex in $A_{1} \cap B_{1} \setminus \{v\}$ lies in $P_{b,c} \setminus \{b\}$ and the vertex in $A'_{1} \cap B'_{1} \setminus \{u\}$ lies in $P_{c,d} \setminus \{d\}$. Notice that $c$ is not both in $A_{1} \cap B_{1} \setminus \{v\}$ and $A'_{1} \cap B'_{1} \setminus \{u\}$.  Let $x$ be the vertex in $A_{1} \cap B_{1} \setminus \{v\}$ and suppose $x \neq c$. Then $(A_{2},B_{2})$ exists and $A_{2} \cap B_{2} \setminus \{x\}$ lies on $P_{v,d}$. Then notice that $\{u,x\}$ is a $2$-vertex cut in $G$. Let $(A',B')$ be the separation such that $A' \cap B' = \{u,x\}$. 
Then $(A,B), (A_{1}, B_{1}), (A',B')$ plus $((A_{2},B_{2}),\ldots,(A_{n},B_{n}))$ is a terminal separating triangle and terminal separating chain as in obstruction $3$. The case where $c$ is not in $A'_{1} \cap B'_{1} \setminus \{u\}$ follows similarly.  
 
 Now consider the case where the vertex in $A_{1} \cap B_{1} \setminus \{v\}$ lies in $P_{c,d} \setminus \{d\}$ and the vertex in $A'_{1} \cap B'_{1} \setminus \{u\}$ lies in $P_{b,c} \setminus \{b\}$. Notice that at least one of the vertices in $A'_{1} \cap B'_{1} \setminus \{u\}$ and $A_{1} \cap B_{1} \setminus \{v\}$ is not $c$. Let $x$ be the vertex in $A_{1} \cap B_{1} \setminus \{v\}$ and suppose that $x \neq c$. Then since $x \neq c$, we have that $(A_{2},B_{2})$ exists and $A_{2} \cap B_{2} \setminus \{x\}$ lies on $P_{v,b}$. Then since $uv \in E(C_{B})$, we get that $\{u,x\}$ is a $2$-vertex cut. Let $(A',B')$ be the separation such that $A' \cap B' = \{u,x\}$. Then $(A,B),(A',B'), (A_{1},B_{1})$ plus $((A_{2},B_{2}),\ldots,(A_{n},B_{n}))$ is a terminal separating $2$-chain plus terminal separating triangle satisfying obstruction $3$, a contradiction. The case where $c$ is not in $A'_{1} \cap B'_{1} \setminus \{u\}$ follows similarly.

 \textbf{Subcase 2:} Suppose that under $X_{1}$, we get a terminal separating triangle $(A'_{1},B'_{1}), (A'_{2},B'_{2}),(A'_{3},B'_{3})$ satisfying obstruction $2$.

Suppose that the vertex in $A'_{1} \cap B'_{1} \setminus u$ lies in $A_{1}$.  Call this vertex $x$. First suppose $x \in A_{1} \cap B_{1} \setminus \{v\}$. Notice that $x \neq c$, as if $x =c$ then $(A'_{1},B'_{1}),(A'_{2},B'_{2}),(A'_{3},B'_{3})$ would not satisfy the definition of a terminal separating triangle, a contradiction. Therefore $x \neq c$, but then $(A,B), (A'_{1},B'_{1}),(A_{1},B_{1})$ and $((A_{2},B_{2}),\ldots,(A_{n},B_{n}))$ is a terminal separating $2$-chain satisfying obstruction $3$, a contradiction. 

Now consider when $x \in A_{1} \setminus  B_{1}$ and suppose that $x$ lies in $P_{b,c} \setminus \{b\}$. Then the vertex in $A'_{2} \cap B'_{2} \setminus \{x\}$ lies in either $P_{u,d} \setminus \{u,v\}$ or $P_{c,d}$. In either case, notice that $\{x,v\}$ is a $2$-vertex cut. Let $(A',B')$ be the separation such that $A' \cap B' = \{x,v\}$. But then $(A,B),(A',B'),(A'_{1},B'_{1})$ plus $((A_{1},B_{1}),\ldots,(A_{n},B_{n}))$ satisfies obstruction $3$ in $G$, a contradiction. Now suppose that $x$ lies in $P_{c,d} \setminus \{d\}$. Then since $x \in A_{1} \setminus B_{1}$, we have that $u$ and the vertex in $A_{1} \cap B_{1} \setminus \{v\}$ form a $2$-vertex cut. Let $(A',B')$ be the separation induced by this $2$-vertex cut. Then if $n=1$, we have $(A,B), (A',B'), (A_{1},B_{1})$ form a terminal separating triangle in $G$, a contradiction. Otherwise $n >1$ and $(A,B), (A',B'), (A_{1},B_{1})$ plus $((A_{2},B_{2}), \ldots, (A_{n},B_{n}))$ form a terminal separating triangle plus a terminal separating $2$-chain in $G$, a contradiction. 

Therefore we can assume $x$ lies in $B_{1} \setminus A_{1}$. Let $y$ be the vertex in $A_{1} \cap B_{1} \setminus \{v\}$. Then since $x \in B_{1} \setminus (A_{1} \cap B_{1})$,  notice that $u,y$ induces a $2$-separation. Let $(A',B')$ be the $2$-separation such that $A' \cap B' = \{u,y\}$. If $n=1$, then $(A,B), (A',B'), (A_{1},B_{1})$ is a terminal separating triangle satisfying obstruction $2$ in $G$, a contradiction. Otherwise, $(A,B),(A',B'),(A_{1},B_{1})$ plus $((A_{2},B_{2}),\ldots, (A_{n},B_{n}))$ satisfy obstruction $3$ in $G$, a contradiction. 

\textbf{Subcase 3:} Suppose that under $X_{1}$, we get a terminal separating triangle $(A'_{1},B'_{1}), (A'_{2},B'_{2}),(A'_{3},B'_{3})$ and a terminal separating $2$-chain $((A''_{1},B''_{1}), \ldots, (A''_{n},B''_{n}))$ satisfying obstruction $3$.

Notice that if $u \not \in A''_{1} \cap B''_{1}$ or $u \not \in A''_{n} \cap B''_{n}$, then the obstruction exists already in $G$, a contradiction. Therefore without loss of generality, $u \in A''_{1} \cap B''_{1}$. Now by the same arguments as in subcase $1$, we can look at the position of the vertex in $A''_{1} \cap B''_{1} \setminus \{u\}$ and show that some obstruction $3$ or $2$ always exists in $G$.

\textbf{Case 2:} Under $X_{2}$, we get a terminal separating triangle $(A_{1},B_{1}),(A_{2},B_{2}),(A_{3},B_{3})$ satisfying obstruction $2$.

Notice that $A_{1} \cap B_{1}$ contains $v$ as otherwise the obstruction is an obstruction of $G$, a contradiction. Then since $v \in A_{1} \cap B_{1}$, we may assume that no other vertex of $X_{2}$ is in $A_{1} \cap B_{1}, A_{2} \cap B_{2},$ or $A_{3} \cap B_{3}$.  Also note that $u$ is not in any of $A_{1} \cap B_{1}$, $A_{2} \cap B_{2}$, $A_{3} \cap B_{3}$ as otherwise $(A_{1},B_{1}),(A_{2},B_{2}),(A_{3},B_{3})$ would not be a terminal separating triangle. Then we may assume that $u \in A_{1} \setminus B_{1}$. Also note that we may assume the vertex in $A_{1} \cap B_{1} \setminus \{v\}$ lies in $P_{b,c}$. Notice by symmetry and the above cases, we do not need to consider the case where under $X_{1}$ we get a terminal separating $2$-chain.

\textbf{Subcase 1:} Under $X_{1}$, we get a terminal separating triangle, $(A'_{1},B'_{1}),(A'_{2},B'_{2}),(A'_{3},B'_{3})$, satisfying obstruction $2$.

By a similar argument as above, we may assume that $u \in A'_{1} \cap B'_{1}$ as otherwise the obstructions exists in $G$. Without loss of generality we may assume that the vertex in $A'_{1} \cap B'_{1} \setminus \{u\}$ lies in $P_{b,c} \setminus \{c\}$. Let $x$ be the vertex in $A'_{1} \cap B'_{1} \setminus \{u\}$. If $x \in A_{1} \cap B_{1} \setminus \{v\}$, then $(A,B), (A'_{1},B'_{1}),(A_{1},(B_{1})$ and $(A_{1},B_{1}),(A_{2},B_{2}),(A_{3},B_{3})$ satisfy obstruction $5$ in $G$, a contradiction.

Now suppose that $x$ is in $A_{1} \setminus B_{1}$. Notice that $A'_{2} \cap B'_{2} \setminus \{x\}$ lies on $P_{u,d}$. But then since $uv \in E(C_{B})$, $\{x,v\}$ is a $2$-vertex cut. Let $(A',B')$ be the separation where $A' \cap B' = \{x,v\}$. But then $(A,B), (A',B'), (A'_{1},B'_{1})$ and $(A_{1},B_{1}),(A_{2},B_{2}),(A_{3},B_{3})$ satisfy obstruction $5$ in $G$, a contradiction. 

Now suppose that $x$ lies in $B_{1} \setminus A_{1}$. Let $y \in A_{1} \cap B_{1} \setminus \{v\}$. Then $\{u,y\}$ is a $2$ vertex cut. Let $(A',B')$ be the separation such that $A' \cap B' = \{u,y\}$. Then $(A,B),(A',B'), (A_{1},B_{1})$ and $(A_{1},B_{1}),(A_{2},B_{2}),(A_{3},B_{3})$ are terminal separating triangles satisfying obstruction $5$ in $G$, a contradiction. 

Now suppose that $x$ is in $A_{1} \cap B_{1} \setminus \{v\}$. Then easily $(A,B),(A_{1},B_{1}),(A'_{1},B'_{1})$ and $(A'_{1},B'_{1}),(A'_{2},B'_{2}),(A'_{3},B'_{3})$ satisfies obstruction $5$ in $G$ a contradiction. 
 
\textbf{Subcase 2:} Suppose under $X_{1}$, we get a terminal separating triangle $(A'_{1},B'_{1}),(A'_{2},B'_{2}), (A'_{3},B'_{3})$ and terminal separating $2$-chain $((A''_{1},B''_{1}) \ldots (A''_{n'},B''_{n'}))$ as in obstruction $3$.

 Notice that if $u \not \in A''_{1} \cap B''_{1}$ and $u \not \in A''_{n} \cap B''_{n}$ then the obstruction exists in $G$, a contradiction. Thus without loss of generality, we may assume that $u \in A''_{1} \cap B''_{1}$. Then notice that the vertex in $A''_{1} \cap B''_{1} \setminus \{u\}$ lies on $P_{b,c} \setminus \{b\}$ or $P_{c,d} \setminus \{d\}$. Let $x$ be the vertex in $A''_{1} \cap B''_{1}$.

Now suppose that $x$ is in $A_{1} \setminus B_{1}$. Since we assumed that the other vertex in $A_{1} \cap B_{1} \setminus \{v\}$ was in $P_{b,c}$, this implies that $x \in P_{b,c}$. Then notice that since $c \not \in A_{1} \cap B_{1} \setminus \{v\}$, then either $(A''_{2},B''_{2})$ exists and the vertex in $A''_{2} \cap B''_{2} \setminus \{x\}$ lies on $P_{u,d}$ or $(A''_{2},B''_{2})$ does not exist and the vertex in $A'_{1} \cap B'_{1} \setminus \{x\}$ lies in $P_{u,d}$. First suppose the vertex in $A''_{2} \cap B''_{2} \setminus \{x\}$ lies on $P_{u,d}$. But then since $uv \in E(C_{B})$, $\{x,v\}$ is a $2$-vertex cut. Let $(A',B')$ be the separation where $A' \cap B' = \{x,v\}$. But then $(A,B), (A',B'), (A'_{1},B'_{1})$ and $(A_{1},B_{1}),(A_{2},B_{2}),(A_{3},B_{3})$ satisfy obstruction $5$ in $G$, a contradiction. A similar analysis holds for the case when $(A''_{2},B''_{2})$ does not exist and the vertex in $A'_{1} \cap B'_{1} \setminus \{x\}$ lies in $P_{u,d}$.

Now suppose that $x$ lies in $B_{1} \setminus (A_{1} \cap B_{1})$. Let $y \in A_{1} \cap B_{1} \setminus \{v\}$. Then $\{u,y\}$ is a $2$ vertex cut. Let $(A',B')$ be the separation such that $A' \cap B' = \{u,y\}$. Then $(A,B),(A',B'), (A_{1},B_{1})$ and $(A_{1},B_{1}),(A_{2},B_{2}),(A_{3},B_{3})$ are terminal separating triangles satisfying obstruction $5$, a contradiction.

 \textbf{Case 3:} Under $X_{2}$, we have a terminal separating $2$-chain $((A'_{1},B'_{1}),\ldots,(A'_{n},B'_{n}))$ and a terminal separating triangle $(A_{1}, B_{1}),(A_{2},B_{2}),(A_{3},B_{3})$. 
 
 Notice that if $v \not \in A'_{1} \cap B'_{1}$ and $v \not \in A'_{n} \cap B'_{n}$ then the obstruction exists in $G$, a contradiction. Without loss of generality, assume that $v \in A'_{1} \cap B'_{1}$. Notice that the vertex in $v \in A'_{1} \cap B'_{1} \setminus \{v\}$ lies in either $P_{b,c}$ or $P_{c,d}$. By symmetry and the previous cases, we only need to consider the case when under $X_{1}$ we get obstruction $3$.

\textbf{Subcase 1:} Under $X_{1}$ we get a terminal separating $2$ chain $((A''_{1},B''_{1}), \ldots (A''_{n},B''_{n}))$ and a terminal separating triangle $(A'''_{1},B'''_{1}),(A'''_{2},B'''_{2}),(A'''_{3},B'''_{3})$ satisfying obstruction $3$. 

By the same arguments as above, without loss of generality we may assume that $u \in A''_{1} \cap B''_{1}$. Then let $x$ be the vertex in $A''_{1} \cap B''_{1} \setminus \{u\}$. Notice that $x$ lies in $P_{b,c} \setminus \{b\}$ or $P_{c,d} \setminus \{d\}$. Consider the case where $x$ lies on $P_{b,c} \setminus \{b\}$ and the vertex in $A'_{1} \cap B'_{1} \setminus \{v\}$ lies on $P_{b,c} \setminus \{b\}$.

First suppose that $x \in A_{1} \cap B_{1} \setminus \{v\}$. Then if $n =1$, $(A,B),(A''_{1},B''_{1}),(A'_{1},B'_{1})$ and $(A_{1}, B_{1}),(A_{2},B_{2}),(A_{3},B_{3}))$ satisfies obstruction $5$. Otherwise $n \geq 1$ and $(A,B),(A''_{1},B''_{1}), \\ (A'_{1},B'_{1})$ plus $((A'_{2},B'_{2}), \ldots, (A'_{n},B'_{n}))$ plus $(A_{1}, B_{1}),(A_{2},B_{2}),(A_{3},B_{3})$ satisfy obstruction $4$ in $G$, a contradiction. 

Now suppose that $x \in A'_{1} \setminus B'_{1}$. Then either the vertex in  $A''_{2} \cap B''_{2} \setminus \{x\}$ lies on $P_{u,d}$ or $(A''_{2},B''_{2})$ does not exist and the vertex in $A'''_{1} \cap B'''_{1} \setminus \{x\}$ lies on $P_{u,d}$. Consider the case where $A''_{2} \cap B''_{2} \setminus \{x\}$ lies on $P_{u,d}$. Then since $uv \in E(C_{B})$, we have that $\{x,v\}$ is a $2$-vertex cut. Let $(A',B')$ be the separation where $A' \cap B' = \{x,v\}$. But then $(A,B), (A',B'), (A''_{1},B''_{1})$ plus $((A_{1},B_{1}),\ldots,(A_{n},B_{n}))$ plus $(A'_{1}, B'_{1}),(A'_{2},B'_{2}),(A'_{3},B'_{3})$ satisfies obstruction $4$ in $G$, a contradiction. 

Now suppose that $x \in B'_{1} \setminus A'_{1}$. Let $y \in A'_{1} \cap B'_{1} \setminus \{v\}$. Then notice that $\{y,u\}$ is a $2$-vertex cut in $G_{B}$. Let $(A',B')$ be the separation such that $A' \cap B' = \{y,u\}$. Then if $n =1$,  $(A,B), (A',B'),(A'_{1},B'_{1})$ plus $(A_{1},B_{1}),(A_{2},B_{2}),(A_{3},B_{3})$ satisfies obstruction $5$ in $G$, a contradiction. Otherwise, $n \geq 2$, and $(A,B), (A',B'),(A'_{1},B'_{1})$ plus $((A'_{2},B'_{2}),\ldots,(A'_{n},B'_{n}))$ plus $(A_{1},B_{1}),(A_{2},B_{2}),(A_{3},B_{3})$ satisfies obstruction $4$ in $G$, a contradiction. 

Notice the case where $x$ lies on $P_{c,d} \setminus \{d\}$ and the vertex in $A'_{1} \cap B'_{1} \setminus \{v\}$ lies on $P_{b,d}$ follows in a similar fashion.

 Now suppose that $x$ lies on $P_{b,c} \setminus \{b\}$ and the vertex in $A'_{1} \cap B'_{1} \setminus \{v\}$ lies on $P_{c,d} \setminus \{d\}$. Let $y$ be the vertex in $A'_{1} \cap B'_{1} \setminus \{v\}$. Then $\{u,y\}$ is a $2$-vertex cut. Let $(A',B')$ be the separation induced by this $2$-vertex cut. Then if $n=1$, we have $(A,B),(A',B'), (A'_{1},B'_{1})$ plus $(A_{1},B_{1}),(A_{2},B_{2}),(A_{3},B_{3})$ form obstruction $5$, a contradiction. Otherwise, $n >1$ and $(A,B),(A',B'), (A'_{1},B'_{1})$ plus $((A'_{2},B'_{2}) \ldots (A'_{n},B'_{n}))$ and $(A_{1},B_{1}),(A_{2},B_{2}),(A_{3},B_{3})$ satisfies obstruction $4$, a contradiction. 
 
 Now suppose that $x$ lies on $P_{c,d} \setminus \{d\}$ and the vertex in $A'_{1} \cap B'_{1} \setminus \{v\}$ lies in $P_{b,c} \setminus \{c\}$. We note that a similar argument to when both $x$ and the vertex in $A'_{1} \cap B'_{1} \setminus \{v\}$ were in $P_{b,c}$ and $x \in B'_{1} \setminus A'_{1} $ works in this case.

 Therefore, there are no $2$-separations of the form $(A,B)$ such that $A \cap B = \{u,v\}$, $a \in A \setminus \{u,v\}$, and $b,c,d \in B \setminus \{u,v\}$. 

\textbf{Claim 5:} There are no $2$-separations $(A,B)$ such that $A \cap B$ contain two vertices from $X$, and $A \setminus  B$ contains a vertex from $X$ and $B \setminus A$ contains a vertex from $X$. 

If such separation existed, it would be a terminal separating $2$-chain, a contradiction. 

Therefore, our graph $G$ has no $2$-separations. Therefore $G$ is $3$-connected. But every $3$-connected graph which does not have a $K_{4}(X)$-minor has a $W_{4}(X)$-minor by Theorem \ref{3connectivityW4K4}. But this contradicts our choice of $G$, completing the claim. 
\end{proof}

\section{A characterization of graphs without a $K_{4}(X)$, $W_{4}(X)$, or a $K_{2,4}(X)$-minor}

In this section, we will first show that spanning subgraphs of class $\mathcal{A}$ graphs are $K_{2,4}(X)$-free. Second, we show that $2$-connected spanning subgraphs of class $\mathcal{B}$ and $\mathcal{C}$ graphs always have a $K_{2,4}(X)$-minor. Third, we show that determining if a spanning subgraph class $\mathcal{E}$ or $\mathcal{F}$ graph has a $K_{2,4}(X)$-minor reduces to looking at the underlying web. Finally, we show that an $2$-connected spanning subgraph of an $\{a,b,c,d\}$-web has a $K_{2,4}(X)$-minor if and only if it has an $W_{4}(X)$-minor. 

Now we formally define $K_{2,4}(X)$-minors. Let $V(K_{2,4}) = \{t_{1},t_{2},t_{3},t_{4},s_{1},s_{2}\}$ where $E(K_{2,4}) = \{t_{i}s_{j} \ | \ \forall i \in \{1,2,3,4\}, j \in \{1,2\}\}$.  Let $G$ be a graph and $X = \{a,b,c,d\} \subseteq V(G)$. Let $\mathcal{F}$ be the family of maps from $X$ to $V(K_{2,4})$ such that each vertex of $X$ goes to a distinct vertex in $\{t_{1},t_{2},t_{3},t_{4}\}$. For the purposes of this paper, a $K_{2,4}(X)$ minor refers to the $X$ and family of maps given above.

\begin{figure}
\begin{center}
\includegraphics[clip, trim=2cm 11cm 0.5cm 2cm, scale =0.8]{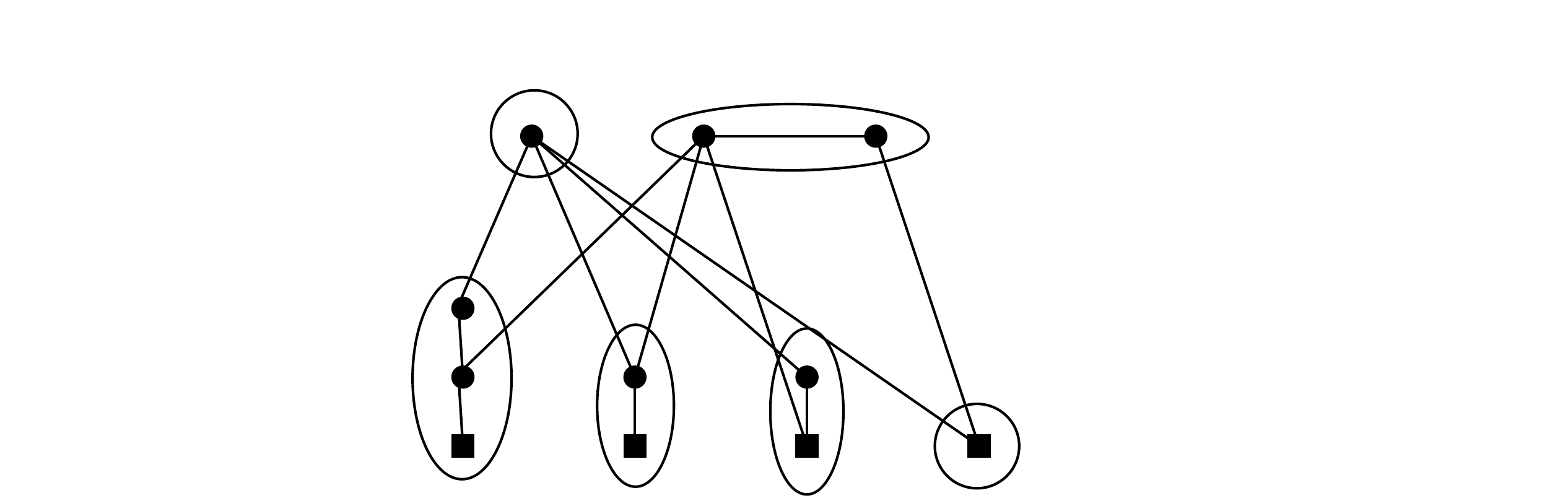}
\caption{A graph $G$ with a model of a $K_{2,4}(X)$-minor. The circles represent connected subgraphs, the vertices represented by squares represent vertices from $X$.}
\end{center}
\end{figure}

We note that $K_{2,4}(X)$-minors have been studied before. They were the subject of Demasi's PhD thesis \cite{linodemasithesis}, where he gave a characterization of $3$-connected planar graphs not containing a $K_{2,4}(X)$-minor. We do not make use of this characterization here, as excluding $K_{4}(X)$ and $W_{4}(X)$-minors already significantly simplifies the problem, but we will appeal to some of his results.

Of particular interest will be $K_{2,2}(X)$-minors due to a useful characterization in \cite{linodemasithesis}. Let $V(K_{2,2}) = \{t_{1},t_{2},s_{1},s_{2}\}$ where $E(K_{2,2}) = \{t_{i}s_{j} \ | \ \forall i,j \in \{1,2\}\}$. Let $G$ be a graph and $X = \{a,b,c,d\} \subseteq V(G)$. Define $\mathcal{F}$ to be the family of maps where $a$ and $b$ are mapped to $\{s_{1},s_{2}\}$ and $c$ and $d$ are mapped to $\{t_{1},t_{2}\}$. For the purpose of this paper, a $K_{2,2}(X)$-minor refers to the $X$ and $\mathcal{F}$ above.

\begin{theorem}[\cite{linodemasithesis}]
\label{k22characterization}
Let $a,b,c,d$ be distinct vertices in a graph $G$. Then $G$ contains a 
$K_{2,2}(X)$-minor if and only if there exists an $(a,c)$-path $P_{a,c}$ and a $(b,d)$-path $P_{b,d}$ such that $P_{a,c} \cap P_{b,d} = \emptyset$ and there exists an $(a,d)$-path $P_{a,d}$ and a $(b,c)$-path $P_{b,c}$ such that $P_{b,c} \cap P_{a,d} = \emptyset$.
\end{theorem}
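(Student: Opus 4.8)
The plan is to prove both implications, treating necessity as routine and putting the work into sufficiency. For necessity, suppose $G$ has a $K_{2,2}(X)$-minor with branch sets $G_a,G_b,G_c,G_d$ (where $a,b$ are the roots sent to $\{s_1,s_2\}$ and $c,d$ to $\{t_1,t_2\}$). Each branch set is connected, and the four edges of $K_{2,2}$ force edges of $G$ realizing the cross pairs $ac,ad,bc,bd$. Since $G_a$ and $G_c$ are connected and joined by an edge, $G_a\cup G_c$ contains an $(a,c)$-path $P_{a,c}$; likewise $G_b\cup G_d$ contains a $(b,d)$-path $P_{b,d}$, and since the branch sets are pairwise disjoint we get $P_{a,c}\cap P_{b,d}=\emptyset$. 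Running the same argument inside $G_a\cup G_d$ and $G_b\cup G_c$ produces disjoint $P_{a,d}$ and $P_{b,c}$, which is exactly the stated condition.

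For sufficiency I would build a $K_{2,2}(X)$-model directly (in fact a subdivision of the $4$-cycle $a,c,b,d$) from the two disjoint pairs. First I would reduce to the case where $G$ is the union of the four given paths, since a minor of a subgraph is a minor of $G$. The key structural observation is that the hypotheses forbid any vertex from lying on both paths of an \emph{opposite} pair ($\{P_{a,c},P_{b,d}\}$ or $\{P_{a,d},P_{b,c}\}$); hence every vertex lies on at most two of the four paths, and when it lies on exactly two, those two are \emph{adjacent} in the cycle $a,c,b,d$ and therefore share a common root label. This lets me assign each vertex a label in $\{a,b,c,d\}$: a vertex on two paths is forced to their shared label, while a vertex on a single path $P_{x,y}$ has a free choice between $x$ and $y$. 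The disjointness of the opposite pairs is precisely what guarantees no conflict, since the only empty label-intersections are $\{a,c\}\cap\{b,d\}$ and $\{a,d\}\cap\{b,c\}$, which by hypothesis never occur.

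It then remains to choose the free labels so that each colour class $G_a,G_b,G_c,G_d$ is connected and each path $P_{x,y}$ decomposes into an initial block labelled $x$ followed by a terminal block labelled $y$; the single edge between the two blocks supplies exactly the cross edge $xy$, and the four classes become the desired branch sets (connected because each is a union of initial segments of paths glued at a common root). To make such a clean prefix/suffix split available on every path, I would choose the four paths to have minimum total length and argue that, along any path, every ``start-forced'' vertex precedes every ``end-forced'' vertex.

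The main obstacle is this last connectivity step. The technical heart is the exchange argument showing that minimality rules out an end-forced vertex preceding a start-forced vertex on a path, together with the verification that the rerouting used to derive the contradiction preserves the opposite-pair disjointness: rerouting $P_{a,c}$ along a subpath of the adjacent path $P_{a,d}$ could a priori create a new intersection with $P_{b,d}$ at a forced-$d$ vertex. I would handle this by always rerouting along the correct adjacent path and invoking minimality to exclude the new intersection, iterating until the four paths meet only in the pattern of the $4$-cycle $a,c,b,d$, at which point the subdivision, and hence the $K_{2,2}(X)$-minor, is exhibited.
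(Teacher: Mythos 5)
The paper does not actually prove this statement; it is imported from Demasi's thesis \cite{linodemasithesis} without proof, so your proposal has to be judged on its own terms rather than against an argument in the text. Your necessity direction is complete and correct. Your reduction of sufficiency to a combinatorial ordering claim is also a genuinely good idea: once every path $P_{x,y}$ splits into an initial block labelled $x$ and a terminal block labelled $y$ (equivalently, on $P_{a,c}$ every vertex of $P_{a,c}\cap P_{a,d}$ precedes every vertex of $P_{a,c}\cap P_{b,c}$, and similarly on the other three paths), the four label classes are pairwise disjoint connected branch sets and the four split edges realise exactly the edges of $K_{2,2}$; I checked that this construction is sound, including the disjointness of the classes, which uses precisely the two hypothesised empty intersections.

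The gap is that this ordering claim is the entire content of the hard direction, and you do not prove it. Your mechanism --- take four paths of minimum total length and derive a contradiction from a bad pair $u\in P_{a,c}\cap P_{b,c}$ preceding $w\in P_{a,c}\cap P_{a,d}$ by rerouting --- does not close as stated. Every natural rerouting (replacing $P_{a,c}$ by $P_{a,d}[a,w]\cup P_{a,c}[w,c]$, or by $P_{a,c}[a,u]\cup P_{b,c}[u,c]$, or modifying $P_{a,d}$ or $P_{b,c}$ analogously) splices in a segment of a path from the \emph{other} pair, which may meet $P_{b,d}$ (respectively $P_{a,d}$); the rerouted system then fails the hypotheses, and minimality of total length does not exclude this, since the obstruction is a new forbidden intersection rather than an increase in length, and the rerouted walk need not be shorter anyway. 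You name exactly this circularity and answer it with ``reroute along the correct adjacent path and invoke minimality, iterating,'' but no quantity is identified that decreases under the iteration or that rules out the bad configuration at an extremal choice; indeed the bad orderings propagate (a vertex of $P_{a,d}\cap P_{b,d}$ ahead of $w$ on $P_{a,d}$ creates a new bad pair on $P_{a,d}$, and so on), so the iteration could a priori cycle. Until the extremal quantity and the exchange step are specified and termination is verified, the sufficiency direction remains a plan rather than a proof.
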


Since $K_{2,4}$ is $2$-connected, by previous discussion, we may assume we are dealing with $2$-connected graphs. Now we record some of the  $2$-connectivity reductions from \cite{linodemasithesis}. For the next three lemmas, suppose $G$ is a $2$-connected graph, $X = \{a,b,c,d\} \subseteq V(G)$ and $(A,B)$ a $2$-separation such that $A \cap B = \{u,v\}$. Also let $G_{A} = G[A] \cup \{uv\}$ and $G_{B} = G[B] \cup \{uv\}$, as before. 

\begin{lemma}[\cite{linodemasithesis}]
\label{alloneside}
Suppose that $X \subseteq B$. Then $G$ has a $K_{2,4}(X)$ minor if and only if $G_{B}$ has a $K_{2,4}(X)$-minor.
\end{lemma}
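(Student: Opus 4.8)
The statement is an ``all roots on one side'' reduction, and the natural strategy is the same two-way argument used for Lemma~\ref{allonesidegeneralH}, but adapted to the fact that $K_{2,4}$ is only $2$-connected (so the clean $3$-connectivity argument of that lemma does not apply verbatim). For the easy direction, I would suppose $G_{B}$ has a $K_{2,4}(X)$-model and show it lifts to $G$ by realizing the edge $uv$ of $G_{B}$ inside $G[A]$. Since $G$ is $2$-connected and $(A,B)$ is a proper separation with $A\cap B=\{u,v\}$, there is a $(u,v)$-path $P$ in $G[A]$; contracting the internal vertices of $P$ into $u$ (and deleting or absorbing the remaining vertices of $A\setminus\{u,v\}$) produces the edge $uv$ and exhibits $G_{B}$ as a minor of $G$. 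As $X\subseteq B$, the roots are untouched, so the $K_{2,4}(X)$-model of $G_{B}$ becomes one of $G$.

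\textbf{The converse.} For the harder direction, let $\{G_{x}\mid x\in V(K_{2,4})\}$ be a model of a $K_{2,4}(X)$-minor in $G$, and define the candidate model $G'_{x}=G_{B}[V(G_{x})\cap B]$ in $G_{B}$. The key structural step is to control which branch sets meet $A\setminus\{u,v\}$. Because $\{u,v\}$ separates $A\setminus\{u,v\}$ from $B\setminus\{u,v\}$, any edge leaving $A\setminus\{u,v\}$ ends at $u$ or $v$; hence a branch set contained in $A\setminus\{u,v\}$ can be adjacent to at most the (at most two) branch sets containing $u$ or $v$, so its image in $K_{2,4}$ has degree at most $2$ and must be some $t_{i}$. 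But $G_{t_{i}}$ contains a root of $X\subseteq B$, so $G_{t_{i}}\not\subseteq A\setminus\{u,v\}$, a contradiction. Therefore no branch set lies entirely in $A\setminus\{u,v\}$, and consequently the only branch sets meeting $A\setminus\{u,v\}$ are the (at most two) ones containing $u$ or $v$; every branch set meets $B$.

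\textbf{Finishing and the main obstacle.} I would then check that $\{G'_{x}\mid x\in V(K_{2,4})\}$ is a valid $K_{2,4}(X)$-model in $G_{B}$. Each branch set not containing a vertex of $A\setminus\{u,v\}$ already lies in $B$ and is unchanged. For a branch set $G_{z}$ containing $u$ but not $v$ (or symmetrically $v$ but not $u$), the portion in $A\setminus\{u,v\}$ attaches to the rest of $G_{z}$ only through the single vertex $u$, so $G_{z}\cap B$ remains connected. The delicate case, which I expect to be the main obstacle, is a branch set containing \emph{both} $u$ and $v$, or the pair of branch sets $G_{z}\ni u$, $G_{w}\ni v$ whose required $zw$-adjacency was realized by an edge inside $A$: restricting to $B$ may disconnect such a branch set or destroy that adjacency. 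Both issues are resolved by the extra edge $uv$ present in $G_{B}$, which reconnects the two $B$-pieces hanging off $u$ and $v$ in the first case and supplies the $G_{z}$--$G_{w}$ adjacency (with $u\in G'_{z}$, $v\in G'_{w}$) in the second. Since all other cross-edges between branch sets have both endpoints in $B$ by the separation property, every required adjacency survives, and $\{G'_{x}\}$ is a $K_{2,4}(X)$-model in $G_{B}$, completing the proof.
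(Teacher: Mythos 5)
The paper does not actually prove this lemma: it is imported verbatim from Demasi's thesis \cite{linodemasithesis}, so there is no in-paper argument to compare against. Your proof is the natural one, and it correctly mirrors the template of the paper's own Lemma \ref{allonesidegeneralH} (no branch set survives on the rootless side; restrict to $B$; let the added edge $uv$ repair connectivity and adjacency), replacing the $3$-connectivity argument used there with the degree/root structure of $K_{2,4}$. The easy direction and the finishing step are fine; in particular you correctly identify and handle the two delicate points (a branch set containing both $u$ and $v$, and an adjacency witnessed only inside $A$), both of which are indeed fixed by the edge $uv$ of $G_{B}$.

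The one step that is not justified as written is the claim that a branch set $G_{y}\subseteq A\setminus\{u,v\}$ ``can be adjacent to at most the (at most two) branch sets containing $u$ or $v$.'' A branch set adjacent to $G_{y}$ must meet $A$, but a priori it could be \emph{another} branch set wholly contained in $A\setminus\{u,v\}$ rather than one containing $u$ or $v$, so the degree bound of $2$ does not follow directly. The repair is immediate and uses the same ingredients you already have: any branch set contained in $A\setminus\{u,v\}$ contains no root, hence is $G_{s_{1}}$ or $G_{s_{2}}$; so $y\in\{s_{1},s_{2}\}$, and since $s_{1}s_{2}\notin E(K_{2,4})$, all four required neighbours of $y$ are the root-bearing sets $G_{t_{1}},\dots,G_{t_{4}}$. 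Each of these meets $A$ (to witness adjacency with $G_{y}$) and meets $B$ (it contains a root), so each contains $u$ or $v$ --- impossible for four pairwise disjoint sets. With that substitution your argument is complete.
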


\begin{lemma}[\cite{linodemasithesis}]
\label{splitvertices}
If $u, v \in X$, then $G$ has a $K_{2,4}(X)$-minor if and only if $X \subseteq A$ and $G_{A}$ has a $K_{2,4}(X)$-minor or $X \subseteq B$ and $G_{B}$ has a $K_{2,4}(X)$-minor. Suppose $u \in X$ and $v \not \in X$. Furthermore, suppose $a \in A \setminus B$ and $b,c,d \in B$. For each $\pi \in \mathcal{F}$, let $\pi_{B}: \{u,b,c,d\} \to V(K_{2,4})$ be such that $\pi_{B}(u) = \pi(a)$ and on $\{b,c,d\}$, $\pi_{B} = \pi$.  Then $G$ has a $K_{2,4}(X)$-minor if and only if $G_{B}$ has a $K_{2,4}(X)$-minor. 
\end{lemma}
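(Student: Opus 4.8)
The plan is to prove both biconditionals by the same two-move template that underlies every reduction in this section: to pass from $G$ down to the smaller graph $G_A$ or $G_B$ one restricts a given $K_{2,4}$-model to one side of the separation, and to pass back up one lifts a model across the virtual edge $uv$ using $2$-connectivity. The engine for the restriction is the observation that, since $\{u,v\}$ is a $2$-cut, any single branch set meets both $A\setminus B$ and $B\setminus A$ only by passing through $u$ or through $v$; consequently a branch set avoiding both $u$ and $v$ lies entirely in $A\setminus B$ or entirely in $B\setminus A$, while a branch set containing exactly one of $u,v$ has connected intersection with each side. The engine for the lift is Menger's theorem, which supplies internally disjoint paths through the contracted side to realise $uv$ and to re-anchor any displaced root.

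For the first statement ($u,v\in X$) the two cut vertices are distinct roots, hence lie in distinct root branch sets $G_u,G_v$, so no branch set contains both and in particular neither $s$-branch set contains a cut vertex. I would first rule out the mixed placement of the remaining two roots $p,q$: if $p\in A\setminus B$ and $q\in B\setminus A$ then $G_p\subseteq A\setminus B$ and $G_q\subseteq B\setminus A$, and since in $K_{2,4}$ each $s$-vertex is adjacent to every $t$-vertex, each of $G_{s_1},G_{s_2}$ would have to meet both $A\setminus B$ (to touch $G_p$) and $B\setminus A$ (to touch $G_q$), forcing each to contain a cut vertex --- impossible, as $u,v$ are already spoken for. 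Thus $X\subseteq A$ or $X\subseteq B$; say $X\subseteq A$. Here the key simplification is that $K_{2,4}$ is bipartite, so its only edges run between the $s$-side and the $t$-side. Because $G_{s_1},G_{s_2}$ must touch $G_p,G_q\subseteq A\setminus B$ and contain no cut vertex, they too lie in $A\setminus B$; restricting every branch set to $A$ then leaves all six branch sets connected and keeps every $s$--$t$ adjacency inside $A$, giving a model in $G[A]\subseteq G_A$. The converse is the easy lift: realise the edge $uv$ of $G_A$, if it is used, by a $u$--$v$ path in $G[B]$ (which exists since $G$ is $2$-connected) and fold its interior, which lies in the unused region $B\setminus A$, into the branch set needing the adjacency.

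For the second statement I would run the same template, but now a cut vertex carries the image of the stranded root. In the lift direction, starting from a model of $G_B$ in which a cut vertex plays the part of the $A$-side root, I would invoke Menger to produce two internally disjoint paths from that root to $\{u,v\}$ inside $A$; their union both re-anchors the root in $A\setminus B$ and, if needed, realises the virtual edge $uv$, entirely through vertices of $A\setminus B$ that are disjoint from the $B$-model. In the forward direction I would restrict the model to $B$; the substantive work is to show that the branch set of the stranded root is exactly the one that must absorb the free cut vertex, so that after restriction that cut vertex legitimately inherits the root's image, while every other branch set restricts to a connected set and every $s$--$t$ adjacency is retained on the $B$ side.

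The main obstacle is the forward direction of the second statement, and it is genuinely delicate precisely because $K_{2,4}$ is only $2$-connected. Its two $s$-vertices form a $2$-cut that mirrors the graph's own $2$-cut $\{u,v\}$, so the two $s$-branch sets can interact with $u$ and $v$ in several ways, and one must argue by cases on how $u$ and $v$ are distributed among the $t$- and $s$-branch sets to pin down that the stranded root's connection to the two $s$-sides is forced through a single free cut vertex. By contrast, for $3$-connected $H$ the analogous placement splits into the two options recorded in Lemma~\ref{oneterminalonesideothersother}; the $2$-connectivity of $K_{2,4}$ is exactly what collapses the case into a single reduction, and verifying this collapse is the crux. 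Once that case analysis is complete, the restriction and the Menger lift are routine, and the two directions close the biconditional.
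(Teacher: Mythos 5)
First, a point of comparison: the paper does not prove this lemma at all --- it is imported verbatim from Demasi's thesis \cite{linodemasithesis} --- so there is no in-paper argument to measure you against. Your overall template (restrict a model across the $2$-cut in one direction, lift via Menger/contraction in the other) is the standard and correct one, and your treatment of the first biconditional is essentially complete: ruling out the mixed placement of the two non-cut roots, then pushing $G_{s_1},G_{s_2}$ into $A\setminus B$ because they avoid both cut vertices, is exactly right. Two small remarks there: since $u,v$ are both roots they land in distinct $t$-branch sets, and $t$-vertices are pairwise non-adjacent in $K_{2,4}$, so the virtual edge $uv$ of $G_A$ is in fact never needed and your lift is even easier than stated; and when you say the union of the two Menger paths ``realises the virtual edge $uv$,'' be careful not to fold the endpoint $u$ itself into $G_{\pi(a)}$, since $u$ already lives in another branch set (again moot here, for the same bipartiteness reason).

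The genuine gap is that you name the crux of the second statement --- ``the branch set of the stranded root is exactly the one that must absorb the free cut vertex'' --- but do not carry out the case analysis that establishes it, and that case analysis is the only non-routine content of the lemma. It does close, and quickly: write $u$ for the cut vertex in $X$ (say $u=b$) and $v$ for the free one, and suppose $v\notin G_{\pi(a)}$. Then $G_{\pi(a)}$ avoids both $u$ and $v$, so $G_{\pi(a)}\subseteq A\setminus B$; each $s$-branch set must send an edge into $G_{\pi(a)}$, so each contains a vertex of $A$, and at most one of them can contain $v$, so at least one $s$-branch set avoids $\{u,v\}$ entirely and is therefore contained in $A\setminus B$. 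But at least one of $c,d$ lies in a branch set avoiding $\{u,v\}$ as well (only one branch set can hold $v$, and $u$ is spoken for), hence contained in $B\setminus A$, and that $s$-branch set cannot reach it --- contradiction. So $v\in G_{\pi(a)}$, and the restriction to $B$ then goes through exactly as in your first part. I would also flag that the lemma as printed is internally inconsistent --- with $u\in X$ and $a\in A\setminus B$ one has $u\in\{b,c,d\}$, so ``$\pi_B(u)=\pi(a)$ and $\pi_B=\pi$ on $\{b,c,d\}$'' clashes; the replacement root must be the cut vertex \emph{not} in $X$. Your prose silently adopts the correct reading (``a free cut vertex carries the image of the stranded root''), but a complete write-up should say so explicitly, since the correctness of the reduction depends on it.
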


\begin{lemma}[\cite{linodemasithesis}]
\label{thereductionwithk22init}
Suppose $a,b \in A \setminus  B$ and $c,d \in B \setminus A $. For each $\pi \in \mathcal{F}$, let $\pi_{A}: \{a,b,u,v\} \to V(K_{2,4})$ be such that $\pi_{A} = \pi$ on $\{a,b\}$ and $\pi_{A}(u) =\pi(c) $ and $\pi_{A}(v) = \pi(d)$. 
Similarly for each $\pi \in \mathcal{F}$, define $\pi_{B} : \{c,d,u,v\} \to V(K_{2,4})$. Additionally for each $\pi \in \mathcal{F}$, let $\pi'_{A}: \{a,b,u,v\} \to V(K_{2,2})$ be such that $\pi'_{A}= \pi$ on $\{a,b\}$ and $\pi'_{A}(u) = \pi(c)$ and $\pi'_{A}(v) = \pi(d)$. 
Also, for each $\pi \in \mathcal{F}$, let $\pi'_{B}: \{c,d,u,v\} \to V(K_{2,2})$ be such that $\pi'_{B} = \pi$ on $\{c,d\}$ and, $\pi'_{A}(u) = \pi(a)$ and $\pi'_{A}(v) = \pi(b)$.  Then $G$ has a $K_{2,4}(X)$-minor if and only if either $G_{A}$ or $G_{B}$ has a $K_{2,4}(X)$-minor, or both of $G_{A}$ and $G_{B}$ have a $K_{2,2}(X)$ minor. 
\end{lemma}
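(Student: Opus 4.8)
The plan is to argue both implications by surgery on $K_{2,4}$-models across the cut. Throughout set $A^{\circ}=A\setminus\{u,v\}$ and $B^{\circ}=B\setminus\{u,v\}$, so that $a,b\in A^{\circ}$, $c,d\in B^{\circ}$, and no edge of $G$ joins $A^{\circ}$ to $B^{\circ}$; since $G$ is $2$-connected both $G_A$ and $G_B$ are $2$-connected, and Menger's Theorem yields two vertex-disjoint paths from $\{u,v\}$ to any chosen pair of vertices on the appropriate side. I note first that in any $K_{2,2}$-model the virtual edge $uv$ is never needed, since under $\pi'_A,\pi'_B$ the vertices $u,v$ always lie on the \emph{same} side of the $K_{2,2}$-bipartition; likewise the virtual edge in the $K_{2,4}$-models for $G_A,G_B$ is a terminal--terminal edge and so is never required by a model.

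For the reverse direction I would treat the three alternatives separately. If $G_A$ has a $K_{2,4}(\{a,b,u,v\})$-model, extend the two terminal sets containing $u$ and $v$ along a pair of disjoint paths from $\{u,v\}$ to $\{c,d\}$ inside $G[B]$ (supplied by Menger); the terminals formerly representing $c,d$ now genuinely contain $c,d$, and by the symmetry of $K_{2,4}$ in its four terminals either pairing of the two paths produces a $K_{2,4}(X)$-model of $G$. The $G_B$ case is symmetric. Finally, if $G_A$ has a $K_{2,2}(\{a,b;u,v\})$-model with terminal sets $B^A_a,B^A_b,B^A_u,B^A_v$ and $G_B$ has a $K_{2,2}(\{c,d;u,v\})$-model with sets $B^B_c,B^B_d,B^B_u,B^B_v$, I would set $S_1=B^A_u\cup B^B_u$, $S_2=B^A_v\cup B^B_v$ and keep $B^A_a,B^A_b,B^B_c,B^B_d$ as the four terminals. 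The sets $S_1,S_2$ are connected (glued at $u$ and at $v$ respectively) and disjoint, and each inherits from the two $K_{2,2}$-models an edge to each of $a,b$ (inside $A$) and to each of $c,d$ (inside $B$); this is exactly a $K_{2,4}(X)$-model of $G$.

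The forward direction is the real work. Starting from a $K_{2,4}(X)$-model $\{G_a,G_b,G_c,G_d,S_1,S_2\}$, I would first pass to a model minimizing the total number of vertices used; minimality should let me assume that no \emph{terminal} branch set crosses the cut, i.e. $G_a,G_b\subseteq A^{\circ}\cup\{u,v\}$ and $G_c,G_d\subseteq B^{\circ}\cup\{u,v\}$. Granting this, each hub $S_i$, being adjacent to the $A$-anchored $G_a$ and to the $B$-anchored $G_c$, must meet both $A$ and $B$. I would then run a case analysis on the locations of the two cut vertices among the branch sets and on which cut vertex each hub-to-opposite-terminal adjacency uses. The governing mechanism is this: if both cut vertices are absorbed into the opposite side's terminals and both hub adjacencies to $\{a,b\}$ (respectively to $\{c,d\}$) funnel through $u,v$, then restricting the model to $G_B$ (respectively $G_A$) and promoting $u,v$ to terminals yields a full $K_{2,4}(\{c,d,u,v\})$-minor (respectively a $K_{2,4}(\{a,b,u,v\})$-minor); otherwise some hub genuinely enters $A^{\circ}$ or $B^{\circ}$, its restriction to the far side retains only the cut vertex it passes through, and then the two sides each keep a rooted $C_4$, giving $K_{2,2}(\{a,b;u,v\})$ in $G_A$ and $K_{2,2}(\{c,d;u,v\})$ in $G_B$ simultaneously.

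I expect the main obstacle to be precisely this forward case analysis. Two points demand care: justifying that a minimum model has no crossing terminal (a shortening/rerouting argument that must respect all eight required hub--terminal adjacencies), and verifying that the restrictions $S_i\cap A$ and $S_i\cap B$ remain connected, so that the $K_{2,2}$- and $K_{2,4}$-certificates read off via Theorem \ref{k22characterization} and the definition of a model are legitimate. Once connectivity of the restricted hubs is secured, each of the finitely many configurations reduces to recognizing a rooted $C_4$ or a rooted $K_{2,4}$ on one side, which is routine.
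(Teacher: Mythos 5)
A preliminary remark: the paper does not prove this lemma at all; it is stated with the attribution \cite{linodemasithesis} and imported from Demasi's thesis, so there is no in-paper argument to measure yours against, and I can only judge the proposal on its own terms. Your sufficiency direction is essentially complete and correct: the virtual edge $uv$ joins two branch sets that are non-adjacent in the pattern (two terminals of $K_{2,4}$, or two vertices on the same side of $K_{2,2}$), so it is indeed never needed; the Menger extension of the $u$- and $v$-terminals to $\{c,d\}$ works because $\mathcal{F}$ contains every bijection onto the terminals; and the glued hubs $B^A_u\cup B^B_u$ and $B^A_v\cup B^B_v$ are connected, mutually disjoint (since $v$ is a root of the $G_A$-model, $v\notin B^A_u$, so $B^A_u\cap B\subseteq\{u\}$, etc.), and carry all eight required adjacencies.

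The gap is in the forward direction, at the one structural claim you lean on: that a vertex-minimal model has $G_a,G_b\subseteq A$ and $G_c,G_d\subseteq B$. This is false. Take $A^{\circ}=\{a,b\}$ with $a\sim u$, $b\sim v$, $a\sim b$, and let $B^{\circ}$ contain vertices $u',v'$ with $u\sim u'$, $v\sim v'$ and two vertices $s_1,s_2$ adjacent exactly to $u',v',c,d$. This graph has a $K_{2,4}(X)$-minor, but every model must build its hubs around $s_1$ and $s_2$, and chasing the adjacencies shows that $G_a$ or $G_b$ is forced to contain $u'$ or $v'\in B^{\circ}$; no minimization removes the crossing. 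Your later ``governing mechanism'' paragraph in fact describes exactly this situation (a root branch set absorbing a cut vertex and funnelling into the far side), so the write-up is internally inconsistent: you cannot both normalize away crossing terminals and then case on them. The repair is to drop the normalization and drive the case analysis by the location of $u$ and $v$ among the six branch sets: if a root branch set, say $G_a$, contains $u$ and meets $B^{\circ}$, then $G_a\cap G[B]$ is connected (every vertex of $G_a\cap B^{\circ}$ reaches $u$ inside $G_a$ without leaving $B$, as $v\notin G_a$), $G_b$ is forced through $v$, both hubs are forced into $B^{\circ}$, and restricting to $G_B$ yields the second alternative; if each hub holds one cut vertex one reads off a rooted $K_{2,2}$ on each side; if both cut vertices lie in $G_c,G_d$ (resp.\ $G_a,G_b$) one gets the full $K_{2,4}$ in $G_A$ (resp.\ $G_B$); and the remaining distributions are easily shown impossible. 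With that reorganization the argument goes through, but as written the forward direction does not.
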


That completes the $2$-connected reductions from \cite{linodemasithesis} that will be needed. There is one $3$-connected reduction from \cite{linodemasithesis} which is useful. 

\begin{lemma}
\label{3connk24}
Let $G$ be a graph and let $(A,B)$ be a tight $3$-separation where $A \cap B = \{v_{1},v_{2},v_{3}\}$. Suppose that $X \subseteq A$. Let $G' = G[A] \cup \{v_{1}v_{2},v_{1}v_{3},v_{2}v_{3}\}$. Then $G$ has a $K_{2,4}(X)$-minor if and only if $G'$ has a $K_{2,4}(X)$-minor. 
\end{lemma}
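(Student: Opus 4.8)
The plan is to prove both implications at the level of branch-set models, extracting from tightness a single structural fact that lets me reroute the three added edges through $B$. Throughout, write $\{v_1,v_2,v_3\}=A\cap B$ for the boundary.

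\textbf{Forward direction ($G$ has it $\Rightarrow$ $G'$ has it).} Given a $K_{2,4}(X)$-model $\{G_z\mid z\in V(K_{2,4})\}$ in $G$, I would set $G'_z:=G'[V(G_z)\cap A]$. Since $X\subseteq A$, each $G'_z$ contains its required root, and the $G'_z$ are pairwise disjoint because the $G_z$ were. Because the separation has no edges between $A\setminus\{v_1,v_2,v_3\}$ and $B\setminus\{v_1,v_2,v_3\}$, any component of $G_z\cap A$ containing no boundary vertex has no edge to the rest of $G_z$ and hence equals $G_z$; so every component of a disconnected $G_z\cap A$ contains one of $v_1,v_2,v_3$. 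As $\{v_1,v_2,v_3\}$ is a clique in $G'$, the triangle edges merge these components and each $G'_z$ is connected. Finally, any adjacency $G_y$--$G_z$ realized by an edge of $G[A]$ survives, while one realized by an edge touching $B\setminus\{v_1,v_2,v_3\}$ forces both $G_y$ and $G_z$ to cross the boundary, hence to contain \emph{distinct} boundary vertices, whose triangle edge in $G'$ restores the adjacency. This direction uses neither tightness nor any special feature of $K_{2,4}$.

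\textbf{Structural lemma (the key use of tightness).} For each $k$, the two boundary vertices other than $v_k$ are joined by a path in $G[B]-v_k$. I would argue by contradiction: if $v_1,v_2$ lie in distinct components of $G[B]-v_3$, let $C\ni v_1$ be one such component. Then $C\setminus\{v_1\}$ consists of non-boundary vertices, which have no neighbour in $A$ and whose only possible $B$-neighbour outside $C$ is $v_3$; thus $\{v_1,v_3\}$ separates $C\setminus\{v_1\}$ from the rest of $G$. If $C\setminus\{v_1\}\neq\emptyset$ this is a proper separation whose boundary is a proper subset of $\{v_1,v_2,v_3\}$, contradicting tightness. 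If $C=\{v_1\}$ (and symmetrically $v_2$ is isolated in $G[B]-v_3$), then every interior vertex of $B$ is separated from $G$ by the single vertex $v_3$, again contradicting tightness unless $B$ has no interior, which is impossible for a proper separation.

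\textbf{Backward direction ($G'$ has it $\Rightarrow$ $G$ has it).} Since $V(G')=A$, the branch sets of a $K_{2,4}(X)$-model in $G'$ already lie in $G$, and the only obstruction is the use of $v_1v_2,v_1v_3,v_2v_3$; I may assume each branch set is a tree, so every used triangle edge is genuinely needed. I would split on how many branch sets meet $\{v_1,v_2,v_3\}$. The decisive observation is that $K_{2,4}$ is triangle-free, so three distinct branch sets are never pairwise adjacent; hence the used triangle edges demand only a path or a claw in $B$, never a subdivided triangle. Concretely, a triangle edge used \emph{inside} a branch set is realized by attaching to that branch set a $v_i$--$v_j$ path from the structural lemma that avoids the third boundary vertex; a triangle edge realizing an adjacency between $G_y\ni v_i$ and $G_z\ni v_j$ is realized by attaching to $G_y$ that same path with its endpoint $v_j$ deleted, so $G_y$ gains a neighbour of $v_j$. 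Because at most two used triangle edges share a common ``centre'' branch set and all chosen paths avoid the third boundary vertex, the paths can be taken to meet only at that centre, keeping the extended branch sets connected and pairwise disjoint, which yields a $K_{2,4}(X)$-model in $G$.

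\textbf{Main obstacle.} The delicate step is the simultaneous disjointness in the backward direction: ensuring the (at most two) rerouting paths traverse $B$ without colliding and can be assigned consistently to branch sets. This is exactly where triangle-freeness of $K_{2,4}$ is indispensable, as it excludes the one configuration---three mutually adjacent boundary branch sets---whose realization would require three internally disjoint boundary-paths in $B$, a structure tightness does not supply. Once that case is ruled out, the remaining configurations form a finite check driven entirely by the structural lemma.
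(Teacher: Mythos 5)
The paper does not actually prove this lemma --- it is imported as a black box from Demasi's thesis \cite{linodemasithesis} --- so there is no in-paper argument to compare against, and your proposal is a genuine self-contained addition. It is essentially sound. The structural lemma (tightness yields, for each $k$, a $v_{i}$--$v_{j}$ path in $G[B]-v_{k}$) is correctly derived: a component of $G[B]-v_{k}$ missing one of the other two boundary vertices would, by tightness, have to be a single boundary vertex, and if that happens on both sides the interior of $B$ (nonempty since the separation is proper) hangs off $v_{k}$ alone, again contradicting tightness. The backward direction's use of triangle-freeness of $K_{2,4}$ is also the right idea and survives scrutiny: any two of the three triangle edges share a boundary vertex, hence the branch set containing it, so all rerouting paths can be rooted at that one ``centre'' branch set; each such path avoids the relevant third boundary vertex, and since every other branch set meets $B$ only in boundary vertices, disjointness is automatic.

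The one genuine gap is in the forward direction. You assert that an adjacency realized through $B\setminus\{v_{1},v_{2},v_{3}\}$ ``forces both $G_{y}$ and $G_{z}$ to cross the boundary, hence to contain distinct boundary vertices.'' That is only true for a branch set that also meets $A$. One of the two branch sets $G_{s_{1}},G_{s_{2}}$ on the small side of $K_{2,4}$ carries no root and could a priori lie entirely inside $B\setminus A$; then $G'_{s_{j}}=G'[V(G_{s_{j}})\cap A]$ is empty and the construction collapses. This case must be excluded explicitly: if $G_{s_{j}}\subseteq B\setminus A$, then each of the four pairwise disjoint sets $G_{t_{1}},\dots,G_{t_{4}}$ contains a root in $A$ and a vertex of $B$ adjacent to $G_{s_{j}}$, hence (being connected, with no edges between $A\setminus B$ and $B\setminus A$) each contains one of the only three boundary vertices --- a pigeonhole contradiction. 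With that paragraph inserted the forward direction closes and the rest of your argument stands.
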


Additional reduction lemmas are proven in \cite{linodemasithesis}, but these suffice for our needs.
To avoid repeating the same statements in the next lemmas, we make the following observation.

\begin{lemma}
\label{smallgraphk22}
Let $H$ be the graph where $V(H) = V(K_{2,2})$ and $E(H) = E(K_{2,2}) \cup \{s_{1}s_{2}\}$. Let $G$ be a $2$-connected spanning subgraph of $H^{+}$. From our definition of $K_{2,2}(X)$-minors, let $t_{1},t_{2}$ take the place of $a,b$ and $s_{1}, s_{2}$ take the place of $c$ and $d$. Then $G$ has a $K_{2,2}(X)$-minor, and $G$ does not have a $K_{2,4}(X)$-minor. 
\end{lemma}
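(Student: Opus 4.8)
The plan is to treat the two assertions separately, exploiting the rigid structure of $H^{+}$. Recall that the two triangles of $H$ are $T_{1}=\{s_{1},s_{2},t_{1}\}$ and $T_{2}=\{s_{1},s_{2},t_{2}\}$, so $H^{+}$ consists of these together with two attached cliques $F_{T_{1}}$ and $F_{T_{2}}$. The key structural facts I would record first are: no vertex of $F_{T_{1}}$ is adjacent to any vertex of $F_{T_{2}}$, every vertex of $F_{T_{1}}$ has all its neighbours in $\{s_{1},s_{2},t_{1}\}\cup F_{T_{1}}$ (and symmetrically for $F_{T_{2}}$), and consequently $\{s_{1},s_{2}\}$ is a $2$-vertex cut of $G$ separating $\{t_{1}\}\cup F_{T_{1}}$ from $\{t_{2}\}\cup F_{T_{2}}$. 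Write $A=\{t_{1}\}\cup F_{T_{1}}\cup\{s_{1},s_{2}\}$ and $B=\{t_{2}\}\cup F_{T_{2}}\cup\{s_{1},s_{2}\}$, so $(A,B)$ is a $2$-separation with $A\cap B=\{s_{1},s_{2}\}$, and both $G_{A}=G[A]\cup\{s_{1}s_{2}\}$ and $G_{B}=G[B]\cup\{s_{1}s_{2}\}$ are $2$-connected.

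For the $K_{2,2}(X)$-minor, I would apply the Fan Lemma in the $2$-connected graph $G_{A}$ to obtain a fan from $t_{1}$ to $\{s_{1},s_{2}\}$: internally disjoint paths $Q_{1}$ from $t_{1}$ to $s_{1}$ and $Q_{2}$ from $t_{1}$ to $s_{2}$ meeting only at $t_{1}$. Since $Q_{1}$ ends at $s_{1}$ and $Q_{2}$ ends at $s_{2}$ and they are internally disjoint, neither can use the added edge $s_{1}s_{2}$, so $Q_{1},Q_{2}\subseteq G[A]\subseteq G$. Symmetrically $G_{B}$ yields paths $R_{1}$ from $t_{2}$ to $s_{1}$ and $R_{2}$ from $t_{2}$ to $s_{2}$ with $R_{1},R_{2}\subseteq G[B]$. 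Because $A\cap B=\{s_{1},s_{2}\}$ while $Q_{1},R_{1}$ contain $s_{1}$ but not $s_{2}$ and $Q_{2},R_{2}$ contain $s_{2}$ but not $s_{1}$, the pair $(Q_{1},R_{2})$ and the pair $(Q_{2},R_{1})$ are each disjoint. Under the relabelling $a=t_{1},b=t_{2},c=s_{1},d=s_{2}$, these are exactly a disjoint $(a,c)$-path and $(b,d)$-path together with a disjoint $(a,d)$-path and $(b,c)$-path, so Theorem \ref{k22characterization} gives the desired $K_{2,2}(X)$-minor.

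For the absence of a $K_{2,4}(X)$-minor, suppose toward a contradiction that branch sets $G_{t_{1}},G_{t_{2}},G_{s_{1}},G_{s_{2}}$ (the four degree-$2$ branch sets, containing the four roots) together with two hub branch sets $H_{1},H_{2}$ (the two degree-$4$ branch sets) realise the minor. Since all four roots are consumed by the leaf branch sets, each $H_{j}$ is contained in $F_{T_{1}}\cup F_{T_{2}}$; as there are no edges between $F_{T_{1}}$ and $F_{T_{2}}$, connectivity forces each $H_{j}$ to lie entirely inside $F_{T_{1}}$ or entirely inside $F_{T_{2}}$. I would then argue the root branch sets are confined: $G_{t_{2}}$ avoids $t_{1},s_{1},s_{2}$ (these lie in other branch sets), and any path inside $G_{t_{2}}$ from $t_{2}$ into $F_{T_{1}}$ would have to pass through $\{s_{1},s_{2},t_{1}\}$, so $G_{t_{2}}\subseteq\{t_{2}\}\cup F_{T_{2}}$; symmetrically $G_{t_{1}}\subseteq\{t_{1}\}\cup F_{T_{1}}$. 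Now a hub $H_{j}\subseteq F_{T_{1}}$ has all external neighbours in $\{s_{1},s_{2},t_{1}\}$, none of which lie in $G_{t_{2}}$, so such a hub cannot be adjacent to $G_{t_{2}}$; hence adjacency of both hubs to $G_{t_{2}}$ forces $H_{1},H_{2}\subseteq F_{T_{2}}$, while adjacency of both hubs to $G_{t_{1}}$ forces $H_{1},H_{2}\subseteq F_{T_{1}}$. Since $F_{T_{1}}\cap F_{T_{2}}=\emptyset$, this is a contradiction, so no $K_{2,4}(X)$-minor exists.

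The routine parts are the fan-lemma extraction and the disjointness bookkeeping in the second paragraph; the main obstacle, and the heart of the argument, is the confinement step in the last paragraph, namely verifying that both the hub branch sets and the root branch sets $G_{t_{1}},G_{t_{2}}$ are trapped inside a single attached clique, which is what makes the two mutually incompatible requirements collide.
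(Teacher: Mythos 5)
Your proof is correct. The first half (the $K_{2,2}(X)$-minor) is essentially the paper's argument: both extract, via $2$-connectivity, an $(t_{1},s_{1})$-path and a $(t_{2},s_{1})$-path confined to the two sides of the separation at $\{s_{1},s_{2}\}$ and avoiding the other $s_{i}$, then invoke Theorem \ref{k22characterization}; your detour through the Fan Lemma in $G_{A}$ and $G_{B}$ rather than directly in $G$ is a cosmetic difference (and your observation that the added edge $s_{1}s_{2}$ cannot appear in the fan paths is the right check to make). The second half is where you genuinely diverge: the paper disposes of the $K_{2,4}(X)$-minor in one line by applying Lemma \ref{splitvertices} (Demasi's reduction for a $2$-separation whose boundary consists of two roots, with the remaining roots split across the two sides), whereas you give a direct, self-contained branch-set analysis — hubs must avoid the four roots, hence live in $F_{T_{1}}\cup F_{T_{2}}$ and, by connectivity, in a single attached clique, while $G_{t_{1}}$ and $G_{t_{2}}$ are trapped on opposite sides, so the two hubs cannot simultaneously reach both. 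Your route costs more space but avoids citing the external reduction lemma and makes the structural reason for the obstruction explicit; the paper's route is shorter but leans on machinery imported from \cite{linodemasithesis}. Both are sound.
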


\begin{proof}
Notice that $\{s_{1},s_{2}\}$ are the vertex boundary for a $2$-separation $(A,B)$  in $H^{+}$ such that $t_{1} \in A \setminus \{s_{1},s_{2}\}$ and $t_{2} \in B \setminus \{s_{1},s_{2}\}$. Then by Lemma \ref{splitvertices},  $H^{+}$ has no $K_{2,4}(X)$-minor and thus $G$ has no $K_{2,4}(X)$ minor.

 As $G$ is $2$-connected, we can find a $(s_{1},t_{1})$-path in $G[A]$ which does not contain $s_{2}$ and a $(s_{2},t_{2})$-path in $G[B]$ which does not use $s_{1}$. Similarly there exists a $(s_{1},t_{2})$-path in $G[B]$ which does not use $s_{2}$ and a $(s_{2},t_{1})$-path in $G[A]$ which does not use $G[B]$. Therefore by Theorem \ref{k22characterization}, $G$ has a $K_{2,2}(X)$-minor.  
\end{proof}

\begin{corollary}
Let $G$ be a $2$-connected spanning subgraph of a class $\mathcal{A},\mathcal{B},$ or $\mathcal{C}$. If $G$ is the spanning subgraph of a class $\mathcal{A}$ graph, $G$ does not have a $K_{2,4}(X)$-minor. If $G$ is the spanning subgraph of a class $\mathcal{B},$ or $\mathcal{C}$, then $G$ has a $K_{2,4}(X)$-minor.
\end{corollary}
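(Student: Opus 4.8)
The plan is to treat the three classes separately, in each case exploiting the $2$-separations visible in the defining graph $H$ of Theorem~\ref{k4free} together with the hypothesis that $G$ is $2$-connected. The common mechanism is that deleting the non-root vertices which meet every triangle leaves a collection of components, each containing exactly one root, and $2$-connectivity forces each such component to attach to the deleted vertices in a controlled way. I will write a $K_{2,4}(X)$-model as root branch sets $B_a,B_b,B_c,B_d$ together with two hub branch sets $S_1,S_2$, and I only ever need that each hub is adjacent to each root branch set.

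For class $\mathcal{A}$ I would first observe that $\{d,e\}$ is a $2$-cut whose deletion leaves three components $C_a,C_b,C_c$ containing $a,b,c$ respectively (each root together with the clique hung on its triangle), where $d\in X$ but $e\notin X$. Suppose for contradiction $G$ has a $K_{2,4}(X)$-model. Since $e$ lies in at most one of the six branch sets, at least two of $B_a,B_b,B_c$ avoid $e$; such a branch set, say $B_a$, is then confined to $C_a$, because it is connected, misses the cut, and $C_a$ is a component of $G-\{d,e\}$. Every vertex outside $C_a$ adjacent to $C_a$ lies in $\{d,e\}$, and $d\in B_d$ is unavailable; hence each hub adjacent to $B_a$ must contain $e$. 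As both hubs are adjacent to $B_a$, both contain $e$, which is impossible, so no $K_{2,4}(X)$-minor exists.

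For class $\mathcal{B}$ I would use that $\{e,f\}$ is a $2$-cut of non-root vertices whose deletion leaves four components $A_a,A_b,A_c,A_d$, each containing one root and, in $H$, adjacent only to $e$ and $f$. Let $B_x$ be the component of $x$ in $G[A_x]$; since $G$ is $2$-connected, $B_x$ cannot attach to the rest of $G$ through a single vertex, so it is adjacent to both $e$ and $f$. Taking $S_1=\{e\}$ and $S_2=\{f\}$ as the hubs and $B_a,\dots,B_d$ as the root branch sets yields a $K_{2,4}(X)$-model directly. Class $\mathcal{C}$ runs along the same lines but is the step I expect to require the most care: here $g$ is the universal non-root vertex and $\{e,f\}$ a bridge, and deleting $\{e,f,g\}$ leaves components $A_a,A_b$ (attaching only to $e,g$) and $A_c,A_d$ (attaching only to $f,g$). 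As before, $2$-connectivity makes each root-component $B_x$ adjacent to both of its two possible neighbours, so $S_1=\{g\}$ is adjacent to all four roots. The second hub must simultaneously reach $B_a,B_b$ (necessarily via $e$, the only available non-$g$ neighbour) and $B_c,B_d$ (via $f$), so it is forced to contain both $e$ and $f$.

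The main obstacle is therefore constructing this second hub, namely joining $e$ to $f$ without using $g$ or meeting any root branch set. The resolution I would give is that $2$-connectivity makes $G-g$ connected, so there is a simple $(e,f)$-path $P$ in $G-g$; moreover, in $G-g$ each component $A_x$ is attached through a single vertex ($e$ if $x\in\{a,b\}$, $f$ if $x\in\{c,d\}$), so a simple path whose endpoints $e,f$ both lie outside $A_x$ cannot meet $A_x$ at all. Hence $P$ avoids every $A_x$, and setting $S_2=V(P)$ gives a connected hub disjoint from $g$ and from every $B_x\subseteq A_x$, adjacent to all four roots. This completes the $K_{2,4}(X)$-model and hence the proof for class $\mathcal{C}$.
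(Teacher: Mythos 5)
Your argument reaches the right conclusions but by a genuinely different route from the paper. The paper handles all three classes by quoting the $2$-separation reduction lemmas imported from Demasi's thesis (Lemmas \ref{splitvertices} and \ref{thereductionwithk22init}) and then invoking Lemma \ref{smallgraphk22} on the small graph that survives the reduction; you instead build or refute the $K_{2,4}(X)$-model by hand from the component structure of $G$ minus the non-root ``hub'' vertices. Your treatment of classes $\mathcal{B}$ and $\mathcal{C}$ is correct and complete: the $2$-connectivity argument forcing each root component $B_x$ to attach to both of its permitted cut vertices is sound, and the simple $(e,f)$-path in $G-g$ really does avoid every $A_x$, because each $A_x$ meets the rest of $G-g$ in a single vertex which is an endpoint of the path and hence cannot be re-used to exit. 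Your analysis of class $\mathcal{C}$ is in fact cleaner than the paper's, which asserts that $\{e,f\}$ is a $2$-cut of a class $\mathcal{C}$ graph; it is not (the vertex $g$ is adjacent to everything), and the separation the paper needs there is of the form $\{e,g\}$ or $\{f,g\}$.

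The one genuine gap is in class $\mathcal{A}$. From ``every vertex outside $C_a$ adjacent to $C_a$ lies in $\{d,e\}$'' you conclude that each hub adjacent to $B_a$ must contain $e$, but a hub could meet $B_a$ from inside $C_a$: nothing prevents a hub from containing vertices of $C_a\setminus B_a$ (the clique $F_{\{a,d,e\}}$ attached to the triangle may be large and need not lie in $B_a$), and such a hub is adjacent to $B_a$ without containing $e$. The repair is already present in your setup: you showed that at least two of $B_a,B_b,B_c$ avoid $e$ and are therefore confined to distinct components, say $B_a\subseteq C_a$ and $B_b\subseteq C_b$. A connected hub adjacent to both cannot lie inside a single component of $G-\{d,e\}$, so it must contain $d$ or $e$, and $d$ is occupied by $B_d$; hence both hubs contain $e$, which is the desired contradiction. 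With that one sentence added, the class $\mathcal{A}$ case is complete.
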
 

\begin{proof}
If $G$ is the spanning subgraph of a class $\mathcal{A}$ graph, then $\{d,e\}$ is the vertex boundary of a $2$-separation $(A,B)$ where two vertices of $X$ lie in $A \setminus \{d,e\}$ and one lies in $B \setminus \{d,e\}$. By applying Lemma \ref{splitvertices} we see that $G$ has a $K_{2,4}(X)$-minor if and only if the graph in Lemma \ref{smallgraphk22} has a $K_{2,4}(X)$-minor. By Lemma \ref{smallgraphk22}, it does not. Therefore $G$ is $K_{2,4}(X)$-minor free.

If $G$ is the spanning subgraph of a class $\mathcal{B}$ or $\mathcal{C}$ graph, then $\{e,f\}$ is the vertex boundary of a $2$-separation $(A,B)$ where two vertices of $X$ lie in $A \setminus \{e,f\}$ and two vertices of $X$ lies in $B \setminus \{e,f\}$. Then applying Lemma \ref{thereductionwithk22init}, we see that $G$ has a $K_{2,4}(X)$-minor if the graph in Lemma \ref{smallgraphk22} has a $K_{2,2}(X)$-minor. By Lemma \ref{smallgraphk22}, the graph in question has a $K_{2,2}(X)$-minor and therefore $G$ has a $K_{2,4}(X)$-minor. 
\end{proof}

Now we deal with class $\mathcal{E}$ and $\mathcal{F}$ graphs.

\begin{lemma}
\label{planarreductionk22}
Let $G$ be a $2$-connected spanning subgraph of a $\{t_{1},t_{2},s_{1},s_{2}\}$-web, $H^{+} = (H,F)$. Then there is a planar $2$-connected graph $G'$ such that $G$ has a $K_{2,2}(X)$-minor if and only if $G'$ has a $K_{2,2}(X)$-minor. 
\end{lemma}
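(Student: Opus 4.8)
The plan is to re-use the planarizing contraction from the proof of Corollary \ref{webcycle} and then verify that it preserves the existence of a $K_{2,2}(X)$-minor in both directions. First I would build $G'$ exactly as in Corollary \ref{webcycle}: for every triangle $T=\{x_1,x_2,x_3\}$ of $H$, look at the connected components $M_1,\dots,M_n$ of $G[V(F_T)]$; if some $M_i$ sends an edge to all three of $x_1,x_2,x_3$, keep one such component as a single vertex $v_T$ and contract every other component onto an arbitrary vertex of $T$, and otherwise (each component meets exactly two of the $x_i$, by $2$-connectivity) keep one representative $v^{ij}_T$ for each pair $\{x_i,x_j\}$ that is met by some component and contract the rest. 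As shown in Corollary \ref{webcycle}, the resulting graph $G'$ is planar and $2$-connected, and since we only ever contract clique vertices onto triangle vertices we never identify two vertices of $H$, so $X\subseteq V(G')$ with its four roots still distinct.

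Since $G'$ is a minor of $G$ obtained without merging roots, any $K_{2,2}(X)$-minor of $G'$ lifts to one of $G$; this gives the easy direction. For the converse I would invoke the path characterization of Theorem \ref{k22characterization}: a $K_{2,2}(X)$-minor is equivalent to the existence of a pair of disjoint paths $P_{s_1 t_1},P_{s_2 t_2}$ together with a pair of disjoint paths $P_{s_1 t_2},P_{s_2 t_1}$. I would treat the two pairs separately and show that each pair can be rerouted so as to live in $G'$.

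The observation driving the rerouting is that for any two vertex-disjoint paths $P,Q$ and any triangle $T$, at most one of $P,Q$ uses an internal vertex of $F_T$. Indeed, since the roots are never clique vertices, a path that enters $V(F_T)$ must leave it again, hence it uses at least two of the three triangle vertices $x_1,x_2,x_3$ as portals; if both $P$ and $Q$ met $V(F_T)$ they would need four distinct portals among the three $x_i$, which is impossible. Because only three portals are available, a single path makes at most one excursion into $V(F_T)$, and that excursion runs inside one component $M_\ell$ from a portal $x_i$ to a portal $x_j$. In $G'$ I would replace this excursion by $x_i\,v_T\,x_j$ in the first construction case, or by $x_i\,v^{ij}_T\,x_j$ in the second case (there $M_\ell$ is adjacent to exactly the pair $\{x_i,x_j\}$, so a representative for that pair was kept); the needed edges exist in $G'$ since $v_T$, resp.\ $v^{ij}_T$, is adjacent to the relevant portals. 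Outside the cliques the paths use only vertices of $H$, which survive in $G'$, so each rerouted path is a genuine path of $G'$.

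Finally I would check disjointness of a rerouted pair: the two paths use disjoint sets of triangle vertices (these lie in $H$) and, by the key observation, disjoint sets of cliques, hence disjoint sets of representative vertices; together with disjointness of their $H$-vertices this makes the rerouted paths disjoint. Applying this to both pairs and invoking Theorem \ref{k22characterization} inside $G'$ yields the desired $K_{2,2}(X)$-minor of $G'$. I expect the main obstacle to be the bookkeeping in this rerouting step, namely verifying that each clique is entered by at most one path of a given pair and that such a visit is a single portal-to-portal excursion through one component, since this is exactly what guarantees both that the kept representatives suffice and that disjointness is preserved.
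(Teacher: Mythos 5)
Your proposal is correct and follows essentially the same route as the paper: construct $G'$ by the contraction scheme of Corollary \ref{webcycle}, get the easy direction from $G'$ being a minor of $G$, and for the converse use Theorem \ref{k22characterization} together with the portal-counting observation that two disjoint paths cannot both enter a clique $F_{T}$ (each would need two of the three triangle vertices), then reroute each excursion through the surviving representative vertex. Your write-up is in fact slightly more careful than the paper's, which loosely says ``contract $F_{T}$ down to a vertex'' even though the construction may retain up to three representatives per triangle; your explicit choice of $v_{T}$ or $v^{ij}_{T}$ handles that case cleanly.
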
 

\begin{proof}
We construct $G'$ in the same way that we construct $G'$ in Corollary \ref{webcycle}. We refer the reader to the proof of Corollary \ref{webcycle} for the verification that $G'$ is $2$-connected and planar. Notice that if $G'$ has a $K_{2,2}(X)$-minor, then immediately $G$ has a $K_{2,2}(X)$-minor. 

Therefore we assume that $G$ has a $K_{2,2}(X)$-minor. Then by Theorem \ref{k22characterization}, there is a $(t_{1},s_{1})$-path $P_{t_{1},s_{1}}$ and a $(t_{2},s_{2})$-path, $P_{t_{2},s_{2}}$ such that $P_{t_{1},s_{1}} \cap P_{t_{2},s_{2}} \neq \emptyset$ and there is an $(t_{1},s_{2})$-path, $P_{t_{1},s_{2}}$ and a $(t_{2},s_{1})$-path $P_{t_{2},s_{1}}$ such that $P_{t_{1},s_{2}} \cap P_{t_{2},s_{1}} \neq \emptyset$. 

Suppose $P_{t_{1},s_{1}}$ contains a vertex from $F_{T}$ for some $T$. Then at least two vertices from $T$ are in $P_{t_{1},s_{1}}$. Then since $|V(T)| = 3$, $P_{t_{2},s_{2}}$ does not contain any vertex from $F_{T}$. Therefore if we contract $F_{T}$ down to a vertex, after contracting appropriately      $P_{t_{1},s_{1}}$ is still a $(t_{1},s_{1})$-path, and $P_{t_{1},s_{1}} \cap P_{t_{2},s_{2}} = \emptyset$. A similar statement holds for $P_{t_{2},s_{1}}$ and $P_{t_{1},s_{2}}$. Applying that argument to each triangle $T$ in $H$ and appealing to Theorem \ref{k22characterization}, $G'$ has a $K_{2,2}(X)$-minor. 
\end{proof}

\begin{lemma}
\label{k22minorswebs}
Let $G$ be a $2$-connected spanning subgraph of a $\{t_{1},t_{2},s_{1},s_{2}\}$-web, $H^{+} = (H,F)$. Assume that $t_{1},t_{2},s_{1}$ and $s_{2}$ appear in that order in the outerface of $H$. Then $G$ does not contain a $K_{2,2}(X)$-minor. 
\end{lemma}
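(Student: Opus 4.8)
The plan is to reduce to the planar case and then extract a forbidden $K_5$-subdivision from the cyclic order of the roots. First I would invoke Lemma \ref{planarreductionk22} to replace $G$ by a planar $2$-connected graph $G'$ with a $K_{2,2}(X)$-minor if and only if $G$ has one. Since that reduction only contracts connected pieces inside the cliques $F_T$ and never identifies the four roots or disturbs the planar skeleton $H$, the roots $t_1,t_2,s_1,s_2$ still appear in this cyclic order on the outer $4$-cycle of the underlying planar structure. Thus it suffices to prove the statement under the assumption that $G$ is planar. Applying Observation \ref{planarface} (with $t_1,t_2,s_1,s_2$ in the roles of $a,b,c,d$), I would add the edges $t_1t_2,\,t_2s_1,\,s_1s_2,\,s_2t_1$ to obtain a planar supergraph $G''$ of $G$ in which the $4$-cycle $C=t_1t_2s_1s_2$ bounds a face. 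Because $G\subseteq G''$, it is enough to show that $G''$ has no $K_{2,2}(X)$-minor, as any such minor of $G$ would survive in $G''$.

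For the core argument I would use the path characterization. Recall that in our $K_{2,2}(X)$-minor the roots $t_1,t_2$ play the part of $a,b$ and $s_1,s_2$ the part of $c,d$. By Theorem \ref{k22characterization}, a $K_{2,2}(X)$-minor would in particular require a $(t_1,s_1)$-path $P$ and a $(t_2,s_2)$-path $Q$ with $P\cap Q=\emptyset$. The key observation is that, in the cyclic order $t_1,t_2,s_1,s_2$ on $C$, the pairs $\{t_1,s_1\}$ and $\{t_2,s_2\}$ \emph{interleave}: the endpoints of $P$ separate the endpoints of $Q$ on the boundary cycle. So I only need to show that two such vertex-disjoint ``crossing'' paths cannot exist in the planar graph $G''$.

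To make this precise without delicate point-set topology, I would add a single new vertex $v$ inside the face bounded by $C$ and join it to all of $t_1,t_2,s_1,s_2$; since these four vertices all lie on the boundary of that face, the resulting graph $G''+v$ is still planar. Now observe that $C$ supplies the four edges $t_1t_2,\,t_2s_1,\,s_1s_2,\,s_2t_1$, the path $P$ supplies a $t_1$--$s_1$ connection, and the path $Q$ supplies a $t_2$--$s_2$ connection, so together they realize all six connections of a $K_4$ on $\{t_1,t_2,s_1,s_2\}$ through internally disjoint paths (here $P$ and $Q$ are internally disjoint from each other by hypothesis and avoid $t_2,s_2$ and $t_1,s_1$ respectively since they are disjoint). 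Adjoining $v$, which is adjacent to all four branch vertices, yields a subdivision of $K_5$ inside the planar graph $G''+v$, a contradiction. Hence $G''$, and therefore $G$, has no $K_{2,2}(X)$-minor.

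The main obstacle I anticipate is bookkeeping rather than ideas: I must verify carefully that the planar reduction of Lemma \ref{planarreductionk22} preserves the cyclic order of the roots on a common face, which is what lets Observation \ref{planarface} apply to $G'$, and I must keep straight that passing from $G$ to the supergraph $G''$ transfers non-existence of the minor in the correct direction (a minor of $G$ is a minor of $G''$, so no minor in $G''$ forces no minor in $G$). An alternative, purely topological way to close the final step would be to argue directly from the Jordan Curve Theorem that the interleaving arcs $P$ and $Q$ must meet inside the closed disk bounded by $C$; I prefer the apex/$K_5$-subdivision formulation above precisely because it sidesteps the nuisance of $P$ or $Q$ running along vertices of the boundary cycle.
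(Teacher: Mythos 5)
Your proposal is correct and follows essentially the same route as the paper: reduce to the planar case via Lemma \ref{planarreductionk22}, use Observation \ref{planarface} to place the roots on a facial $4$-cycle, and then show via Theorem \ref{k22characterization} that the required disjoint $(t_1,s_1)$- and $(t_2,s_2)$-paths cannot both exist because they interleave on that face. The only difference is cosmetic: the paper closes with a direct appeal to the Jordan Curve Theorem, whereas you extract a $K_5$-subdivision using an apex vertex in the face, which is a slightly more careful way of packaging the same topological obstruction.
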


\begin{proof}
We note it suffices to show that $H^{+}$ does not have a $K_{2,2}(X)$-minor. By Lemma \ref{planarreductionk22}, we may assume that $H^{+}$ is planar. Then by Observation \ref{planarface} the cycle with edge set $t_{1}t_{2}, t_{2}s_{1}, s_{1}s_{2}, s_{2}t_{1}$ is the boundary of a face. Consider any  $(t_{1},s_{1})$-path $P_{t_{1},s_{1}}$ and any $(t_{2},s_{2})$-path $P_{t_{2},s_{2}}$. We claim that $P_{t_{1},s_{1}} \cap P_{t_{2},s_{2}} \neq \emptyset$. If $t_{2},s_{2} \in P_{t_{1},s_{1}}$ or $t_{1},s_{1} \in P_{t_{2},s_{2}}$ then we are done. Therefore we assume that $t_{2},s_{2} \not \in P_{t_{1},s_{1}}$ and $t_{1},s_{1} \not \in P_{t_{2},s_{2}}$. But then by the Jordan Curve Theorem, $P_{t_{1},s_{1}} \cap P_{t_{2},s_{2}}$ is non-empty, completing the claim. 
\end{proof}

\begin{lemma}
Let $G$ be a $2$-connected spanning subgraph of a class $\mathcal{E}$ or a $\mathcal{F}$ graph. If $G$ is a spanning subgraph of a class $\mathcal{E}$ graph, then $G$ has a $K_{2,4}(X)$-minor if and only if the $\{e,f,c,d\}$-web has a $K_{2,4}(X)$-minor. If $G$ is a spanning subgraph of a class $\mathcal{F}$ graph, then $G$ has a $K_{2,4}(X)$-minor if and only if the $\{e,f,g,h\}$-web has a $K_{2,4}(X)$-minor. 
\end{lemma}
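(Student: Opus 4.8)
The plan is to peel off the pendant vertices along the $2$-separations built into classes $\mathcal{E}$ and $\mathcal{F}$ and reduce the whole question to the underlying web via Lemma \ref{thereductionwithk22init}. In a class $\mathcal{E}$ graph the pair $\{e,f\}$ is a $2$-cut with $a,b$ on one side and the $\{c,d,e,f\}$-web on the other; in a class $\mathcal{F}$ graph there are two such cuts, $\{e,f\}$ carrying the pendant $\{a,b\}$ and $\{g,h\}$ carrying the pendant $\{c,d\}$. In each case $2$-connectivity of $G$ forces all four pendant edges (say $ae,af,be,bf$) to be present, since $a$ and $b$ have no other possible neighbours; hence the pendant side $G_A=G[\{a,b,e,f\}]\cup\{ef\}$ is exactly the graph of Lemma \ref{smallgraphk22}, with $a,b$ and $e,f$ as the two bipartition classes. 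By that lemma $G_A$ has a $K_{2,2}(X_A)$-minor but no $K_{2,4}(X_A)$-minor.

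First I would treat class $\mathcal{E}$. Applying Lemma \ref{thereductionwithk22init} to the separation at $\{e,f\}$ (with $a,b\in A\setminus B$ and $c,d\in B\setminus A$), the three possible outcomes are that $G_A$ has a $K_{2,4}$, that $G_B$ has a $K_{2,4}$, or that both $G_A$ and $G_B$ have a $K_{2,2}$. Since $G_A$ never has a $K_{2,4}$ but always has the required $K_{2,2}$, this collapses to: $G$ has a $K_{2,4}(X)$-minor if and only if $G_B$ has a $K_{2,4}(X_B)$-minor or a $K_{2,2}(X_B)$-minor, where $X_B=\{c,d,e,f\}$ are exactly the four corners of the web and $G_B$ is a $2$-connected spanning subgraph of the $\{c,d,e,f\}$-web. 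The key point is that the $K_{2,2}(X_B)$ forced by the reduction pairs $\{c,d\}$ against $\{e,f\}$, and since the corners occur in the cyclic order $c,d,e,f$ this is precisely the rooting whose linkage $\{(c,e),(d,f)\}$ consists of the two crossing diagonals. Lemma \ref{k22minorswebs} says a web has no such $K_{2,2}$-minor, so the spurious disjunct disappears and $G$ has a $K_{2,4}(X)$-minor if and only if the $\{c,d,e,f\}$-web has one. (If desired, Lemma \ref{planarreductionk22} first replaces $G_B$ by a planar web, so that the crossing obstruction is literally the Jordan curve argument underlying Lemma \ref{k22minorswebs}.)

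For class $\mathcal{F}$ I would iterate the same reduction at both cuts. Reducing first at $\{e,f\}$ gives, as above, that $G$ has a $K_{2,4}(X)$-minor if and only if the graph $G_B$ (the $\{e,f,g,h\}$-web together with the pendant $\{c,d\}$ hung on $\{g,h\}$) has a $K_{2,4}(X_B)$- or $K_{2,2}(X_B)$-minor, with $X_B=\{c,d,e,f\}$. Reducing the $K_{2,4}(X_B)$ term at the second cut $\{g,h\}$ (again the pendant side is the graph of Lemma \ref{smallgraphk22}) leaves a web $K_{2,4}$ on the corners $\{e,f,g,h\}$ together with a web $K_{2,2}$ pairing $\{e,f\}$ against $\{g,h\}$; since the corner order is $e,f,g,h$ the latter is once more a crossing-diagonal rooting killed by Lemma \ref{k22minorswebs}, so the $K_{2,4}(X_B)$ term is equivalent to the $\{e,f,g,h\}$-web having a $K_{2,4}$-minor on its corners, as wanted.

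The main obstacle, and the step I would spend the most care on, is the remaining $K_{2,2}(X_B)$ disjunct in the class $\mathcal{F}$ argument. Unlike in class $\mathcal{E}$, here $c,d$ are themselves pendant (attached at $\{g,h\}$), so $G_B$ is not a web and Lemma \ref{k22minorswebs} does not apply to it directly. The plan is to push this $K_{2,2}(X_B)$ across the second cut as well, using the path characterization of Theorem \ref{k22characterization}: each of the two required linkages $\{(c,e),(d,f)\}$ and $\{(c,f),(d,e)\}$ must enter the web through $\{g,h\}$, and I would classify the routings by which cut vertex each path uses, discarding the routings that produce crossing diagonals in the web (impossible by Lemma \ref{k22minorswebs}) and matching every surviving routing with a web rooting that is already subsumed by the corner $K_{2,4}$. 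Making this bookkeeping airtight—so that the $K_{2,2}(X_B)$ term never enlarges the family beyond those graphs whose underlying web has a corner $K_{2,4}$-minor—is the crux; the class $\mathcal{E}$ case is the clean prototype, and class $\mathcal{F}$ is the same idea applied at both separations with this extra $K_{2,2}$ analysis.
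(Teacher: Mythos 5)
Your treatment of class $\mathcal{E}$ is correct and is exactly the paper's argument: apply Lemma \ref{thereductionwithk22init} at the cut $\{e,f\}$, kill the pendant-side $K_{2,4}$ disjunct with Lemma \ref{smallgraphk22}, and kill the double-$K_{2,2}$ disjunct with Lemma \ref{k22minorswebs}, which applies because the forced pairing $\{c,d\}$ versus $\{e,f\}$ matches the crossing order $c,d,e,f$ of the web's corners.

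The class $\mathcal{F}$ half is where the trouble lies, and you have correctly located it --- but the ``bookkeeping'' you propose cannot be completed, because the surviving non-crossing routings genuinely enlarge the family. After reducing at $\{e,f\}$, the residual $K_{2,2}(X_B)$ pairs $\{c,d\}$ against $\{e,f\}$; pushing $c$ and $d$ through the second cut $\{g,h\}$ turns this into a linkage between the two \emph{opposite edges} $\{e,f\}$ and $\{g,h\}$ of the web's outer $4$-cycle, not between its two diagonals, and such a linkage is realized by the outer-cycle edges $he$ and $fg$ themselves, so Lemma \ref{k22minorswebs} gives no obstruction. Concretely, let $H'$ be the $4$-cycle $e,f,g,h$ plus the chord $eg$ (a legitimate $\{e,f,g,h\}$-web with empty attached cliques) and let $G$ be the full class $\mathcal{F}$ graph built on it. Then $\{a\},\{b\},\{c\},\{d\},\{e,h\},\{f,g\}$ is a model of a $K_{2,4}(X)$-minor: each of $a,b$ is adjacent to $e\in\{e,h\}$ and to $f\in\{f,g\}$, each of $c,d$ is adjacent to $h\in\{e,h\}$ and to $g\in\{f,g\}$, and the two large branch sets are connected by the outer-cycle edges $he$ and $fg$. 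Yet the web itself has only four vertices and hence no $K_{2,4}$-minor whatsoever. So the biconditional fails for class $\mathcal{F}$, and no routing analysis will recover it; the extra $K_{2,2}$ term must be carried along (or the statement weakened) in that case. For what it is worth, the paper's own proof dismisses class $\mathcal{F}$ with ``essentially the same argument'' and therefore contains the same gap, so your instinct that this step is the crux was exactly right --- the flaw is in the claim, not merely in your execution.
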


\begin{proof}
First suppose that $G$ is a $2$-connected spanning subgraph of a class $\mathcal{E}$ graph. If the $\{e,f,c,d\}$-web has a $K_{2,4}(X)$-minor, then immediately $G$ has a $K_{2,4}(X)$-minor. 

Therefore assume that $G$ has a $K_{2,4}(X)$-minor. Apply Lemma \ref{thereductionwithk22init} to the $2$-separation with vertex boundary $\{e,f\}$. Then by appealing to Lemma \ref{k22minorswebs} and Lemma \ref{smallgraphk22}, we get that $G$ having a $K_{2,4}(X)$-minor implies the $\{e,f,c,d\}$-web has a $K_{2,4}(X)$-minor, completing the claim. 
Essentially the same argument gives the claim for the class $\mathcal{F}$ graphs. 
\end{proof}

Now it suffices to deal with webs to complete the characterization. Note we can reduce the problem of finding $K_{2,4}(X)$-minors down to the planar case. 

\begin{lemma}
\label{k24planarreduction}
Let $H^{+} = (H,F)$ be an $\{a,b,c,d\}$-web. Let $G$ be $2$-connected spanning subgraph of $H^{+}$. Then there is a planar graph $K$ such that $G$ has a $K_{2,4}(X)$-minor if and only if $K$ has a $K_{2,4}(X)$-minor. 
\end{lemma}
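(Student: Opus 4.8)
The plan is to reuse the contraction construction from the proof of Corollary~\ref{webcycle}. For each triangle $T \in H$ with $V(T) = \{x_1,x_2,x_3\}$, I look at the connected components of $G[V(F_T)]$ and do one of two things: if some component is adjacent to all three of $x_1,x_2,x_3$, I contract it to a single vertex $v_T$ adjacent to $x_1,x_2,x_3$ and absorb every remaining component into an arbitrary vertex of $T$; otherwise, since $G$ is $2$-connected, every component is adjacent to exactly two of the $x_i$, and I keep one representative $v^{ij}_T$ for each pair $\{x_i,x_j\}$ that actually occurs, absorbing the rest into $T$. Exactly as verified in Corollary~\ref{webcycle}, the resulting graph $K$ is planar, and since only non-root vertices lying inside the cliques $F_T$ are ever contracted, no two vertices of $X$ are identified.

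The easy direction is then immediate: $K$ is obtained from $G$ by contracting connected subgraphs contained in the $F_T$, so $K$ is a minor of $G$ in which $a,b,c,d$ remain in distinct vertices; hence any $K_{2,4}(X)$-model of $K$ lifts directly to a $K_{2,4}(X)$-model of $G$.

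For the converse, suppose $\{G_z \mid z \in V(K_{2,4})\}$ is a $K_{2,4}(X)$-model of $G$ and fix a triangle $T$. Since the roots avoid $V(F_T)$ and $\{x_1,x_2,x_3\}$ separates $V(F_T)$ from the rest of $G$, I first observe that no branch set is contained in $V(F_T)$: such a branch set could only be adjacent to branch sets meeting $\{x_1,x_2,x_3\}$, so it could be neither $G_{s_1}$ nor $G_{s_2}$ (each of which requires four neighbours) nor a root branch set. Consequently every branch set meeting $F_T$ contains some $x_i$, so at most three branch sets meet $F_T$, and at most one of them contains two of the $x_i$ (since there are only three boundary vertices). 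Because $K_{2,4}$ is triangle-free, among the at most three branch sets meeting $\{x_1,x_2,x_3\}$ at most two of the pairwise adjacencies are required by the model, and any two required edges share a common branch set. The plan is to show that these bounded demands can be met locally in $K$: the vertex $v_T$ (respectively the representatives $v^{ij}_T$), being adjacent to the relevant $x_i$, can be placed into the appropriate branch set(s) to supply both the internal connectivity of the unique branch set holding two of the $x_i$ and the at most two required inter-branch edges. Repeating this replacement over all (pairwise disjoint, mutually separated) triangles yields a $K_{2,4}(X)$-model of $K$.

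The main obstacle is precisely this last step. The difficulty is that a component of $G[V(F_T)]$ may be split among several branch sets, so one cannot simply restrict the old model to $V(K)$; instead the model must be rebuilt near each $F_T$, and one must check carefully that the representatives created by the construction always suffice to realize simultaneously every adjacency and every internal connection that $F_T$ provided in $G$. The argument that this is always possible rests on the two structural facts isolated above: triangle-freeness of $K_{2,4}$, which bounds the number of required edges and forces them to share a branch set, and the fact that $\{x_1,x_2,x_3\}$ is a $3$-separator, which bounds to one the number of branch sets that can need $F_T$ for internal connectivity. (One could alternatively phrase the reduction of each $F_T$ through Lemma~\ref{3connk24} applied to the $3$-separation isolating $F_T$, but the construction above has the advantage of producing the planar graph $K$ uniformly and is the form we will reuse.)
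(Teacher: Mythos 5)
Your construction is sound and the structural observations you isolate (no branch set lives inside $V(F_T)$; every branch set meeting $F_T$ contains a vertex of $T$, so at most three do; at most one branch set can need $F_T$ for internal connectivity; triangle-freeness of $K_{2,4}$ bounds the required adjacencies to two and forces them to share a branch set) are all correct and are indeed enough to push the model surgery through. But this is a genuinely different and much more laborious route than the paper's. The paper's proof is precisely the alternative you dismiss in your final parenthetical: for each triangle $T$, first apply the ``all roots on one side'' $2$-separation reduction to peel off the parts of $F_T$ separated by a $2$-subset of $V(T)$, after which $T$ bounds a tight $3$-separation isolating what remains of $F_T$; then Lemma~\ref{3connk24} replaces that side by the triangle on $V(T)$, so the resulting graph is a subgraph of $H$ and hence planar. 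That two-lemma reduction hands you the equivalence of $K_{2,4}(X)$-minors for free, whereas your route must re-prove a special case of Lemma~\ref{3connk24} by hand. (Your construction does produce a slightly different planar graph $K$ --- you keep representative vertices inside the triangles rather than erasing $F_T$ entirely --- but nothing later in the paper needs that extra structure.)

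The one thing I would insist you repair before accepting this as a proof: the converse direction is announced rather than executed. You write ``the plan is to show that these bounded demands can be met locally in $K$'' and ``one must check carefully that the representatives \ldots{} always suffice,'' but the check is the whole content of the lemma in your formulation. In particular you must confront the arbitrariness in your construction: a component of $G[V(F_T)]$ that supplied the $x_1$--$x_2$ connectivity of a branch set may be the one absorbed into $x_3$, in which case the edge $x_1x_2$ is \emph{not} created in $K$ and you must fall back on the retained representative ($v_T$, or $v^{12}_T$, which exists exactly because such a component occurs); similarly, when a single component of $F_T$ witnesses an adjacency in one direction and you need $v_T$ in the ``other'' branch set, you must observe that the adjacency can be re-witnessed by the edge from $v_T$ to the boundary vertex of the other branch set. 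These cases all close, but as written they are a plan, not a proof; either carry out the short case analysis or replace the whole step by the appeal to Lemma~\ref{3connk24} that you already identified.
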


\begin{proof}
For each triangle $T$ in $H$, and every two element subset of $V(T)$ which induces a $2$-separation $(A,B)$ such that $B \setminus A = V(F_{T})$, apply Lemma \ref{allonesidegeneralH}. After doing this to every triangle, notice that for every triangle $T \in H$, $T$ induces a tight $3$-separation $(A,B)$ such that $B = V(F_{T}) \cup V(T)$. Then we may apply Lemma \ref{3connk24} to $(A,B)$. Call the resulting graph $K$. By construction, $K$ has a $K_{2,4}(X)$-minor if and only if $G$ has a $K_{2,4}(X)$-minor. Additionally, notice that $K$ is a subgraph of $H$, and $H$ is planar, so thus $K$ is planar.
\end{proof}

Now we make the main claim, which is that for spanning subgraphs of $2$-connected webs, $K_{2,4}(X)$-minors occur if and only if $W_{4}(X)$-minors occur.

\begin{theorem}
\label{k4w4k24characterization}
Let $G$ be a $2$-connected graph which is the spanning subgraph of an $\{a,b,c,d\}$-web. Let $X = \{a,b,c,d\} \subseteq V(G)$.  If $G$ does not have a $W_{4}(X)$-minor, then $G$ does not have a $K_{2,4}(X)$-minor. 
\end{theorem}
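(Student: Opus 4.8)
The plan is to prove the contrapositive in a strong form: rather than exploiting the $W_{4}(X)$-free hypothesis, I would show directly that \emph{no} $2$-connected spanning subgraph of an $\{a,b,c,d\}$-web has a $K_{2,4}(X)$-minor, which of course implies the stated implication. The key point is that a $K_{2,4}(X)$-minor always supplies the ``crossing'' pair of disjoint paths that planarity forbids once the four roots lie on a common face, in exactly the way already exploited for $K_{2,2}(X)$-minors in Theorem \ref{k22characterization} and Lemma \ref{k22minorswebs}.

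First I would reduce to the planar case. By Lemma \ref{k24planarreduction} there is a planar graph $K$, obtained from $G$ by the web contractions, with $K$ a subgraph of the web base $H$, such that $G$ has a $K_{2,4}(X)$-minor if and only if $K$ does. Since the outer face of $H$ is bounded by the $4$-cycle on $a,b,c,d$, and since the roots are never contracted in the reductions, $a,b,c,d$ lie on a common face of $K$; adding back the edges $ab,bc,cd,da$ of $H$ (which keeps us inside $H$, hence planar) I may assume, as in Observation \ref{planarface}, that $a,b,c,d$ appear in this cyclic order on the boundary of a face $\Phi$ of $K$. Thus it suffices to prove that a planar graph in which $a,b,c,d$ lie in the cyclic order $a,b,c,d$ on a common face has no $K_{2,4}(X)$-minor.

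Now suppose for contradiction that $K$ has a $K_{2,4}(X)$-model with root branch sets $T_{a}\ni a,\dots,T_{d}\ni d$ and centre branch sets $S_{1},S_{2}$, each $T_{x}$ being adjacent to both $S_{1}$ and $S_{2}$. Because $S_{1}$ is connected and adjacent to both $T_{a}$ and $T_{c}$, the set $T_{a}\cup S_{1}\cup T_{c}$ contains an $(a,c)$-path $P$; likewise $T_{b}\cup S_{2}\cup T_{d}$ contains a $(b,d)$-path $Q$. As the six branch sets are pairwise disjoint, $P$ and $Q$ are vertex-disjoint, and $P$ avoids $b,d$ while $Q$ avoids $a,c$. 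Fixing the embedding in which $a,b,c,d$ occur in cyclic order on $\partial\Phi$, the $(a,c)$-path $P$ separates $b$ from $d$ in the closed disk bounded by $\Phi$; hence the $(b,d)$-path $Q$ must meet $P$, contradicting their disjointness. This shows $K$, and therefore $G$, has no $K_{2,4}(X)$-minor.

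The content here is a single topological obstruction, and the remaining work is bookkeeping. The one place that needs care is Step~2: I must check that the common-face property, and in particular the cyclic order $a,b,c,d$, genuinely survives the operations packaged in Lemma \ref{k24planarreduction} (deleting edges and collapsing the clique-sides only enlarges faces and never permutes the boundary order of $a,b,c,d$), and in Step~3 that the two diagonal paths can be extracted from an \emph{arbitrary} model. The decisive observation making the argument uniform is that both centres $S_{1}$ and $S_{2}$ see all four roots, so the unavoidable crossing pair $(a,c),(b,d)$ is always available; this mirrors the obstruction used for $K_{2,2}(X)$-minors in Lemma \ref{k22minorswebs}, and is the main reason I expect the proof to go through cleanly. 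Should any difficulty arise in transporting the face structure through the reduction, I would instead induct on $|V(G)|$, peeling off tight $2$-separations exactly as in Theorem \ref{w4cuts} and invoking Lemma \ref{alloneside}, Lemma \ref{splitvertices} and Lemma \ref{thereductionwithk22init}; the only nontrivial case is the two--two split, where the same crossing argument rules out the $K_{2,2}(X)$-minors that Lemma \ref{thereductionwithk22init} would otherwise require, so the reduction again collapses to the planar obstruction above.
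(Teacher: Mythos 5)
Your proposal is correct, but it takes a genuinely different route from the paper, and in fact proves a strictly stronger statement. The paper's proof keeps the hypothesis ``$G$ has no $W_{4}(X)$-minor'' essential: it runs through the five obstructions of Theorem \ref{w4cuts} on a cycle through $X$, and in each case peels off a $2$-separation using Demasi's reduction lemmas (Lemmas \ref{alloneside}, \ref{splitvertices}, \ref{thereductionwithk22init}) together with induction on $|V(G)|$, handling obstructions $4$ and $5$ by a bespoke argument that kills the residual $K_{2,2}(X_{1})$-minors across the triangle of separations. You instead discard the $W_{4}(X)$-free hypothesis entirely and argue that \emph{no} $2$-connected spanning subgraph of an $\{a,b,c,d\}$-web has a $K_{2,4}(X)$-minor: after Lemma \ref{k24planarreduction} one sits inside the planar base $H$ with the four roots on the outer $4$-cycle, and any $K_{2,4}(X)$-model yields two vertex-disjoint paths joining the two interleaving pairs of roots (choosable in every cyclic order precisely because each centre branch set is adjacent to all four root branch sets), which the Jordan curve argument already used in Lemmas \ref{Planar $3$-connected} and \ref{k22minorswebs} forbids. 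Since the disjoint-paths obstruction is inherited by subgraphs of $H$, your worry about transporting the face structure through the reduction evaporates and the inductive fallback is unnecessary. Your argument is shorter, avoids Theorem \ref{w4cuts} altogether, and exposes that the hypothesis in the theorem is not actually needed for webs; the trade-off is that it is specific to the web case, whereas the paper's machinery is the same machinery it needs anyway for classes $\mathcal{A}$, $\mathcal{B}$ and $\mathcal{C}$ (where $K_{2,4}(X)$-minors genuinely can occur). One consequence worth flagging: your stronger conclusion shows that the ``if and only if'' advertised at the start of the section (a web has a $K_{2,4}(X)$-minor iff it has a $W_{4}(X)$-minor) cannot be right as stated --- already $W_{4}$ itself is a web with a $W_{4}(X)$-minor and no $K_{2,4}(X)$-minor --- so your version is the one that should be believed.
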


\begin{proof}

Since $G$ does not have a $W_{4}(X)$-minor, by Theorem \ref{w4cuts} for every cycle $C$ such that $X \subseteq V(C)$, we have one of five obstructions. By Corollary \ref{webcycle} we know at least one such cycle exists. Suppose for sake of contradiction, that $G$ is a minimal counterexample with respect to vertices. We proceed by checking the five cases from Theorem \ref{w4cuts}. 

\textbf{Case 1:} Suppose that there is a terminal separating $2$-chain $((A_{1},B_{1}),(A_{2},B_{2}),\ldots, (A_{n},B_{n}))$ such that $A_{i} \cap B_{i} \subseteq V(C)$ for all $i \in \{1,\ldots,n\}$. If $A_{1} \cap B_{1}$ contains two vertices of $X$, then by Lemma \ref{splitvertices} there is no $K_{2,4}(X)$-minor. Therefore we can assume that $A_{1} \cap B_{1}$ contains exactly one vertex from $X$. Then by Lemma \ref{splitvertices}, the graph $G$ has $K_{2,4}(X)$-minor if and only if the graph $G_{B_{1}}$ has a $K_{2,4}(X_{1})$-minor, where $X_{1}$ is defined from Lemma \ref{splitvertices}. Since $G$ did not have a $W_{4}(X)$-minor, $G_{B_{1}}$ does not have a $W_{4}(X_{1})$-minor, and thus by minimality $G_{B_{1}}$ does not have a $K_{2,4}(X_{1})$-minor.

\textbf{Case 2:} Suppose that there is a terminal separating triangle $(A_{1},B_{1}),(A_{2},B_{2}),(A_{3},B_{3})$ such that $A_{i} \cap B_{i} \subseteq V(C)$ and $A_{1} \cap B_{1}$ contains a vertex of $X$. Then applying Lemma \ref{splitvertices} to the separation $(A_{1},B_{1})$ we that $G$ has a $K_{2,4}(X)$-minor if and only if the graph $G_{B_{1}}$ has a $K_{2,4}(X_{1})$-minor, where $X_{1}$ is defined from Lemma \ref{splitvertices}. Since $G$ does not have a $W_{4}(X)$-minor, $G_{B_{1}}$ does not have a $W_{4}(X_{1})$-minor, and thus by minimality, $G_{B_{1}}$ does not have a $K_{2,4}(X_{1})$-minor, and therefore $G$ does not have a $K_{2,4}(X)$-minor. 

\textbf{Case 3:} Suppose that there is a terminal separating triangle $(A^{1}_{1},B^{1}_{1}), (A^{1}_{2},B^{1}_{2}),(A^{1}_{3},B^{1}_{3})$ and a terminal separating $2$-chain $((A_{1},B_{1}),\ldots, (A_{n},B_{n}))$ in $G_{A^{1}_{1}}$. Then suppose that $A_{n} \cap B_{n}$ contains a vertex of $X$. Then applying Lemma \ref{splitvertices} to the separation $(A_{n},B_{n})$, we have that $G$ has a $K_{2,4}(X)$-minor if and only if $G_{A_{n}}$ has a $K_{2,4}(X_{1})$-minor, where $X_{1}$ is defined from Lemma \ref{splitvertices}. Since $G$ does not have a $W_{4}(X)$-minor, $G_{A_{n}}$ does not have a $W_{4}(X_{1})$-minor, and thus by minimality, $G_{A_{n}}$ does not have a $K_{2,4}(X_{1})$-minor so $G$ does not have a $K_{2,4}(X)$-minor. 

\textbf{Case 4:} Suppose we have terminal separating triangles $((A^{1}_{1},B^{1}_{1}), (A^{1}_{2},B^{1}_{2}), (A^{1}_{3},B^{1}_{3}))$, $((A^{2}_{1},B^{2}_{1}),(A^{2}_{2},B^{2}_{2}), \\ (A^{2}_{3},B^{2}_{3}))$  where for all $i \in \{1,2,3\}$,  $A^{1}_{i} \cap B^{1}_{i} \subseteq A^{2}_{1}$  and $A^{2}_{i} \cap B^{2}_{i} \subseteq A^{1}_{3}$, and if we consider the graph $G[A^{2}_{1} \cap A^{1}_{1}]$, and the cycle $C' = G[V(C) \cap A^{2}_{1} \cap A^{1}_{1}] \cup \{xy | x,y \in A^{i}_{1} \cap B^{i}_{1}, i \in \{1,2\}\}$ and we let $X'$ be defined to be the vertices $A^{2}_{1} \cap B^{2}_{1}$ and $A^{2}_{3} \cap B^{2}_{3}$, then there is a terminal separating $2$-chain in $G[A^{2}_{3} \cap A^{1}_{1}]$ with respect to $X'$. 

Consider the two separation $(A^{1}_{1},B^{1}_{1})$. From previous analysis, there are two vertices of $X \in A^{1}_{1} \setminus  B^{1}_{1}$ and two vertices of $X$ are in $B^{1}_{1} \setminus A^{1}_{1}$. Without loss of generality, let $a,b$ be the two vertices in $X$ in $ A^{1}_{1} \setminus B^{1}_{1}$. Therefore we can apply Lemma \ref{thereductionwithk22init} to $(A^{1}_{1},B^{1}_{1})$ and get a two graphs $G_{A^{1}_{1}}$, $G_{B^{1}_{1}}$ such that $G$ has a $K_{2,4}(X)$-minor if and only if either $G_{A^{1}_{1}}$ has a $K_{2,4}(X_{1})$ or both have a $K_{2,2}(X_{1})$-minor, where $X_{1}$ is defined from Lemma \ref{thereductionwithk22init}. Since $G$ did not have a $W_{4}(X)$-minor both $G_{A^{1}_{1}}$ and $G_{B^{1}_{1}}$ do not have a $W_{4}(X_{1})$-minor, and thus by minimality, both $G_{A^{1}_{1}}$ and $G_{B^{1}_{1}}$ do not have $K_{2,4}(X_{1})$-minors. 

Therefore it suffices to show that $G_{B^{1}_{1}}$ does not have a $K_{2,2}(X_{1})$-minor (see Figure \ref{ThegraphGA} for a picture). Let $A^{1}_{1} \cap B^{1}_{1} = \{t_{1},t_{2}\}$. Without loss of generality, let $A^{1}_{2} \cap B^{1}_{2} = \{v,t_{1}\}$ and $A^{1}_{3} \cap B^{1}_{3} = \{v, t_{2}\}$. Now since we had a terminal separating triangle, we have that $v \neq a,b$. So without loss of generality, we may assume $a \in A^{1}_{2} \setminus  B^{1}_{2}$ and $b \in A^{1}_{3} \setminus  B^{1}_{3}$. Now by Theorem \ref{k22characterization}, it suffices to show that for every  $(a,t_{2})$-path, $P_{a,t_{2}}$ and every $(b,t_{1})$-path, $P_{b,t_{1}}$ in $G_{B^{1}_{1}}$ we have $P_{a,t_{2}} \cap P_{b,t_{1}} \neq \emptyset$. Let $P_{a,t_{2}}$ be any $(a,t_{2})$-path. Since $a \in A^{1}_{2} \setminus \{v,t_{1}\}$ and $t_{2} \in B^{1}_{2} \setminus \{v,t_{1}\}$, either $v$ or $t_{1} \in V(P_{a,t_{2}})$. If $t_{1} \in V(P_{a,t_{2}})$, then any $(b,t_{2})$-path $P_{b,t_{2}}$ contains $t_{1}$ by definition, so $P_{a,t_{2}} \cap P_{b,t_{1}} \neq \emptyset$. Therefore we only have to consider when $v \in P_{a,t_{2}}$.
Now since $b \in A^{1}_{3} \setminus \{v,t_{2}\}$ and $t_{1} \in B^{1}_{3} \setminus \{v,t_{2}\}$ every $(b,t_{1})$-path $P_{b,t_{1}}$ contains either $v$ or $t_{2}$. By similar reasoning as above, we may assume that $t_{2} \not \in P_{b,t_{1}}$. Therefore $v \in P_{b,t_{1}}$. But then $P_{b,t_{1}} \cap P_{a,t_{2}} \neq \emptyset$, which implies $G_{B^{1}_{1}}$ does not have a $K_{2,2}(X_{1})$-minor. Combining this with what we already showed, this implies that $G$ does not have a $K_{2,4}(X)$-minor. 

\textbf{Case 5:} Now suppose there are $2$ distinct terminal separating triangles $(A^{1}_{1},B^{1}_{1}), (A^{1}_{2},B^{1}_{2}), (A^{1}_{3},B^{1}_{3}), \\ (A^{2}_{1},B^{2}_{1}), (A^{2}_{2},B^{2}_{2}),(A^{2}_{3},B^{2}_{3})$ where for all $i \in \{1,2,3\}$,  $A^{1}_{i} \cap B^{1}_{i} \subseteq A^{2}_{1}$  and $A^{2}_{i} \cap B^{2}_{i} \subseteq A^{1}_{1}$, $A^{2}_{1} \cap B^{2}_{1} \cap A^{1}_{1} \cap B^{1}_{1}$ is not empty and $A^{j}_{i} \cap B^{j}_{i} \subseteq V(C)$ for all $i \in \{1,2,3\}$, $j \in \{1,2\}$. Now notice if we apply Lemma \ref{thereductionwithk22init} to $(A^{1}_{1},B^{1}_{1})$, we get two graphs $G_{A^{1}_{1}}$ and $G_{B^{1}_{1}}$ such that $G$ has a $K_{2,4}(X)$-minor if and only if either one of $G_{A^{1}_{1}}$ or $G_{B^{1}_{1}}$ has a $K_{2,4}(X_{1})$-minor or both have $K_{2,2}(X_{1})$-minors, where $X_{1}$ is defined from Lemma \ref{thereductionwithk22init}. Now since $G$ does not have a $W_{4}(X)$-minor both of $G_{B^{1}_{1}}$ and $G_{A^{1}_{1}}$  do not have $W_{4}(X_{1})$-minors and thus by minimality both do not have $K_{2,4}(X_{1})$-minors. Observe that we can apply the same argument as case four to $G_{B^{1}_{1}}$ to obtain that $G_{B^{1}_{1}}$ does not have a $K_{2,2}(X_{1})$-minor. Therefore $G$ does not have a $K_{2,4}(X)$-minor, completing the claim. 
\end{proof}

\begin{figure}
\begin{center}
\includegraphics[scale =0.5]{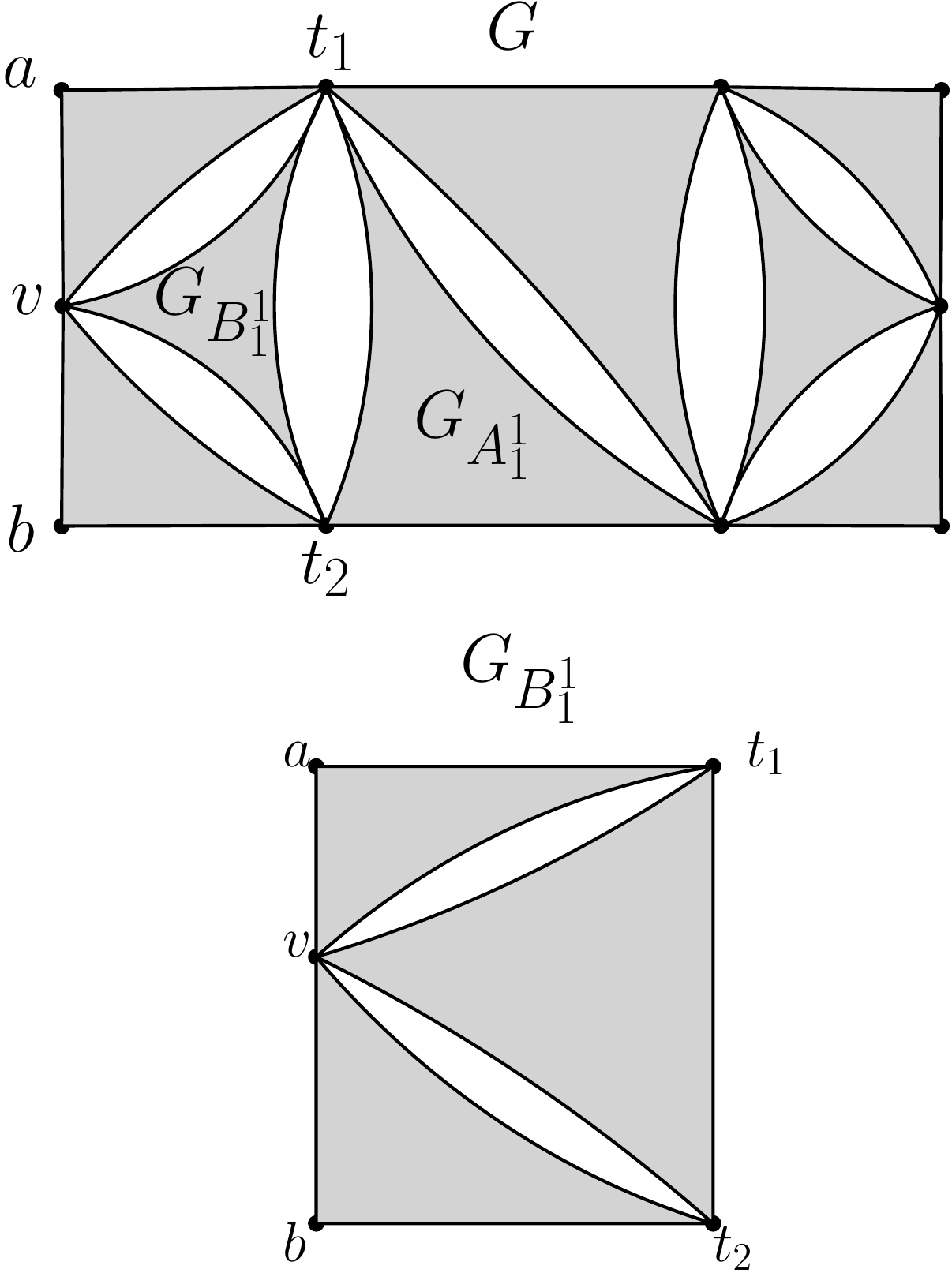}
\caption{The graph $G_{B^{1}_{1}}$ in case four of Theorem \ref{k4w4k24characterization}.}
\label{ThegraphGA}
\end{center}
\end{figure}

As a recap of what we have so far.

\begin{corollary}
Let $G$ be a $2$-connected graph and $X = \{a,b,c,d\} \subseteq V(G)$. The graph $G$ has no $K_{4}(X)$, $K_{2,4}(X)$ or $W_{4}(X)$-minor if and only if $G$ belongs to class $\mathcal{A}$ (see Theorem \ref{k4free}) or $G$ is the spanning subgraph of a class $\mathcal{D}$, $\mathcal{E}$ and $\mathcal{F}$ graph and the corresponding web does not have a $W_{4}(X)$-minor (see Theorem \ref{w4cuts}). 
\end{corollary}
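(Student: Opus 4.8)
The plan is to obtain the corollary purely by assembling Theorem~\ref{k4free} with the per-class facts proved earlier in this section and the previous one; no new combinatorial argument is needed, so I would organise the proof around the two directions of the equivalence.

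For the reverse implication I would check that each graph named in the statement avoids all three minors. If $G$ is a $2$-connected spanning subgraph of a class $\mathcal{A}$ graph, then it is $K_4(X)$-free by Theorem~\ref{k4free}, it is $W_4(X)$-free by the lemma stating that spanning subgraphs of classes $\mathcal{A},\mathcal{B},\mathcal{C}$ have no $W_4(X)$-minor, and it is $K_{2,4}(X)$-free by the corollary recording that class $\mathcal{A}$ spanning subgraphs have no $K_{2,4}(X)$-minor. If instead $G$ is a spanning subgraph of a class $\mathcal{D},\mathcal{E}$, or $\mathcal{F}$ graph whose associated web is $W_4(X)$-minor-free, it is again $K_4(X)$-free by Theorem~\ref{k4free}. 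For class $\mathcal{D}$ the graph $G$ is itself a $2$-connected spanning subgraph of the web, so $G$ is $W_4(X)$-free and Theorem~\ref{k4w4k24characterization} applies directly to give that $G$ is $K_{2,4}(X)$-free. For classes $\mathcal{E}$ and $\mathcal{F}$ I would use the remark following Lemma~\ref{2sepW4} to transfer $W_4(X)$-freeness from the web to $G$, and the lemma reducing $K_{2,4}(X)$-minors of class $\mathcal{E}/\mathcal{F}$ graphs to their underlying webs, combined with Theorem~\ref{k4w4k24characterization} applied to that ($W_4(X)$-free) web, to conclude that $G$ is $K_{2,4}(X)$-free.

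For the forward implication I would assume $G$ has none of the three minors. Since $G$ is $K_4(X)$-free, Theorem~\ref{k4free} realises $G$ as a spanning subgraph of some class $\mathcal{A}$--$\mathcal{F}$ graph. The corollary that every $2$-connected spanning subgraph of a class $\mathcal{B}$ or $\mathcal{C}$ graph has a $K_{2,4}(X)$-minor then serves as an obstruction: were the only classes witnessing $G$ among $\{\mathcal{B},\mathcal{C}\}$, the graph $G$ would have a $K_{2,4}(X)$-minor, contradicting the hypothesis. Hence $G$ is a spanning subgraph of a class $\mathcal{A},\mathcal{D},\mathcal{E}$, or $\mathcal{F}$ graph. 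If it is class $\mathcal{A}$ it already appears in the list; otherwise it is a spanning subgraph of a class $\mathcal{D},\mathcal{E}$, or $\mathcal{F}$ graph, and because $G$ is $W_4(X)$-free the same $W_4$-equivalences as above force the associated web to be $W_4(X)$-free, matching the stated description.

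The one point that demands care---and the only place I see a genuine risk of error---is the forward direction: a single graph may legitimately be a spanning subgraph of several of the six classes at once, so one should not try to assign $G$ a unique class. Phrasing the elimination of classes $\mathcal{B}$ and $\mathcal{C}$ as ``these always produce a $K_{2,4}(X)$-minor'' sidesteps this, since it only requires that $G$ be coverable by one of $\mathcal{A},\mathcal{D},\mathcal{E},\mathcal{F}$ rather than exclusively so.
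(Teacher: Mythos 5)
Your assembly is correct and is exactly what the paper intends: the corollary is stated there as a recap with no written proof, and the pieces you cite (Theorem \ref{k4free}, the $W_{4}(X)$-freeness lemma for classes $\mathcal{A},\mathcal{B},\mathcal{C}$, the $K_{2,4}(X)$ corollary for classes $\mathcal{A},\mathcal{B},\mathcal{C}$, the class $\mathcal{E}/\mathcal{F}$-to-web reductions, and Theorem \ref{k4w4k24characterization}) are the right ones in both directions. Your observation about not assigning $G$ a unique class, and instead ruling out $\mathcal{B}$ and $\mathcal{C}$ because any spanning subgraph of those classes already contains a $K_{2,4}(X)$-minor, is the correct way to handle the only subtle point.
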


\section{A characterization of graphs without a $K_{4}(X)$, $W_{4}(X)$, $K_{2,4}(X)$ or an $L(X)$-minor}
\label{Lsection}
In this section, we look at the following problem. Suppose $G$ has no $K_{4}(X), W_{4}(X)$ and $K_{2,4}(X)$. When does $G$ have an $L(X)$-minor? We reduce this problem to finding an $L'(X)$-minor where $L'$ is a smaller graph. 

We define the graph $L$ to have vertex set $V(L) = \{v_{1},\ldots,v_{8}\}$ and $E(L) = \{v_{1}v_{2},v_{1}v_{5},v_{2}v_{7}, v_{2}v_{8},v_{2}v_{3},v_{3}v_{4}, \\ v_{4}v_{5}, v_{4}v_{7},v_{5}v_{6},v_{6}v_{7},v_{6}v_{8},v_{7}v_{8}\}$ (see Figure \ref{L(X)labelling}). Let $G$ be a graph and $X = \{a,b,c,d\} \subseteq V(G)$. Let $\mathcal{F}$ be the family of maps from $X$ to $V(L)$ where each vertex of $X$ goes to a distinct vertex in $\{v_{1},v_{3},v_{4},v_{5}\}$. For the purposes of this paper, an $L(X)$-minor refers to the $X$ and family of maps defined above. It is easy to see that the graph $L$ is $2$-connected, so the cut vertex section applies. Therefore we may assume that all graphs are at least $2$-connected. 

We let $L'$ denote the graph induced by $\{v_{2},v_{8},v_{7},v_{6},v_{5},v_{4}\}$ in $L$. Let $G$ be a graph and $X = \{a,b,c\} \subseteq V(G)$. Let $\mathcal{F}$ be the family of surjective maps from $X$ to $\{v_{2},v_{4},v_{5}\}$. An $L'(X)$-minor will refer to the $\mathcal{F}$ and $X$ above. It is easy to see that $L'$ is $2$-connected and thus we may assume all graphs are $2$-connected. 

\begin{figure}
\begin{center}
\includegraphics[scale=0.5]{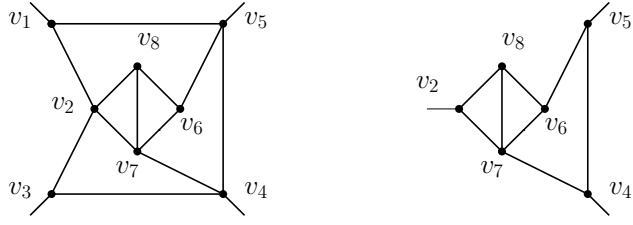}
\caption{The graph $L$ and the graph $L'$. Vertices with lines with only one endpoint indicate the vertices where the roots are being mapped to.}
\label{L(X)labelling}
\end{center}
\end{figure}

\begin{lemma}
\label{L'(X')minors}
Let $G$ be a $2$-connected graph and let $X = \{a,b,c\}$. Then $G$ has an $L'(X)$-minor if and only if there are three distinct cycles $C_{1}$, $C_{2}$, $C_{3}$ and three distinct paths $P_{1},P_{2},P_{3}$ satisfying the following properties: 
\begin{enumerate}
\item{$|V(C_{1}) \cap V(C_{2})| \geq 2$, $|V(C_{2}) \cap V(C_{3})| \geq  2$,  $|C_{3}| \geq 4$, there is at least one edge in $E(C_{2})$ which is not contained in either of $E(C_{1})$ and $E(C_{2})$, and there exists a vertex $v_{1} \in (V(C_{2}) \cap V(C_{3})) \setminus V(C_{1})$ and a vertex $v_{2} \in (V(C_{1}) \cap V(C_{2})) \setminus V(C_{3})$.}

\item{The vertices $a,b$ and $c$ are endpoints of $P_{1},P_{2}$ and $P_{3}$ respectively. Additionally, the other endpoint of $P_{1}$ is in $V(C_{1})$, and the other endpoint of point $P_{2}$ and $P_{3}$ is in  $V(C_{3})$. Furthermore, $P_{1} \cap C_{j} = \emptyset$ for all $j \in \{2,3\}$, and $P_{i} \cap C_{j} = \emptyset$ for all $i \in \{2,3\}, j \in \{1,2\}$.}
\end{enumerate}
\end{lemma}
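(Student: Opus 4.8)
The plan is to recognize the right-hand side as a direct translation of a cycle decomposition of $L'$. Setting $C_1^\ast=v_2v_7v_8$, $C_2^\ast=v_6v_7v_8$ and $C_3^\ast=v_4v_5v_6v_7$, one checks by inspection that $E(L')=E(C_1^\ast)\cup E(C_2^\ast)\cup E(C_3^\ast)$, that $v_7$ lies on all three cycles, that $v_8\in V(C_1^\ast)\cap V(C_2^\ast)$ with $v_8\notin V(C_3^\ast)$, that $v_6\in V(C_2^\ast)\cap V(C_3^\ast)$ with $v_6\notin V(C_1^\ast)$, that $v_6v_8$ is the only edge of $C_2^\ast$ on neither $C_1^\ast$ nor $C_3^\ast$, and that the roots satisfy $v_2\in V(C_1^\ast)$ and $v_4,v_5\in V(C_3^\ast)$. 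Under this dictionary the vertex called $v_1$ in condition (1) is $v_6$, the vertex called $v_2$ in condition (1) is $v_8$, and $P_1,P_2,P_3$ are the attachments of the three roots. I will prove the equivalence by translating an $L'(X)$-model into the configuration and back; throughout I use that $\mathcal{F}$ is the set of all bijections $X\to\{v_2,v_4,v_5\}$, so I may name the roots so that the $v_2$-root is the endpoint of $P_1$ and the $v_4,v_5$-roots are the endpoints of $P_2,P_3$.

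\emph{Necessity.} Let $\{B_u : u\in V(L')\}$ be an $L'(X)$-model. Each branch set is connected and, for every edge $uu'$ of $L'$, some edge of $G$ joins $B_u$ to $B_{u'}$; so I realize each $C_i^\ast$ as a genuine cycle $C_i$ of $G$ inside $\bigcup_{u\in V(C_i^\ast)}B_u$, arranging that $C_1$ and $C_2$ run along a common $B_{v_7}$--$B_{v_8}$ edge and that $C_2$ and $C_3$ run along a common $B_{v_6}$--$B_{v_7}$ edge. Every clause of condition (1) is then read off the branch-set incidences: the two shared edges give $|V(C_1)\cap V(C_2)|\ge2$ and $|V(C_2)\cap V(C_3)|\ge2$; the cycle $C_3$ meets the four distinct branch sets $B_{v_4},B_{v_5},B_{v_6},B_{v_7}$, so $|C_3|\ge4$; the $B_{v_6}$--$B_{v_8}$ edge of $C_2$ lies on neither $C_1$ nor $C_3$ since those cycles avoid $B_{v_6}$ and $B_{v_8}$ respectively, supplying the required special edge; and the witnesses in $(V(C_2)\cap V(C_3))\setminus V(C_1)$ and $(V(C_1)\cap V(C_2))\setminus V(C_3)$ are the chosen vertices of $B_{v_6}$ and $B_{v_8}$. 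Finally I take $P_1\subseteq B_{v_2}$, $P_2\subseteq B_{v_4}$, $P_3\subseteq B_{v_5}$ to be paths from the roots to the points where $C_1$ (respectively $C_3$) enters these branch sets; the disjointness clauses of condition (2) hold because $B_{v_2}$ is disjoint from $V(C_2)\cup V(C_3)$ and $B_{v_4},B_{v_5}$ are disjoint from $V(C_1)\cup V(C_2)$.

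\emph{Sufficiency.} Conversely, given $C_1,C_2,C_3,P_1,P_2,P_3$ I carve six connected, pairwise disjoint branch sets out of their union. The hub $B_{v_7}$ is an arc of $C_2$ joining a vertex $h_{12}\in(V(C_1)\cap V(C_2))\setminus\{v_8\}$ to a vertex $h_{23}\in(V(C_2)\cap V(C_3))\setminus\{v_6\}$, which exist because both intersections have at least two vertices; $B_{v_8}$ consists of $v_8$ and the remaining vertices of $V(C_1)\cap V(C_2)$, $B_{v_6}$ of $v_6$ and the remaining vertices of $V(C_2)\cap V(C_3)$, while $B_{v_2}=P_1$ together with the arc of $C_1$ bearing its root, and $B_{v_4},B_{v_5}$ are $P_2,P_3$ with the two arcs of $C_3$ bearing their roots. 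Then $C_1$ supplies the edges $v_2v_7,v_2v_8,v_7v_8$, $C_2$ supplies $v_6v_7,v_7v_8$ and, through its special edge, $v_6v_8$, and $C_3$ supplies $v_4v_5,v_5v_6,v_6v_7,v_4v_7$; verifying connectedness, disjointness (after shortening the $P_i$ and, if necessary, rerouting them to be internally disjoint using $2$-connectivity) and the eight adjacencies yields the model.

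The main obstacle is the sufficiency direction, and within it the realization of the hub $B_{v_7}$ as a single connected arc that meets both $C_1$ and $C_3$ while $v_6$ and $v_8$ stay in separate branch sets. Concretely one must check that the anchors $h_{12},v_8$ on $C_1\cap C_2$, the anchors $h_{23},v_6$ on $C_2\cap C_3$, and the two root-endpoints together with $v_6$ and $h_{23}$ on $C_3$ occur in cyclic orders compatible with the triangles $C_1^\ast,C_2^\ast$ and the $4$-cycle $C_3^\ast$. This is exactly where the hypotheses are spent: the two size-at-least-two intersections furnish the distinct anchors on $C_2$, the designated vertices $v_6\notin V(C_1)$ and $v_8\notin V(C_3)$ prevent the two links from merging into the hub, the special edge of $C_2$ produces the lone $v_6v_8$ adjacency, and $|C_3|\ge4$ provides four independent positions on $C_3$; the freedom to swap the labels $v_4$ and $v_5$ (legitimate since $\mathcal{F}$ contains every bijection) then orients $C_3$ so that contracting the arcs returns precisely $C_3^\ast$. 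Once these cyclic orders are reconciled the contraction is routine.
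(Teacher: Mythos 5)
Your proof takes essentially the same route as the paper's: the same decomposition of $L'$ into the triangles $v_{2}v_{7}v_{8}$ and $v_{6}v_{7}v_{8}$ sharing the edge $v_{7}v_{8}$ and the $4$-cycle $v_{4}v_{5}v_{6}v_{7}$, the same extraction of $C_{1},C_{2},C_{3}$ and of the root-paths from the branch sets for necessity, and the same contraction idea for sufficiency. Your sufficiency sketch is in fact more detailed than the paper's one-sentence contraction argument, and the loose ends you flag there (pairwise disjointness of the $P_{i}$, connectedness of the carved-out branch sets) are equally unaddressed in the paper's own proof.
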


\begin{proof}

Let $\{G_{x} | x \in V(L')\}$ be a model of an $L'(X)$-minor in $G$. Let $x_{1}$ be a vertex in $G_{v_{2}}$ which is adjacent to a vertex $x_{2} \in G_{v_{7}}$. Let $x_{3}$ be a vertex in $G_{v_{7}}$ which is adjacent to a vertex $x_{4}$ in $G_{v_{8}}$. Let $x_{5}$ be a vertex in $G_{v_{8}}$ which is adjacent to a vertex $x_{6}$ in $G_{v_{2}}$. Then since $G_{v_{2}}$ is connected, there is a $(x_{1},x_{6})$-path, $P_{x_{1},x_{6}}$, contained in $G_{v_{2}}$. Similarly, there is a $(x_{2},x_{3})$-path, $P_{x_{2},x_{3}}$, contained in $G_{v_{7}}$ and a $(x_{4},x_{5})$-path, $P_{x_{4},x_{5}}$, contained in $G_{v_{8}}$. Then $C_{1} = P_{x_{1},x_{6}} \cup P_{x_{2},x_{3}} \cup P_{x_{4},x_{5}} \cup \{x_{1}x_{2}, x_{3}x_{4},x_{5}x_{6}\}$ is a cycle. 

Now there is a vertex $v_{1} \in G_{v_{7}}$ which is adjacent to a vertex $v_{2} \in G_{v_{6}}$. Additionally there is a vertex $v_{3} \in G_{v_{6}}$ which is adjacent to a vertex, $v_{4} \in G_{v_{8}}$. As $G_{v_{6}}$ is connected, there is a $(v_{2},v_{3})$-path, $P_{v_{2},v_{3}}$, contained in $G_{v_{6}}$. As $v_{4}$ and $x_{4}$ are in $G_{v_{8}}$, there is a $(v_{4},x_{4})$-path, $P_{v_{4},x_{4}}$, contained in $G_{v_{8}}$. Similarly, as $v_{1}$ and $x_{3}$ are in $G_{v_{7}}$, there is a $(v_{1},x_{3})$-path, $P_{v_{1},x_{3}}$, contained in $G_{v_{7}}$. Then $C_{2} = P_{v_{2},v_{3}} \cup P_{v_{1},x_{3}} \cup P_{v_{4},x_{4}} \cup \{v_{3}v_{4}, x_{3}x_{4},v_{1}v_{2}\}$ is a cycle.

By definition of an $L(X)$-model, there is a vertex $y_{1} \in G_{v_{5}}$ which is adjacent to a vertex $y_{2} \in G_{v_{4}}$. There is a vertex $y_{3} \in G_{v_{4}}$ which is adjacent to a vertex $y_{4} \in G_{v_{7}}$. There is a vertex $y_{5}$ in $G_{v_{5}}$ which is adjacent to a vertex $y_{6}$ in $G_{v_{6}}$. As $G_{v_{7}}$ is connected, this is a $(v_{1},y_{4})$-path, $P_{v_{1},y_{4}}$, contained in $G_{v_{7}}$. As $G_{v_{6}}$ is connected, there is a $(v_{2},y_{6})$-path, $P_{v_{2},y_{6}}$, which is contained in $G_{v_{6}}$. As $G_{v_{5}}$ is connected, there is a $(y_{1},y_{5})$-path, $P_{y_{1},y_{5}}$ contained in $G_{v_{5}}$. As $G_{v_{4}}$ is connected there is a $(y_{3},y_{2})$-path, $P_{y_{3},y_{2}}$ which is contained inside $G_{v_{4}}$. Then let $C_{3} = P_{y_{3},y_{2}} \cup P_{v_{1},y_{4}} \cup P_{y_{3},y_{2}} \cup P_{y_{1},y_{5}} \cup \{y_{3}y_{4}, v_{1}v_{2},y_{6}y_{5},y_{1}y_{2}\}$ is a cycle. Notice that $|C_{3}| \geq 4$ and $|C_{3} \cap C_{2}| \geq 2$. Now without loss of generality let $a \in G_{v_{2}}$. There is a $(a, x_{1})$-path $P_{a,x_{1}}$ contained in $G_{v_{2}}$ as $G_{v_{2}}$ is connected. Without loss of generality let $b \in G_{v_{5}}$. There is a $(b,y_{1})$-path, $P_{b,y_{1}}$, contained in $G_{v_{5}}$ since $G_{v_{5}}$ is connected. Then $c \in G_{v_{4}}$ and there is a $(c,y_{2})$-path $P_{c,y_{2}}$ contained in $G_{v_{4}}$. Then notice it is easy to see that that $P_{1},P_{2},P_{3}, C_{1},C_{2},C_{3}$ satisfy the claim.

Conversely, if given $P_{1},P_{2},P_{3},C_{1},C_{2}$ and $C_{3}$ satisfying the lemma statement, we simply contract $P_{1}$, $P_{2}$,$P_{3}$ down to a single vertex, contract $C_{1}$ and $C_{2}$ to a diamond, and then $C_{3}$ to a $4$-cycle. 
\end{proof}

As with the other sections, we start off with some lemmas about how the $L(X)$-minor behaves across $2$-separations.

\begin{lemma}
\label{allononesideL(X)}
Let $G$ be a graph and $(A,B)$ be a $2$-separation with vertex boundary $\{x,y\}$. If $X = \{a,b,c,d\} \subseteq A$, then $G$ has an $L(X)$-minor if and only if $G_{A} = G[A] \cup \{xy\}$ has an $L(X)$-minor. 
\end{lemma}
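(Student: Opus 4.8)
The plan is to follow the template of Lemma~\ref{allonesidegeneralH}, but since $L$ is only $2$-connected (it has the $2$-cuts $\{v_2,v_5\}$ and $\{v_2,v_4\}$), the generic connectivity argument used there fails and must be replaced by an analysis of the explicit cut structure of $L$. Write $A\cap B=\{x,y\}$; we may assume the separation is proper, and since $G$ is $2$-connected every component of $G[B\setminus\{x,y\}]$ is adjacent to both $x$ and $y$, so there is an $(x,y)$-path $P$ whose interior lies in $B\setminus\{x,y\}$.

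For the easy direction, suppose $\{G_z\mid z\in V(L)\}$ is an $L(X)$-model in $G_A$. The only edge of $G_A$ that need not be present in $G$ is $xy$, and every branch set has vertex set contained in $A$. If the model does not use $xy$ then it is already a model in $G$. Otherwise, replace each use of $xy$ — whether it occurs inside a single branch set or realises an adjacency between two branch sets $G_z\ni x$ and $G_w\ni y$ — by the path $P$, adding the interior vertices of $P$ to the branch set containing $x$. Since these vertices lie in $B\setminus\{x,y\}$ and are disjoint from all branch sets, this preserves disjointness and connectivity and supplies every adjacency formerly carried by $xy$, giving an $L(X)$-model in $G$.

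For the converse, suppose $G$ has an $L(X)$-model $\{G_z\mid z\in V(L)\}$ with $X\subseteq A$. The central claim is that no branch set lies entirely in $B\setminus\{x,y\}$. Partition the branch sets into $T_A$ (those contained in $A\setminus\{x,y\}$), $T_B$ (those contained in $B\setminus\{x,y\}$), and the at most two remaining branch sets that contain $x$ or $y$. Because $\{x,y\}$ is a vertex cut of $G$, no vertex of $A\setminus\{x,y\}$ is adjacent to a vertex of $B\setminus\{x,y\}$; hence in $L$ every neighbour of a vertex of $T_B$ lies in $T_B$ or among the branch sets meeting $\{x,y\}$, and likewise every neighbour of a vertex of $T_A$ lies in $T_A$ or among those branch sets. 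Since the four root branch sets are distinct and at most two of them can meet $\{x,y\}$, at least two roots lie in $T_A$, so $T_A\neq\emptyset$. Thus, if $T_B\neq\emptyset$, the branch sets meeting $\{x,y\}$ constitute a set of at most two vertices of $L$ whose deletion separates $T_B$ from $T_A$. As $L$ is $2$-connected it has no cut vertex, so this separator has size exactly two and is therefore one of the two $2$-cuts $\{v_2,v_5\}$ or $\{v_2,v_4\}$ (which isolate the degree-two vertices $v_1$, respectively $v_3$). A short check shows that in either case every component of $L$ minus the cut contains a root, whereas $T_B$ avoids all roots since $X\subseteq A$; this contradiction proves $T_B=\emptyset$.

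Given $T_B=\emptyset$, every branch set meets $A$, and the only branch sets meeting $B$ are those containing $x$ or $y$. Define $G'_z=G_A[V(G_z)\cap A]$. For $z\in T_A$ this equals $G_z$ and is connected; for a branch set containing $x$ or $y$, any component of its $A$-part that is joined to the rest of $G_z$ only through $B$ must contain $x$ or $y$, so there are at most two such components, one containing $x$ and the other $y$, and the edge $xy\in E(G_A)$ reconnects them, making $G'_z$ connected. Finally, every adjacency of the original model persists: an adjacency realised inside $A$ survives, and the only adjacency that could have been realised by an edge of $B\setminus\{x,y\}$ is that between the branch set containing $x$ and the branch set containing $y$, which the edge $xy$ now supplies. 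Hence $\{G'_z\mid z\in V(L)\}$ is an $L(X)$-model in $G_A$, with all roots in their original branch sets. The main obstacle is precisely the key claim of the converse: because $L$ is not $3$-connected, the generic argument of Lemma~\ref{allonesidegeneralH} does not apply, and one must use both the specific $2$-cut structure of $L$ and the fact that all four roots lie in the web-corner vertices $\{v_1,v_3,v_4,v_5\}$.
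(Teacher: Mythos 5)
Your proof is correct, and its overall architecture matches the paper's: the forward direction realises the edge $xy$ of $G_{A}$ inside $G$ by routing through $B$, and the converse restricts each branch set to $A$ and uses the edge $xy$ of $G_{A}$ to repair connectivity and the one adjacency that may have been carried through $B$. Where you genuinely diverge is in how the central claim is justified. The paper argues by cases on \emph{which pairs} of branch sets of $L$ can contain vertices of $B$, using the placement of the roots in $\{v_{1},v_{3},v_{4},v_{5}\}$ and degree counts (e.g.\ $\deg(v_{2})>2$) to force those branch sets to contain $x$ or $y$; it never explicitly isolates the statement that no branch set lies wholly inside $B\setminus\{x,y\}$. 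You instead prove exactly that statement once and for all: the branch sets meeting $\{x,y\}$ would have to induce a $2$-cut of $L$ separating $T_{B}$ from $T_{A}$, and the only $2$-cuts of $L$ are $\{v_{2},v_{5}\}$ and $\{v_{2},v_{4}\}$ (isolating the degree-two vertices $v_{1}$ and $v_{3}$), each of whose complementary components contains a root --- impossible since $T_{B}$ is root-free. Your route is tighter and more clearly complete (the paper's phrase ``we may assume that $x\in G_{v_{1}}$ and $y\in G_{v_{2}}$'' quietly absorbs the possibility of a branch set buried in $B\setminus\{x,y\}$, which your cut analysis rules out explicitly), at the cost of having to verify the full list of $2$-cuts of $L$; the paper's case analysis avoids that enumeration but is harder to audit. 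Both arguments correctly exploit that $L$ is only $2$-connected, so the generic argument of Lemma~\ref{allonesidegeneralH} does not apply verbatim.
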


\begin{proof}
If $G_{A}$ has an $L(X)$-minor then since $G_{A}$ is a minor of $G$ by contracting all of $G[B]$ onto $\{x,y\}$, we get that $G$ has an $L(X)$-minor.

Conversely, let $\{G_{x} | x \in V(L)\}$ be a model for an $L(X)$-minor. We claim that $\{G_{A}[V(G_{x}) \cap A] | x \in V(L)\}$ is a model of an $L(X)$-minor in $G_{A}$. If there is only one branch set containing vertices of $B$ then the result trivially holds. Therefore we may assume there are at least two distinct branch sets containing vertices of $B$. If the branch sets containing vertices of $B$ are two of $G_{v_{1}}, G_{v_{3}},G_{v_{4}},G_{v_{5}}$, then since $X \subseteq A$, all other branch sets are contained inside $G[A]$. Then since $xy \in E(G_{A})$, $\{G_{A}[V(G_{x}) \cap A] | x \in V(L)\}$ is a model of an $L(X)$-minor.

 Now suppose that exactly one of $G_{v_{1}},G_{v_{3}},G_{v_{4}},$ and $G_{v_{5}}$ contains a vertex from $B$. Suppose that $G_{v_{1}}$ is the branch set which contains the vertex. Then suppose that $G_{v_{2}}$ is the other branch set which contains a vertex from $B$. Since $\deg(v_{2}) >2$, we may assume that $x \in G_{v_{1}}$ and $y \in G_{v_{2}}$ and all other branch sets are contained in $G[A \setminus \{x,y\}]$. Then since $xy \in E(G_{A})$, $\{G_{A}[V(G_{x}) \cap A] \ | \ x \in V(L)\}$ is a model for an $L(X)$-minor. A similar analysis works for the other cases.

Therefore we can assume that two of $G_{v_{2}}$, $G_{v_{8}}$, $G_{v_{7}}$, and $G_{v_{6}}$ contain vertices from $B$. Suppose $G_{v_{2}}$ and $G_{v_{8}}$ are two branch sets containing vertices from $B$. Then since $\deg(v_{2}) >2$ and $\deg(v_{8}) >2$ in $L$, we may assume that $u \in G_{v_{2}}$ and $x \in G_{v_{8}}$ and that all other branch sets are contained in $A \setminus \{x,y\}$. But then since $xy \in E(G_{A})$, $\{G_{A}[V(G_{x}) \cap A] | x \in V(L)\}$ is a model for an $L(X)$-minor. The other cases follow by essentially the same argument. 
\end{proof}

\begin{lemma}
\label{terminalseparatingL(X)}
Let $G$ be a graph and $(A,B)$ a $2$-separation with vertex boundary $\{x,y\}$. Let $X = \{a,b,c,d\} \subseteq V(G)$. If $x,y \in X$ and one vertex of $X$ lies in $A \setminus \{x,y\}$, and the other lies in $B \setminus \{x,y\}$, then $G$ does not have an $L(X)$-minor. 
\end{lemma}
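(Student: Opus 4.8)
The plan is to mirror the strategy of Lemma~\ref{noW4minor}: I will assume an $L(X)$-model exists, use the $2$-separation to pin down where the four root branch sets can sit, and then derive a forbidden separation inside $L$ itself. Suppose toward a contradiction that $\{G_w \mid w \in V(L)\}$ is an $L(X)$-model, realized by a bijection $\phi$ from $X$ onto $\{v_1,v_3,v_4,v_5\}$. Write $p$ for the vertex of $X$ in $A\setminus\{x,y\}$ and $q$ for the vertex of $X$ in $B\setminus\{x,y\}$; since $x,y\in X$ as well, all four roots are accounted for. Because the branch sets are pairwise disjoint and $x\in G_{\phi(x)}$, $y\in G_{\phi(y)}$, neither $G_{\phi(p)}$ nor $G_{\phi(q)}$ can contain $x$ or $y$. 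Hence $G_{\phi(p)}$ is a connected subgraph of $G-\{x,y\}$ meeting $A\setminus\{x,y\}$, so, as $\{x,y\}$ is the vertex boundary of $(A,B)$, it lies entirely in $A\setminus\{x,y\}$; symmetrically $G_{\phi(q)}\subseteq B\setminus\{x,y\}$.

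Next I would argue that $\{\phi(x),\phi(y)\}$ must separate $\phi(p)$ from $\phi(q)$ in $L$. Indeed, any $\phi(p)$--$\phi(q)$ path $w_0 w_1 \cdots w_k$ in $L$ corresponds to a connected subgraph $\bigcup_i G_{w_i}$ of $G$, together with the linking edges, that contains the vertex $p\in A\setminus\{x,y\}$ and the vertex $q\in B\setminus\{x,y\}$. Since $\{x,y\}$ is a cut of $G$ separating these two sides, this subgraph must contain $x$ or $y$, and as those vertices occur only in $G_{\phi(x)}$ and $G_{\phi(y)}$, the path must pass through $\phi(x)$ or $\phi(y)$. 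Thus in $L-\{\phi(x),\phi(y)\}$ the vertices $\phi(p)$ and $\phi(q)$ lie in different components.

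The final, and essential, step is to check that this is impossible for $L$. Here is where the argument genuinely differs from Lemma~\ref{noW4minor}: that lemma could invoke $3$-connectivity of $H$, but $L$ has $v_1$ and $v_3$ of degree $2$ and is only $2$-connected, so I must verify directly that deleting any two of the roots $\{v_1,v_3,v_4,v_5\}$ leaves the other two in one component. This is a short finite check, exhibiting a surviving path in each of the six cases: deleting $\{v_1,v_3\}$, $\{v_1,v_5\}$, or $\{v_3,v_4\}$ leaves the edge $v_4v_5$, $v_3v_4$, or $v_1v_5$ respectively; deleting $\{v_4,v_5\}$ leaves $v_1 v_2 v_3$; deleting $\{v_3,v_5\}$ leaves $v_1 v_2 v_7 v_4$; and deleting $\{v_1,v_4\}$ leaves $v_3 v_2 v_7 v_6 v_5$. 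In every case the two remaining roots stay connected, contradicting the separation forced above, so no $L(X)$-model exists. I expect the only delicate point to be stating the branch-set containment of the first paragraph cleanly; the separation check itself is routine once the six explicit paths are recorded.
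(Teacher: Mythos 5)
Your proof is correct and follows essentially the same approach as the paper: both confine the branch sets of the two non-cut roots to opposite sides of the separation and then derive a contradiction from the structure of $L$. Your version merely packages the paper's branch-set case analysis more cleanly as the single claim that $\{\phi(x),\phi(y)\}$ would have to separate $\phi(p)$ from $\phi(q)$ in $L$, followed by the (correct) six-case check that no pair of roots separates the other two.
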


\begin{proof}
Suppose towards a contradiction that $\{G_{x} | x \in V(L)\}$ was a model of an $L(X)$-minor in $G$. If $x = v_{1}$ and $y = v_{3}$ then without loss of generality we have that $G_{v_{4}} \subseteq G[A \setminus \{x,y\}]$ and $G_{v_{5}} \subseteq G[B \setminus \{x,y\}]$. But this contradicts that there is a vertex in $G_{v_{4}}$ which is adjacent to a vertex in $G_{v_{5}}$. The same argument works if $x= v_{1}$, $y =v_{5}$ or if $x = v_{3}$ or $y = v_{4}$.  

Now assume that $x = v_{1}$ and $y = v_{4}$. Then without loss of generality we may assume that $G_{v_{3}} \subseteq G[A \setminus \{x,y\}]$ and $G_{v_{5}} \subseteq G[B \setminus \{x,y\}]$. Then $G_{v_{2}} \subseteq G[A \setminus \{x,y\}]$ as $G_{v_{2}}$ has a vertex which is adjacent to a vertex in $G_{v_{3}}$. By similar reasoning, $G_{v_{8}}, G_{v_{7}}$ and $G_{v_{6}}$ are all contained in $G[A \setminus \{x,y\}]$. But then there is no vertex in $G_{v_{6}}$ which is adjacent to a vertex in $G_{v_{5}}$, a contradiction. The case where $x = v_{3}$ and $y = v_{5}$ follows similarly. 

Lastly, assume that $x= v_{4}$ and $y = v_{5}$. Without loss of generality, we may assume that $G_{v_{3}}$ is contained in $G[B \setminus \{x,y\}]$ and $G_{v_{1}}$ is contained in $G[A \setminus \{x,y\}]$. Then $G_{v_{2}}$ is contained in either $G[A \setminus \{x,y\}]$ or $G[B \setminus \{x,y\}]$. Suppose that $G_{v_{2}}$ is contained in $G[A \setminus \{x,y\}]$. But then there is no vertex in $G_{v_{3}}$ which is adjacent to a vertex in $G_{v_{2}}$. Then $G_{v_{2}}$ is contained in $G[B \setminus \{x,y\}]$. But then there is no vertex in $G_{v_{1}}$ which is adjacent to a vertex in $G_{v_{2}}$, a contradiction. 
\end{proof}

\begin{lemma}
\label{subdivisionlabellingL(X)}
Let $G$ be a graph and $(A,B)$ be a $2$-separation with vertex boundary $\{u,v\}$. Let $X = \{a,b,c,d\} \subseteq V(G)$.  Suppose that there is exactly one vertex, $z$, such that $z \in X \cap (A \setminus \{u,v\})$, and exactly two vertices from $X$ in $B \setminus \{x,y\}$ and $u \in X$. Let $X_{1} = X \setminus \{z\} \cup \{v\}$. For each $\pi \in \mathcal{F}$, define $\pi': X_{1} \to V(L)$ such that $\pi' = \pi$ on $X \setminus \{z\}$ and $\pi'(v) = \pi(z)$. If there is a model of an $L(X)$-minor, $\{G_{x} | x \in L(X)\}$, then either $\{G_{B}[V(G_{x}) \cap B)]| x \in V(L)\}$ is a model of an $L(X_{1})$-minor in $G_{B}$, or the vertex from $X$ in $ A \setminus \{u,v\}$ is not in branch sets $G_{v_{4}}$ or $G_{v_{5}}$. 
\end{lemma}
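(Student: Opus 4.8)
The plan is to analyze a fixed $L(X)$-model $\{G_x \mid x\in V(L)\}$ across the $2$-separation, writing $D = A\setminus\{u,v\}$ for the interior of $A$ and recalling $G_B = G[B]\cup\{uv\}$. First I would dispose of the trivial branch of the dichotomy: since $z$ is a root it lies in $G_{\pi(z)}$, and if $\pi(z)\in\{v_1,v_3\}$ then $z\in G_{v_1}\cup G_{v_3}$, so $z\notin G_{v_4}\cup G_{v_5}$ and the second alternative of the conclusion already holds. Thus I may assume $t:=\pi(z)\in\{v_4,v_5\}$ and must show that the natural restriction $\{G_B[V(G_x)\cap B]\mid x\in V(L)\}$ is an $L(X_1)$-model. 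Note $v\notin X$, so $\pi$ restricts to a bijection from the roots onto $\{v_1,v_3,v_4,v_5\}$, with $\alpha:=\pi(u)\neq t$ and the two vertices of $X$ in $B\setminus\{u,v\}$ mapping to the remaining two of $\{v_1,v_3,v_4,v_5\}$.

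Next I would set up the structural skeleton. Let $\alpha=\pi(u)$, so $u\in G_\alpha$, and if some branch set contains $v$ call it $G_\gamma$. Because $\{u,v\}$ is a $2$-cut and the branch sets are connected and pairwise disjoint, any branch set other than $G_\alpha$ and $G_\gamma$ is contained entirely in $D$ or entirely in $B$. Let $I=\{x\in V(L):G_x\subseteq D\}$ be the set of ``trapped'' branch sets. A short check gives $N_L(I)\subseteq I\cup\{\alpha,\gamma\}$: an $L$-edge leaving $I$ is witnessed at a vertex of $D$, whose $G$-neighbours lie in $A=D\cup\{u,v\}$, hence belong either to another trapped set or to $G_\alpha\cup G_\gamma$. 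The decisive reduction is the claim $I=\emptyset$. Assuming $I\neq\emptyset$, the two vertices of $X$ in $B\setminus\{u,v\}$ are not trapped and at most one of them can equal $\gamma$, so the far side $V(L)\setminus(I\cup\{\alpha,\gamma\})$ is nonempty; then $\{\alpha,\gamma\}$ is a genuine vertex cut of $L$ separating $I$ from that far side, and $2$-connectivity of $L$ forces $\alpha\neq\gamma$ (both defined) and the separator to have size exactly $2$.

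The main obstacle is pinning down the $2$-cuts of $L$, and this is where the specific graph and the hypothesis $u\in X$ are used. I would verify by a finite case check that the only $2$-cuts of $L$ are $\{v_2,v_4\}$ and $\{v_2,v_5\}$ (each isolating a degree-$2$ vertex, $v_3$ and $v_1$ respectively); in particular every $2$-cut contains the non-root $v_2$. Since $\alpha$ is a root, $\alpha\neq v_2$, so $\{\alpha,\gamma\}$ being a $2$-cut forces $\gamma=v_2$ and $\alpha\in\{v_4,v_5\}$; as $t\in\{v_4,v_5\}\setminus\{\alpha\}$, the two $B$-roots are then exactly $\{v_1,v_3\}$. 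But $\{v_2,v_4\}$ (resp.\ $\{v_2,v_5\}$) isolates the single vertex $v_3$ (resp.\ $v_1$), so one $B$-root is the isolated vertex while the other $B$-root lies in the same component as $I$ and is not trapped; connectedness of that component then produces an $L$-edge from $I$ to the far side, contradicting $N_L(I)\subseteq I\cup\{\alpha,\gamma\}$. Hence $I=\emptyset$. (The case $\gamma$ undefined, i.e.\ $v$ uncovered, is even easier: $\{\alpha\}$ alone would separate $I$, contradicting $2$-connectivity.) This single argument handles $v_4$ and $v_5$ uniformly, which is convenient since $L$ admits no automorphism swapping them.

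Finally I would assemble the restricted model. With $I=\emptyset$, only $G_\alpha$ and $G_\gamma$ meet $D$; in particular $G_t$ meets $B$ and, since $t\neq\alpha$, it avoids $u$, so $v\in G_t$ and $\gamma=t$. Then each restricted branch set is nonempty and connected in $G_B$ (for $x\notin\{\alpha,\gamma\}$ it equals $G_x\subseteq B$; the crossing sets $G_\alpha,G_\gamma$ each contain exactly one of $u,v$, so any excursion into $D$ enters and leaves through that single cut-vertex and can be collapsed). Every $L$-adjacency survives in $G_B$: an edge of $L$ with an endpoint $y\notin\{\alpha,\gamma\}$ has $G_y\subseteq B$, and its witnessing edge cannot have an endpoint in $D$ (that would force the other endpoint into $\{u,v\}\cap G_y=\emptyset$), so it lies in $G[B]$; the lone possible $\alpha\gamma$-edge is witnessed by $uv\in E(G_B)$. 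Since $u$ stays at $\alpha=\pi'(u)$, the two $B$-roots stay fixed, and $v$ now realizes $t=\pi'(v)$, the restriction is a valid $L(X_1)$-model, completing the proof.
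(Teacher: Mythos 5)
Your proof is correct, and it reaches the conclusion by a genuinely different route than the paper. The paper fixes $z\in G_{v_4}$ (handling $G_{v_5}$ ``similarly''), splits on whether $v\in G_{v_4}$, and chases adjacencies by hand through the diamond on $v_2,v_6,v_7,v_8$: either only $G_{v_4}$ and the branch set of $u$ meet $A$, in which case the restriction to $G_B$ is the desired model, or some root branch set is stranded in $B\setminus\{u,v\}$ while all of its $L$-neighbours are trapped in $A\setminus\{u,v\}$, a contradiction. You instead package the ``trapped'' branch sets into the set $I$, observe that $I$ must be a union of components of $L$ minus the (at most two) branch sets containing $u$ and $v$, and use $2$-connectivity of $L$ to force $\{\pi(u),\gamma\}$ to be a $2$-cut of $L$; the finite check that the only $2$-cuts are $\{v_2,v_4\}$ and $\{v_2,v_5\}$ (each isolating a degree-$2$ root vertex) then pins $\gamma=v_2$ and lets the placement of the two $B$-roots kill both components, so $I=\emptyset$ and the restriction goes through. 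I verified the enumeration of $2$-cuts and the degenerate cases ($v$ uncovered, or $u$ and $v$ in the same branch set), and your nonemptiness argument for the ``far side'' is sound. Your approach is more systematic: it treats $v_4$ and $v_5$ uniformly (the paper's ``follows similarly'' actually requires rerunning the adjacency chase with $v_6$ in place of $v_7$), and it would transfer to other target graphs $H$ once one knows the $2$-cuts of $H$ and the root placement; the price is the explicit classification of the $2$-cuts of $L$, which the paper's hands-on argument avoids.
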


\begin{proof}
Suppose $\{G_{x} | x \in L(X)\}$ is a model of an $L(X)$-minor, and suppose that $\{G_{B}[V(G_{x}) \cap B)] \ | \  x \in V(L)\}$ is not a model of an $L(X_{1})$-minor. Furthermore, suppose the vertex from $X$ in $A \setminus \{u,v\}$ is in $G_{v_{4}}$.

 First consider the case when $v \in G_{v_{4}}$. Suppose any of $G_{v_{2}}$, $G_{v_{7}}$, $G_{v_{6}}$, or $G_{v_{8}}$ 
is contained in $G[A \setminus \{u,v\}]$. Since $v_{2},v_{7},v_{6}$ and $v_{8}$ induce a diamond in $L$, and $v \in G_{v_{4}}$, and $u \in X$, each of $G_{v_{2}}$, $G_{v_{7}}$, $G_{v_{6}}$ and $G_{v_{8}}$ are contained in $G[A \setminus \{u,v\}]$. But then at least two of $G_{v_{1}}, G_{v_{3}}$, and $G_{v_{5}}$ are contained in $G[B \setminus \{u,v\}]$.  But this is a contradiction, since in $L$, all of $v_{1},v_{5}$ and $v_{4}$ are adjacent to at least one of $v_{2},v_{7}$ and $v_{6}$. Therefore we can assume that none of $G_{v_{2}}$, $G_{v_{7}}$, $G_{v_{6}}$, or $G_{v_{8}}$ are in $G[A \setminus \{u,v\}]$. But then since two vertices of $X$ lie in $B \setminus \{u,v\}$, there are at most two branch sets containing vertices from $A$. But then $\{G_{B}[V(G_{x}) \cap B)]| x \in V(L)\}$ is a model of an $L(X_{1})$-minor in $G_{B}$, a contradiction. 

Therefore we can assume that $v \not \in V(G_{v_{4}})$, and thus $G_{v_{4}} \subseteq G[A \setminus \{u,v\}]$. Now at least one of $G_{v_{3}}$ and $G_{v_{5}}$ contains a vertex from $B$ which is not $u$, and thus either $v \in G_{v_{3}}$ or $v \in G_{v_{5}}$. In either case, this implies that $G_{v_{7}}$ is contained in $G[A \setminus \{u,v\}]$. By the same reasoning as before, this implies that all of $G_{v_{2}}, G_{v_{8}}$ and $G_{v_{6}}$ are contained in $G[A \setminus \{u,v\}]$. But then since at least one of $G_{v_{1}}, G_{v_{3}}$ and $G_{v_{5}}$ are contained in $G[B \setminus \{u,v\}]$, which contradicts that $\{G_{x} | x \in L(X)\}$ is an $L(X)$-model. The case where the vertex from $X$ in $ A \setminus \{u,v\}$ is in $G_{v_{5}}$ follows similarly. 
 \end{proof}

\begin{lemma}
\label{twooneachsideL(X)}
Let $G$ be a $2$-connected graph and $(A,B)$ be a $2$-separation with vertex boundary $\{x,y\}$. Let $X = \{a,b,c,d\} \subseteq V(G)$.  Suppose  $a, b \in A \setminus (A \cap B)$ and $c,d \in B \setminus (B \cap A)$. Let $X_{1}= (X \cap A) \cup \{x,y\}$. For each $\pi \in \mathcal{F}$, define $\pi_{1}: X_{1} \to V(L)$ such that $\pi_{1} = \pi$ on $a,b$ and $\pi_{1}(x) = \pi(c)$ and $\pi_{1}(d) = \pi$. Let $X_{2} = (X \cap B) \cup \{x,y\}$. For each $\pi \in \mathcal{F}$, define $\pi_{2}: X \to V(L)$ such that $\pi_{2} = \pi$ on $\{c,d\}$ and $\pi_{2}(x) = \pi(a)$ and $\pi_{2}(y) = \pi(b)$.  Then $G$ has an $L(X)$-minor if and only if $G_{A} = G[A] \cup \{xy\}$ has an $L(X_{1})$-minor or $G_{B} = G[B] \cup \{xy\}$ has an $L(X_{2})$-minor. 
\end{lemma}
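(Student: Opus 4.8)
The plan is to mirror the reductions in Lemma~\ref{2sepW4} and Lemma~\ref{thereductionwithk22init}, keeping in mind that, unlike $W_4$ or $K_{2,4}$ there, the present difficulty is that $L$ is only $2$-connected. I would prove the two implications separately.

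\emph{Sufficiency.} Suppose $G_A$ has an $L(X_1)$-minor, fix a model, and observe that its roots for the positions $\pi_1(x)=\pi(c)$ and $\pi_1(y)=\pi(d)$ are $x$ and $y$. Since $G$ is $2$-connected, Menger's theorem gives two vertex-disjoint paths inside $G[B]$ from $\{x,y\}$ to $\{c,d\}$; I would prolong the branch sets containing $x$ and $y$ along these paths until they reach $c$ and $d$. Because $\mathcal{F}$ consists of \emph{all} bijections of $\{a,b,c,d\}$ onto $\{v_1,v_3,v_4,v_5\}$, whichever of the two pairings Menger produces corresponds to a legal map, so the extended model is an $L(X)$-minor of $G$. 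The only subtlety is the case in which the $G_A$-model genuinely uses the added edge $xy$ (possible only if $\pi(c)\pi(d)\in E(L)$): here the adjacency between the two prolonged branch sets must be recovered inside $G[B]$, which I would obtain from the $2$-connectivity of $G_B=G[B]\cup\{xy\}$. The case where $G_B$ has an $L(X_2)$-minor is symmetric.

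\emph{Necessity.} Given a model $\{G_w : w\in V(L)\}$ of an $L(X)$-minor in $G$, I would classify each branch set as \emph{pure-$A$} (inside $A\setminus\{x,y\}$), \emph{pure-$B$} (inside $B\setminus\{x,y\}$), or \emph{boundary} (meeting $\{x,y\}$). As the branch sets are disjoint and a set meeting both sides must contain $x$ or $y$, there are at most two boundary sets; write $x\in G_s$ and $y\in G_t$. If no branch set is pure-$B$, then $\{G_A[V(G_w)\cap A] : w\in V(L)\}$, with connectivity repaired by the edge $xy$ exactly as in Lemma~\ref{allononesideL(X)}, is an $L(X_1)$-model of $G_A$ (the roots $c,d$ being taken over by the boundary vertices $x,y$, which is precisely the content of $\pi_1$); symmetrically, if no branch set is pure-$A$ we obtain an $L(X_2)$-model of $G_B$.

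The main obstacle is the remaining \emph{mixed} case, where a pure-$A$ and a pure-$B$ branch set coexist. Deleting $x,y$ from $G$ then separates the pure-$A$ branch sets from the pure-$B$ ones, so, after contracting each branch set, $\{s,t\}$ is a $2$-separation of $L$ with the two classes on opposite sides; in particular $s\ne t$, as a single vertex cannot separate the $2$-connected graph $L$. The structural fact I would exploit is that the only $2$-separations of $L$ are $\{v_2,v_5\}$ and $\{v_2,v_4\}$, each of which strands a single degree-$2$ root ($v_1$, respectively $v_3$) from the rest. Thus in the mixed case exactly one root sits alone on one side, attached as a pendant to $G_s$ and $G_t$. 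I expect the delicate step to be re-routing this stranded root through the boundary $\{x,y\}$—using $2$-connectivity of $G$, and Theorem~\ref{k22characterization} when a disjoint pair of connections is required—so that the entire model can be pushed onto a single side and we fall back into one of the two cases already settled. Making this relocation land the root in the branch set prescribed by $\pi_1$ or $\pi_2$ is where the argument requires the most care.
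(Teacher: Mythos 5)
Your sufficiency argument and the two ``unmixed'' cases of necessity are fine and essentially coincide with the paper's proof (the paper phrases sufficiency as contracting $G[B]$ onto $\{x,y\}$ so that $c,d$ land on $x,y$, which is your Menger step). The genuine gap is the mixed case, which you correctly isolate and analyse structurally (your claim that the only $2$-cuts of $L$ are $\{v_2,v_5\}$ and $\{v_2,v_4\}$, each stranding a single degree-$2$ root, is correct) but then do not prove: you only say you \emph{expect} to be able to re-route the stranded root through the boundary and fall back into a settled case. That step cannot be carried out. Consider the following instance of your mixed configuration with $\{s,t\}=\{v_2,v_5\}$: let $V(G)=\{a,b,x,y,c,d,w_6,w_7,w_8\}$ with edges $ay,ab,bx$ on the $A$-side and $yw_7,yw_8,yc,cd,dx,dw_7,xw_6,w_6w_7,w_6w_8,w_7w_8$ on the $B$-side, where $A=\{a,b,x,y\}$ and $B=\{x,y,c,d,w_6,w_7,w_8\}$. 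This $G$ is $2$-connected, $(A,B)$ is a $2$-separation of the required type, and the branch sets $\{a\},\{y\},\{c\},\{d\},\{b,x\},\{w_6\},\{w_7\},\{w_8\}$ for $v_1,\dots,v_8$ form an $L(X)$-model of $G$. Yet $|A|=4$ and $|B|=7$ are both smaller than $|V(L)|=8$, so neither $G_A$ nor $G_B$ contains \emph{any} $L$-minor. Hence no re-routing of the stranded root can push the model onto one side, and the equivalence fails in exactly the case you flagged as delicate.

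For context, the paper's own necessity argument disposes of this case by asserting that $x$ and $y$ cannot lie in $G_{v_2},G_{v_6},G_{v_7}$ or $G_{v_8}$; in the relevant subcase it concludes from $y\in V(G_{v_2})$ that ``there is no vertex in $G_{v_4}$ adjacent to a vertex in $G_{v_5}$,'' overlooking the possibility that $x$ lies in $G_{v_4}$ or $G_{v_5}$ itself. The example above realizes precisely that overlooked possibility. So your instinct about where the difficulty sits is exactly right, but the relocation step you defer is not merely delicate --- it is impossible in general, and the lemma would need an additional hypothesis (or a weaker conclusion recording the mixed configuration) before either your argument or the paper's can be completed.
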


\begin{proof}
If $G_{A}$ has an $L(X_{1})$-minor, then we can contract $B$ onto $\{x,y\}$ such that the vertices of $X$ do not get contracted together. This is possible since $G$ is $2$-connected. Then $G$ has an $L(X)$-minor. A similar argument works when $G_{B}$ has an $L(X_{2})$-minor. Now assume that $\{G_{x} | x \in V(L)\}$ is a model of an $L(X)$-minor in $G$.

Suppose $x$ is in one of $G_{v_{2}}$, $G_{v_{7}}, G_{v_{6}},$ or $G_{v_{8}}$.  We consider cases based on which branch sets contain the vertices of $X$.

First, suppose that the vertices from $X$ in $A \setminus \{x,y\}$ are in branch sets $G_{v_{1}}$ and $G_{v_{3}}$. Then the vertices in $X$ in $B \setminus \{x,y\}$ are in branch sets $G_{v_{5}}$ and $G_{v_{4}}$. But then since at most one branch set contains $y$, either there is no vertex in $G_{v_{3}}$ adjacent to a vertex in $G_{v_{4}}$ or there is no vertex in $G_{v_{1}}$ which is adjacent to a vertex in $G_{v_{5}}$.

Now suppose  the vertices from $X$ in $A \setminus \{x,y\}$ are in branch sets $G_{v_{1}}$ and $G_{v_{4}}$. Then the vertices in $X$ in $B \setminus \{x,y\}$ are in branch sets $G_{v_{5}}$ and $G_{v_{3}}$. But then since at most one branch set contains $y$, either there is no vertex in $G_{v_{3}}$ adjacent to a vertex in $G_{v_{4}}$ or there is no vertex in $G_{v_{1}}$ which is adjacent to a vertex in $G_{v_{5}}$. In either case, this is a contradiction. 

Now suppose the vertices from $X$ in $A \setminus \{x,y\}$ are in branch sets $G_{v_{1}}$ and $G_{v_{5}}$. Then the vertices from $X$ in $B \setminus \{x,y\}$ are in branch sets $G_{v_{3}}$ and $G_{v_{4}}$. First suppose that $x \not \in V(G_{v_{2}})$. Then since $v_{1}$ and $v_{3}$ are adjacent to $v_{2}$ in $L$, we have that $y \in V(G_{v_{2}})$. But then there is no vertex in $G_{v_{4}}$ which is adjacent to a vertex in $G_{v_{5}}$, a contradiction. Therefore $x \in V(G_{v_{2}})$. Then $y \in V(G_{v_{4}})$ or $V(G_{v_{5}})$. Then $G_{v_{8}},G_{v_{7}}$ and $G_{v_{6}}$ all have vertex sets in either $B \setminus \{x,y\}$ or $A \setminus \{x,y\}$. But then either $G_{v_{6}}$ does not have a vertex adjacent to a vertex in $G_{v_{5}}$ or $G_{v_{4}}$ does not have a vertex adjacent to a vertex in $G_{v_{7}}$. In either case, this is a contradiction.

Therefore $x$ is not in $V(G_{v_{2}})$, $V(G_{v_{6}}), V(G_{v_{7}}),$ or $V(G_{v_{8}})$. Similarly, $y$ is not in $V(G_{v_{2}})$, $V(G_{v_{6}}), V(G_{v_{7}}),$ or $V(G_{v_{8}})$.  Then $x$ and $y$ belong to two of $G_{v_{1}},G_{v_{3}},G_{v_{4}}$ and $G_{v_{5}}$. Now suppose that $G_{v_{1}}$ is contained in $G[A \setminus \{x,y\}]$ and $G_{v_{3}}$ is contained in $G[B \setminus \{x,y\}]$. Then since $x,y$ belong to two of $G_{v_{1}},G_{v_{3}},G_{v_{4}}$ and $G_{v_{5}}$, we have that $G_{v_{2}}$ is contained in one of $G[B \setminus \{x,y\}]$ or $G[A \setminus \{x,y\}]$. But then without loss of generality there is no vertex in $G_{v_{2}}$ which is adjacent to a vertex in $G_{v_{1}}$, a contradiction. A similar analysis shows that for any two of $G_{v_{1}},G_{v_{3}},G_{v_{4}}$ and $G_{v_{5}}$, if one of the branch sets is contained in $G[B \setminus \{x,y\}]$ and the other in $G[A \setminus \{x,y\}]$ we get a contradiction. Therefore the two branch sets from $G_{v_{1}},G_{v_{3}},G_{v_{4}}$ and $G_{v_{5}}$ which do not contain $x,y$ are contained on the same side of the $2$-separation. Suppose the two branch sets from $G_{v_{1}},G_{v_{3}},G_{v_{4}}$ and $G_{v_{5}}$ which do not contain $x,y$ are contained in $G[A \setminus \{x,y\}]$. Then by the same reasoning as above, all of $G_{v_{2}},G_{v_{8}},G_{v_{6}},$ and $G_{v_{7}}$ are contained in $G[A \setminus \{x,y\}]$. But then $\{G_{A}[V(G_{x}) \cap A] | x \in V(L)\}$ is a model of an $L(X_{1})$-minor in $G_{A}$. In the other case, by the same argument we get an $L(X_{2})$-minor in $G_{B}$.
\end{proof}

Now we show that class $\mathcal{A}$ graphs do not have an $L(X)$-minors.

\begin{lemma}
\label{noclassAgraphsL(X)}
Let $G$ be $2$-connected spanning subgraph of a class $\mathcal{A}$ graph. Then $G$ does not have an $L(X)$-minor. 
\end{lemma}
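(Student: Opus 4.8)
The plan is to show directly that $G$ admits no $L(X)$-model, exploiting the fact that every class $\mathcal{A}$ graph has $\{d,e\}$ as a $2$-cut with the roots distributed as $a,b,c$ in three distinct ``blobs'' and $d$ inside the cut. Write $H^{+}=(H,F)$ for the class $\mathcal{A}$ graph containing $G$ as a spanning subgraph, with $V(H)=\{a,b,c,d,e\}$ as in Theorem \ref{k4free}, where $X=\{a,b,c,d\}$ are the roots and $e$ is the extra vertex. The three triangles of $H$ are $\{a,d,e\}$, $\{b,d,e\}$, $\{c,d,e\}$, so the vertex sets $B_{a}=\{a\}\cup V(F_{\{a,d,e\}})$, $B_{b}=\{b\}\cup V(F_{\{b,d,e\}})$ and $B_{c}=\{c\}\cup V(F_{\{c,d,e\}})$ are exactly the components of $H^{+}-\{d,e\}$; since $G$ is a spanning subgraph of $H^{+}$, there are no edges of $G$ between any two of $B_{a},B_{b},B_{c}$. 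I would suppose for contradiction that $\{G_{x}\mid x\in V(L)\}$ is a model of an $L(X)$-minor, so that $a,b,c,d$ lie in four distinct branch sets among $G_{v_1},G_{v_3},G_{v_4},G_{v_5}$.

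Next, for each of $a,b,c$ I would apply Lemma \ref{subdivisionlabellingL(X)} to the $2$-separation isolating that root. For $a$, take $(A,B)$ with $A=B_{a}\cup\{d,e\}$ and $B=B_{b}\cup B_{c}\cup\{d,e\}$; this is a $2$-separation with boundary $\{d,e\}$ in which $d\in X$ plays the role of $u$, the root $a$ is the unique vertex of $X$ in $A\setminus\{d,e\}$, and $b,c$ are the two vertices of $X$ in $B\setminus\{d,e\}$. Hence Lemma \ref{subdivisionlabellingL(X)} yields that either $G_{B}=G[B]\cup\{de\}$ has an $L(X_1)$-minor (where $X_1=\{b,c,d,e\}$ and $e$ inherits the image of $a$), or $a\notin V(G_{v_4})\cup V(G_{v_5})$.

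In the first alternative I would derive a contradiction using Lemma \ref{terminalseparatingL(X)}: in $G_{B}$ the pair $\{d,e\}$ is still a $2$-cut, and the separation $(B_{b}\cup\{d,e\},\,B_{c}\cup\{d,e\})$ has both boundary vertices $d,e$ lying in the root set $X_1$, while the remaining two roots $b,c$ lie on opposite sides of it; therefore $G_{B}$ has no $L(X_1)$-minor, contradicting the first alternative. Running the identical argument for the separations isolating $b$ and isolating $c$, I conclude that the first alternative never occurs, so the second alternative must hold for each of $a,b,c$, giving $a,b,c\in V(G_{v_1})\cup V(G_{v_3})$.

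Finally this is impossible: the root map is injective into $\{v_1,v_3,v_4,v_5\}$, so $a,b,c$ occupy three distinct branch sets, yet only $G_{v_1}$ and $G_{v_3}$ remain available, a pigeonhole contradiction. Hence no $L(X)$-model exists, completing the proof. The main obstacle is bookkeeping the root roles correctly through the two reduction lemmas: one must notice that the first alternative of Lemma \ref{subdivisionlabellingL(X)} always lands in a graph where both former cut vertices have become roots (so that Lemma \ref{terminalseparatingL(X)} applies), while the second alternative, imposed simultaneously on all three of $a,b,c$, forces the collapse into the two branch sets $G_{v_1},G_{v_3}$.
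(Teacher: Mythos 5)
Your proposal is correct and follows essentially the same route as the paper: apply Lemma \ref{subdivisionlabellingL(X)} across the $\{d,e\}$ cut for each of $a,b,c$, kill the first alternative via Lemma \ref{terminalseparatingL(X)} on the remaining two-blob graph, and then get a pigeonhole contradiction from $a,b,c$ all being forced into $G_{v_1}\cup G_{v_3}$. Your write-up is in fact slightly more explicit than the paper's about why the first alternative fails and about the final counting step.
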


\begin{proof}
Suppose towards a contradiction that we have a model of an $L(X)$-minor in $G$, $\{G_{x} | x \in V(L)\}$. Then observe that $\{e,d\}$ is the vertex boundary of a separation $(A,B)$ such that $a \in A \setminus \{d,e\}$ and $b,c \in B \setminus \{d,e\}$ so the hypotheses of Lemma \ref{subdivisionlabellingL(X)} are satisfied. Then we consider two cases. If $\{G_{B}[V(G_{x} \cap B)]| x \in V(L)\}$ is a model of an $L(X_{1})$-minor, then notice that this means that a spanning subgraph of a graph $H^{+}$, where $H$ is the graph defined in Lemma \ref{smallgraphk22} has an $L(X_{1})$-minor. But in this graph $H^{+}$, $\{d,e\}$ is the vertex boundary of a $2$-separation satisfying Lemma \ref{terminalseparatingL(X)}, and thus $H^{+}$ does not have an $L(X_{1})$-minor, a contradiction. 

Therefore we must be in the case where $a \in V(G_{v_{1}})$ or $a \in V(G_{v_{3}})$. Notice that $\{e,d\}$  induces a $2$-separation in $G$, $(A',B')$ such that $c \in A' \setminus \{e,d\}$ and $a,b \in B' \setminus \{e,d\}$. Additionally, $\{e,d\}$ is the vertex boundary of a $2$-separation $(A'',B'')$ such that $b \in A'' \setminus \{e,d\}$, and $a,c \in B'' \setminus \{e,d\}$. Therefore by applying the exact same analysis as the the $(A,B)$ separation, we get that all of $a,b,$ and $c$ are in $G_{v_{1}}$ or $G_{v_{3}}$, which is a contradiction.  
\end{proof}

Now we reduce class $\mathcal{E}$ and $\mathcal{F}$ down to looking at webs. 

\begin{lemma}
Let $G$ be a $2$-connected spanning subgraph of a class $\mathcal{E}$ or a $\mathcal{F}$ graph. If $G$ is a spanning subgraph of a class $\mathcal{E}$ graph, then $G$ has an $L(X)$-minor if and only if the $\{e,f,c,d\}$-web has an $L(X)$-minor. If $G$ is a spanning subgraph of a class $\mathcal{F}$ graph, then $G$ has an $L(X)$-minor if and only if the $\{e,f,g,h\}$-web has an $L(X)$-minor.
\end{lemma}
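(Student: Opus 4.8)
The plan is to mirror the analogous reductions that were already carried out for $K_{4}(X)$, $W_{4}(X)$, and $K_{2,4}(X)$ on class $\mathcal{E}$ and $\mathcal{F}$ graphs, using the $L(X)$-specific separation lemmas just established. Recall from Theorem~\ref{k4free} that a class $\mathcal{E}$ graph is built from an $\{e,f,c,d\}$-web $H'$ by adjoining two new vertices $a,b$ each adjacent exactly to $e$ and $f$; the set $\{e,f\}$ is therefore a $2$-vertex cut separating $\{a,b\}$ from the rest of the web. For class $\mathcal{F}$ the situation is the same with two such attachments across $\{e,f\}$ and $\{g,h\}$. The forward direction is trivial in each case: if the underlying web has an $L(X)$-minor then, since the web is a minor of $G$ (contract $G[B]$ onto the cut as in Lemma~\ref{allononesideL(X)}), $G$ has an $L(X)$-minor.

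For the reverse direction of the class $\mathcal{E}$ case, first I would identify the $2$-separation $(A,B)$ with vertex boundary $\{e,f\}$ where $A\setminus\{e,f\}=\{a,b\}$ and $B\supseteq V(H')$. All four roots of $X$ may lie in $A$, in $B$, or be split, so the key observation is that in a class $\mathcal{E}$ graph the roots $\{a,b,c,d\}$ are distributed with $a,b\in A\setminus\{e,f\}$ and $c,d\in B\setminus\{e,f\}$ (up to the labelling of the web), so that exactly two roots lie strictly on each side. This is precisely the hypothesis of Lemma~\ref{twooneachsideL(X)}. Applying that lemma to $(A,B)$, $G$ has an $L(X)$-minor if and only if $G_{A}=G[A]\cup\{ef\}$ has an $L(X_{1})$-minor or $G_{B}=G[B]\cup\{ef\}$ has an $L(X_{2})$-minor, where $X_{1}$ replaces $c,d$ by the cut vertices and $X_{2}$ replaces $a,b$ by the cut vertices. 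Now $G_{A}$ is a $2$-connected spanning subgraph of the small graph $H^{+}$ from Lemma~\ref{smallgraphk22} (vertices $a,b,e,f$ with the edge $ef$ and a possible attached clique), and in that graph $\{e,f\}$ is a $2$-vertex cut with $a$ and $b$ as the two remaining roots of $X_{1}$ split across it, so Lemma~\ref{terminalseparatingL(X)} shows $G_{A}$ has no $L(X_{1})$-minor. Hence $G$ has an $L(X)$-minor if and only if $G_{B}$ does, and $G_{B}$ is exactly the $\{e,f,c,d\}$-web (with the $ef$ edge added, which does not affect the web's $L(X)$-status since the outer $4$-cycle already carries the structure), completing the claim.

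The class $\mathcal{F}$ case proceeds the same way but with two successive reductions. Here $a,b$ attach across $\{e,f\}$ and $c,d$ attach across $\{g,h\}$, with $e,f,g,h$ the outer $4$-cycle of the web $H'$. I would first apply Lemma~\ref{twooneachsideL(X)} (or, since only the two roots $a,b$ sit in the small piece while the other two roots $c,d$ sit deep in the $B$ side together with the rest of the web, the appropriate single-cut reduction) to the separation at $\{e,f\}$, splitting off the $\{a,b\}$ gadget; as in the $\mathcal{E}$ case the small side has no $L$-minor by Lemma~\ref{terminalseparatingL(X)}, so we may replace $a,b$ by $e,f$. Then I would apply the same reduction to the separation at $\{g,h\}$, replacing $c,d$ by $g,h$. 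What remains is precisely the $\{e,f,g,h\}$-web with roots mapped to the outer $4$-cycle, and $G$ has an $L(X)$-minor if and only if this web does.

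The step I expect to require the most care is verifying that the reduced root set $X_{1}$ (resp.\ $X_{2}$) on the small gadget side genuinely satisfies the terminal-separating hypothesis of Lemma~\ref{terminalseparatingL(X)} — that is, that the two cut vertices $e,f$ end up playing the role of two of the four roots with one remaining root strictly inside the gadget and one strictly outside, so that the lemma applies and rules out an $L$-minor there. One must check that the family of maps $\mathcal{F}$ defining the $L(X)$-minor, after the relabelling prescribed by Lemma~\ref{twooneachsideL(X)}, indeed sends these four vertices to four \emph{distinct} target vertices of $L$ drawn from $\{v_{1},v_{3},v_{4},v_{5}\}$, since otherwise the gadget-side lemma does not directly apply. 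Once this bookkeeping of the maps is confirmed, the rest is a routine transcription of the $K_{2,4}(X)$ argument, and the two cases close symmetrically.
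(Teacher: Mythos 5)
Your proposal is correct and follows essentially the same route as the paper: apply Lemma \ref{twooneachsideL(X)} to the $2$-separation at $\{e,f\}$ (and at $\{g,h\}$ for class $\mathcal{F}$), observe that the small side is the graph of Lemma \ref{smallgraphk22} and has no $L$-minor by Lemma \ref{terminalseparatingL(X)}, and conclude that only the web side matters. The bookkeeping concern you flag at the end is already resolved by the explicit definitions of $\pi_{1},\pi_{2}$ in Lemma \ref{twooneachsideL(X)}, so no further work is needed.
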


\begin{proof}
First suppose that $G$ is a $2$-connected spanning subgraph of a class $\mathcal{E}$ graph. Then apply Lemma \ref{twooneachsideL(X)} to the separation whose vertex boundary is $\{e,f\}$. Notice that one of the graphs we get from Lemma \ref{twooneachsideL(X)} is the graph $H^{+}$ where $H$ is the graph from Lemma \ref{smallgraphk22}. Notice that $H^{+}$ does not have an $L(X)$-minor since $\{e,f\}$ forms a separation satisfying Lemma \ref{terminalseparatingL(X)}. Notice that the other graph we obtain from Lemma \ref{twooneachsideL(X)} is the $\{e,f,c,d\}$-web, which completes the claim. The argument for the class $\mathcal{F}$ graphs is essentially the same.
\end{proof}

Therefore it suffices to look at graphs which are spanning subgraphs of webs satisfying the obstructions given in Theorem \ref{w4cuts}. We will look at each case separately, but it turns out that essentially all the obstructions reduce to instances of terminal separating $2$-chains having $L(X)$-minors. First we look at the terminal separating $2$-chain obstruction. 

\begin{figure}
\begin{center}
\includegraphics[scale =0.5]{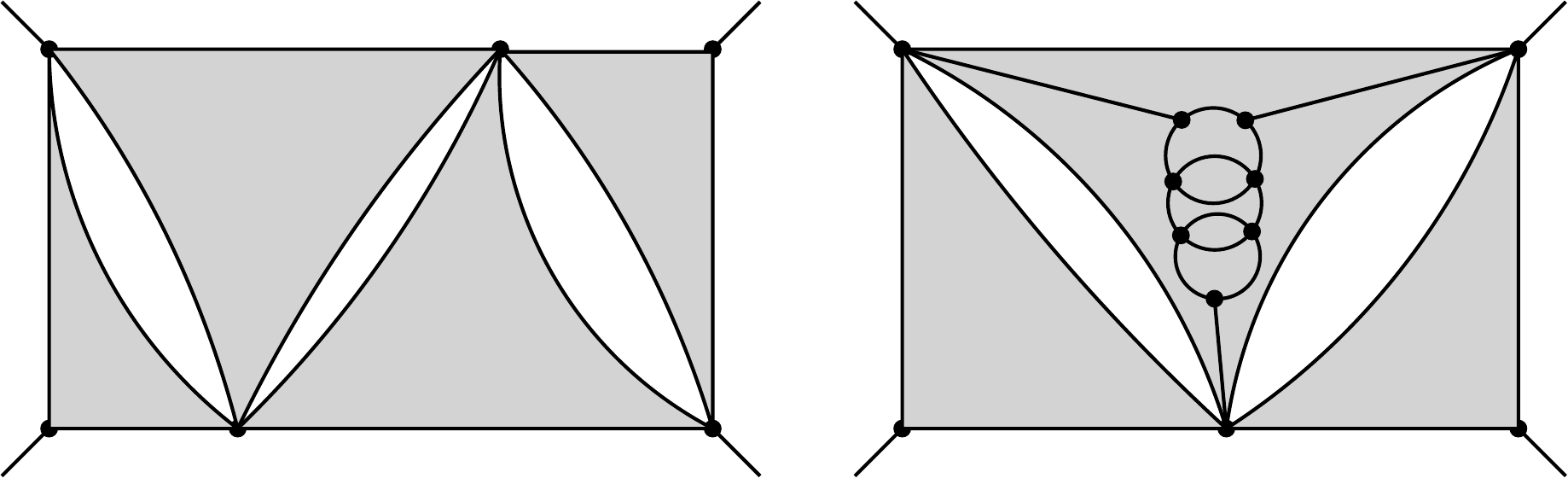}
\caption{Graph on the left has no $L(X)$-minor by Lemma \ref{terminalseparting2chainsL(X)} and the graph on the right has an $L(X)$-minor and satisfies the conditions of Lemma \ref{terminalseparting2chainsL(X)}. Curved lines indicate $2$-separations.}
\end{center}
\end{figure}

\begin{lemma}
\label{terminalseparting2chainsL(X)}
Let $G$ be a $2$-connected spanning subgraph of an $\{a,b,c,d\}$-web. Let $X =\{a,b,c,d\} \subseteq V(G)$ and suppose that $G$ does not have a $W_{4}(X)$-minor. Consider any cycle $C$ for which $X \subseteq V(C)$. Suppose $a,b,c,d$ appears in that order on $C$. Suppose there is a terminal separating $2$-chain $((A_{1},B_{1}),\ldots, (A_{n},B_{n}))$ satisfying obstruction $1$ from Theorem \ref{w4cuts}. Furthermore, suppose that $A_{1} \cap B_{1} \cap X = \{a\}$, and $b = A_{1} \cap X \setminus (A_{1} \cap B_{1})$. If $n \equiv 1 \pmod 2$, then $G$ has no $L(X)$-minor. Now consider when $n \equiv 0 \pmod 2$. Then $c \in A_{n} \cap X \setminus B_{n}$. Then $G$ has an $L(X)$-minor if and only if the following occurs. There exists an $i \in \{1,\ldots,n-1\}$, $i \equiv 1 \pmod 2$, such that the graph $G_{B_{i} \cap A_{i+1}} =  G[B_{i} \cap A_{i+1}] \cup \{xy | x,y \in A_{i} \cap B_{i}\} \cup \{xy | x,y \in A_{i+1} \cap B_{i+1}\}$ has an $L'(X')$-minor, where $X' = \{(A_{1} \cap B_{1}), (A_{2} \cap B_{2})\}$. Furthermore, the vertex from $X'$ which is contained in the branch set $G_{v_{2}}$ lies on the $(b,c)$-path $P_{b,c}$ where $a,d \not \in V(P_{b,c})$ and $V(P_{b,c}) \subseteq V(C)$.
\end{lemma}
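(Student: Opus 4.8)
The plan is to induct on the length $n$ of the chain, repeatedly peeling the two extreme separations $(A_1,B_1)$ and $(A_n,B_n)$ with the reduction lemmas of this section while tracking both the labels forced onto the interior roots and the parity of $n$. A useful preliminary observation is that, since $A_i\cap B_i\subseteq V(C)$ and each cut must separate $\{a,b\}$ from $\{c,d\}$ on the cyclic order $a,b,c,d$, every cut $A_i\cap B_i$ meets $C$ in exactly one vertex of $P_{b,c}$ and one vertex of $P_{d,a}$ (if both cut vertices lay on a single arc they would fail to separate the roots even inside $C$); in particular $a$ is the $P_{d,a}$-endpoint of cut $1$ and the non-root vertex $a'$ of cut $1$ lies on $P_{b,c}$. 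Note also that the hypothesis $A_1\cap B_1\cap X=\{a\}$ together with the definition of a terminal separating $2$-chain forces $n\ge 2$, since for $n=1$ the single cut would have to contain two roots.

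At cut $1$ we have $a$ on the cut, $b$ the unique root of $A_1\setminus B_1$, and $c,d\in B_1\setminus A_1$, so Lemma \ref{subdivisionlabellingL(X)} applies with $u=a$, $z=b$, giving a dichotomy for any $L(X)$-model: either it restricts to an $L(X_1)$-model of $G_{B_1}$, which transfers the label of $b$ to $a'$ and shortens the chain to $((A_2,B_2),\dots,(A_n,B_n))$, or $b$ lies in a branch set $G_{v_1}$ or $G_{v_3}$. The symmetric statement holds at cut $n$ for the interior root $d$. When the reducing alternative holds at both ends at once, the chain shrinks by two and the hypotheses reappear verbatim (one root on each new extreme cut, one interior root at each end), so the parity of $n$ is preserved; this is the recursive step of the induction, whose base cases are short chains handled directly by Lemmas \ref{terminalseparatingL(X)} and \ref{twooneachsideL(X)}.

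For the odd case I would argue that the process cannot terminate consistently: each time a peeling step is irreducible, Lemma \ref{subdivisionlabellingL(X)} pins the current interior root into $G_{v_1}$ or $G_{v_3}$, but $v_1,v_3$ are exactly the two roots of $L$ adjacent to $v_2$ and not reaching the core through $v_6,v_7$, so at most two roots can play these parts. For odd $n$ the two ends fall irreducible out of phase, which either forces a third root into a $\{v_1,v_3\}$-role or places two branch sets that must be adjacent (via the edge $v_4v_5$ or through the core diamond $v_6v_7v_8$) on opposite sides of a single surviving $2$-cut; the latter contradicts the $L$-adjacencies exactly as in Lemma \ref{terminalseparatingL(X)}, giving the first assertion. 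For even $n$, the same bookkeeping shows the $2$-connected part $L'$ of $L$ (on $\{v_2,v_4,v_5,v_6,v_7,v_8\}$) cannot be split across any single cut without creating a $W_{4}(X)$- or $K_{2,4}(X)$-minor, which is excluded by hypothesis and Theorem \ref{k4w4k24characterization}; hence this core localizes to one piece $B_i\cap A_{i+1}$, whose two bounding cuts supply the three roots of an $L'(X')$-minor, with the $v_2$-root forced onto $P_{b,c}$ and the index $i$ forced odd by the starting configuration (the matching $L'(X')$-model being read off via Lemma \ref{L'(X')minors}). Conversely, given an $L'(X')$-minor in an odd piece, I would lift it to an $L(X)$-minor by reversing the peeling: using $2$-connectivity of $G$ and Corollary \ref{webcycle}, route disjoint paths in the pieces before $i$ to realise $a,b$ as the roots $v_1,v_3$ adjacent to $v_2$, and route the $v_4,v_5$ roots of $L'$ outward through the pieces after $i$ to $c,d$.

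The main obstacle is the parity bookkeeping together with the localization of the core. Making the dichotomy of Lemma \ref{subdivisionlabellingL(X)} interact cleanly across all $n$ cuts requires a careful exhaustive analysis of which branch sets of $L$ can straddle each $2$-cut, and it is precisely here that the absence of $W_{4}(X)$- and $K_{2,4}(X)$-minors is used to rule out configurations in which the core diamond spreads over two pieces. Pinning down that the surviving central piece has odd index and that its $v_2$-root lands on $P_{b,c}$ rather than on $P_{d,a}$ is the delicate geometric point, and I expect it to consume most of the case work.
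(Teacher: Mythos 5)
Your overall architecture (induct on $n$, peel the end separations with Lemma \ref{subdivisionlabellingL(X)}, do a branch-set analysis in a short base case, and prove the converse by extending branch sets along $C$) points in the right direction, but the two places where you wave your hands are exactly where the proof lives, and as written they do not close. The decisive step you are missing is the one the paper uses: for $n\ge 3$ one applies Lemma \ref{twooneachsideL(X)} to the \emph{second} separation $(A_2,B_2)$, across which two roots lie on each side, obtaining $G_{A_2}$ and $G_{B_2}$ such that $G$ has an $L(X)$-minor if and only if one of them does; $G_{A_2}$ carries the length-one chain $(A_1,B_1)$ (whose cut now consists of two roots, so Lemma \ref{terminalseparatingL(X)} kills it), and $G_{B_2}$ carries the chain $((A_3,B_3),\dots,(A_n,B_n))$ of length $n-2$, preserving parity. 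This single move disposes of the whole odd case by induction and reduces the even case to $n=2$. Your substitute for the odd case --- ``the two ends fall irreducible out of phase'' --- is not an argument: when both terminal applications of Lemma \ref{subdivisionlabellingL(X)} land in the non-reducing alternative, you get one interior root in $G_{v_1}$ and the other in $G_{v_3}$, which is precisely the configuration that \emph{does} support an $L(X)$-minor when $n$ is even, so no counting of ``$\{v_1,v_3\}$-roles'' can distinguish the parities; you would still have to produce the adjacency contradiction for odd $n$, and you never do. Relatedly, the peeling step via Lemma \ref{subdivisionlabellingL(X)} is only a dichotomy per model, not an equivalence of minor-containment, so it cannot by itself drive an induction on ``$G$ has no $L(X)$-minor''.

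The second gap is the localization of the core in the even case. You assert that the diamond on $\{v_2,v_6,v_7,v_8\}$ cannot be split across a cut ``without creating a $W_4(X)$- or $K_{2,4}(X)$-minor''; this is not how it is proved, and nothing in your sketch derives it. The paper's $n=2$ analysis is a direct case check on the branch sets: after Lemma \ref{subdivisionlabellingL(X)} forces $b\in G_{v_1}$ and $c\in G_{v_3}$, one shows $G_{v_1}\subseteq G[A_1\setminus B_1]$ and $G_{v_3}\subseteq G[B_2\setminus A_2]$, that $G_{v_2}$ must contain the non-root vertex of $A_1\cap B_1$ (this is what puts the $v_2$-root on $P_{b,c}$), and that if any of $G_{v_6},G_{v_7},G_{v_8}$ left the middle piece then $G_{v_4}$ or $G_{v_5}$ would lose a required neighbour --- a contradiction internal to the $L$-model, with no appeal to excluded $W_4$ or $K_{2,4}$ minors. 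Finally, the $n=1$ situation you dismiss as vacuous does occur, but \emph{inside} the induction after re-rooting (where both vertices of the first cut become roots), which is why Lemma \ref{terminalseparatingL(X)} is the correct base case; your version of the induction has no base case at all for odd $n$.
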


\begin{proof}
First suppose that $n \equiv 1 \pmod 2$. If $n=1$, then $(A_{1},B_{1})$ satisfies Lemma \ref{terminalseparatingL(X)} and thus $G$ does not have an $L(X)$-minor. Now suppose $n \geq 3$ and $G$ is a vertex minimal counterexample to the claim. Then apply Lemma \ref{twooneachsideL(X)} to $(A_{2},B_{2})$ gives two graphs $G_{A_{2}}$ and $G_{B_{2}}$ such that $G$ has an $L(X)$-minor if and only if $G_{A_{2}}$ has an $L(X_{1})$-minor or $G_{B_{2}}$ has an $L(X_{2})$-minor.   Now both $G_{A_{2}}$ and $G_{B_{2}}$ have terminal separating $2$-chains of odd length. Namely, $(A_{1},B_{1})$ is a terminal separating $2$-chain in $G_{A_{2}}$ and $(A_{3},B_{3}),\ldots,(A_{n},B_{n})$ is a terminal separating $2$-chain in $G_{B_{2}}$ which has length $n-2$, which is odd.  Then since $G$ is a vertex minimal counterexample, both $G_{A_{2}}$ and $G_{B_{2}}$ do not have $L(X)$-minors and therefore $G$ does not have an $L(X)$-minor. 

Now suppose that $n \equiv 0 \pmod 2$. First suppose that $n \geq 4$ and suppose that $G$ is a vertex minimal counterexample. Then apply Lemma \ref{twooneachsideL(X)} to $(A_{2},B_{2})$. Then we get two graphs $G_{A_{2}}$ and $G_{B_{2}}$ such that $G$ has an $L(X)$-minor if and only if either $G_{A_{2}}$ has an $L(X_{1})$-minor or $G_{B_{2}}$ has an $L(X_{2})$-minor. We note that $G_{A_{2}}$ does not have an $L(X_{1})$-minor since $(A_{1},B_{1})$ satisfies Lemma \ref{terminalseparatingL(X)}. Therefore since $G$ has an $L(X)$-minor, $G_{B_{2}}$ has an $L(X_{2})$-minor. Furthermore, $((A_{3},B_{3}),\ldots,(A_{n},B_{n}))$ is an even length terminal separating $2$-chain in $G_{B_{2}}$. Then as $G$ is a vertex minimal counterexample, $G_{B_{2}}$ satisfies the claim. But then notice that the vertex in $X_{2}$ which takes the place of $b$ in the claim lies on $P_{b,c}$ since $b \in A_{1} \setminus (A_{1} \cap B_{1})$ implies that the vertex in $(A_{1} \cap B_{1}) \setminus \{a\}$ lies on $P_{b,c}$. Therefore $G$ satisfies the claim, a contradiction.

Therefore we can assume that $n =2$, and suppose $\{G_{x} \ | \ x \in V(L)\}$ is a model of an $L(X)$-minor in $G$. Apply Lemma \ref{subdivisionlabellingL(X)} to $(A_{1},B_{1})$. Then we have two cases. First suppose we have an $L(X_{1})$-minor in $G_{B_{1}}$ where $X_{1} = X \setminus \{b\} \cup (A_{1} \cap B_{1})$. Then in $G_{B_{1}}$ under $X_{1}$, the $2$-separation $(A_{2},B_{2})$ satisfies Lemma \ref{terminalseparatingL(X)}, implying that $G_{B_{1}}$ does not have an $L(X_{1})$-minor, a contradiction. Therefore $b \in V(G_{v_{1}})$ or $b \in V(G_{v_{3}})$. First suppose that $b \in V(G_{v_{1}})$. Notice that $(A_{2},B_{2})$ satisfies Lemma \ref{subdivisionlabellingL(X)}, and that by symmetry we may assume that $c \in V(G_{v_{1}})$ or $c \in V(G_{v_{3}})$. As we assumed $b \in V(G_{v_{1}})$, we get that $c \in V(G_{v_{3}})$. Suppose that $G_{v_{1}}$ is not contained in $G[A_{1} \setminus B_{1}]$. Then the vertex in $A_{1} \cap B_{1} \setminus \{a\}$ is in $G_{v_{1}}$. Then $G_{v_{3}}$ is contained in $G[B_{2}]$. Then since $G_{v_{3}}$ has a vertex adjacent to a vertex in $G_{v_{2}}$, we have that $G_{v_{2}}$ is contained in $G[B_{1}]$. Then this implies that $G_{v_{8}}, G_{v_{6}},$ and $G_{v_{7}}$ are contained in $G[B_{2} \setminus A_{2}]$. But $a \in V(G_{v_{5}})$ or $a \in V(G_{v_{4}})$, so either $G_{v_{4}}$ is contained in $G[A_{2} \setminus B_{2}]$ or $G_{v_{5}}$ is contained in $G[A_{2} \setminus B_{2}]$. But that contradicts that $\{G_{x} \ | \ x \in L(X)\}$ is an $L(X)$-minor. Therefore $G_{v_{1}}$ is contained in $G[A_{1} \setminus (A_{1} \cap B_{1})]$. By symmetry, $G_{v_{3}}$ is contained in $G[B_{2} \setminus (A_{2} \cap B_{2})]$. That implies that $G_{v_{2}}$ contains the vertex in $A_{1} \cap B_{1} \setminus \{a\}$. Suppose that $G_{v_{8}}$ is contained in $G[A_{1} \setminus (A_{1} \cap B_{1})]$. Then since $G_{v_{2}}$ contains the vertex in $A_{1} \cap B_{1} \setminus \{a\}$, the branch sets $G_{v_{7}}$ and $G_{v_{6}}$ are contained in $G[A_{1} \setminus (A_{1} \cap B_{1})]$. But then since $d \in V(G_{v_{5}})$ or $d \in V(G_{v_{4}})$, this implies that either $G_{v_{5}}$ is contained in $G[B_{1} \setminus A_{1}]$ or $G_{v_{4}}$ is contained in $G[B_{1} \setminus A_{1}]$. But this contradicts that $\{G_{x} \ | \ x \in V(L)\}$ is a model of an $L(X)$-minor. By essentially the same argument, none of $G_{v_{8}}, G_{v_{7}}$ or $G_{v_{6}}$ are contained in $G[B_{2} \setminus A_{2} ]$. Therefore all of $G_{v_{8}}, G_{v_{7}}$ and $G_{v_{6}}$ are contained in $G[B_{1} \cap A_{2}]$. But then the set $\{G_{B_{1} \cap A_{2}}[G_{x} \cap B_{1} \cap A_{2}] \ | \ x \in V(L')\}$ is a model of $L'(X')$ minor satisfying the properties of the lemma.  

Now we prove the converse.  Suppose there exists an $i \in \{1,\ldots,n-1\}$ and $i \equiv 1 \pmod 2$  such that the graph $G_{B_{i} \cap A_{i+1}}$ has an $L'(X')$-minor satisfying the properties in the lemma. Let $\{G_{x} | x \in V(L')\}$ be a model of an $L'(X')$-minor satisfying the above properties. Let $v$ be the vertex in $X'$ which is also in $G_{v_{2}}$ in the $L'(X)$ model. Observe that since $i \equiv 1 \pmod 2$, there is exactly one vertex in $X'$ which lies on $P_{b,c}$, so therefore $v \in P_{b,c}$ and both other vertices of $X'$ lie on $P_{a,d}$, the $(a,d)$-path such that $V(P_{a,d}) \subseteq V(C)$ and $b,c \not \in V(P_{a,d})$. Then to get an $L(X)$-minor in $G$, first let $x$ be the vertex in $X'$ such that the $(a,x)$-subpath on $P_{a,d}$ does not contain the other vertex from $X'$ on $P_{a,d}$, and without loss of generality, let $x \in G_{v_{5}}$. Then extend $G_{v_{5}}$ to include the subpath from $(a,x)$ on $P_{a,d}$. Now let $y$ be the vertex in $X$ such that the $(y,d)$-subpath on $P_{a,d}$ does not contain $x$. Since $x \in G_{v_{5}}$, we have $y \in G_{v_{4}}$. Extend $G_{v_{4}}$ along the $(y,d)$-subpath on $P_{a,d}$. Now we create $G_{v_{1}}$  by letting it be the $(a,v)$-path, $P_{a,v}$ such that $V(P_{a,v}) \subseteq V(C)$ and $d,c \not \in P_{a,v}$ and do not include either $a$ or $v$. Note that $b \in V(P_{a,v})$. Similarly, let $G_{v_{3}}$ be the $(d,v)$-path, $P_{d,v}$ such that $V(P_{d,v}) \subseteq V(C)$ and $a,b \not \in P_{d,v}$, not including either $d$ or $v$.
 we $G_{v_{1}}$ contain the path from $v$ to $b$ on $P_{b,c}$, not including $v$, we let $G_{v_{3}}$ contain the path from $v$ to $c$ on $P_{b,c}$ not including $v$. Then by construction and since we already had an $L'(X')$-minor, we have an $L(X)$-minor in $G$.
\end{proof}

Now we look at when our graph has a terminal separating triangle as in obstruction $2$.

\begin{lemma}
\label{trianglereductionL(X)}
Let $G$ be a $2$-connected spanning subgraph of an $\{a,b,c,d\}$-web. Let $X =\{a,b,c,d\} \subseteq V(G)$ and suppose that $G$ does not have a $W_{4}(X)$-minor. Consider any cycle $C$ for which $X \subseteq V(C)$, and suppose that there is a terminal separating triangle $(A_{1},B_{1}),(A_{2},B_{2}),(A_{3},B_{3})$ satisfying obstruction $2$ of Theorem \ref{w4cuts}. Then $G$ has an $L(X)$-minor if and only if either the graph $G_{B_{1}}$ has an $L(X_{1})$-minor where $X_{1} = (X \cap B_{1}) \cup (A_{1} \cap B_{1})$ or the graph $G_{B_{3}}$ has an $L(X_{2})$-minor  where $X_{2} = (X \cap B_{3}) \cup (A_{3} \cap B_{3})$. 
\end{lemma}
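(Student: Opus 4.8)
The plan is to prove both directions of the equivalence by reducing across the three $2$-separations of the triangle using the cut-vertex lemmas already established for $L(X)$-minors. Write $A_1 \cap B_1 = \{x,y\}$, $A_2 \cap B_2 = \{x,v\}$, $A_3 \cap B_3 = \{v,y\}$, and normalise the labelling of the triangle so that the vertex of $X$ guaranteed to lie in $A_1 \cap B_1$ is $y$, the corner shared by the first and third separations. The pocket-disjointness built into the definition of a triangle, together with the separation axioms, shows that the four roots are distributed as follows: two in $A_1$, namely $y$ and a vertex $a \in A_1 \setminus B_1$; one vertex $p \in A_2 \setminus B_2$; and one vertex $q \in A_3 \setminus B_3$, with $A_2 \setminus B_2, A_3 \setminus B_3 \subseteq B_1$. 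A short computation then gives $X_1 = (X \cap B_1) \cup \{x,y\} = \{x,y,p,q\}$ and $X_2 = (X \cap B_3) \cup \{v,y\} = \{y,a,p,v\}$, each of size four, and in both cases the accompanying map is exactly the one produced by Lemma \ref{subdivisionlabellingL(X)} applied to $(A_1,B_1)$, respectively $(A_3,B_3)$, with the cut root $y$ playing the role of $u$. (If instead both $x,y \in X$, then $X \subseteq B_1$ and the first equivalence is immediate from Lemma \ref{allononesideL(X)}; so I will assume $a \in A_1 \setminus B_1$.)

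For the backward direction, suppose $G_{B_1}$ has an $L(X_1)$-minor. Since in $X_1$ the vertex $x$ occupies the $L$-position of $a$, I lift the model to $G$ by extending the branch set at that position along a path from $x$ through the pocket $A_1 \setminus B_1$ to $a$, and, if the added edge $xy$ was used in the model, onward to a neighbour of $y$; the $2$-connectivity of $G$ guarantees such a path inside $G[A_1]$, so every required adjacency survives and we obtain an $L(X)$-minor of $G$. The case of $G_{B_3}$ is symmetric.

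For the forward direction I apply Lemma \ref{subdivisionlabellingL(X)} first to $(A_1,B_1)$: either the restricted model is an $L(X_1)$-minor of $G_{B_1}$, and we are done, or the pocket root $a$ lies in neither $G_{v_{4}}$ nor $G_{v_{5}}$, forcing $a \in G_{v_{1}} \cup G_{v_{3}}$. Applying the same lemma to $(A_3,B_3)$ either yields an $L(X_2)$-minor of $G_{B_3}$, in which case we are done, or forces $q \in G_{v_{1}} \cup G_{v_{3}}$. In the one remaining case $\{a,q\}$ occupies $\{G_{v_{1}},G_{v_{3}}\}$, and hence $\{p,y\}$ occupies $\{G_{v_{4}},G_{v_{5}}\}$; the goal is to show that this configuration cannot support an $L(X)$-model, so that one of the two good cases must in fact have occurred.

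The heart of the argument, and the step needing the most care, is this impossibility, which I expect to be the main obstacle. Because $v_{1}v_{2}v_{3}$ is a path in $L$, the set $G_{v_{1}} \cup G_{v_{2}} \cup G_{v_{3}}$ is connected, and it meets both $A_1 \setminus B_1$ (at $a$) and $A_3 \setminus B_3$ (at $q$). As $y$ lies in one of $G_{v_{4}},G_{v_{5}}$, this connected set avoids $y$; but by the separation axioms the only boundary vertex of $A_1 \setminus B_1$ other than $y$ is $x$, and the only boundary vertex of $A_3 \setminus B_3$ other than $y$ is $v$, so a connected subgraph avoiding $y$ that reaches into both pockets must contain both $x$ and $v$. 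On the other hand, $v_{4}v_{5} \in E(L)$ forces $G_{v_{4}} \sim G_{v_{5}}$, that is, the branch set containing $p \in A_2 \setminus B_2$ must be adjacent to the branch set containing $y$; since the boundary of $A_2 \setminus B_2$ is $\{x,v\}$ and $y \notin \{x,v\}$, no vertex of $A_2 \setminus B_2$ is adjacent to $y$, so realising this adjacency requires $x$ or $v$ to lie in $G_{v_{4}} \cup G_{v_{5}}$. But $x$ and $v$ already lie in the disjoint set $G_{v_{1}} \cup G_{v_{2}} \cup G_{v_{3}}$, a contradiction. This rules out the leftover configuration and finishes the forward direction. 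The delicate point throughout is the cut-vertex accounting: one must verify, directly from the separation definitions, both that each corner $x,y,v$ is available to only a single branch set and that the $v_{4}v_{5}$ adjacency genuinely demands a corner that has already been spent by the $v_{1}v_{2}v_{3}$ chain.
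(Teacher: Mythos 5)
Your proof is correct and follows essentially the same route as the paper's: both directions are handled by the same reductions (two applications of Lemma \ref{subdivisionlabellingL(X)}, to $(A_{1},B_{1})$ and $(A_{3},B_{3})$), and the leftover configuration is killed by the same observation that the required $v_{4}v_{5}$ adjacency is blocked once the corners $x$ and $v$ have been absorbed by $G_{v_{1}}\cup G_{v_{2}}\cup G_{v_{3}}$. Your write-up merely packages the paper's subcases (on whether $G_{v_{1}}$ and $G_{v_{3}}$ stay inside their respective pockets) into the single cleaner statement that this connected union must contain both $x$ and $v$.
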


\begin{figure}
\begin{center}
\includegraphics[scale =0.5]{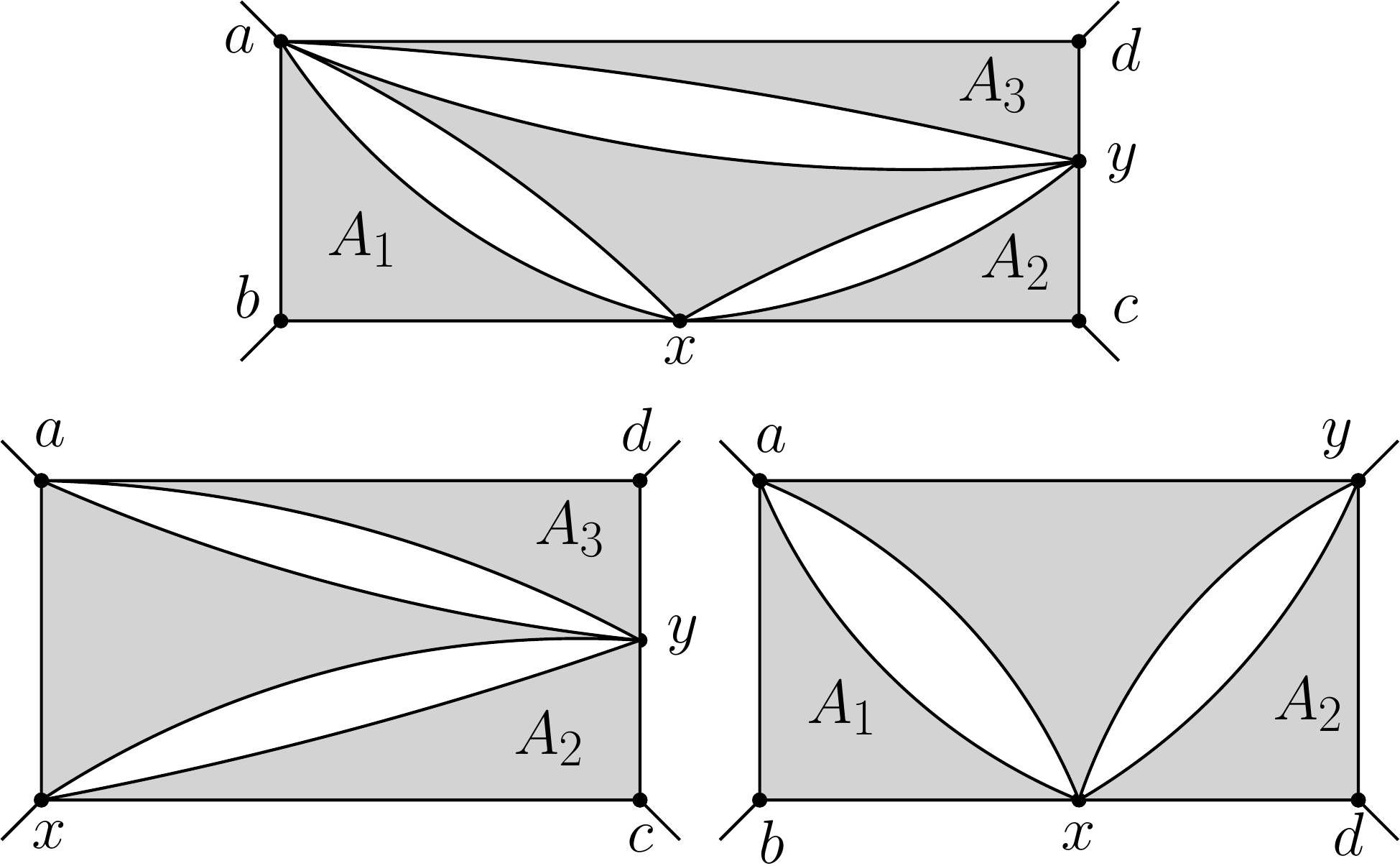}
\caption{A terminal separating triangle and the graphs $G_{B_{1}}$ and $G_{B_{3}}$ as in Lemma \ref{trianglereductionL(X)}. Note both $G_{B_{1}}$ and $G_{B_{3}}$ have terminal separating $2$-chain obstructions.}
\end{center}
\end{figure}

\begin{proof}
If $G_{B_{1}}$ has an $L(X_{1})$-minor then since $G$ is $2$-connected we can contract the vertex in $X$ in $A_{1}$ to $A_{1} \cap B_{1}$ such that it does not get contracted together with another vertex of $X$. But then $G$ has an $L(X)$-minor. A similar argument holds for $G_{B_{3}}$.

Now suppose that $\{G_{x} | x \in V(L)\}$ is a model of an $L(X)$-minor in $G$, and furthermore suppose we have a vertex minimal counterexample to the claim. Without loss of generality let $a,b,c,d$ appear in that order on $C$, and without loss of generality suppose that $a$ is the vertex from $X$ in $A_{1} \cap B_{1}$, and that $b \in  A_{1} \setminus B_{1}$. Similarly, without loss of generality we may assume that $c \in A_{2} \setminus  B_{2}$ and $d \in A_{3} \setminus B_{3}$. Then notice that $(A_{1},B_{1})$ satisfies Lemma \ref{subdivisionlabellingL(X)}. We consider two cases. 

First consider the case where $b \in G_{v_{1}}$ or $b \in G_{v_{3}}$. We assume that $b \in G_{v_{1}}$. Now notice that since we assumed $d \in A_{3}$, $(A_{3},B_{3})$ is a $2$-separation satisfying Lemma \ref{subdivisionlabellingL(X)}. First suppose that $d \in G_{v_{3}}$ (note this is without loss of generality, since $b \in G_{v_{1}}$). First consider the case where $G_{v_{1}}$ is contained in $G[A_{1} \setminus B_{1}]$. Then $G_{v_{2}}$ contains the vertex in $(A_{1} \cap B_{1})$ which is not $a$. Similarly, if $G_{v_{3}}$ is contained in $G[A_{3} \setminus B_{3}]$, then $G_{v_{2}}$ contains the vertex in $A_{3} \cap B_{3} \setminus \{a\}$. But then $a \in B_{2} \setminus A_{2}$ and $c \in A_{2} \setminus B_{2}$, which implies that there is no vertex in $G_{v_{5}}$ which is adjacent to a vertex in $G_{v_{4}}$, a contradiction. Therefore $G_{v_{3}}$ is not contained in $G[A_{3} \setminus  B_{3}]$ and the vertex in $A_{3} \cap B_{3} \setminus \{a\}$ is in $G_{v_{3}}$. But then as before, $a \in B_{2} \setminus A_{2}$ and $c \in A_{2} \setminus  B_{2}$, contradicting that there is a vertex in $G_{v_{5}}$ which is adjacent to a vertex in $G_{v_{4}}$. Therefore we can assume that $d \not \in V(G_{v_{3}})$. But then by Lemma \ref{subdivisionlabellingL(X)}, $G_{B_{3}}$ has an $L(X_{2})$-minor, which satisfies the claim. 

Therefore we can assume that $G_{v_{1}}$ is not contained in $G[A_{1} \setminus  B_{1}]$. Since the cases are symmetric, the only situation left to consider is when  $d \in G_{v_{3}}$ and not contained in $G[A_{3} \setminus  B_{3}]$. Then the vertex in $(A_{1} \cap B_{1}) \setminus \{a\}$ and the vertex in $(A_{3} \cap B_{3}) \setminus \{a\}$ are in $G_{v_{1}}$ and $G_{v_{3}}$ respectively. But then there is no vertex in $G_{v_{4}}$ which is adjacent to $G_{v_{5}}$, a contradiction. 
The same situation happens when $b \in G_{v_{3}}$. Therefore we assume that $b \not \in G_{v_{3}}$ and $b \not \in G_{v_{1}}$. But then by Lemma \ref{subdivisionlabellingL(X)}, $G_{B_{1}}$ has an $L(X_{1})$-minor, which completes the claim.
\end{proof}

Observe that both the graphs $G_{B_{1}}$ and $G_{B_{2}}$ have terminal separating $2$-chains as obstructions, and thus when we have a terminal separating triangle as an obstruction, the problem reduces to appealing to Lemma \ref{terminalseparting2chainsL(X)}. Now we look at what happens when the graph has a terminal separating $2$-chain and a terminal separating triangle as in obstruction $3$. 

\begin{lemma}
\label{terminalseparatingchainplustriangleL(X)}
Let $G$ be a $2$-connected spanning subgraph of an $\{a,b,c,d\}$-web. Let $X = \{a,b,c,d\} \subseteq V(G)$ and suppose that $G$ does not have a $W_{4}(X)$-minor. Consider any cycle $C$ for which $X \subseteq V(C)$, and suppose there is a terminal separating triangle $(A_{1},B_{1}),(A_{2},B_{2}), (A_{3},B_{3})$ such that $A_{1} \cap B_{1} = \{x,y\}$, where $x,y \not \in X$. Furthermore, the graph $G_{A_{1}} = G[A_{1}] \cup \{xy\}$ and $C_{A} = G[V(C) \cap A] \cup \{x,y\}$  has a terminal separating $2$-chain $(A'_{1},B'_{1}),\ldots,(A'_{n},B'_{n})$ where we let $x$ and $y$ replace the two vertices in $X$ from $G$ not in $G_{A_{1}}$ and $A_{i} \cap B_{i} \subseteq V(C)$, $i \in \{1,2,3\}$, and $A'_{i} \cap B'_{i} \subseteq V(C_{A})$ for all $i \in \{1,\ldots,n\}$. Then $G$ has an $L(X)$-minor if and only if the graph $G_{B_{1}} = G[B_{1}] \cup \{xy | x,y \in A_{1} \cap B_{1}\}$ has an $L(X_{1})$-minor where $L(X_{1}) = X \cap B_{1} \cup (A_{1} \cap B_{1})$ or $G_{A_{1}} =  G[B_{1}] \cup \{xy | x,y \in A_{1} \cap B_{1}\}$ has an $L(X_{2})$-minor where $X_{2} = X \cap A_{1} \cup (A_{1} \cap B_{1})$.
\end{lemma}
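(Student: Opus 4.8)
The plan is to read this biconditional off directly from Lemma~\ref{twooneachsideL(X)} applied to the separation $(A_1,B_1)$. The substance of that lemma already handles exactly a $2$-separation with two roots on each side, so here the only real task is to confirm that the hypotheses on the terminal separating triangle force precisely such a root distribution across $(A_1,B_1)$; the biconditional, in both directions, then comes for free.

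First I would locate the four roots relative to $(A_1,B_1)$ using the definition of a terminal separating triangle. By that definition the boundaries are $A_1\cap B_1=\{x,y\}$, $A_2\cap B_2=\{x,v\}$, $A_3\cap B_3=\{v,y\}$ with $x,y,v$ distinct and $(A_i\setminus B_i)\cap(A_j\setminus B_j)=\emptyset$, and ``terminal separating'' means exactly two vertices of $X$ lie in $A_1$, one in $A_2\setminus B_2$, and one in $A_3\setminus B_3$. Since $x,y\notin X$ by hypothesis, the two roots in $A_1$ actually lie in $A_1\setminus\{x,y\}=A_1\setminus B_1$. The apex $v$ is a boundary vertex, hence not a root, and since $V(G)$ is partitioned into $A_1\setminus B_1$, the boundary $\{x,y\}$, and $B_1\setminus A_1$, each of the regions $A_2\setminus B_2$ and $A_3\setminus B_3$---being disjoint from $A_1\setminus B_1$ and from $\{x,y\}$---sits inside $B_1\setminus A_1$. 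Thus the remaining two roots lie in $B_1\setminus A_1$, and altogether exactly two roots lie in $A_1\setminus(A_1\cap B_1)$ and exactly two in $B_1\setminus(A_1\cap B_1)$.

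This is exactly the hypothesis of Lemma~\ref{twooneachsideL(X)} with $A=A_1$, $B=B_1$, and boundary $\{x,y\}$, and the root families defined there (which replace the two roots on the opposite side by $x,y$) coincide with the relabellings used here. Applying it gives that $G$ has an $L(X)$-minor if and only if $G_{A_1}=G[A_1]\cup\{xy\}$ has an $L(X_2)$-minor with $X_2=(X\cap A_1)\cup\{x,y\}$, or $G_{B_1}=G[B_1]\cup\{xy\}$ has an $L(X_1)$-minor with $X_1=(X\cap B_1)\cup\{x,y\}$, which is the claim. I expect no genuine obstacle: the only care needed is the bookkeeping of the root distribution above, and both directions---the ``if'' direction contracting one side onto $\{x,y\}$ without merging roots (using $2$-connectivity), and the case analysis for ``only if''---are inherited verbatim from Lemma~\ref{twooneachsideL(X)}. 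I would also remark that the further hypotheses (the terminal separating $2$-chain inside $G_{A_1}$ and $G$ being $W_4(X)$-minor free) play no role in this reduction itself; they serve only the next step, where $G_{A_1}$ and $G_{B_1}$ themselves carry terminal separating $2$-chains and can be fed into Lemma~\ref{terminalseparting2chainsL(X)}.
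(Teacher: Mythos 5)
Your proposal is correct and matches the paper's proof, which likewise consists of a single application of Lemma~\ref{twooneachsideL(X)} to $(A_{1},B_{1})$; your explicit verification that the terminal separating triangle forces exactly two roots into $A_{1}\setminus B_{1}$ and two into $B_{1}\setminus A_{1}$ is the (unstated) bookkeeping the paper relies on.
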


\begin{proof}
This follows immediately from applying Lemma \ref{twooneachsideL(X)} to $(A_{1},B_{1})$.
\end{proof}

The point of the above observation is that the graphs  $G_{A_{1}}$ and $G_{B_{1}}$ have terminal separating $2$-chain obstructions, and thus the problem of finding $L(X)$-minors in that case reduces to the problem of finding $L(X)$-minors in terminal separating  $2$-chains case, which is done in Lemma \ref{terminalseparatingL(X)}. Now we look at what happens when we have two terminal separting triangles and a terminal separating $2$-chain as in

\begin{lemma}
Let $G$ be a $2$-connected spanning subgraph of an $\{a,b,c,d\}$-web. Let $X = \{a,b,c,d\} \subseteq V(G)$ and suppose that $G$ does not have a $W_{4}(X)$-minor. Consider any cycle $C$ for which $X \subseteq V(C)$, and suppose there are two distinct terminal separating triangles $((A^{1}_{1},B^{1}_{1}), (A^{1}_{2},B^{1}_{2}), (A^{1}_{3},B^{1}_{3}))$, $((A^{2}_{1},B^{2}_{1}),(A^{2}_{2},B^{2}_{2}),(A^{2}_{3},B^{2}_{3}))$  where for all $i \in \{1,2,3\}$,  $A^{1}_{i} \cap B^{1}_{i} \subseteq A^{2}_{1}$  and $A^{2}_{i} \cap B^{2}_{i} \subseteq A^{1}_{3}$. Furthermore, if we consider the graph $G[A^{2}_{1} \cap A^{1}_{1}]$ and the cycle $C' = G[V(C) \cap A^{2}_{1} \cap A^{1}_{1}] \cup \{xy | x,y \in A^{i}_{1} \cap B^{i}_{1}, i \in \{1,2\}\}$ and we let $X'$ be defined to be the vertices $A^{2}_{1} \cap B^{2}_{1}$ and $A^{2}_{3} \cap B^{2}_{3}$, then there is a terminal separating $2$-chain in $G[A^{2}_{3} \cap A^{1}_{1}]$ with respect to $X'$. Then $G$ has an $L(X)$-minor if and only if either $G_{B^{1}_{1}}$ has an $L(X_{1})$-minor where $X_{1} = X \cap B^{1}_{1} \cup (A^{1}_{1} \cap B^{1}_{1})$ or $G_{B^{2}_{1}}$ has an $L(X_{2})$-minor where $X_{2} = X \cap B^{2}_{1} \cup (A^{1}_{1} \cap B^{1}_{1})$ or $G[A^{2}_{3} \cap A^{1}_{1}]$ has an $L(X')$-minor. 
\end{lemma}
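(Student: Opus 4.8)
The plan is to treat obstruction $4$ as ``two copies'' of the situation handled in Lemma~\ref{terminalseparatingchainplustriangleL(X)}, and to dispatch it by two successive applications of Lemma~\ref{twooneachsideL(X)}, one at the base of each terminal separating triangle. Recall that Lemma~\ref{terminalseparatingchainplustriangleL(X)} reduced obstruction $3$ to a single application of Lemma~\ref{twooneachsideL(X)} at the triangle base $(A_1,B_1)$; here the presence of two nested triangles will produce exactly the three pieces $G_{B^1_1}$, $G_{B^2_1}$, and $G[A^2_3\cap A^1_1]$ named in the statement. Before splitting, I would record two structural facts. First, each separation $(A^1_1,B^1_1)$ and $(A^2_1,B^2_1)$ has two vertices of the (appropriately relabelled) root set strictly on each side: because each triangle is terminal separating, two roots lie in $A^j_1$, while the remaining two sit in the pockets $A^j_2\setminus B^j_2$ and $A^j_3\setminus B^j_3$ on the opposite side, so two roots lie in $B^j_1$. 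Second, the nesting hypotheses $A^1_i\cap B^1_i\subseteq A^2_1$ and $A^2_i\cap B^2_i\subseteq A^1_3$ force the second triangle, together with the middle region carrying the terminal separating $2$-chain, to lie wholly on a single side of the first cut, so that $(A^2_1,B^2_1)$ survives as a terminal separating triangle after the first split.

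For the forward direction I would argue exactly as in the forward half of Lemma~\ref{twooneachsideL(X)}. If $G_{B^1_1}$ has an $L(X_1)$-minor, then using $2$-connectedness and Menger's theorem I contract the complementary side onto $A^1_1\cap B^1_1$ so that the two roots replaced by the boundary pair are recovered without being identified, yielding an $L(X)$-minor of $G$; the cases of $G_{B^2_1}$ and of the middle piece are handled by performing this contraction at both triangle bases in turn.

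For the converse, assume $G$ has an $L(X)$-minor and apply Lemma~\ref{twooneachsideL(X)} to $(A^1_1,B^1_1)$. This produces either an $L(X_1)$-minor of $G_{B^1_1}$, which is the first alternative, or an $L$-minor of the side carrying the second triangle and the middle region. In the latter case the second triangle is still terminal separating there, so I apply Lemma~\ref{twooneachsideL(X)} once more, now to $(A^2_1,B^2_1)$. This second application gives either an $L(X_2)$-minor of $G_{B^2_1}$ (the second alternative) or an $L(X')$-minor of the middle piece $G[A^2_3\cap A^1_1]$ (the third alternative). The point here is that the relabellings compose: the first split turns the pair $A^1_1\cap B^1_1$ into roots, the second split turns $A^2_1\cap B^2_1$ and $A^2_3\cap B^2_3$ into roots, and one checks these are precisely the sets $X_1$, $X_2$, $X'$ listed in the statement.

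I expect the main obstacle to be the branch-set bookkeeping that makes the two splits both legitimate and exhaustive, rather than any new idea. Concretely, after the first split the root set has changed, and I must reproduce the case analysis from the proof of Lemma~\ref{twooneachsideL(X)} to confirm that no $L$-model can straddle the second cut in a configuration not captured by the three listed pieces; this is the step where one uses that the high-degree ``diamond'' vertices $v_2,v_6,v_7,v_8$ of $L$ cannot be distributed across a $2$-cut once two of the four roots have been separated off. Verifying that the leftover pocket $G_{A^1_1}$ contributes no spurious fourth alternative, and that the middle region genuinely inherits the terminal separating $2$-chain with root set $X'$ so that its $L(X')$-minor status is the operative condition, will complete the reduction; everything else follows the template already established in Lemmas~\ref{trianglereductionL(X)}, \ref{terminalseparting2chainsL(X)}, and \ref{terminalseparatingchainplustriangleL(X)}.
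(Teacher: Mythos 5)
Your proposal is correct and follows essentially the same route as the paper: the paper applies Lemma \ref{twooneachsideL(X)} to $(A^{1}_{1},B^{1}_{1})$ and then observes that $G_{A^{1}_{1}}$ carries an obstruction-$3$ configuration, so it invokes Lemma \ref{terminalseparatingchainplustriangleL(X)} --- which is itself just one more application of Lemma \ref{twooneachsideL(X)} at $(A^{2}_{1},B^{2}_{1})$ --- exactly matching your two successive splits and the resulting three alternatives. The preconditions you flag (two roots strictly on each side of each triangle base, and survival of the second triangle after the first split) are the right things to check and are implicit in the paper's shorter argument.
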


\begin{proof}
Apply Lemma \ref{twooneachsideL(X)} to $(A^{1}_{1},B^{1}_{1})$. Then $G$ has an $L(X)$-minor if and only if $G_{B^{1}_{1}}$ has an $L(X_{1})$ minor where $X_{1} = X \cap B^{1}_{1} \cup (A^{1}_{1} \cap B^{1}_{1})$ or $G_{A^{1}_{1}}$ has an $L(X'_{1})$-minor where $X'_{1} = X \cap A^{1}_{1} \cup (A^{1}_{1} \cap B^{1}_{1})$. Notice that in $G_{A^{1}_{1}}$, we have a terminal separating $2$-chain and terminal separating triangle as in obstruction $3$. But then we can apply Lemma \ref{terminalseparatingchainplustriangleL(X)} which gives us exactly the claim.  
\end{proof}

\begin{lemma}
\label{2disjointtrianglesL(X)}
Let $G$ be a $2$-connected spanning subgraph of an $\{a,b,c,d\}$-web. Let $X = \{a,b,c,d\} \subseteq V(G)$ and suppose that $G$ does not have a $W_{4}(X)$-minor. Consider any cycle $C$ for which $X \subseteq V(C)$, and suppose that there two distinct terminal separating triangles $((A^{1}_{1},B^{1}_{1}), (A^{1}_{2},B^{1}_{2}), (A^{1}_{3},B^{1}_{3})), ((A^{2}_{1},B^{2}_{1}), (A^{2}_{2},B^{2}_{2}),(A^{2}_{3},B^{2}_{3}))$ which satisfy obstruction $5$ in Theorem \ref{w4cuts}. Then $G$ has an $L(X)$-minor if and only if either the graph $G_{B^{1}_{1}} = G[B^{1}_{1}] \cup \{xy | x,y \in A^{1}_{1} \cap B^{1}_{1}\}$ has an $L(X_{1})$-minor where $X_{1} = (X \cap B^{1}_{1}) \cup (A^{1}_{1} \cap B^{1}_{1})$ or the graph $G_{B^{2}_{1}}= G[B^{2}_{1}] \cup \{xy | x,y \in A^{2}_{1} \cap B^{2}_{1}\}$ has an $L(X_{2})$-minor where $X_{2} = (X \cap B^{2}_{1}) \cup (A^{2}_{1} \cap B^{2}_{1})$.  
\end{lemma}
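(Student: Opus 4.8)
The plan is to reproduce the peeling strategy used for obstructions $3$ and $4$, removing one triangle at a time and observing that the second triangle degenerates into a plain terminal separating triangle (obstruction $2$) once the first has been absorbed. The backward direction should be routine: if $G_{B^{1}_{1}}$ has an $L(X_{1})$-minor (respectively $G_{B^{2}_{1}}$ has an $L(X_{2})$-minor), then because $G$ is $2$-connected I can contract $A^{1}_{1}$ (respectively $A^{2}_{1}$) onto its two boundary vertices without merging two roots, exactly as in the easy direction of Lemma \ref{twooneachsideL(X)}, and thereby lift the minor to an $L(X)$-minor of $G$.

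For the forward direction I would take a model $\{G_{x}\mid x\in V(L)\}$ of an $L(X)$-minor and apply Lemma \ref{twooneachsideL(X)} to $(A^{1}_{1},B^{1}_{1})$. By the terminal separating condition on the first triangle, together with the description of obstruction $5$ given in the proof of Lemma \ref{nicesplittinglemma} (the roots lie strictly inside the distinguished $A$-sides, so $A^{1}_{1}\cap B^{1}_{1}$ is root-free), exactly two roots lie in $A^{1}_{1}\setminus B^{1}_{1}$ and two in $B^{1}_{1}\setminus A^{1}_{1}$, so the hypotheses are met. One branch gives that $G_{B^{1}_{1}}$ has an $L(X_{1})$-minor, which is the first alternative of the statement. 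In the other branch $G_{A^{1}_{1}}$ has an $L(X'_{1})$-minor, where the two vertices of $A^{1}_{1}\cap B^{1}_{1}$ have been promoted to roots. The crucial point is that the nonempty set $A^{2}_{1}\cap B^{2}_{1}\cap A^{1}_{1}\cap B^{1}_{1}$ supplies a vertex $w$ that is at once one of these new roots and a boundary vertex of the second triangle's distinguished separation $(A^{2}_{1},B^{2}_{1})$. Using $A^{2}_{i}\cap B^{2}_{i}\subseteq A^{1}_{1}$ I would verify that inside $G_{A^{1}_{1}}$ the second triangle is still terminal separating, with root set $\{c,d,w,y_{1}\}$ where $\{w,y_{1}\}=A^{1}_{1}\cap B^{1}_{1}$: its side $A^{2}_{1}$ contains exactly the two roots $w,y_{1}$, the petals $A^{2}_{2}\setminus B^{2}_{2}$ and $A^{2}_{3}\setminus B^{2}_{3}$ carry $c$ and $d$, and $w\in A^{2}_{1}\cap B^{2}_{1}$; hence it satisfies obstruction $2$.

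I would then apply Lemma \ref{trianglereductionL(X)} to this triangle in $G_{A^{1}_{1}}$. One of its two outputs is exactly $G_{B^{2}_{1}}$ with root set $X_{2}=(X\cap B^{2}_{1})\cup(A^{2}_{1}\cap B^{2}_{1})$, the second alternative. The remaining output is a reduction across the other root-carrying petal, and here the difference between obstruction $5$ and obstruction $4$ does the work: the two triangles abut at $w$ rather than being separated by a $2$-chain, so the leftover ``central'' region only sees the vertices $(A^{1}_{1}\cap B^{1}_{1})\cup(A^{2}_{1}\cap B^{2}_{1})=\{w,y_{1},y_{2}\}$, at most three distinct vertices since $w$ is shared. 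I expect this bookkeeping to be the main obstacle, and I would discharge it by showing the spurious third branch carries no $L(X)$-minor, either because fewer than four distinct roots survive or because the relevant separation satisfies Lemma \ref{terminalseparatingL(X)}. A cleaner alternative I would try is to apply Lemma \ref{subdivisionlabellingL(X)} directly to $(A^{2}_{1},B^{2}_{1})$ in $G_{A^{1}_{1}}$, which either produces the $G_{B^{2}_{1}}$ alternative at once or forces $y_{1}$ out of the branch sets $G_{v_{4}},G_{v_{5}}$, a configuration I would then rule out by the same adjacency argument used in the proofs of Lemmas \ref{trianglereductionL(X)} and \ref{terminalseparting2chainsL(X)}. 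Once the third branch is eliminated, only the two stated alternatives remain, which completes the equivalence.
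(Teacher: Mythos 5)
Your proposal follows the paper's proof almost exactly: the backward direction contracts $A^{i}_{1}$ onto its two boundary vertices using $2$-connectivity, and the forward direction applies Lemma \ref{twooneachsideL(X)} to $(A^{1}_{1},B^{1}_{1})$, observes that in $G_{A^{1}_{1}}$ the second triangle becomes a terminal separating triangle of obstruction-$2$ type (the shared vertex of $A^{2}_{1}\cap B^{2}_{1}\cap A^{1}_{1}\cap B^{1}_{1}$ being the promoted root on its boundary), and finishes by invoking Lemma \ref{trianglereductionL(X)}. The one place you go beyond the paper is in flagging and attempting to discharge the second alternative ($G_{B_{3}}$) output by Lemma \ref{trianglereductionL(X)}, which the paper's own proof silently compresses into the single $G_{B^{2}_{1}}$ alternative; that extra care is warranted rather than a deviation in approach.
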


\begin{proof}
If $G_{B^{1}_{1}}$ has an $L(X_{1})$-minor, then we simply contract all of $A^{1}_{1}$ onto $B^{1}_{1} \cap A^{1}_{1}$ such that we do not contract two vertices of $X$ together. This is possible as $G$ is $2$-connected. The same strategy applies to $G_{B^{2}_{1}}$.

Now suppose that $G$ has an $L(X)$-minor. Then $(A^{1}_{1},B^{1}_{1})$ satisfies Lemma \ref{twooneachsideL(X)}. Therefore $G$ has an $L(X)$-minor if and only if $G_{B^{1}_{1}}$ has an $L(X_{1})$-minor where $X_{1} = X \cap B^{1}_{1} \cup (B^{1}_{1} \cap A^{1}_{1})$  or $G_{A^{1}_{1}}$ has an $L(X'_{1})$-minor where $X'_{1} = A^{1}_{1} \cap X \cup (A^{1}_{1} \cap B^{1}_{1})$. If $(A^{1}_{1},B^{1}_{1}) = (A^{2}_{1},B^{2}_{1})$, then the claim follows immediately. Thus we assume that $(A^{1}_{1},B^{1}_{1}) \neq (A^{2}_{1},B^{2}_{1})$. Notice that in $G_{A^{1}_{1}}$, the triangle  $(A^{2}_{1},B^{2}_{1}), (A^{2}_{2},B^{2}_{2}),(A^{2}_{3},B^{2}_{3})$ is a terminal separating triangle satisfying obstruction $2$. Then by Lemma \ref{trianglereductionL(X)}, $G_{A^{1}_{1}}$ has an $L(X'_{1})$-minor if and only if the graph $G_{B^{2}_{1}}$ has an $L(X_{2})$-minor, which completes the claim.
\end{proof}

With that, one can determine exactly which graphs do not have an $K_{4}(X)$-minor, $W_{4}(X)$-minor, $K_{2,4}(X)$-minor and an $L(X)$-minor. To summarize, a $2$-connected graph does not have one of the four above minors if and only if $G$ is a class $\mathcal{A}$ graph, or it is the spanning subgraph of a class $\mathcal{D}$, $\mathcal{E}$, or $\mathcal{F}$ graph where the corresponding web has one of the obstructions from Theorem \ref{w4cuts}, and then after the reductions given above, we do not end up with an $L'(X')$-minor (Lemma \ref{L'(X')minors}) satisfying the properties in Lemma \ref{terminalseparatingL(X)}. It would be nice to obtain a cleaner structure theorem for when a graph has an $L'(X')$-minor. Additionally, if one desired, they could attempt to give a spanning subgraph characterization of $1$-connected graphs by working through the cut vertex reductions.

\bibliography{ThesisBib}
  \bibliographystyle{plain}

\end{document}